\title{Uniqueness and functoriality of Igusa stacks}
\author{Dongryul Kim}
\date{\today}
\email{dkim04@stanford.edu}
\address{Department of Mathematics, Stanford University, 450 Jane Stanford Way
(Building 380), Stanford, California, USA}
\newtheorem{theorem}[subsection]{Theorem}
\newtheorem{lemma}[subsection]{Lemma}
\newtheorem{proposition}[subsection]{Proposition}
\newtheorem{corollary}[subsection]{Corollary}
\newtheorem{maintheorem}{Theorem}
\theoremstyle{definition}
\newtheorem{definition}[subsection]{Definition}
\newtheorem{example}[subsection]{Example}
\newtheorem{remark}[subsection]{Remark}
\renewcommand\paragraph{\subsection{}}
\DeclareMathOperator{\Gal}{Gal}
\DeclareMathOperator{\Spec}{Spec}
\DeclareMathOperator{\Spa}{Spa}
\DeclareMathOperator{\Spd}{Spd}
\DeclareMathOperator{\Hom}{Hom}
\DeclareMathOperator{\Aut}{Aut}
\DeclareMathOperator{\im}{im}
\newcommand\id{\mathrm{id}}
\let\textdef\textit
\NewDocumentCommand\gx{t'}{%
  (\mathsf{G}\IfBooleanT{#1}{^\prime},%
  \mathsf{X}\IfBooleanT{#1}{^\prime})%
}
\NewDocumentCommand\bung{t\c t\m t'}{%
  \mathrm{Bun}_{G%
  \IfBooleanTF{#1}{\IfBooleanTF{#3}{^{\prime\mathrm{c}}}{^\mathrm{c}}}{\IfBooleanT{#3}{^\prime}}%
  \IfBooleanT{#2}{,\{%
    \IfBooleanTF{#1}{(\mu^{\IfBooleanT{#3}{\prime}\mathrm{c}})^{-1}}{\mu^{\IfBooleanT{#3}{\prime}-1}}%
    \}}}%
}
\NewDocumentCommand\grg{t\c t\m t' o}{%
  \mathrm{Gr}_{G%
  \IfBooleanTF{#1}{\IfBooleanTF{#3}{^{\prime\mathrm{c}}}{^\mathrm{c}}}{\IfBooleanT{#3}{^\prime}}%
  \IfBooleanT{#2}{,\{%
    \IfBooleanTF{#1}{(\mu^{\IfBooleanT{#3}{\prime}\mathrm{c}})^{-1}}{\mu^{\IfBooleanT{#3}{\prime}-1}}\}}%
  ,\IfValueTF{#4}{#4}{\IfBooleanTF{#2}{E\IfBooleanT{#3}{^\prime}}{\qp}}}%
}
\NewDocumentCommand\igsu{t'}{%
  \mathrm{Igs}^{U\IfBooleanT{#1}{^\prime}}\IfBooleanTF{#1}{\gx'}{\gx}%
}
\NewDocumentCommand\sh{t\d t\ad t' o o}{%
  \mathbf{Sh}\IfValueT{#4}{_{#4}}\IfBooleanTF{#3}{\gx'}{\gx}%
  _{\IfValueTF{#5}{#5}{E\IfBooleanT{#3}{^\prime}}}%
  \IfBooleanT{#1}{^\lozenge}\IfBooleanT{#2}{^\mathrm{ad}}%
}
\NewDocumentCommand\shloc{t\c t' m o}{%
  \mathcal{M}_{G%
  \IfBooleanTF{#1}{\IfBooleanTF{#2}{^{\prime\mathrm{c}}}{^\mathrm{c}}}{\IfBooleanT{#2}{^\prime}}%
  ,#3,\{\mu%
  \IfBooleanTF{#1}{\IfBooleanTF{#2}{^{\prime\mathrm{c}}}{^\mathrm{c}}}{\IfBooleanT{#2}{^\prime}}%
  \}%
  \IfValueT{#4}{,#4}}%
}
\NewDocumentCommand\period{t\o m m t\n t\p}{%
  \IfBooleanT{#1}{\mathscr{O}}\mathbf{#2}_\mathrm{#3}%
  \IfBooleanTF{#4}{\IfBooleanTF{#5}{^{\nabla+}}{^\nabla}}{\IfBooleanT{#5}{^+}}%
}
\newcommand{\qp}{\mathbb{Q}_p}
\newcommand{\qpbr}{\breve{\mathbb{Q}}_p}
\newcommand{\qpbar}{\bar{\mathbb{Q}}_p}
\newcommand{\zp}{\mathbb{Z}_p}
\newcommand{\fp}{\mathbb{F}_p}
\newcommand{\fpbar}{\bar{\mathbb{F}}_p}
\newcommand{\cp}{\mathbb{C}_p}
\newcommand{\af}{\mathbb{A}_\mathbb{Q}^\infty}
\newcommand{\afp}{\mathbb{A}_\mathbb{Q}^{p,\infty}}
\begin{document}

\begin{abstract}
  We provide an axiomatic definition of an Igusa stack associated to an
  arbitrary Shimura datum. We then prove that Igusa stacks are unique and
  automatically functorial with respect to morphisms of Shimura data, assuming
  their existence. Using the same techniques, we also prove that the existence
  of the Igusa stack passes to a Shimura subdatum.
\end{abstract}

\maketitle
\tableofcontents

{\section{Introduction}
\def\igs{\mathrm{Igs}{\gx}}
\def\igsp{\mathrm{Igs}{\gx'}}
\def\igscirc{\mathrm{Igs}^\circ{\gx}}

Let $\gx$ be a Shimura datum with reflex field $\mathsf{E}$, let $p$ be a
rational prime, fix a place $v \mid p$ of $\mathsf{E}$, and write $E =
\mathsf{E}_v$. Attached to the infinite level $p$-adic Shimura variety $\sh$ is
the Hodge--Tate period map
\[
  \pi_\mathrm{HT} \colon \sh\d \to \grg\m,
\]
constructed by \cite{Sch15}, \cite{CS17} for PEL-type Shimura varieties and
\cite{Han16p}, \cite{PR24} in general. This allows one to study the \'{e}tale
cohomology of Shimura varieties via the sheaves $\mathscr{F} =
R\pi_{\mathrm{HT},\ast} \Lambda$ on $\grg\m$. Fargues \cite{Far16p} and Scholze
conjectured that the sheaf $\mathscr{F}$ arises from a sheaf on $\bung$, and
that this phenomenon can be explained by the existence of a v-stack $\igs$ that
fits in a Cartesian diagram
\[ \begin{tikzcd}
  \sh\d \arrow{r}{\pi_\mathrm{HT}} \arrow{d} & \grg\m \arrow{d}{\mathrm{BL}} \\
  \igs \arrow{r} & \bung\m.
\end{tikzcd} \]
The v-stack $\igs \to \bung\m$ may then be interpreted as interpolating
different Igusa varieties for different Newton strata into a single family.

In a foundational work, Zhang \cite{Zha23p} constructed Igusa stacks for
PEL-type Shimura data on the good reduction locus (as well as on the minimal
compactification in many cases). This result was later extended in
\cite{DvHKZ24p} to all Hodge type Shimura data, using fine analysis of integral
models of Shimura varieties due to Pappas--Rapoport \cite{PR24}. However, in
both \cite{Zha23p} and \cite{DvHKZ24p}, the Igusa stack was constructed directly
without a defining characterization. We provide an axiomatic definition of an
Igusa stack, which is made more precise in \Cref{Sec:Uniformization}.

\begin{definition}[\Cref{Def:Igusa}]
  An \textdef{Igusa stack} is a v-stack $\igs$ together with a
  $\underline{\mathsf{G}(\afp)}$-action and a Cartesian diagram
  \[ \begin{tikzcd}
    \sh\d \arrow{r}{\pi_\mathrm{HT}} \arrow{d} & \grg\m \arrow{d}{\mathrm{BL}} \\
    \igs \arrow{r}{\bar{\pi}_\mathrm{HT}} & \bung\m
  \end{tikzcd} \]
  satisfying the property that
  \begin{itemize}
    \item the $\mathsf{G}(\afp) \times G(\qp)$-action on $\igs \times_{\bung\m}
      \grg\m$ recovers the Hecke $\mathsf{G}(\af)$-action on $\sh\d$,
    \item for $\phi$ the absolute Frobenius, $\phi \times \id$ acts trivially on
      $\igs \times_{\bung\m} \grg\m$.
  \end{itemize}
\end{definition}

This definition makes sense for arbitrary Shimura data $\gx$, as it does not
make reference to moduli interpretations or integral models of Shimura
varieties. We also remark that the notion of an Igusa stack $\igs$ depends not
only on $\gx$ but also the place $v \mid p$ of $\mathsf{E}$, even though the
choice of $v$ is suppressed in the notation.

\begin{theorem}[\cite{DvHKZ24p}]
  When $\gx$ is of Hodge type, the Igusa stack on the good reduction locus
  $\igscirc$ exists.
\end{theorem}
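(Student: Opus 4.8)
The plan, following \cite{DvHKZ24p}, is to descend from the Siegel case along a Hodge embedding, using the Pappas--Rapoport integral models \cite{PR24} as the bridge that keeps the descent compatible with the integral structure implicit in $\pi_\mathrm{HT}$ over the good reduction locus. Fix a hyperspecial level $K_p\subseteq\mathsf{G}(\qp)$, with reductive $\zp$-model $\mathcal{G}$, and choose a Hodge embedding $\iota\colon\gx\hookrightarrow(\mathrm{GSp},\mathbb{H}^{\pm})$ into a Siegel datum which extends to a closed embedding of reductive $\zp$-models, so that both data have good reduction (this can be arranged, as in the construction of integral canonical models). Let $\mathscr{S}$ be the canonical smooth integral model of $\mathrm{Sh}_{K_p}(\mathsf{G},\mathsf{X})$ over $\mathcal{O}_E$ of \cite{PR24}, let $\mathfrak{S}$ be its $p$-adic formal completion, and let $\mathfrak{S}_{K^p}=\varprojlim_{K^p}\mathfrak{S}$ be the prime-to-$p$ tower with its $\mathsf{G}(\afp)$-action; write $\mathrm{Sh}^\circ\subseteq\sh\d$ for the good reduction locus, i.e.\ the preimage of the adic generic fibre of $\mathfrak{S}$, on which $\pi_\mathrm{HT}$ lands in the relevant open $\mathrm{Gr}^\circ\subseteq\grg\m$ (the $\mathcal{G}$-integral locus). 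By \cite{PR24} the formal scheme $\mathfrak{S}$ carries a universal $\mathcal{G}$-shtuka bounded by $\mu$, and evaluating it on the Fargues--Fontaine curve, via the comparison of \cite{PR24} between $\mathcal{G}$-shtukas over $\Spd(\mathcal{O}_C)$ and $G$-bundles on the curve, exhibits $\mathrm{BL}\circ\pi_\mathrm{HT}\colon\mathrm{Sh}^\circ\to\bung\m$ as $\mathsf{G}(\afp)$-equivariant and $G(\qp)$-invariant; this is the morphism we will descend along.

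I would then define $\igscirc$ as the ``v-stack image'' of $\mathrm{Sh}^\circ$ inside $\mathrm{Igs}^\circ_{\mathrm{GSp}}\times_{\mathrm{Bun}_{\mathrm{GSp}}}\bung\m$, where $\mathrm{Igs}^\circ_{\mathrm{GSp}}$ is Zhang's Igusa stack for the Siegel datum \cite{Zha23p}. Functoriality of integral canonical models realises $\mathscr{S}$, compatibly with the universal shtukas, as a closed subscheme of the corresponding Siegel model, hence $\mathrm{Sh}^\circ$ as a closed sub-v-sheaf of $\mathrm{Sh}^\circ_{\mathrm{GSp}}$ over which $\pi_\mathrm{HT}$ is $\iota$-equivariant; the induced map $\mathrm{Sh}^\circ\to\mathrm{Igs}^\circ_{\mathrm{GSp}}\times_{\mathrm{Bun}_{\mathrm{GSp}}}\bung\m$ (second coordinate $\mathrm{BL}\circ\pi_\mathrm{HT}$) thus has a well-defined image, which carries the $\mathsf{G}(\afp)$-action by transport of structure, maps to $\bung\m$ by projection, and inherits a Frobenius $\phi$ from $\mathrm{Igs}^\circ_{\mathrm{GSp}}$. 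As a guide to what this object must look like: over a Newton stratum $b\in B(G,\mu^{-1})$, whose substack in $\bung\m$ is $[\ast/\widetilde{\mathbb{J}}_b]$ with $\widetilde{\mathbb{J}}_b=\Aut(\mathcal{E}_b)$, the fibre of $\igscirc$ should be $[\mathrm{Ig}^b/\widetilde{\mathbb{J}}_b]$ for $\mathrm{Ig}^b$ the infinite-level Igusa variety over the perfection of the central leaf $C^b$ in the special fibre of $\mathscr{S}$, enhanced with the $\widetilde{\mathbb{J}}_b$-action extending the usual $\mathbb{J}_b(\qp)$-action, so that the Cartesian square becomes, stratum by stratum, the v-sheaf form of Mantovan's product formula $\mathrm{Sh}^{\circ,b}\cong[(\mathrm{Ig}^b\times\mathcal{M}_b)/\widetilde{\mathbb{J}}_b]$, with $\mathcal{M}_b$ the infinite-level local Shimura variety at $b$ (cf.\ \cite{CS17}).

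Finally one verifies the three conditions of \Cref{Def:Igusa}. That the $\mathsf{G}(\afp)\times G(\qp)$-action on $\igscirc\times_{\bung\m}\mathrm{Gr}^\circ$ recovers the $\mathsf{G}(\af)$-Hecke action on $\mathrm{Sh}^\circ$ follows by tracking group actions through the Hodge embedding ($G(\qp)$ acting through the tower structure at $p$ on $\mathrm{Gr}^\circ$, which is Hecke at $p$), and $\phi\times\id\simeq\id$ on the fibre product passes from the Siegel case. The substantial point is the Cartesian property --- the analogue over the good reduction locus of the square in \Cref{Def:Igusa} --- i.e.\ that $\mathrm{Sh}^\circ\to\igscirc\times_{\bung\m}\mathrm{Gr}^\circ$ is an isomorphism. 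Over a single Newton stratum this is Mantovan's product formula, so the content is that the identification is compatible with all specialisation relations among Newton strata simultaneously, equivalently that the ``v-stack image'' above is an honest v-stack whose formation commutes with base change along $\mathrm{Gr}^\circ\to\bung\m$. Here I would use that the square is Cartesian for $\mathrm{GSp}$ by \cite{Zha23p}, reduce the $\gx$-statement by a diagram chase to understanding the base change $\bung\m\to\mathrm{Bun}_{\mathrm{GSp}}$ against the closed condition cutting out $\mathsf{G}$, and invoke \cite{PR24} to control the integral structure --- the extension of the shtuka from $\Spd(C)$ to $\Spd(\mathcal{O}_C)$ --- which is invisible on the generic fibre but is precisely what pins down $\igscirc$; v-descent along the cover of the relevant part of $\bung\m$ by $\mathrm{Gr}^\circ$ then promotes the stratum-wise statements to the global diagram. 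I expect matching the integral shtuka data on the two sides uniformly over all Newton strata to be where essentially all of the work lies.
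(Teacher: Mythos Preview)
The paper does not prove this theorem; it is quoted as an external result, and the paper's entire proof is a one-sentence citation to \cite[Section~5.2.9, Proposition~5.2.15, Theorem~6.0.1]{DvHKZ24p}, where the axioms of \Cref{Def:Igusa} are checked for the construction given there. Your proposal instead attempts to reconstruct the argument of \cite{DvHKZ24p} itself --- Hodge embedding, Pappas--Rapoport integral models and universal shtukas, v-stack image inside the Siegel Igusa stack, Mantovan product formula stratum by stratum --- which is far more than the present paper does and is not really comparable to its ``proof''. Your sketch is broadly consistent with the one-line summary of the \cite{DvHKZ24p} method given in the introduction, but for the purposes of this paper only the citation is needed; whether your outline matches \cite{DvHKZ24p} in detail (e.g.\ the restriction to hyperspecial level, or the precise definition of the candidate stack as a v-stack image) cannot be adjudicated from this paper alone.
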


\begin{proof}
  The axioms are verified in \cite[Section~5.2.9, Proposition~5.2.15,
  Theorem~6.0.1]{DvHKZ24p}.
\end{proof}

It was observed in \cite{HL24p} and \cite{DvHKZ24p} that in many cases, given an
Igusa stack $\igs$ satisfying the above axioms, the cohomology of Shimura
varieties as a $\mathsf{G}(\af) \times W_E$-representation can be recovered from
the sheaf $\bar{\mathscr{F}} = R\bar{\pi}_{\mathrm{HT},\fpbar,\ast} \Lambda$. In
particular, we have
\[
  R\Gamma(\sh[K^p][\qpbar], \Lambda) =
  T_\mu^{[1]}(\bar{\mathscr{F}}[-d])(-\tfrac{d}{2}),
\]
where $d = \langle 2\rho, \mu \rangle$ and $T_\mu^{[1]}$ is a Hecke operator
constructed in \cite[Section~IX.2]{FS21p}, see \cite[Theorem~8.4.10]{DvHKZ24p}.
Using compatibility of the Fargues--Scholze spectral action with parabolic
induction and methods developed in \cite{Kos21p}, it was shown in
\cite[Theorem~9.4.11]{DvHKZ24p} and \cite[Theorem~5.4]{vdH24p} that this is
enough to deduce Eichler--Shimura relations for Shimura varieties at Iwahori
level in many cases.

\subsection{Statement of main theorems}
We now state the main results of the paper. We refer the reader to
\Cref{Sec:Uniformization} for the precise definition of morphisms between
Igusa stacks and isomorphisms between such morphisms.

\begin{maintheorem}
  Given a Shimura datum $\gx$ with a place $v \mid p$ of $\mathsf{E}$, the Igusa
  stack $\igs$ is unique up to unique isomorphism, if it exists.
\end{maintheorem}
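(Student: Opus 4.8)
The plan is to reconstruct the Igusa stack canonically from the Hodge–Tate period map together with the descent data encoded in the axioms, so that any two solutions must agree. First I would observe that the Cartesian square exhibits $\sh\d \to \igs$ as a base change of $\mathrm{BL} \colon \grg\m \to \bung\m$; since $\mathrm{BL}$ is a surjection of v-stacks (indeed $\grg\m \to \bung\m$ is a $\underline{G(\qp)}$-torsor-like cover, or at least v-surjective), the map $\sh\d \to \igs$ is v-surjective. Therefore $\igs$ is the quotient v-stack of $\sh\d$ by the groupoid $\sh\d \times_{\igs} \sh\d$, and the entire content of the uniqueness statement is that this groupoid is determined by the axioms. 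The key point is that $\sh\d \times_{\igs} \sh\d = \sh\d \times_{\bung\m} \sh\d$ via $\pi_\mathrm{HT}$ on each factor, because the square is Cartesian: indeed $\sh\d \times_{\igs}\sh\d \cong \sh\d \times_{\igs} (\igs \times_{\bung\m} \grg\m) \times_{\grg\m} \ldots$ unwinds to $(\sh\d \times_{\bung\m} \grg\m)$-type fibre products, which the first axiom identifies with the Hecke $\mathsf{G}(\af)$-action, and the second axiom pins down the remaining Frobenius ambiguity.

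More concretely, the second step is to analyze $\sh\d \times_{\bung\m} \sh\d$. Using that $\grg\m \to \bung\m$ has fibres governed by $G(\qp)$ (the fibre over the basepoint being $\mathrm{Gr}$ with its $\underline{G(\qp)}$-action, and over a general point a form thereof), one gets $\sh\d \times_{\bung\m}\sh\d \cong \sh\d \times_{\grg\m} (\grg\m \times_{\bung\m}\grg\m) \times_{\grg\m} \sh\d$, and $\grg\m \times_{\bung\m} \grg\m$ is computed by the automorphisms of the trivial $G$-bundle modified by $\mu$, i.e.\ a Hecke correspondence. Combined with the first axiom — which says exactly that the $\mathsf{G}(\afp)\times G(\qp)$-action on $\sh\d \times_{\bung\m}\grg\m$ is the prime-to-$p$ Hecke action together with the $p$-adic one — this identifies the candidate groupoid intrinsically. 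The remaining subtlety is that a priori the descent datum also involves the relative Frobenius of $\bung\m$ over $\fpbar$ (the diamond $\bung\m$ lives over $\mathrm{Spd}\,\fpbar$ and carries a Frobenius); the second axiom, asserting $\phi \times \id$ acts trivially on $\igs \times_{\bung\m}\grg\m$, is precisely what forces the Frobenius not to appear in the equivalence relation, so that the quotient is well-defined and independent of choices. Having fixed the groupoid object $R \rightrightarrows \sh\d$ canonically, one sets $\igs := [\sh\d / R]$, checks it carries the $\underline{\mathsf{G}(\afp)}$-action inherited from the prime-to-$p$ Hecke action, and verifies that $\bar\pi_\mathrm{HT}$ descends from $\pi_\mathrm{HT}$ and that the square is Cartesian — the last being a consequence of $R = \sh\d \times_{\bung\m} \sh\d$ by v-descent of Cartesian squares.

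For the uniqueness up to \emph{unique} isomorphism: given two Igusa stacks $\igs_1, \igs_2$, both receive v-surjections from $\sh\d$ compatible with the maps to $\bung\m$, and both groupoids $\sh\d \times_{\igs_i}\sh\d$ equal $\sh\d\times_{\bung\m}\sh\d$ by the above; hence there is a canonical isomorphism $\igs_1 \cong \igs_2$ of quotient v-stacks over $\bung\m$, automatically $\underline{\mathsf{G}(\afp)}$-equivariant since the action on each is read off from the same Hecke action on $\sh\d$. Uniqueness of this isomorphism follows because any morphism of Igusa stacks is by definition compatible with the data from $\sh\d$, and $\sh\d \to \igs_i$ is an epimorphism of v-stacks, so a morphism $\igs_1 \to \igs_2$ is determined by its pullback to $\sh\d$, which is forced to be the identity on $\sh\d$; the groupoid presentation then rigidifies the $2$-categorical data, leaving no nontrivial automorphisms.

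The main obstacle I anticipate is the Frobenius bookkeeping in the second step: making precise in what sense $\sh\d \times_{\bung\m}\sh\d$ fails to be the naive Hecke groupoid and verifying that the "$\phi\times\id$ acts trivially" axiom exactly cancels this discrepancy. This requires understanding the Frobenius structure on $\bung\m$ over $\mathrm{Spd}\,\fpbar$ and how $\pi_\mathrm{HT}$, which a priori is only defined after base change along $\mathrm{Spd}\,E \to \mathrm{Spd}\,\qp$ and then over $\qpbar$, interacts with descent to $\fpbar$-diamonds. A secondary technical point is confirming that $\mathrm{BL}\colon\grg\m\to\bung\m$ is v-surjective and behaves well enough (e.g.\ that quotients by the resulting groupoids exist as v-stacks), which should follow from the Beauville–Laszlo-type results of \cite{FS21p} but needs to be cited carefully; this and the precise formulation of morphisms between Igusa stacks will be handled in \Cref{Sec:Uniformization}.
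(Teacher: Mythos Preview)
There is a genuine gap. You correctly observe that $\sh\d \to \igs$ is a v-surjection and that $\igs$ is the quotient of $\sh\d$ by the groupoid $\sh\d \times_{\igs} \sh\d$, and you correctly unwind $\sh\d \times_{\igs} \sh\d \cong \sh\d \times_{\bung\m} \grg\m$ using the Cartesian square. (Your parallel claim that this equals $\sh\d \times_{\bung\m} \sh\d$ with $\pi_\mathrm{HT}$ on both factors is false: that would require $\bar{\pi}_\mathrm{HT}$ to be a monomorphism, whereas it is only $0$-truncated.) But the identification $\sh\d \times_{\igs} \sh\d \cong \sh\d \times_{\bung\m} \grg\m$ only pins down the groupoid \emph{object}; the groupoid \emph{structure} requires the two projections to $\sh\d$. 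One of them is $\mathrm{pr}_1$, but the other is the map the paper calls $\Theta \colon \sh\d \times_{\bung\m} \grg\m \to \sh\d$, and this map is \emph{not} determined by the axioms in any formal way. The first axiom only says that $\Theta$ is $\underline{\mathsf{G}(\af)} \times \underline{G(\qp)}$-equivariant (for a specified action on the target), and the second that it is $(\phi \times \id)$-invariant; together with the coherence conditions coming from the simplicial identities, these are constraints on $\Theta$, not a construction of it. Your sentence ``the first axiom identifies [the groupoid] with the Hecke $\mathsf{G}(\af)$-action'' conflates the equivariance of $\Theta$ with $\Theta$ itself: the Hecke action is a map $\underline{G(\qp)} \times \sh\d \to \sh\d$, whereas $\Theta$ has source $\sh\d \times_{\bung\m} \grg\m$, a genuinely different and much larger object.

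The paper's proof proceeds exactly by isolating $\Theta$ as the datum to be compared (this is \Cref{Prop:IgusaGlobalUnif} and \Cref{Prop:IgusaThetaFunct}), and then proving that any two such $\Theta$ agree. That last step is not formal: it occupies Sections~\ref{Sec:LocalShimura}--\ref{Sec:DifferentUntilt} and uses $p$-adic Hodge theory to control $\Theta$ locally near classical points via a Grothendieck--Messing-type period map (\Cref{Prop:LocalUniformization}), connectedness of the admissible locus to propagate along $\mathcal{R}^E$ (\Cref{Prop:SameUntilt}), a rigidity argument with stably v-complete adic spaces and vanishing of $\cp(n)^{\Gal}$ to pass from $\mathcal{R}^E$ to $\mathcal{R}^{k_E}$ (\Cref{Prop:RFinFactorPt}), and finally Frobenius to reach all of $\mathcal{R}$. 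None of this is visible in your proposal, and without it there is no argument that two candidate $\Theta$'s coincide.
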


This immediately follows from a more general functoriality result.

\begin{maintheorem}[\Cref{Thm:Functoriality}]
  Let $\gx \to \gx'$ be a morphism of Shimura data, and choose places $v \mid
  v^\prime \mid p$ of $\mathsf{E}$ and $\mathsf{E}^\prime$. Assume that there
  exist corresponding Igusa stacks $\igs$ and $\igsp$. Then there exists,
  uniquely up to unique isomorphism, a
  $\underline{\mathsf{G}(\afp)}$-equivariant morphism of Igusa stacks $\igs \to
  \igsp$ fitting in the diagram
  \[ \begin{tikzcd}[row sep=0em]
    \sh\d \arrow{rr} \arrow{rd} \arrow{dd} & & \sh\d' \arrow{rd} \arrow{dd} \\ &
    \grg\m \arrow{rr} \arrow{dd} & & \grg\m' \arrow{dd} \\ \igs \arrow{rd}
    \arrow{rr} & & \igsp \arrow{rd} \\ & \bung\m \arrow{rr} & & \bung\m'.
  \end{tikzcd} \]
\end{maintheorem}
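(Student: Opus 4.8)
The plan is to deduce Theorem~A from Theorem~B, and to prove Theorem~B by v-descent along the map $\sh\d \to \igs$ that is part of the defining Cartesian square. For Theorem~A, I would apply Theorem~B to the identity morphism of $\gx$, feeding in two a priori distinct Igusa stacks for $\gx$ as source and target; this produces $\underline{\mathsf G(\afp)}$-equivariant morphisms in both directions. Composing them and invoking the uniqueness clause of Theorem~B once more --- now with a single Igusa stack on both sides, so that the identity is a competing morphism --- shows that both composites are canonically isomorphic to the identity, whence the two Igusa stacks are equivalent. Any isomorphism compatible with the Cartesian diagram is in particular such a morphism, so Theorem~B pins it down canonically; this is exactly Theorem~A.

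For Theorem~B I would first assemble the functorial comparison maps that do not involve Igusa stacks: a $\underline{\mathsf G(\afp)}$-equivariant map $\sh\d \to \sh\d'$ over $\Spd E \to \Spd E'$ from functoriality of infinite-level perfectoid Shimura varieties; a map $\grg\m \to \grg\m'$ from functoriality of $B_{\mathrm{dR}}^+$-affine Grassmannians under $\mathsf G \to \mathsf G'$ with $\mu \mapsto \mu'$ (hence $\mu^{-1} \mapsto \mu'^{-1}$); and a map $\bung\m \to \bung\m'$ from the pushout of $G$-bundles along $G \to G'$, restricted to the relevant substacks. These fill in the top, front and back faces of the cube and are compatible with the maps $\mathrm{BL}$ and with $\pi_\mathrm{HT}$, the latter by functoriality of the Hodge--Tate period morphism. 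It then remains to build the bottom face $\igs \to \igsp$ together with the two slanted faces.

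The core step is the construction of $\igs \to \igsp$. Since the square defining $\igs$ is Cartesian and $\mathrm{BL}\colon \grg\m \to \bung\m$ is a surjective v-cover, its base change $q\colon \sh\d \to \igs$ is again a surjective v-cover, so it suffices to descend the composite $\sh\d \to \sh\d' \xrightarrow{q'} \igsp$ along $q$. One has $\sh\d \times_{\igs} \sh\d \cong \igs \times_{\bung\m} (\grg\m \times_{\bung\m} \grg\m)$, and the idea is to use the two axioms --- that the $\mathsf G(\afp) \times G(\qp)$-action on $\igs \times_{\bung\m} \grg\m = \sh\d$ is the Hecke action, and that $\phi \times \id$ acts trivially --- to describe the two projections $\sh\d \times_{\igs} \sh\d \rightrightarrows \sh\d$ intrinsically, that is, purely in terms of $\sh\d$ together with its $G(\qp)$-Hecke action, the morphism $\pi_\mathrm{HT}$, and the Frobenius $\phi$, with no reference to $\igs$ itself. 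Because $\sh\d \to \sh\d'$ intertwines the $G(\qp)$- and $G'(\qp)$-Hecke actions, commutes with the Hodge--Tate morphisms, and is defined over the base (hence commutes with $\phi$), it carries this groupoid on $\sh\d$ to the analogous one on $\sh\d'$; therefore $q'$ composed with $\sh\d \to \sh\d'$ coequalizes the two projections, and the cocycle condition over the triple fiber product follows in the same way. v-descent then yields $\igs \to \igsp$ whose restriction along $q$ is the prescribed map; it is $\underline{\mathsf G(\afp)}$-equivariant and lies over $\bung\m \to \bung\m'$ (both verified after pulling back along $q$), completing the cube. Uniqueness up to unique isomorphism is then automatic: any morphism filling the cube restricts along $q$ to $q'$ composed with $\sh\d \to \sh\d'$, since the top square must commute, and a descent of a fixed map along a v-cover is unique up to a unique $2$-isomorphism.

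I expect the main obstacle to be precisely the intrinsic description of the groupoid $\sh\d \times_{\igs} \sh\d \rightrightarrows \sh\d$ and the verification that $\sh\d \to \sh\d'$ is compatible with it --- equivalently, showing that the gluing implicit in $\igs$ being a quotient of $\sh\d$ is recorded exactly by the Hecke-at-$p$ action together with the Frobenius, which is the genuine content of the axioms. Along the way one must also pin down the geometric input on $\mathrm{BL}$ (that it is a surjective v-cover, and that the fiber products occurring above are well behaved); everything downstream of this is formal v-descent. The same circle of ideas should also give the final assertion of the introduction, that existence of the Igusa stack passes to a Shimura subdatum: one runs the analogous construction for a closed immersion $\gx \hookrightarrow \gx'$ and checks that the resulting v-stack satisfies the axioms for $\gx$.
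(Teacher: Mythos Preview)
Your descent framework is right and matches the paper: the equivalence between Igusa stacks and global uniformization maps $\Theta \colon \mathcal{R} = U \times_{\bung\m} \grg\m \to U$ is \Cref{Prop:IgusaGlobalUnif}, the reduction of functoriality to commutativity of
\[ \begin{tikzcd}
U \times_{\bung\m} \grg\m \arrow{r} \arrow{d}{\Theta} & U^\prime \times_{\bung\m'} \grg\m' \arrow{d}{\Theta^\prime} \\ U \arrow{r} & U^\prime
\end{tikzcd} \]
is \Cref{Prop:IgusaThetaFunct}, and the uniqueness clause is \Cref{Prop:IgusaDiscrete}.

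The gap is your sentence ``Because $\sh\d \to \sh\d'$ intertwines the Hecke actions, commutes with $\pi_\mathrm{HT}$, and commutes with $\phi$, it carries this groupoid on $\sh\d$ to the analogous one on $\sh\d'$.'' That is the whole theorem, not a consequence of the listed compatibilities. The axioms say $\Theta$ is equivariant for $\underline{\mathsf{G}(\af)} \times \underline{G(\qp)}$ and for $\phi \times \id$, and restricts to the identity on the graph of $\pi_\mathrm{HT}$; but the orbit of the graph under $\{1\} \times \underline{G(\qp)} \times \phi^\mathbb{Z}$ is far from all of $\mathcal{R}$ --- the $\underline{G(\qp)}$-quotient of a fiber of $\mathrm{BL}$ is the positive-dimensional admissible locus in the flag variety, and Frobenius only permutes finitely many untilt-components --- so the axioms do not formally pin down $\Theta$, and agreement on this orbit does not force the square to commute everywhere. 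The paper proves commutativity in three geometric steps spanning Sections~\ref{Sec:LocalShimura}--\ref{Sec:DifferentUntilt}: on the diagonal $\mathcal{R}^E$ one passes to finite level and classical points, where $p$-adic Hodge theory forces $\Theta_{K,y}^E$ near the graph to invert an explicit period map built from a horizontal trivialization of $D_\mathrm{dR}(\xi_K)$ (\Cref{Prop:LocalUniformization}), after which analytic continuation on connected components, connectedness of the admissible locus \cite{GL22p}, and $G(\qp)$-equivariance propagate this to all of $\mathcal{R}^E$ (\Cref{Prop:SameUntilt}); a rigidity theorem for maps from $\mathcal{R}_{K,y}^{k_E}$ into rigid analytic varieties, proved via stably v-complete adic spaces and vanishing of Galois invariants on the $\xi_E^n$-graded pieces (\Cref{Prop:RFinFactorPt}), extends this to $\mathcal{R}^{k_E}$; only then does Frobenius invariance finish the argument via $\mathcal{R} = \coprod_n (\phi \times \id)^n \mathcal{R}^{k_E}$. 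None of this can be replaced by unwinding the axioms.
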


\begin{remark}
  The actual statement of \Cref{Thm:Functoriality} is flexible enough to
  handle uniqueness and functoriality of Igusa stacks on good reduction loci as
  well.
\end{remark}

\begin{remark}
  In upcoming work \cite{DvHKZ25p}, this functoriality result is used crucially
  in constructing Igusa stacks for abelian type Shimura data.
\end{remark}

One curious aspect of the Igusa stack is that even though its definition only
involves the Shimura variety over $E$, it contains information of the special
fiber as well. For example, when $\gx$ is of Hodge type,
\cite[Remark~6.5.10]{DvHKZ24p} states that for an arbitrary choice of parahoric
$K_p \subseteq G(\qp)$, the $\fpbar$-points of the good reduction Igusa stack
$\igscirc$ is the set of $p$-power quasi-isogeny classes of $\fpbar$-points of
the integral model $\mathscr{S}_{K_p}\gx$. This seems to suggest that, assuming
the existence of Igusa stacks, the Langlands--Rapoport conjecture could be more
naturally stated in terms of $\fpbar$-points of Igusa stacks.

Applying methods developed in the paper, we also prove an existence result for
Shimura subdata.

\begin{maintheorem}[\Cref{Thm:IgusaSubdata}]
  Let $\gx \hookrightarrow \gx'$ be an embedding of Shimura data, and let $E =
  \mathsf{E}_v$ and $E^\prime = \mathsf{E}^\prime_{v^\prime}$ be as before. If
  there exists an Igusa stack $\igsp$ for $\gx'$, then there also exists an
  Igusa stack $\igs$ for $\gx$.
\end{maintheorem}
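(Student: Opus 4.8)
The plan is to realise $\mathrm{Igs}\gx$ as a sub-v-stack of $\mathrm{Igs}\gx' \times_{\bung\m'} \bung\m$, namely the part coming from the Shimura subvariety $\sh\d \hookrightarrow \sh\d'$. First I would record the compatibilities attached to the embedding $\gx \hookrightarrow \gx'$, just as in the proof of \Cref{Thm:Functoriality}: the map $\sh\d \to \sh\d'$ is a closed immersion (on a cofinal system of prime-to-$p$ levels, with $K^p = K'^p \cap \mathsf{G}(\afp)$), the map $\grg\m \to \grg\m'$ is the closed immersion of partial flag varieties determined by $P_\mu = \mathsf{G} \cap P_{\mu'}$, the Hodge--Tate period maps are compatible, and everything is equivariant for $\mathsf{G}(\afp) \times G(\qp) \hookrightarrow \mathsf{G}'(\afp) \times G'(\qp)$ and for $\phi \times \id$. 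Set $\mathcal{I} := \mathrm{Igs}\gx' \times_{\bung\m'} \bung\m$; since the square for $\gx'$ is Cartesian,
\[
  \widetilde{\mathrm{Sh}} \;:=\; \mathcal{I} \times_{\bung\m} \grg\m \;=\; \mathrm{Igs}\gx' \times_{\bung\m'} \grg\m \;=\; \sh\d' \times_{\grg\m'} \grg\m ,
\]
so $\widetilde{\mathrm{Sh}} \to \sh\d'$ is a closed immersion, and by base change from the $\gx'$-side $\mathcal{I}$ and $\widetilde{\mathrm{Sh}}$ carry a $\underline{\mathsf{G}(\afp)}$-action, a map $\bar{\pi}_\mathrm{HT}\colon \mathcal{I} \to \bung\m$, a $\mathsf{G}(\afp) \times G(\qp)$-action on $\widetilde{\mathrm{Sh}}$, and a trivial $\phi \times \id$-action on $\widetilde{\mathrm{Sh}}$. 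Thus $(\mathcal{I}, \widetilde{\mathrm{Sh}}, \grg\m, \bung\m)$ satisfies everything required of Igusa-stack data for $\gx$, \emph{except} that $\widetilde{\mathrm{Sh}}$ is in general strictly larger than $\sh\d$: a point of $\sh\d'$ whose Hodge--Tate period happens to land in the smaller flag variety $\grg\m$ need not lie on the Shimura subvariety.

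Now $\sh\d \to \widetilde{\mathrm{Sh}}$ is a closed immersion, and I set $\mathrm{Igs}\gx$ to be the stack-theoretic image of $\sh\d \to \mathcal{I}$, equivalently the v-stack quotient $[\,\sh\d / (\sh\d \times_{\mathcal{I}} \sh\d)\,]$. Everything then reduces to the single claim that $\sh\d \hookrightarrow \widetilde{\mathrm{Sh}}$ is pulled back from $\mathrm{Igs}\gx \hookrightarrow \mathcal{I}$ along $q \colon \widetilde{\mathrm{Sh}} \to \mathcal{I}$, i.e.\ that $\sh\d = \widetilde{\mathrm{Sh}} \times_{\mathcal{I}} \mathrm{Igs}\gx$. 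Granting this, $\mathrm{Igs}\gx \to \mathcal{I}$ is a monomorphism, $\mathrm{Igs}\gx$ inherits the $\underline{\mathsf{G}(\afp)}$-action and the map $\bar{\pi}_\mathrm{HT}$ to $\bung\m$, and $\mathrm{Igs}\gx \times_{\bung\m} \grg\m = \mathrm{Igs}\gx \times_{\mathcal{I}} \widetilde{\mathrm{Sh}} = \sh\d$ with the projection the quotient map $\sh\d \to \mathrm{Igs}\gx$, so the square for $\gx$ is Cartesian. The two axioms pass through the closed immersions $\sh\d \hookrightarrow \widetilde{\mathrm{Sh}} \hookrightarrow \sh\d'$: the $\mathsf{G}(\afp) \times G(\qp)$-action on $\mathrm{Igs}\gx \times_{\bung\m} \grg\m = \sh\d$ is the restriction of the action on $\widetilde{\mathrm{Sh}}$, which by Hecke-equivariance of $\sh\d \hookrightarrow \sh\d'$ coincides with the $\mathsf{G}(\af)$-Hecke action on $\sh\d$; and $\phi \times \id$ acts trivially on $\widetilde{\mathrm{Sh}}$, hence on $\sh\d$. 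So $\mathrm{Igs}\gx$ is an Igusa stack for $\gx$, and by Theorem~A it is the unique one.

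The main obstacle is this last claim, $\sh\d = \widetilde{\mathrm{Sh}} \times_{\mathcal{I}} \mathrm{Igs}\gx$: that being on the Shimura subvariety propagates along the fibres of $q$ — concretely, that if $x \in \sh\d$ and $y \in \sh\d'$ has the same image in $\mathrm{Igs}\gx'$ as $x$ and Hodge--Tate period in $\grg\m$, then $y \in \sh\d$, and likewise in families. This is strictly finer than anything visible from $\widetilde{\mathrm{Sh}}$ alone — the analogous statement relative to $\sh\d' \to \grg\m'$ is false, by the phenomenon noted above — so it is here that the methods of the paper genuinely enter. I would deduce it from the uniformization of $\sh\d'$ over $\bung\m'$ established in \Cref{Sec:Uniformization}, which Newton stratum by Newton stratum presents $\sh\d'$ in terms of an Igusa-variety factor together with the local datum $\grg\m'$ (and which is precisely what lets $\mathrm{Igs}\gx'$ remember the special fibre), together with its functoriality in $\gx \hookrightarrow \gx'$: over the point $\mathcal{E}_b$ of $\bung\m$ the map $q$ is the projection that forgets the $\grg\m \times_{\bung\m} \{\mathcal{E}_b\}$-coordinate, whereas the closed immersion $\sh\d \hookrightarrow \widetilde{\mathrm{Sh}}$ restricted to that stratum is cut out by a condition on the Igusa-variety factor alone, so $\sh\d$ meets each fibre of $q$ in the whole fibre or in nothing. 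Running this descent argument at the level of v-stacks rather than on geometric points is the remaining technical point, and should be within reach of the techniques used for \Cref{Thm:Functoriality}; the same argument applies verbatim to the Igusa stacks on good reduction loci.
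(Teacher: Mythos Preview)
Your reformulation is equivalent to the paper's: by \Cref{Prop:IgusaGlobalUnif} an Igusa stack for $\gx$ is the same datum as a global uniformization $\Theta$, and your key claim---that $\sh\d \subseteq \widetilde{\mathrm{Sh}}$ is saturated for $q \colon \widetilde{\mathrm{Sh}} \to \mathcal{I}$---unwinds (via $\sh\d \times_{\mathcal{I}} \widetilde{\mathrm{Sh}} = \sh\d \times_{\bung\m} \grg\m$) to exactly the statement of \Cref{Prop:SubdataDifferent}: the composite $U \times_{\bung\m} \grg\m \to U' \times_{\bung\m'} \grg\m' \xrightarrow{\Theta'} U'$ factors through the closed subsheaf $U$. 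So the two strategies coincide, and the subsequent verification of the Igusa axioms is routine in either language.

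The gap is in your justification of the key claim. You assert that over each Newton point the closed immersion $\sh\d \hookrightarrow \widetilde{\mathrm{Sh}}$ is ``cut out by a condition on the Igusa-variety factor alone'', but this is precisely the statement to be proved, not a consequence of anything in \Cref{Sec:Uniformization}: that section only packages the \emph{given} Igusa stack for $\gx'$ as a groupoid relation on $\sh\d'$, and says nothing about how a closed sub-Shimura-variety interacts with that relation. Invoking ``functoriality in $\gx \hookrightarrow \gx'$'' is circular, since \Cref{Thm:Functoriality} requires both Igusa stacks already in hand. The difficulty is not, as you suggest, merely upgrading from geometric points to v-stacks---it is the claim itself, already on points. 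The paper's argument is genuinely analytic: over the diagonal $\mathcal{R}^E$ (\Cref{Prop:SubdataSame}) one shows $\Psi_{K,y}$ is \'etale at the base point and uses this to \emph{manufacture} a local lift $W_{K,y} \to V_{K,y}$ through a fibre product (there being no $\Theta$ for $\gx$ available to supply the dashed arrow, in contrast to the situation of \Cref{Prop:LocalUniformization}); one then propagates to $\mathcal{R}^{k_E}$ via the rigidity result \Cref{Cor:RFinFactor}, and finally to all of $\mathcal{R}$ by Frobenius. None of this is visible from the Newton-stratum product description you invoke.
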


This theorem, together with \cite{Zha23p}, gives an alternative proof of the
existence of good reduction Igusa stacks for Hodge type Shimura data, as well as
an extension to the bad reduction locus.

\begin{maintheorem}[\Cref{Cor:HodgeTypeBadReduction}]
  When $\gx$ is of Hodge type, there exist Igusa stacks
  \[
    \igscirc \subseteq \igs
  \]
  for both the good reduction locus and the entire Shimura variety, where the
  inclusion is an open embedding.
\end{maintheorem}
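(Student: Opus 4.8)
The plan is to reduce the statement to the Siegel datum and then invoke \Cref{Thm:IgusaSubdata}. Since $\gx$ is of Hodge type, by definition there is an embedding of Shimura data $\gx \hookrightarrow \gx'$ with $\gx'$ of Siegel type, so that $\mathsf{G}' = \mathrm{GSp}_{2g}$ for some $g$ and the reflex field $\mathsf{E}^\prime$ is $\mathbb{Q}$; we take $v^\prime = p$ (the unique place of $\mathbb{Q}$ above $p$) and keep the given place $v \mid p$ of $\mathsf{E}$. It then suffices to produce, for $\gx'$, Igusa stacks $\mathrm{Igs}^\circ\gx' \subseteq \igsp$ over the good reduction locus and over the entire Shimura variety, with the inclusion an open embedding: feeding this pair into \Cref{Thm:IgusaSubdata} — which, like \Cref{Thm:Functoriality}, is flexible enough to handle good reduction loci as well — produces $\igscirc \subseteq \igs$, carrying along the $\underline{\mathsf{G}(\afp)}$-action and the Frobenius compatibility required by \Cref{Def:Igusa}. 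In particular, this reproves the existence of $\igscirc$ obtained in \cite{DvHKZ24p}.

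For the Siegel datum, the Shimura variety at hyperspecial level $\mathrm{GSp}_{2g}(\zp)$ is the moduli space of principally polarized $g$-dimensional abelian varieties with prime-to-$p$ level structure, so $\gx'$ is of PEL type, and \cite{Zha23p} constructs the Igusa stack $\mathrm{Igs}^\circ\gx'$ over the good reduction locus. To obtain $\igsp$ over the entire Shimura variety, I would use that \cite{Zha23p} also constructs the Igusa stack over the good reduction locus of the minimal compactification in this case; since the minimal compactification of the Siegel Shimura variety is proper over $\mathbb{Z}_{(p)}$, every point of its generic fibre specializes into the special fibre, so its good reduction locus is the entire (compactified) Shimura variety. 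Restricting this construction along the open immersion of the open Siegel Shimura variety into its minimal compactification — over which $\pi_{\mathrm{HT}}$, hence the axioms of \Cref{Def:Igusa}, are defined — then produces $\igsp$, inside which $\mathrm{Igs}^\circ\gx'$ is an open substack.

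That the resulting map $\igscirc \to \igs$ is an open immersion follows from the compatibility, built into the arguments behind \Cref{Thm:Functoriality}, between the functoriality construction and open immersions of the underlying Shimura varieties: on the Siegel side the open immersion $\mathrm{Igs}^\circ\gx' \hookrightarrow \igsp$ is the one cutting out the good reduction locus of $\sh'$, and running the construction of \Cref{Thm:IgusaSubdata} along $\gx \hookrightarrow \gx'$ transports it to the open immersion cutting out the good reduction locus of $\sh$. Equivalently, pulling back along the surjection $\sh\d \to \igs$ — the base change of $\mathrm{BL}\colon \grg\m \to \bung\m$ — identifies $\igscirc \to \igs$ with the open immersion of the diamond of the good reduction locus of $\sh$ into $\sh\d$, and open immersions satisfy v-descent.

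I expect the main obstacle to be the Siegel input, namely confirming that \cite{Zha23p} genuinely yields the Igusa stack for the entire Siegel Shimura variety: one has to check that the Siegel datum and its minimal compactification fall within the cases treated there, and that restricting the minimal-compactification construction to the open Shimura variety produces an object satisfying \Cref{Def:Igusa}. Once that is in place, the passage to the Hodge type datum $\gx$, and the fact that $\igscirc \hookrightarrow \igs$ is an open embedding, are formal consequences of \Cref{Thm:IgusaSubdata} and \Cref{Thm:Functoriality}.
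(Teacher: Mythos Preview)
Your overall strategy matches the paper's: reduce to the Siegel case via \Cref{Thm:IgusaSubdata}, and for the open embedding invoke \Cref{Thm:Functoriality} together with the fact that open immersions can be checked after pulling back along the v-surjection $\mathrm{BL}$. That part is fine.

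There are, however, two genuine gaps in the Siegel step.

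First, Zhang's minimally compactified Igusa stack $\mathrm{Igs}^\ast\gx'$ is only constructed when the boundary of $\mathbf{Sh}_K^\ast\gx'$ has codimension at least $2$, i.e.\ for $g \ge 2$; the construction is a partially compactified relative affinization of $\mathrm{Igs}^\circ\gx'$, not literally ``the Igusa stack on the good reduction locus of the minimal compactification.'' So your argument does not cover $g = 1$. The paper handles this by embedding $\mathrm{GL}_2 \hookrightarrow \mathrm{GSp}_4$ diagonally and applying \Cref{Thm:IgusaSubdata} a second time.

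Second, and more seriously, ``restricting the minimal-compactification construction to the open Shimura variety'' is not automatic. Translating via \Cref{Prop:IgusaGlobalUnif}, the Cartesian diagram for $\mathrm{Igs}^\ast\gx'$ gives a uniformization map
\[
  \Theta^\ast \colon \mathbf{Sh}^\ast\gx'_{\qp}^\lozenge \times_{\bung'} \grg\m' \to \mathbf{Sh}^\ast\gx'_{\qp}^\lozenge,
\]
and to obtain an Igusa stack for the open $\mathbf{Sh}\gx'$ you must show that $\Theta^\ast$ sends the open stratum (times $\grg\m'$) back into the open stratum, not into the boundary. Nothing in the axioms of \Cref{Def:Igusa} forces this. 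The paper devotes its appendix to proving exactly this (\Cref{Prop:MinCompUniformization}): one computes the fixed-point loci of certain prime-to-$p$ unipotent Hecke operators on $\mathbf{Sh}^\ast\gx'_{\qp}^\lozenge$ and shows they coincide with unions of boundary strata (\Cref{Prop:UnipotentFixedPts}); since $\Theta^\ast$ is $\underline{\mathsf{G}'(\afp)}$-equivariant it preserves these fixed loci, and a further swap argument using the axioms of $\Theta^\ast$ rules out strata-jumping in the other direction. Your proposal identifies this as ``the main obstacle'' but does not supply the mechanism to resolve it.
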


However, we note that the interplay between Igusa stacks and canonical integral
models of Shimura varieties discussed in \cite{DvHKZ24p} cannot be obtained
directly using our methods.

\begin{remark}
  Once there exists an Igusa stack $\mathrm{Igs}\gx$ over the entire Shimura
  variety, we may consider the sheaf
  \[
    \bar{\mathscr{F}}_c = R\bar{\pi}_{\mathrm{HT},\fpbar,!} \Lambda
  \]
  on $\mathrm{Bun}_{G,\{\mu^{-1}\},\fpbar}$. The discussion of
  \cite[Section~8.4]{DvHKZ24p} shows that the compactly supported cohomology of
  Shimura varieties can be identified with
  \[
    R\Gamma_c(\sh[K^p][\qpbar], \Lambda) \cong T_\mu^{[1]} (\bar{\mathscr{F}}_c
    [-d]) (-\tfrac{d}{2})
  \]
  as a $G(\qp) \times W_E$-representation. This allows us to access the
  cohomology of Shimura varieties without comparing it to the cohomology its
  good reduction locus, i.e., without appealing to the work of Lan--Stroh
  \cite{LS18}, which is currently unavailable in the abelian type case. The
  argument of \cite[Section~5]{HL24p} can then be reinterpreted more naturally
  in terms of semi-perversity of $\bar{\mathscr{F}}_c$, see
  \cite[Proposition~3.7]{HL24p}.
\end{remark}

\subsection{Outline of the proof}
We discuss the proof of the main functoriality result. The first step is to
realize that an Igusa stack contains the same information as the morphism
\begin{align*}
  \Theta \colon \sh\d \times_{\bung\m} \grg\m &\cong \sh\d \times_{\igs} \sh\d
  \\ &\xrightarrow{\mathrm{pr}_2} \sh\d
\end{align*}
satisfying certain conditions. This is because $\Theta, \mathrm{pr}_1$
determines a groupoid object on $\sh\d$ that identifies $\igs$ as the
quotient. Then functoriality of Igusa stacks amounts to the fact that the
diagram
\[ \begin{tikzcd}
  \sh\d \times_{\bung\m} \grg\m \arrow{d}{\Theta} \arrow{r} & \sh\d'
  \times_{\bung\m'} \grg\m' \arrow{d}{\Theta^\prime} \\ \sh\d \arrow{r} & \sh\d'
\end{tikzcd} \]
commutes, see \Cref{Prop:IgusaThetaFunct}.

We note that there is a natural projection map
\[
  \mathcal{R} = \sh\d \times_{\bung\m} \grg\m \to \Spd E \times \Spd E,
\]
and that there are closed v-subsheaves
\[
  \Spd E \xrightarrow{\Delta} \Spd E \times_{\Spd k_E} \Spd E \hookrightarrow
  \Spd E \times \Spd E,
\]
where $k_E$ is the residue field of $E$. We base change $\mathcal{R}$ to these
closed subsheaves and denote them by $\mathcal{R}^E \hookrightarrow
\mathcal{R}^{k_E} \hookrightarrow \mathcal{R}$.

We next prove that the diagram commutes upon restricting to $\mathcal{R}^E
\subseteq \mathcal{R}$, see \Cref{Prop:SameUntilt}. To achieve that, we study
the local structure of
\[
  \mathcal{R}^E = \sh\d \times_{\bung\m \times \Spd E} \grg\m
  \xrightarrow{\Theta^E} \sh\d.
\]
Upon passing to finite level using $\mathsf{G}(\af)$-equivariance, we see that
the v-sheaf $\mathcal{R}_K^E$ at finite level $K$ behaves like a local Shimura
variety. By trivializing around each classical point $y \in \sh[K]$ the de Rham
realization of the canonical $p$-adic local system on $\sh[K]$, we show that on
a small neighborhood of the graph $\sh[K] \to \mathcal{R}_K^E$ the behavior of
$\Theta_K^E$ is uniquely determined and functorial. On the other hand, because
these spaces are smooth rigid analytic varieties over $E$, this implies that the
maps agree on the entire connected component of $\mathcal{R}_{K,y}^E$. Passing
to the limit and using $G(\qp)$-equivariance, we conclude using the idea of
\cite{GLX23p} that the locus on which the diagram commutes contains
$\mathcal{R}^E$.

To pass from $\mathcal{R}^E$ to $\mathcal{R}^{k_E}$, we observe that the v-sheaf
\[
  \mathcal{R}^{k_E} = \sh\d \times_{\bung\m \times \Spd k_E} \grg\m
  \xrightarrow{\mathrm{pr}_2} \Spd E
\]
is far from being representable by an adic space, as it does not have enough
functions defined on it. This makes the map $\Theta^{k_E} \colon
\mathcal{R}^{k_E} \to \sh\d$ quite restrictive. Indeed, we prove quite generally
that for a smooth rigid analytic variety $X/E$ and a map of v-sheaves
\[
  f \colon \mathcal{R}_K^{k_E} \to X^\lozenge
\]
living over $\mathrm{pr}_2 \colon \Spd E \times_{\Spd k_E} \Spd E \to \Spd E$,
if $f$ sends $\mathcal{R}_K^E$ to a closed rigid analytic subvariety $Z^\lozenge
\subseteq X^\lozenge$, then $f$ factors through $Z^\lozenge$, see
\Cref{Prop:RFinFactorPt}. Applying this result to the diagonal $\Delta \colon
\sh'[K^\prime] \hookrightarrow \sh'[K^\prime] \times_{E^\prime} \sh'[K^\prime]$
and taking the limit as $K \to \{1\}$, we conclude that the diagram commutes
over $\mathcal{R}^{k_E}$.

Finally, to pass from $\mathcal{R}^{k_E}$ to all of $\mathcal{R}$, we simply use
the absolute Frobenius $\phi$, which translates $\mathcal{R}^{k_E}$ around to
cover all $\mathcal{R}$. This shows that the diagram commutes on all of
$\mathcal{R}$.

\subsection{Outline of paper}
In \Cref{Sec:Preliminaries}, we review the theory of mixed characteristic
local shtukas. In \Cref{Sec:DeRham} and \Cref{Sec:HodgeTate}, we
review the construction of the Hodge--Tate period map for general Shimura
varieties from \cite{Han16p} and \cite{PR24}. Our construction makes use of a
morphism of topoi $\mathsf{Shv}(X_\mathrm{v}) \to
\mathsf{Shv}(X_\mathrm{proet})$ instead of a pro-\'{e}tale descent result for
$\period{B}{dR}\p$-local systems over perfectoid spaces.

In \Cref{Sec:Uniformization}, we give the axiomatic definition of the
Igusa stack and work out the formalism for translating the information of an
Igusa stack to the information of a global uniformization map $\Theta$. In
\Cref{Sec:FiniteLevel}, we discuss how to pass to finite level and regard
$\Theta$ as a limit of finite level global uniformization maps $\Theta_K$ for
neat compact open subgroups $K \subseteq \mathsf{G}(\af)$. The complications in
this section mainly arise from the anti-cuspidal part of the center of
$\mathsf{G}$, and thus is unnecessary when dealing with Shimura data with
cuspidal center.

The proof of the main functoriality result is carried out in
Sections~\ref{Sec:LocalShimura}--\ref{Sec:DifferentUntilt}. In
\Cref{Sec:LocalShimura}, we define and study local Shimura varieties in a
setting that is slightly more general compared to \cite{SW20}. Using these
properties, we prove in \Cref{Sec:SameUntilt} the functoriality of global
uniformization restricted to $\mathcal{R}^E \hookrightarrow \mathcal{R}$. In
\Cref{Sec:StablyVComp}, we prove a criterion for a map between v-sheaves
associated to adic spaces to uniquely descent to a map between adic spaces, and
apply it in \Cref{Sec:DifferentUntilt} to obtain the full functoriality
of global uniformization maps.

As an application of these techniques, we prove in \Cref{Sec:Subdata}
that if $\gx \hookrightarrow \gx'$ is an embedding of Shimura data, then the
existence of an Igusa stack for $\gx'$ implies the existence of an Igusa stack
for $\gx$.

\subsection{Conventions and notations}
We list some conventions and notations we use throughout the paper.

\begin{itemize}
  \item We fix a prime $p$, which does not change throughout the paper.
  \item Perfectoid Tate--Huber pairs in characteristic $p$ are denoted by $(R,
    R^+)$, and $(R^\sharp, R^{\sharp+})$ denotes an untilt. Conversely, a
    general perfectoid Tate--Huber is denoted by $(R^\sharp, R^{\sharp+})$,
    which has tilt $(R, R^+)$. We denote by $\mathsf{Perf}$ the category of
    characteristic $p$ perfectoid spaces.
  \item Shimura data are denote by $\gx$, with reflex field $\mathsf{E}$. We
    often choose a place $v \mid p$ of $\mathsf{E}$, and write $G =
    \mathsf{G}_{\qp}$ and $E = \mathsf{E}_v$. We also often fix an algebraic
    closure $\qpbar$ and an embedding $E \hookrightarrow \qpbar$, which induces
    a conjugacy class of (geometric) Hodge cocharacters $\{ \mu \colon
    \mathbb{G}_{m,\qpbar} \to G_{\qpbar} \}$.
  \item Our definition of the Hodge cocharacter is the composition of a point $x
    \in \mathsf{X}$ with the natural inclusion $\mathbb{G}_{m,\mathbb{C}}
    \hookrightarrow (\operatorname{Res}_{\mathbb{C}/\mathbb{R}}
    \mathbb{G}_{m,\mathbb{C}})_\mathbb{C}$ given by $(1, 0)$. We note that this
    agrees with \cite{Mil05} but is the opposite of \cite{Pin90}, \cite{KSZ21p}.
    However, in both conventions, the Hodge cocharacter for the Shimura datum
    giving rise to modular curves is $\mu(z) = \operatorname{diag}(1, z)$.
  \item When discussing rational $p$-adic Hodge theory, we usually denote by $K$
    a discretely-valued complete non-archimedean field containing $\qp$ with
    perfect residue field. We usually denote by $X/K$ a smooth rigid analytic
    variety and $G/\qp$ a linear algebraic group that is often connected
    reductive.
  \item We use the convention that all torsors are left torsors unless denoted
    otherwise.
  \item For the Schubert cells of the affine Grassmannian, flag variety, etc.,
    we use the sign conventions compatible with \cite{SW20}, \cite{PR24} but the
    opposite of \cite{CS17}.
\end{itemize}

\subsection{Acknowledgements}
I would like to thank my advisors Richard Taylor and Pol van Hoften for their
continued encouragements and guidance, and the latter also for suggesting the
argument of \Cref{Sec:Boundary}. I would also like to thank Patrick Daniels,
Stepan Kazanin, Kiran Kedlaya, Vaughan McDonald, Mingjia Zhang, and Xinwen Zhu
for helpful comments, discussions, and suggestions.

}
{\section{Preliminaries} \label{Sec:Preliminaries}
\def\PerfPair{(R^\sharp, R^{\sharp+})}
\def\grgmu{\mathrm{Gr}_{G,\{\mu\},E}}
\def\flgmu{\mathrm{Fl}_{G,\{\mu\},E}}
\def\bungmu{\mathrm{Bun}_{G,\{\mu\}}}
\def\grge{\mathrm{Gr}_{G,E}}
\def\grgmuqp{\mathrm{Gr}_{G,\{\mu\},\qp}}

In this section, we review the foundations of the theory of mixed characteristic
shtukas in characteristic zero. We fix a prime $p$.

\paragraph
Throughout the paper, we fix an uncountable strong limit cardinal $\kappa$. We
denote by $\mathsf{Perf}$ the category of $\kappa$-small characteristic $p$
perfectoid spaces, defined as in \cite[Definition~4.2]{Sch17p}. Let
$\mathsf{Perf}_\mathrm{v}$ be the site whose underlying category is
$\mathsf{Perf}$ and covers are v-covers in the sense of
\cite[Definition~8.1(iii)]{Sch17p}, i.e., a collection $\lbrace Y_i \to X
\rbrace_{i \in I}$ is a cover when for every quasi-compact open subset $U
\subseteq X$ there exists a finite subset $J \subseteq I$ and quasi-compact open
subsets $V_j \subseteq Y_j$ for $j \in J$ such that $\bigcup_{j \in J} V_j \to
U$ is surjective. We refer to sheaves and stacks on $\mathsf{Perf}_\mathrm{v}$
simply as v-sheaves and v-stacks.

\paragraph \label{Par:Lozenge}
Given a (pre-)adic space $X \to \Spa \zp$, there is a presheaf $X^\lozenge$ on
$\mathsf{Perf}_\mathrm{v}$ defined as
\[
  (X^\lozenge)(S) = \lbrace (\iota, S^\sharp, f) \text{ where } \iota \colon
  (S^\sharp)^\flat \cong S, f \colon S^\sharp \to X \rbrace / \cong,
\]
which is in fact a v-sheaf by \cite[Lemma~18.1.1]{SW20}. When $X = \Spa(A, A^+)$
is affinoid, we also write $X^\lozenge = \Spd(A, A^+)$. Given a smooth algebraic
variety $Y / \qp$, we may take its analytification $Y^\mathrm{ad} \to \Spa \qp$,
and then consider its associated v-sheaf. We denote this by $Y^\lozenge \in
\mathsf{Shv}(\mathsf{Perf}_\mathrm{v})$ as well. We note that when $Y$ is
quasi-compact as a scheme, the v-sheaf $Y^\lozenge$ need not be quasi-compact. 

\paragraph \label{Par:BdR}
Given a perfectoid Tate--Huber pair $\PerfPair$ with tilt $(R,
R^+)$, there is a natural ring homomorphism
\[
  \theta \colon \period{A}{inf}\PerfPair = W(R^+) \twoheadrightarrow R^{\sharp+}
\]
whose kernel is a principal ideal $\ker \theta = (\xi)$. The absolute Frobenius
$\phi$ on $R^+$ induces an Frobenius morphism $\phi \colon
\period{A}{inf}\PerfPair \xrightarrow{\cong} \period{A}{inf}\PerfPair$. After
choosing a pseudo-uniformizer $\varpi \in R^+$, we define
\begin{align*}
  \period{B}{dR}\p\PerfPair &= \varprojlim_n (\period{A}{inf}\PerfPair /
  (\xi^n))[1/[\varpi]], \\ \period{B}{dR}\PerfPair &=
  \period{B}{dR}\p\PerfPair[\xi^{-1}],
\end{align*}
where both period rings are independent of the choices of $\xi$ or $\varpi$.
Using \cite[Theorem~8.7]{Sch17p} and following the argument of
\cite[Corollary~17.1.9]{SW20}, we see that $(R, R^+) \mapsto \coprod_{(\iota,
R^\sharp)} \period{B}{dR}\p(R^\sharp, R^{\sharp+})$ defines a v-sheaf that we
denote by $\period{B}{dR}\p \to \Spd \zp$. 

\paragraph
Assume furthermore that $p$ is invertible in $R^\sharp$, so that the
$\varpi^\sharp$-adic topology on $R^{\sharp+}$ agrees with the $p$-adic
topology. By \cite[Lemma~3.9]{BMS18}, there exists an pseudo-uniformizer
$\varpi^\prime \in R^+$ with the property that $(\varpi^\prime)^\sharp \in p
(R^{\sharp+})^\times$. We replace $\varpi$ with $\varpi^\prime$ and henceforth
assume that $\varpi^\sharp \in p (R^{\sharp+})^\times$. We define the
sousperfectoid analytic adic space
\[
  \mathcal{Y}_{[0,\infty]}\PerfPair = \Spa (\period{A}{inf}\PerfPair,
  \period{A}{inf}\PerfPair) \setminus \lbrace \lvert p \rvert = \lvert [\varpi]
  \rvert = 0 \rbrace,
\]
see \cite[Proposition~13.1.1]{SW20}. For rational numbers $0 \le r \le s \le
\infty$, we define the open subspaces
\[
  \mathcal{Y}_{[r,s]}\PerfPair = \lbrace x \in \mathcal{Y}_{[0,\infty]}\PerfPair
  : \lvert p \rvert_x^s \le \lvert [\varpi] \rvert_x \le \lvert p \rvert_x^r
  \rbrace \subseteq \mathcal{Y}_{[0,\infty]}\PerfPair.
\]
We note that this subset does not depend on the choice of $\varpi$, because any
two such $\varpi$ differ by an element of $(R^+)^\times$. The absolute Frobenius
$\phi$ induces an isomorphism
\[
  \phi \colon \mathcal{Y}_{[r,s]}\PerfPair \xrightarrow{\cong} \mathcal{Y}_{[pr,
  ps]}\PerfPair.
\]

\paragraph
We define the analytic adic space
\[
  \mathcal{Y}_{(0,\infty)}\PerfPair = \bigcup_{0 < r < s < \infty}
  \mathcal{Y}_{[r,s]}\PerfPair = \mathcal{Y}_{[0,\infty]}\PerfPair \setminus
  \lbrace \lvert p[\varpi] \rvert = 0 \rbrace,
\]
and the \textdef{Fargues--Fontaine curve} as
\[
  \mathcal{X}_\mathrm{FF}(R, R^+) = \mathcal{X}_\mathrm{FF}\PerfPair =
  \mathcal{Y}_{(0,\infty)}\PerfPair / \phi^\mathbb{Z},
\]
where the $\phi$-action is free totally discontinuous, see
\cite[Section~II.1.1]{FS21p}. This is again an sousperfectoid analytic adic
space over $\Spa \qp$. There is a closed Cartier divisor
\[
  \Spa \PerfPair \to \mathcal{Y}_{[0,\infty)}\PerfPair = \bigcup_{0 < r <
  \infty} \mathcal{Y}_{[0,r]}\PerfPair
\]
induced by the ring homomorphism $\theta$, see \cite[Proposition~11.3.1]{SW20},
even if $p$ is not invertible in $R^\sharp$. When $p$ is invertible in
$R^\sharp$, this Cartier divisor lies in $\mathcal{Y}_{(0,\infty)}$ and hence
also defines a Cartier divisor $\Spa \PerfPair \to
\mathcal{X}_\mathrm{FF}\PerfPair$.

\paragraph \label{Par:AffineGrass}
Let $G/\qp$ be a linear algebraic group. We define, following
\cite[Proposition~19.1.2]{SW20}, the \textdef{affine Grassmannian} $\grg \in
\mathsf{Shv}(\mathsf{Perf}_\mathrm{v})$ as the functor sending $(R, R^+) \in
\mathsf{Perf}$ to the isomorphism classes of tuples $(\iota, R^\sharp,
\mathscr{P}, \alpha)$ where
\begin{itemize}
  \item $\iota \colon (R^\sharp)^\flat \cong R$ is an untilt where $R^\sharp$ is
    a $\qp$-algebra,
  \item $\mathscr{P}$ is a left $G$-torsor on $\Spec
    \period{B}{dR}\p\PerfPair$,
  \item $\alpha \colon G \times_{\qp} \Spec \period{B}{dR}\PerfPair
    \xrightarrow{\cong} \mathscr{P} \vert_{\period{B}{dR}\PerfPair}$ is a
    trivialization over $\period{B}{dR}\PerfPair$.
\end{itemize}
This is indeed a v-sheaf, and can be understood as the sheafification of the
functor
\[
  (R, R^+) \mapsto \coprod_{\iota \colon (R^\sharp)^\flat \cong R}
  G(\period{B}{dR}\PerfPair) / G(\period{B}{dR}\p\PerfPair).
\]
Concretely, for a local trivialization $\tau \colon G \times_{\qp} \Spec
\period{B}{dR}\p\PerfPair \cong \mathscr{P}$, the composition
\[
  G \times_{\qp} \Spec \period{B}{dR}\PerfPair \xrightarrow{\alpha} \mathscr{P}
  \vert_{\period{B}{dR}\PerfPair} \xrightarrow{\tau^{-1}} G \times_{\qp} \Spec
  \period{B}{dR}\PerfPair
\]
is right multiplication by an element $g \in G(\period{B}{dR}\PerfPair)$, where
changing $\tau$ amounts to multiplying $g$ by an element of
$G(\period{B}{dR}\p\PerfPair)$ on the right. There is a natural morphism
\[
  \grg \to \Spd \qp
\]
that remembers only the untilt $(\iota, R^\sharp)$. There is also a group action
\[
  G(\period{B}{dR}\p) \times_{\Spd \qp} \grg \to \grg; \quad (g, (\mathscr{P},
  \alpha)) \mapsto (\mathscr{P}, \alpha \circ r_g)
\]
where $r_g$ is the $G$-torsor automorphism of $G \times_{\qp} \Spec
\period{B}{dR}\PerfPair$ given by right multiplication by $g$.

\paragraph
Let $G/\qp$ be a connected reductive group, and let $\lbrace \mu \colon
\mathbb{G}_{m,\qpbar} \to G_{\qpbar} \rbrace$ be a conjugacy class of
cocharacters, whose field of definition is $E \subset \qpbar$. Let $T$ be the
abstract maximal torus of $G$, and let $X_\ast(T)$ be the lattice of
cocharacters with the action of $\Gal(\qpbar/\qp)$. There is a canonical subset
$X_\ast(T)^+ \subseteq X_\ast(T)$ of dominant cocharacters, which is
$\Gal(\qpbar/\qp)$-stable, and the geometric conjugacy class $\lbrace \mu
\rbrace$ corresponds to an element $\mu \in X_\ast(T)^+$ whose stabilizer under
the Galois action is $\Gal(\qpbar/E)$. As in \cite[Proposition~19.2.1]{SW20},
for each algebraically closed perfectoid field $C^\sharp$ with a fixed embedding
$\qpbar \hookrightarrow C^\sharp$, upon choosing an embedding $T_{\qpbar}
\hookrightarrow B \hookrightarrow G_{\qpbar}$ under which $X_\ast(T)^+$
corresponds to $B$, we obtain a Cartan decomposition
\[
  G(\period{B}{dR}(C^\sharp, C^{\sharp+})) = \coprod_{\lambda \in
  X_\ast(T)^+} G(\period{B}{dR}\p(C^\sharp, C^{\sharp+})) \xi^{\lambda}
  G(\period{B}{dR}\p(C^\sharp, C^{\sharp+})).
\]
These double cosets are independent of the choice of $\xi$, because any two
differ by an element of $\period{B}{dR}\p(C^\sharp, C^{\sharp+})^\times$, and
also independent of the choice of $T_{\qpbar} \hookrightarrow G_{\qpbar}$,
because any two are conjugate by an element of $G(\qpbar) \subseteq
G(\period{B}{dR}\p)$. It also follows that the double coset
\[
  G(\period{B}{dR}\p(C^\sharp, C^{\sharp+})) \xi^\mu
  G(\period{B}{dR}\p(C^\sharp, C^{\sharp+})) \subseteq
  G(\period{B}{dR}(C^\sharp, C^{\sharp+}))
\]
only depends on the embedding $E \hookrightarrow C^\sharp$.

\begin{definition}[{\cite[Definition~19.2.2]{SW20}}]
  When $(C, C^+)$ is an algebraically closed perfectoid field in characteristic
  $p$, we define
  \begin{align*}
    \grgmu(C, C^+) &= \coprod_{E \to C^\sharp} G(\period{B}{dR}\p(C^\sharp,
    C^{\sharp+})) \xi^\mu G(\period{B}{dR}\p(C^\sharp, C^{\sharp+})) /
    G(\period{B}{dR}\p(C^\sharp, C^{\sharp+})) \\ &\subseteq \grge(C, C^+),
  \end{align*}
  where the disjoint union is over all untilts of $C$ over $E$. More generally,
  we define
  \[
    \grgmu(R, R^+) \subseteq \grge(R, R^+)
  \]
  as the subset of those elements whose pushforward along every $(R, R^+) \to
  (C, C^+)$, where $C$ is a algebraically closed field, lies in $\grgmu(C,
  C^+)$.
\end{definition}

\begin{example}
  When $G = \mathrm{GL}_{n,\qp}$, we may use the construction $\mathscr{V}
  \mapsto \operatorname{Isom}(\mathscr{V}, \mathscr{O}_X^{\oplus n})$ to
  identify rank $n$ vector bundles and left $\mathrm{GL}_{n,\qp}$-torsors. Under
  this equivalence, a $\period{B}{dR}\p\PerfPair$-lattice $\Xi \subseteq
  \period{B}{dR}\PerfPair^{\oplus n}$ induces a point in $\grg\PerfPair$, where
  the normalization is such that when
  \[
    \Xi = \xi \period{B}{dR}\p\PerfPair^{\oplus a} \oplus
    \period{B}{dR}\p\PerfPair^{\oplus (n-a)} \subseteq
    \period{B}{dR}\PerfPair^{\oplus n},
  \]
  the induced point lies in $\grgmuqp$ with $\mu = (1^a, 0^{n-a})$.
\end{example}

\paragraph \label{Par:RelativePosition}
Consider an $E$-algebra $\PerfPair$, two $G$-torsors $\mathscr{P}, \mathscr{Q}$
on $\Spec \period{B}{dR}\p\PerfPair$, and an isomorphism
\[
  \alpha \colon \mathscr{P} \vert_{\period{B}{dR}\PerfPair} \xrightarrow{\cong}
  \mathscr{Q} \vert_{\period{B}{dR}\PerfPair}.
\]
We say that $\alpha$ has \textdef{relative position} $\mu$ when for every
geometric point $\PerfPair \to (C^\sharp, C^{\sharp+})$ and trivializations
$\tau_\mathscr{P} \colon G_{\period{B}{dR}\p(C^\sharp, C^{\sharp+})} \cong
\mathscr{P} \vert_{\period{B}{dR}\p(C^\sharp, C^{\sharp+})}$ and
$\tau_\mathscr{Q} \colon G_{\period{B}{dR}\p(C^\sharp, C^{\sharp+})} \cong
\mathscr{Q} \vert_{\period{B}{dR}\p(C^\sharp, C^{\sharp+})}$, the composition
\[
  G_{\period{B}{dR}(C^\sharp, C^{\sharp+})} \xrightarrow{\tau_\mathscr{P}}
  \mathscr{P} \vert_{\period{B}{dR}(C^\sharp, C^{\sharp+})} \xrightarrow{\alpha}
  \mathscr{P} \vert_{\period{B}{dR}(C^\sharp, C^{\sharp+})}
  \xrightarrow{\tau_\mathscr{Q}^{-1}} G_{\period{B}{dR}(C^\sharp, C^{\sharp+})}
\]
is right multiplication by an element in the double coset
\[
  G(\period{B}{dR}\p(C^\sharp, C^{\sharp+})) \xi^\mu
  G(\period{B}{dR}\p(C^\sharp, C^{\sharp+})) \subseteq
  G(\period{B}{dR}(C^\sharp, C^{\sharp+})).
\]

\begin{proposition}[{\cite[Proposition~19.2.3]{SW20}}]
  The subfunctor $\grgmu \hookrightarrow \grge$ is a locally closed v-subsheaf,
  i.e., it is a closed subsheaf in an open subsheaf in the sense of
  \cite[Definition~10.7]{Sch17p}.
\end{proposition}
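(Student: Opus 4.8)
The plan is to reduce to the case $G = \mathrm{GL}_n$ by means of a faithful representation, and then to exhibit the Schubert cell as the open complement, inside an affine Schubert variety, of finitely many smaller ones; the affine Schubert varieties will be closed because they are cut out by explicit exterior-power conditions. To carry out the reduction I would fix a closed immersion $\rho \colon G \hookrightarrow \mathrm{GL}_{n,\qp}$ sending the abstract maximal torus of $G$ into that of $\mathrm{GL}_n$, and let $\nu$ be the dominant representative of $\rho \circ \mu$. Since $\rho$ is faithful, the conjugacy class $\{\mu\}$ is recovered from the multiset of weights of $\mu$ on $\rho$, so the assignment $\{\mu\} \mapsto \nu$ is injective on dominant cocharacters. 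Passing to associated vector bundles realizes $\mathrm{Gr}_{G,\qp} \hookrightarrow \mathrm{Gr}_{\mathrm{GL}_n,\qp}$ as a closed immersion (admitting a compatible reduction of structure group to the closed subgroup $G$ is a closed condition), and at every geometric point the relative-position invariant is carried by $\rho_\ast$ to that of the associated bundle; together with the injectivity just noted this identifies
\[
  \grgmu \;=\; \Bigl(\mathrm{Gr}_{G,\qp} \times_{\mathrm{Gr}_{\mathrm{GL}_n,\qp}} \mathrm{Gr}_{\mathrm{GL}_n,\{\nu\},\qp}\Bigr) \times_{\Spd \qp} \Spd E,
\]
where $\mathrm{Gr}_{\mathrm{GL}_n,\{\nu\},\qp}$ is the $\mathrm{GL}_n$-Schubert cell, already defined over $\qp$ as the torus is split. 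Locally closed immersions of v-sheaves being stable under pullback, it then suffices to prove the proposition for $G = \mathrm{GL}_n$.

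For $\mathrm{GL}_n$ I would identify $\mathrm{Gr}_{\mathrm{GL}_n,\qp}$ with the functor of lattices $\Xi \subseteq \period{B}{dR}^{\oplus n}$ as in the Example above, with standard lattice $\Lambda_0 = (\period{B}{dR}\p)^{\oplus n}$, and for dominant $\nu = (\nu_1 \ge \dots \ge \nu_n)$ let $\mathrm{Gr}_{[\nu]} \subseteq \mathrm{Gr}_{\mathrm{GL}_n,\qp}$ be the subfunctor of those $\Xi$ whose relative position at every geometric point lies $\le \nu$ in the dominance order. The key point will be that $\mathrm{Gr}_{[\nu]}$ is a closed subfunctor; granting this, the same applies to each of the finitely many $\mathrm{Gr}_{[\nu']}$ with $\nu' < \nu$. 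A test map $S \to \mathrm{Gr}_{[\nu]}$ avoids $\bigcup_{\nu' < \nu} \mathrm{Gr}_{[\nu']}$ exactly when no geometric point of $S$ has relative position strictly below $\nu$, i.e.\ exactly when the relative position is $\nu$ at every geometric point, which by Definition~19.2.2 is precisely the condition defining $\mathrm{Gr}_{\mathrm{GL}_n,\{\nu\},\qp}$. Thus $\mathrm{Gr}_{\mathrm{GL}_n,\{\nu\},\qp} = \mathrm{Gr}_{[\nu]} \setminus \bigcup_{\nu' < \nu} \mathrm{Gr}_{[\nu']}$ is open in the closed subfunctor $\mathrm{Gr}_{[\nu]}$, hence locally closed.

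It remains to prove that $\mathrm{Gr}_{[\nu]}$ is closed. At a geometric point $\period{B}{dR}\p$ is a complete discrete valuation ring with uniformizer $\xi$ and algebraically closed residue field, so $\Xi$ and $\Lambda_0$ admit a common adapted basis, and the relative position $\lambda$ satisfies $\lambda_1 + \dots + \lambda_k = $ the least integer $m$ with $\wedge^k \Xi \subseteq \xi^{-m}\wedge^k \Lambda_0$; since $\lambda \le \nu$ amounts to $\lambda_1 + \dots + \lambda_k \le \nu_1 + \dots + \nu_k$ for $k < n$ together with equality for $k = n$, the subfunctor $\mathrm{Gr}_{[\nu]}$ is the intersection over $k = 1, \dots, n-1$ of the conditions $\wedge^k \Xi \subseteq \xi^{-(\nu_1 + \dots + \nu_k)}\wedge^k \Lambda_0$ with the clopen condition that the invertible module $\wedge^n \Xi$ has position $\nu_1 + \dots + \nu_n$. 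One is thereby reduced to showing that, for a fixed free $\period{B}{dR}$-module $\mathscr{W}$ with a fixed lattice $\mathscr{L}$ and for the universal lattice $\mathscr{M}$, the locus $\{\mathscr{M} \subseteq \mathscr{L}\}$ is a closed subfunctor: on a quasi-compact test space $\xi^N \mathscr{L} \subseteq \mathscr{M} \subseteq \xi^{-N}\mathscr{L}$ for some $N$, and the containment then holds iff the image of $\mathscr{M}$ in the finite projective $(\period{B}{dR}\p/\xi^N)$-module $(\xi^{-N}\mathscr{L})/\mathscr{L}$ vanishes. The main obstacle is exactly this last step: one has to verify that the formation of $\wedge^k$ of the universal lattice as a sub-lattice, and of the quotient module above, commute with base change, so that $\{\mathscr{M} \subseteq \mathscr{L}\}$ really defines a closed v-subsheaf; this is where the structural properties of $\period{B}{dR}\p$ and of the $\period{B}{dR}\p$-affine Grassmannian recalled in this section are needed.
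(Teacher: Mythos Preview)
The paper does not give its own proof of this statement; it is quoted from \cite[Proposition~19.2.3]{SW20}. Your overall strategy---reduce to $\mathrm{GL}_n$ via a closed immersion of affine Grassmannians, then for $\mathrm{GL}_n$ realize the Schubert cell as the open complement in a closed Schubert variety cut out by exterior-power lattice conditions---is essentially the argument given in that reference, and your treatment of the $\mathrm{GL}_n$ case is correct in outline.

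There is, however, a genuine gap in your reduction step. The assertion that a single faithful representation $\rho$ induces an injection $\{\mu\} \mapsto \nu$ from $G$-conjugacy classes of cocharacters to $\mathrm{GL}_n$-conjugacy classes is false. Take $G = \mathrm{SL}_2 \times \mathrm{SL}_2 \hookrightarrow \mathrm{GL}_4$ block-diagonally: the $G$-dominant cocharacters $\mu_1 \colon t \mapsto \operatorname{diag}(t, t^{-1}, 1, 1)$ and $\mu_2 \colon t \mapsto \operatorname{diag}(1, 1, t, t^{-1})$ are not $G$-conjugate, yet both have weight multiset $\{1, 0, 0, -1\}$ on the standard representation and hence the same $\nu$. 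So your displayed identification of $\mathrm{Gr}_{G,\{\mu\},E}$ with the pullback of a single $\mathrm{GL}_n$-cell can fail: that pullback is in general a finite union of several $G$-Schubert cells. (The paper itself encounters a version of this non-injectivity in the proof of \Cref{Lem:DRTCocharacter}, where even the full collection of $\qp$-rational representations only determines $\mu$ up to Galois conjugacy.) One repair is to intersect over finitely many representations chosen to separate the finitely many $\mu'$ with $\rho(\mu')$ conjugate to $\nu$, since a finite intersection of locally closed v-subsheaves is again locally closed; another is to show directly that each closed Schubert variety $\mathrm{Gr}_{G,\le\mu}$ is a closed subfunctor, after which the cell is open in it.
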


\paragraph
As before, let $G/\qp$ be a connected reductive group, and let $\lbrace \mu
\colon \mathbb{G}_{m,\qpbar} \to G_{\qpbar} \rbrace$ be a geometric conjugacy
class of cocharacters. For each geometric cocharacter $\mu \in \lbrace \mu
\rbrace$, there exists a parabolic subgroup
\[
  P_\mu = \Bigl\lbrace g \in G : \lim_{t \to \infty} \mu(t) g \mu(t)^{-1}
  \text{ exists} \Bigr\rbrace \subseteq G_{\qpbar}.
\]
Because $P_\mu$ is its own normalizer, the space of parabolic subgroups
conjugate to $P_\mu$ is $G_{\qpbar} / P_\mu$. Moreover, this projective variety
has a natural Galois descent datum over $E$, and hence defines a projective
variety
\[
  \flgmu \to \Spec E.
\]

\begin{theorem}[{\cite[Proposition~19.4.2]{SW20}, \cite[Proposition~3.4.3]{CS17}}]
  \label{Thm:BialynickiBirula}
  If $\lbrace \mu \rbrace$ is a geometric conjugacy class of minuscule
  cocharacters, then there is a natural Bia\l{}ynicki-Birula isomorphism
  \[
    \grgmu \cong \flgmu^\lozenge
  \]
  over $\Spd E$.
\end{theorem}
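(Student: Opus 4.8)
The plan is to construct mutually inverse morphisms of v-sheaves over $\Spd E$, checking everything on an arbitrary affinoid perfectoid test object. Fix a perfectoid Tate--Huber pair $\PerfPair$ over $E$ with tilt $(R,R^+)$ and chosen untilt $\iota$, and write $B^+ = \period{B}{dR}\p\PerfPair$ and $B = \period{B}{dR}\PerfPair$; recall that $B^+$ is $\xi$-torsion-free, $\xi$-adically complete, with $B^+/\xi \cong R^\sharp$. Since $\{\mu\}$ is minuscule, the Schubert variety coincides with the Schubert cell, so (v-locally on the test object, using that $\grge$ is the sheafification of $(R,R^+) \mapsto \coprod_\iota G(B)/G(B^+)$) a point of $\grgmu$ is represented by an element $g \in G(B^+)\,\xi^\mu\,G(B^+)$, well-defined modulo right multiplication by $G(B^+)$.

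First I would define the Bia\l{}ynicki-Birula morphism $\grgmu \to \flgmu^\lozenge$. The transparent case is $G = \mathrm{GL}_{n,\qp}$ with $\mu = (1^a, 0^{n-a})$: there a point of $\grgmu$ is a $B^+$-lattice $\Xi \subseteq B^{\oplus n}$ with $\xi\Xi_0 \subseteq \Xi \subseteq \Xi_0$, where $\Xi_0 = (B^+)^{\oplus n}$, such that the $R^\sharp$-module $\Xi_0/\Xi$ is locally free of rank $a$; the rule $\Xi \mapsto \bigl((R^\sharp)^{\oplus n} = \Xi_0/\xi\Xi_0 \twoheadrightarrow \Xi_0/\Xi\bigr)$ identifies this with an $R^\sharp$-point of the Grassmannian $\mathrm{Gr}(a,n) = \flgmu$. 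For general connected reductive $G$ one either embeds $G \hookrightarrow \mathrm{GL}_{n,\qp}$ and reconstructs $G$-torsors Tannakianly from this, or argues directly that the minuscule condition $\lvert\langle\mu,\alpha\rangle\rvert \le 1$ for every root $\alpha$ forces $g$ to be $G(B^+)$-equivalent to $\xi^\mu$ times an element of the big cell, whence its reduction modulo $\xi$ determines, and is determined by, a reduction of the trivial $G$-torsor on $\Spec R^\sharp$ to the parabolic $P_\mu$, i.e.\ an $R^\sharp$-point of $G/P_\mu = \flgmu$. This assignment is manifestly functorial in $\PerfPair$ and preserves the untilt, so it defines a morphism of v-sheaves over $\Spd E$ once one identifies $\flgmu^\lozenge(R,R^+) = \coprod_\iota \flgmu(R^\sharp)$ using properness of $\flgmu$.

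Next I would construct the inverse $\flgmu^\lozenge \to \grgmu$. Given an $R^\sharp$-point $\bar{x}$ of $\flgmu = G/P_\mu$, the smoothness of $G \to G/P_\mu$ together with the $\xi$-adic completeness of $B^+$ allows one to lift $\bar{x}$ successively modulo $\xi^n$ and then to a $B^+$-point $x$ --- equivalently a $P_\mu$-reduction $\mathscr{Q}$ of the trivial $G$-torsor on $\Spec B^+$. Choosing (v-locally) a lift of $x$ to $\tilde g \in G(B^+)$ with $\mathscr{Q} = \tilde g P_\mu$, the class $\tilde g\,\xi^{-\mu}\,G(B^+)$ is the desired point of $\grgmu$: it is independent of the auxiliary choice of $\tilde g$ because conjugating $\xi^{\mp\mu}$ past $P_\mu(B^+)$ stays inside $G(B^+)$ --- this is exactly the inequality $\langle\mu,\alpha\rangle \ge 0$ on the roots of $P_\mu$ --- and one checks at geometric points that its relative position is exactly $\mu$. (The precise powers $\xi^{\pm\mu}$ and the choice between $P_\mu$ and its opposite are dictated by the sign normalization fixed in the conventions and need to be pinned down, but the shape of the argument is insensitive to this.) Functoriality in $\PerfPair$ and compatibility with untilts are again immediate, so this gives a morphism of v-sheaves over $\Spd E$.

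Finally, one verifies that the two morphisms are mutually inverse: composing $\flgmu^\lozenge \to \grgmu \to \flgmu^\lozenge$ returns $\bar{x}$ upon reduction modulo $\xi$, and composing the other way returns $g$ because, by the minuscule hypothesis, $g$ is recovered from the pair consisting of its reduction modulo $\xi$ and the cocharacter $\mu$. Since both sides are v-sheaves and the maps are bijective on all affinoid perfectoid $E$-algebras, they are inverse isomorphisms of v-sheaves over $\Spd E$. The main obstacle is the role of the minuscule hypothesis at $\period{B}{dR}\p$: one must show both that a minuscule modification carries no information beyond its reduction modulo $\xi$ --- this fails for non-minuscule $\mu$, where the Schubert variety is singular and the analogous map is not an isomorphism --- and that every such reduction lifts, for which the $\xi$-adic completeness of $\period{B}{dR}\p$ and the smoothness of $G/P_\mu$ are the essential inputs; carefully matching the normalization so that one lands in $P_\mu$ rather than its opposite, as flagged among the conventions, is the remaining bookkeeping point.
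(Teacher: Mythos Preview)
The paper does not give its own proof of this statement; it is quoted from the literature with citations to \cite[Proposition~19.4.2]{SW20} and \cite[Proposition~3.4.3]{CS17}. Your sketch is essentially the argument found in those references---reduce to $\mathrm{GL}_n$ where minuscule lattices are determined by their reduction modulo $\xi$, and handle general $G$ either Tannakianly or via a direct parabolic-reduction argument using $\xi$-adic completeness of $\period{B}{dR}\p$---so there is nothing further to compare.
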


\begin{remark}
  For $P_\mu$, $\grgmu$, and $\flgmu$, our sign convention agrees with
  \cite{SW20}, \cite{PR24} and is the opposite of \cite{CS17}.
\end{remark}

\paragraph \label{Par:BeauvilleLaszlo}
For $G/\qp$ a linear algebraic group, the functor
\[
  \bung(R, R^+) = \lbrace \text{left } G\text{-torsors on }
  \mathcal{X}_\mathrm{FF}(R, R^+) \rbrace
\]
defines a v-stack $\bung \in \mathsf{Stk}(\mathsf{Perf}_\mathrm{v})$, where the
notion of a $G$-torsor is defined as in \cite[Theorem~19.5.2]{SW20}. There is a
Beauville--Laszlo map
\[
  \mathrm{BL} \colon \grg \to \bung
\]
that sends the tuple $(\iota, R^\sharp, \mathscr{P}, \alpha)$ to the torsor on
$\mathcal{X}_\mathrm{FF}\PerfPair$ obtained by modifying the trivial $G$-torsor
along $\alpha$ to $\mathscr{P}$.

\paragraph \label{Par:BLEquivariance}
The affine Grassmannian $\grg$ has a natural $\underline{G(\qp)}$-action, and
moreover for every $x \in \grg(R^\sharp, R^{\sharp+})$ and $g \in
\underline{G(\qp)}(R^\sharp, R^{\sharp+})$ we have a natural isomorphism
\[
  r_g \colon \mathrm{BL}(gx) \xrightarrow{\cong} \mathrm{BL}(x)
\]
of $G$-torsors on $\mathcal{X}_\mathrm{FF}(R, R^+)$, which is the unique
isomorphism extending right multiplication by $g$ away from the divisor
$\Spa(R^\sharp, R^{\sharp+}) \hookrightarrow \mathcal{X}_\mathrm{FF}(R, R^+)$.
This satisfies the natural cocycle condition, and hence define a
$\underline{G(\qp)}$-equivariant structure on the map $\mathrm{BL} \colon \grg
\to \bung$, where $\bung$ is given the trivial action. Therefore we further
obtain a map
\[
  \mathrm{BL} \colon [\underline{G(\qp)} \backslash \grg] \to \bung.
\]

\paragraph \label{Par:ConstantTorsor}
For every element $b \in G(\qpbr)$ and a perfectoid $\fpbar$-algebra $(R, R^+)$,
we define a left $G$-torsor $\mathscr{P}_b$ on $\mathcal{X}_\mathrm{FF}$ given
by descending the Frobenius-twisted automorphism
\[
  \phi^\ast(G \times_{\qp} \mathcal{Y}_{(0,\infty)}(R, R^+)) \cong G
  \times_{\qp} \mathcal{Y}_{(0,\infty)}(R, R^+) \xrightarrow{r_{b^{-1}}} G
  \times_{\qp} \mathcal{Y}_{(0,\infty)}(R, R^+),
\]
where $r_{b^{-1}}$ is right multiplication by $b^{-1}$, noting that
$\mathcal{Y}_{(0,\infty)}(R, R^+)$ is naturally an adic space over $\Spa \qpbr$.
For every $g \in G(\qpbr)$ and $b^\prime = g b \phi(g)^{-1}$ there is an
isomorphism $r_g \colon \mathscr{P}_{b^\prime} \to \mathscr{P}_b$. In
particular, the isomorphism class of $\mathscr{P}_b$ only depends on the
$\phi$-conjugacy class $[b]$ of $b$, defined as in \cite[Section~1.7]{Kot85}.

\begin{remark}
  An alternative way of describing this $G$-torsor $\mathscr{P}_b$ on
  $\mathcal{X}_\mathrm{FF}(R, R^+)$ is to view it as the quotient
  \[
    \mathscr{P}_b = (G \times_{\qp} \mathcal{Y}_{(0,\infty)}(R, R^+)) / (r_b
    \times \phi)^\mathbb{Z}.
  \]
  This also agrees with the convention in \cite{FS21p}.
\end{remark}

\paragraph
Assume moreover that $G$ is connected reductive. By \cite[Theorem~1.1]{Vie24},
there exists a homeomorphism
\[
  B(G) \cong \lvert \bung \rvert; \quad b \mapsto \mathscr{P}_b,
\]
where $B(G)$ is the set of $\phi$-conjugacy classes of $G(\qpbr)$ and is given
the topology where $[b^\prime]$ is in the closure of $[b]$ if and only if
$[b^\prime] \ge [b]$ in the partial order constructed in
\cite[Section~2.1]{Vie24}. Write $B(G, \{\mu\}) \subseteq B(G)$
for the open subset consisting of $b \in B(G)$ satisfying $\kappa(b) =
\mu^\natural \in \pi_1(G)_{\Gal(\qpbar/\qp)}$ and $\nu(b) \le \mu^\diamond \in
X_\ast(T)^+_\mathbb{Q}$, where $\mu^\natural$ and $\mu^\diamond$ are the image
and the Galois average of $\mu \in X_\ast(T)^+$. This corresponds to an open
v-substack
\[
  \bungmu \hookrightarrow \bung.
\]

\begin{proposition}[{\cite[Proposition~A.9]{Rap18}, \cite[Corollary~6.4.2]{DvHKZ24p}}]
  \label{Prop:BLSurjective}
  The Beauville--Laszlo map $\mathrm{BL} \colon \grg \to \bung$ sends $\grgmu$
  into $\bungmu$. Moreover, the induced map
  \[
    \mathrm{BL} \colon \grgmu \to \bungmu \times \Spd E
  \]
  is surjective as maps of v-stacks.
\end{proposition}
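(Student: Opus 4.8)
The plan is to verify both assertions after reducing to geometric points and then to invoke the classical structure theory of modifications of $G$-bundles on the Fargues--Fontaine curve. For the reduction, note that since $\bungmu \hookrightarrow \bung$ is an open substack, a $G$-torsor on $\mathcal{X}_\mathrm{FF}(R,R^+)$ lies in $\bungmu$ if and only if its pullback along every map $\Spa(C,C^+) \to \Spa(R,R^+)$, with $C$ an algebraically closed perfectoid field, does; combined with the definition of $\grgmu(R,R^+)$ in terms of geometric fibers, this reduces the first assertion to the case $(R,R^+) = (C,C^+)$. Surjectivity of a map of small v-stacks may likewise be tested after pullback to such $\Spa(C,C^+)$, no further v-cover being needed because $C$ is algebraically closed, so the second assertion also reduces to geometric points.

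For the first assertion over $(C,C^+)$, fix an untilt $C^\sharp$ over $E$, which determines a closed point $\infty \hookrightarrow \mathcal{X}_\mathrm{FF}(C,C^+)$, together with a point $x \in \grgmu(C,C^+)$, that is an element of $G(\period{B}{dR}\p)\xi^\mu G(\period{B}{dR}\p)$ modulo right multiplication by $G(\period{B}{dR}\p)$. Then $\mathrm{BL}(x) \cong \mathscr{P}_b$, where $\mathscr{P}_b$ is obtained from the trivial bundle $\mathscr{P}_1$ by a modification at $\infty$ of relative position exactly $\mu$, and I must show $[b] \in B(G,\{\mu\})$. The equality $\kappa(b) = \mu^\natural$ holds because, under $B(G) \cong |\bung|$, the Kottwitz map is identified with the locally constant first Chern class $\bung \to \underline{\pi_1(G)_{\Gal(\qpbar/\qp)}}$, which vanishes on $\mathscr{P}_1$ and changes by $\mu^\natural$ under a single modification of type $\mu$. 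The inequality $\nu(b) \le \mu^\diamond$ is the group-theoretic Mazur inequality: choosing a faithful representation $G \hookrightarrow \mathrm{GL}(V)$, one reduces to the elementary fact that a single modification at $\infty$ of the trivial vector bundle of elementary divisor type $\mu_V$ yields a bundle whose Harder--Narasimhan polygon lies on or below that of $\mu_V$ with equal endpoints, and then translates this polygon inequality into $\nu(b) \le \mu^\diamond$ through the dictionary between slope polygons and the dominance order on $B(G)$; alternatively one simply cites \cite[Proposition~A.9]{Rap18}.

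For the second assertion over $(C,C^+)$, fix $[b] \in B(G,\{\mu\})$ and an untilt $C^\sharp$ over $E$; I must produce $x \in \grgmu(C,C^+)$ with $\mathrm{BL}(x) \cong \mathscr{P}_b$. Passing to the inverse modification, this is the same as exhibiting a modification of $\mathscr{P}_b$ at $\infty$, of the relative position opposite to $\mu$, whose target is the trivial bundle; the set of such modifications modulo $\Aut(\mathscr{P}_b)$ is by definition a $\period{B}{dR}\p$-affine Deligne--Lusztig variety attached to $b$ and $\mu$, and its non-emptiness for $[b] \in B(G,\{\mu\})$ is the classical non-emptiness theorem for affine Deligne--Lusztig varieties, which is what \cite[Corollary~6.4.2]{DvHKZ24p} records. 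When $\{\mu\}$ is minuscule, as in the Shimura-variety applications, one may instead invoke the Bia\l{}ynicki--Birula isomorphism $\grgmu \cong \flgmu^\lozenge$ of \Cref{Thm:BialynickiBirula} together with the non-emptiness of the $b$-admissible locus inside $\flgmu^\lozenge$.

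The two genuinely substantial inputs are the group-theoretic Mazur inequality, for the first assertion, and the non-emptiness of affine Deligne--Lusztig varieties, for the second; both I would cite rather than reprove. The only work internal to the paper is the reduction to geometric points and careful bookkeeping of the sign conventions relating $\mu$, its opposite, $\grgmu$, $\bungmu$, and the affine Deligne--Lusztig set. I expect the non-emptiness of affine Deligne--Lusztig varieties to be the crux.
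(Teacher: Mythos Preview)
The paper does not supply a proof of this proposition at all: it simply records the statement and attributes it to \cite[Proposition~A.9]{Rap18} and \cite[Corollary~6.4.2]{DvHKZ24p}. Your proposal is therefore more than what the paper does, not less; you have unpacked what those citations actually assert (the Mazur inequality for the containment $\mathrm{BL}(\grgmu) \subseteq \bungmu$, and non-emptiness of the relevant affine Deligne--Lusztig set for surjectivity) and supplied the reduction to geometric points that justifies working over $(C,C^+)$. This is the standard route, and it matches the content of the cited references, so there is nothing to correct.
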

}
{\section{de Rham local systems} \label{Sec:DeRham}
\def\grgln{\mathrm{Gr}_{\mathrm{GL}_n,\qp}}
\def\grge{\mathrm{Gr}_{G,E}}
\def\grgmu{\mathrm{Gr}_{G,\{\mu\},E}}

Throughout this section, let $K$ be a discretely valued complete nonarchimedean
field extension of $\qp$ with perfect residue field, and let $X/K$ be a smooth
rigid analytic variety. We review the construction of the Hodge--Tate period map
attached to a de Rham local system or a de Rham $\underline{G(\qp)}$-torsor for
$G/\qp$ a linear algebraic group. While the construction was already discussed
in \cite[Section~5.2]{Han16p} and \cite[Section~2.6]{PR24}, we provide another
interpretation using a morphism of topoi $\gamma \colon
\mathsf{Shv}(X_\mathrm{v}) \to \mathsf{Shv}(X_\mathrm{proet})$.

\paragraph
As in \cite[Section~3]{Sch13} and \cite{Sch13c}, we define the pro-\'{e}tale
site $X_\mathrm{proet}$ of $X$ as a full subcategory of
$\mathsf{Pro}(X_\mathrm{et})$. There is also a v-site $X_\mathrm{v}$ whose
underlying category is the category of pairs $(S, f \colon S \to X)$ with $S$ a
perfectoid space, where a collection of morphisms $\lbrace (S_i, f_i) \to (T, g)
\rbrace$ is a cover when $\lbrace S_i \to T \rbrace$ is a v-cover. We say that
an object in $X_\mathrm{proet}$ is affinoid perfectoid when it has a
pro-\'{e}tale presentation $U = \varprojlim U_i$ with $U_i = \Spa(R_i, R_i^+)
\in X_\mathrm{et}$, where the $p$-adically completed limit
\[
  (R, R^+) = ((\varinjlim R_i^+)^\wedge[p^{-1}], (\varinjlim R_i^+)^\wedge)
\]
is perfectoid. Denote by $X_\mathrm{proet}^\mathrm{ap} \subseteq
X_\mathrm{proet}$ the full subcategory of affinoid perfectoid objects. Then
there is a functor
\[
  c \colon X_\mathrm{proet}^\mathrm{ap} \to X_\mathrm{v}; \quad \varprojlim
  \Spa(R_i, R_i^+) \mapsto \Spa(R, R^+),
\]
see the paragraph after \cite[Lemma~4.5]{Sch13}. In fact, this may be regarded
as the restriction of the functor
\[
  c \colon \mathsf{Pro}(X_\mathrm{et}) \to \mathsf{Shv}(X_\mathrm{v}); \quad
  \varprojlim U_i \mapsto \varprojlim U_i^\lozenge,
\]
which commutes with all small limits because $X_\mathrm{et} \to
\mathsf{Shv}(X_\mathrm{v})$ commutes with all finite limits, see
\cite[Section~8.7.1.7, Corollary~8.9.8]{SGA4I}.

\begin{remark}
  It is unclear whether the functor $c \colon X_\mathrm{proet}^\mathrm{ap} \to
  X_\mathrm{v}$ is fully faithful.
\end{remark}

\begin{remark}
  There is a natural identification $\mathsf{Shv}(X_\mathrm{v}) \cong
  \mathsf{Shv}(\mathsf{Perf}_\mathrm{v})_{/X^\lozenge}$ where a sheaf
  $\mathscr{F} \in \mathsf{Shv}(X_\mathrm{v})$ corresponds to the total
  space $\operatorname{Tot}(\mathscr{F}) \in
  \mathsf{Shv}(\mathsf{Perf}_\mathrm{v})$ defined as
  \[
    \operatorname{Tot}(\mathscr{F})(S) = \coprod_{S^\sharp \to X}
    \mathscr{F}(S^\sharp \to X).
  \]
\end{remark}

\begin{lemma} \label{Lem:Subcanonical}
  For every object $U \in \mathsf{Pro}(X_\mathrm{et})$ the functor $\Hom(-, U)
  \colon X_\mathrm{proet}^\mathrm{op} \to \mathsf{Set}$ is a sheaf on the site
  $X_\mathrm{proet}$.
\end{lemma}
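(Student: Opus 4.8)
The plan is to show that $\Hom(-,U)$ satisfies descent for covers in $X_{\mathrm{proet}}$ by reducing to the corresponding statement for the étale site and for pro-systems. First I would recall that $X_{\mathrm{proet}}$ is built as a full subcategory of $\mathsf{Pro}(X_{\mathrm{et}})$, with covers generated by (a) covers pulled back from $X_{\mathrm{et}}$ and (b) the "pro-étale" covers coming from cofiltered presentations $\varprojlim U_i \to U_j$; this is the content of \cite[Section~3]{Sch13}, \cite{Sch13c}. So it suffices to check the sheaf condition against these two classes of covering sieves, since a presheaf that satisfies descent for a generating family of covers (closed under the relevant operations) is a sheaf. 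For class (a), given an étale cover $\{V_a \to V\}$ in $X_{\mathrm{et}}$ with $V \in X_{\mathrm{proet}}$, we must show $\Hom(V,U) = \mathrm{eq}\big(\prod_a \Hom(V_a,U) \rightrightarrows \prod_{a,b}\Hom(V_a\times_V V_b,U)\big)$; writing $U = \varprojlim_i U_i$ with $U_i \in X_{\mathrm{et}}$, and using that $\Hom(-,U) = \varprojlim_i \Hom(-,U_i)$ commutes with the equalizer, this reduces to the statement that each $\Hom(-,U_i)$ is an étale sheaf, i.e.\ that representable presheaves on $X_{\mathrm{et}}$ are sheaves — this is the subcanonicity of the étale topology on an adic space, which is standard.

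For class (b), the key point is the universal property defining $X_{\mathrm{proet}}$: a pro-object $\varprojlim_i U_i$ represents the functor $V \mapsto \varprojlim_i \Hom(V,U_i)$ on $X_{\mathrm{proet}}$, and a morphism out of a cofiltered limit of qcqs objects is, by the usual limit argument (\cite[Section~8.7.1.7, Corollary~8.9.8]{SGA4I} in the form already cited, or \cite[Lemma~3.16]{Sch13}), determined compatibly on the terms. Concretely, for a pro-étale cover $\{W = \varprojlim_k W_k \to V\}$ presented as a cofiltered limit of étale covers $W_k \to V$, one checks the equalizer condition by interchanging the limit over $i$ (in the target $U = \varprojlim_i U_i$) with the limit over $k$; the inner statement for fixed $i$ is again étale descent for the representable $\Hom(-,U_i)$ together with the fact that $W \times_V W = \varprojlim_k (W_k \times_V W_k)$ and morphisms from $W$ to an object of $X_{\mathrm{et}}$ factor through some $W_k$. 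I expect the main obstacle to be bookkeeping the two limits (over the index of the pro-system defining $U$, and over the index of the cover) and verifying they commute past the equalizer — this is where one must be careful that all the relevant pro-systems are cofiltered and consist of qcqs objects so the finiteness hypotheses in the limit-exchange lemmas apply. Once that is in place, the general covering sieves in $X_{\mathrm{proet}}$ are refined by composites of type-(a) and type-(b) covers, so descent along the generators yields the sheaf property in general, and the result follows.
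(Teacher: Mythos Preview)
Your proposal is correct and follows essentially the same approach as the paper: reduce to $U \in X_{\mathrm{et}}$ using that $\Hom(-,U) = \varprojlim_i \Hom(-,U_i)$ and limits of sheaves are sheaves, then verify the sheaf condition against the two generating classes of covers via subcanonicity of the \'etale topology (\cite[Proposition~2.2.2]{Hub96}) and cocompactness of \'etale objects in $\mathsf{Pro}(X_{\mathrm{et}})$ combined with the fact that filtered colimits commute with equalizers. The paper organizes this slightly more cleanly by first reducing to $U \in X_{\mathrm{et}}$ and then phrasing the remaining check in terms of a class $\mathcal{P}_U$ of morphisms satisfying the sheaf condition, showing it contains \'etale covers and is stable under cofiltered limits in the source; your phrasing of class (a) is a bit ambiguous about whether $V$ lies in $X_{\mathrm{et}}$ or is a general pro-object, but the argument goes through either way once you invoke the same colimit--equalizer interchange.
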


\begin{proof}
  If we write $U = \varprojlim U_i$ then $\Hom(-, U_i) = \varprojlim \Hom(-,
  U_i)$. Because limits of sheaves are sheaves, we reduce to the case when $U
  \in X_\mathrm{et}$. Denote by $\mathcal{P}_U$ the class of morphisms $W \to V$
  that satisfy the sheaf condition
  \[
    \Hom(V, U) \xrightarrow{\cong} \operatorname{eq}(\Hom(W, U)
    \rightrightarrows \Hom(W \times_V W, U)).
  \]
  It suffices to show that $\mathcal{P}_U$ contains all surjective \'{e}tale
  maps in $X_\mathrm{et}$, and $\mathcal{P}_U$ is stable under cofiltered limits
  in $W$. The first part follows from \cite[Proposition~2.2.2]{Hub96}.
  For the second part, assume we have $W \to V$ that is a cofiltered limits.
  Since $U \in X_\mathrm{et}$ is a cocompact object in
  $\mathsf{Pro}(X_\mathrm{et})$, we have
  \begin{align*}
    \Hom(V, U) &= \varinjlim \Hom(V_i, U), \quad \Hom(W, U) = \varinjlim
    \Hom(W_i, U), \\ \Hom(W \times_V W, U) &= \Hom(\varprojlim (W_i \times_{V_i}
    W_i), U) = \varinjlim \Hom(W_i \times_{V_i} W_i, U).
  \end{align*}
  The claim now follows from filtered colimits commuting with equalizers.
\end{proof}

\paragraph
Recall from \cite[Lemma~3.10(vii)]{Sch13} that the category $X_\mathrm{proet}$
has all finite limits, which can be computed in $\mathsf{Pro}(X_\mathrm{et})$.
If $U_i \to U$ and $V \to U$ are morphisms in $X_\mathrm{proet}^\mathrm{ap}$
where $U_i \to U$ is pro-\'{e}tale, then $U_i \times_U V$ is also affinoid
perfectoid by the argument of \cite[Lemma~4.6]{Sch13}. Hence
$X_\mathrm{proet}^\mathrm{ap}$ is a site where the coverings are given by those
collections of morphisms that are coverings in $X_\mathrm{proet}$. Because
affinoid perfectoids form a basis in $X_\mathrm{proet}$, we obtain an
equivalence of topoi
\[
  \mathsf{Shv}(X_\mathrm{proet}^\mathrm{ap}) \simeq
  \mathsf{Shv}(X_\mathrm{proet}).
\]

\paragraph \label{Par:VToProet}
Let $\lbrace U_i \to U \rbrace$ be a cover in $X_\mathrm{proet}^\mathrm{ap}$ and
let $V \to U$ be a morphism in $X_\mathrm{proet}^\mathrm{ap}$. The collection
$\lbrace c(U_i) \to c(U) \rbrace$ of morphisms in $X_\mathrm{v}$ is a v-cover
because $\lvert c(U_i) \rvert \to \lvert U_i \rvert$ and $\lvert c(U) \rvert \to
\lvert U \rvert$ are all homeomorphisms. The map $c(U_i \times_U V) \to c(U_i)
\times_{c(U)} c(V)$ is an isomorphism since both sides are affinoid perfectoid
spaces representing the same v-sheaf. This shows that the functor $c \colon
X_\mathrm{proet}^\mathrm{ap} \to X_\mathrm{v}$ is a continuous functor in the
sense of \cite[Definition~00WV]{Stacks}. It follows that there is a functor
\[
  \gamma_\ast \colon \mathsf{Shv}(X_\mathrm{v}) \to
  \mathsf{Shv}(X_\mathrm{proet}^\mathrm{ap}); \quad \mathscr{F} \mapsto [U
  \mapsto \mathscr{F}(c(U))]
\]
with a left adjoint $\gamma^{-1}$. The following lemma implies that the functor
$c$ defines a morphism of sites $X_\mathrm{v} \to X_\mathrm{proet}^\mathrm{ap}$
in the sense of \cite[Definition~00X1]{Stacks}, and hence we obtain a morphism
of topoi
\[
  \gamma \colon \mathsf{Shv}(X_\mathrm{v}) \to
  \mathsf{Shv}(X_\mathrm{proet}^\mathrm{ap}) \simeq
  \mathsf{Shv}(X_\mathrm{proet}).
\]

\begin{lemma} \label{Lem:Topoi}
  The functor $\gamma^{-1}$ is left exact, i.e., commutes with finite limits.
\end{lemma}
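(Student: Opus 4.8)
The plan is to show $\gamma^{-1}$ commutes with finite limits by the standard criterion: it suffices to check compatibility with the terminal object and with fiber products, and since $\gamma^{-1}$ is a left adjoint (so already commutes with colimits), the real content is left-exactness. I would reduce to a statement about the defining continuous functor $c \colon X_\mathrm{proet}^\mathrm{ap} \to X_\mathrm{v}$. Recall that for a morphism of sites coming from a continuous functor $u \colon \mathcal{C} \to \mathcal{D}$, the inverse image functor $u^{-1}$ on sheaves is the sheafification of the presheaf-level left Kan extension $u_p$. By \cite[Lemma~00XK, Lemma~00XF]{Stacks}, $u^{-1}$ is exact as soon as the presheaf-level functor $u_p$ commutes with finite limits and sheafification is exact (which it always is). So the crux is: \emph{$u_p$ commutes with finite limits}, and by the same Stacks reference this holds provided the comma categories $\mathcal{C} \times_{\mathcal{D}} \mathcal{D}_{/V}$ — equivalently, $(I_V^u)^\mathrm{op}$ in the notation there — are cofiltered (filtered in the appropriate variance) for every $V \in X_\mathrm{v}$.

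So the heart of the proof is a cofiltration statement for the index category of pairs $(U \in X_\mathrm{proet}^\mathrm{ap}, c(U) \to V)$ for a fixed perfectoid $V \to X$. Here I would use that $V = \Spa(R^\sharp, R^{\sharp+})$ is itself perfectoid over $X$, hence represents a point of $X^\lozenge(R,R^+)$, and that affinoid perfectoid objects of $X_\mathrm{proet}$ form a basis; concretely, for any $V \to X$ from a perfectoid affinoid, one can build a cofinal system of $U = \varprojlim U_i \in X_\mathrm{proet}^\mathrm{ap}$ together with maps $V = c(U') \to c(U)$ arising from factorizations of $V \to X$ through pro-étale affinoid perfectoid neighborhoods — exactly the kind of argument used in \cite[Lemma~4.5,~4.6,~Proposition~4.8]{Sch13} and \cite[Lecture~15]{SW20} showing that every perfectoid space over $X$ is pro-étale-locally a limit of étale $X$-spaces. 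The key combinatorial points are: (i) the category is nonempty (take $U = X$ itself, or an étale chart), (ii) any two objects $(U_1, c(U_1) \to V)$ and $(U_2, c(U_2) \to V)$ are dominated by a common one, which follows from the fact that $X_\mathrm{proet}^\mathrm{ap}$ has fiber products of the relevant shape (over $X$, and the fiber product stays affinoid perfectoid by \cite[Lemma~4.6]{Sch13}) together with \Cref{Lem:Subcanonical} to glue the maps to $V$, and (iii) the parallel-arrows condition, which follows similarly by forming equalizer-type objects in $X_\mathrm{proet}^\mathrm{ap}$ or, more cheaply, by noting that $V$ is quasi-compact and using \Cref{Lem:Subcanonical} again to see that two maps $c(U) \to V$ agreeing after some cover already agree after passing to a further object.

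Concretely, the step I expect to be the main obstacle is verifying this cofiltration — specifically, the second condition, showing that any two objects over $V$ admit a common refinement, because one must simultaneously control that the fiber product of two affinoid perfectoid pro-étale $X$-objects remains in $X_\mathrm{proet}^\mathrm{ap}$ (using \cite[Lemma~4.6]{Sch13}, noting that at least one leg is pro-étale, which is the hypothesis under which that lemma applies) and that the two given maps to $V$ are compatible on the overlap, which is where the subcanonicity of $X_\mathrm{proet}$ (\Cref{Lem:Subcanonical}) is used. I would also need the identification of $c$ with $\varprojlim U_i \mapsto \varprojlim U_i^\lozenge$ (stated just before \Cref{Lem:Subcanonical}) to know that maps $c(U) \to V = \Spa(R^\sharp, R^{\sharp+})^\lozenge$ in $X_\mathrm{v}$ are computed by the universal property of $\Spd$, so that they can be manipulated purely in terms of $\mathsf{Pro}(X_\mathrm{et})$. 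The remaining bookkeeping — that the terminal object $X^\lozenge$ of $X_\mathrm{v}$ pulls back correctly, and that sheafification does not disturb finite limits — is routine.
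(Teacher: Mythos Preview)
Your overall strategy agrees with the paper's: reduce left-exactness of $\gamma^{-1}$ to the index category $\{(V \in X_\mathrm{proet}^\mathrm{ap},\, W \to c(V))\}$ being filtered, then invoke exactness of sheafification. The gap is in your verification of filteredness for an \emph{arbitrary} test object $W \in X_\mathrm{v}$. For nonemptiness you propose $U = X$ or an \'etale chart, but a smooth rigid analytic variety over $K$ is not perfectoid, so neither lies in $X_\mathrm{proet}^\mathrm{ap}$; more to the point, for a general perfectoid $W$ there is no reason the structure map $W \to X^\lozenge$ lifts through any single $c(V)$, since the $c(V)$ are pro-\'etale \emph{covers} of opens in $X$ and lifting through a cover is an obstruction. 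For the joint-refinement step, your appeal to \cite[Lemma~4.6]{Sch13} to get $U_1 \times_X U_2 \in X_\mathrm{proet}^\mathrm{ap}$ is misapplied: that argument (and its restatement just before \Cref{Par:VToProet}) requires the \emph{base} of the fiber product to be affinoid perfectoid, which $X$ is not. The projection $U_1 \times_X U_2 \to U_1$ is only a pro-system of \'etale (not finite \'etale) maps, and \'etale over affinoid perfectoid need not be affinoid.

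The paper does not attempt filteredness for all $W$. Instead it observes that strictly totally disconnected perfectoid spaces form a basis of $X_\mathrm{v}$, so after sheafification it suffices to treat such $W$. For strictly totally disconnected $W$ every (finite) \'etale surjection onto $W$ splits, and this is exactly what lets one take the limit of a finite diagram in $X_\mathrm{proet}$ (which exists by \cite[Lemma~3.10(vii)]{Sch13} but is typically not affinoid perfectoid), pass to a quasi-compact open $V_{-1}$, choose an affinoid perfectoid pro-\'etale cover $\tilde V \to V_{-1}$, and then \emph{lift} $W \to c(V_{-1})$ transfinitely to $W \to c(\tilde V)$. This lifting through a cover---isolated as \Cref{Lem:Filtered}---is the missing ingredient; your fiber-product maneuver is an attempt to avoid it, but the category $X_\mathrm{proet}^\mathrm{ap}$ does not have the finite limits you need over the non-perfectoid base $X$.
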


\begin{proof}
  For $\mathscr{F} \in \mathsf{Shv}(X_\mathrm{proet}^\mathrm{ap})$, the inverse
  image $\gamma^{-1} \mathscr{F}$ is the sheafification of the assignment
  \[
    X_\mathrm{v} \ni W \mapsto \varinjlim_{W \to c(V)} \mathscr{F}(V).
  \]
  Here, the colimit is over the category where a morphism from $f \colon W \to
  c(V)$ to $f^\prime \colon W \to c(V^\prime)$ is given by a morphism $\alpha
  \colon V^\prime \to V$ for which $c(\alpha) \circ f^\prime = f$. Because
  sheafification is exact, and strictly totally disconnected spaces in the sense
  of \cite[Definition~7.15]{Sch17p} form a basis for the site $X_\mathrm{v}$, it
  suffices to show that the indexing category is filtered for strictly totally
  disconnected $W \in X_\mathrm{v}$. This follows from \Cref{Lem:Filtered}
  applied to $U = X$.
\end{proof}

\begin{lemma} \label{Lem:Filtered}
  Let $U \in X_\mathrm{proet}$ and $W \in X_\mathrm{v}$ be objects, and fix a
  map $\pi \colon W \to c(U)$ in $\mathsf{Shv}(X_\mathrm{v})$. Let
  $\mathcal{C}_{W,U,\pi}$ be the category where
  \begin{itemize}
    \item objects are triples $(V, f, g)$ where $V \in
      X_\mathrm{proet}^\mathrm{ap}$, $f \colon V \to U$ is a morphism in
      $X_\mathrm{proet}$, $g \colon W \to c(V)$ is a morphism in
      $X_\mathrm{v}$, satisfying $c(f) \circ g = \pi$,
    \item a morphism from $(V, f, g)$ to $(V^\prime, f^\prime, g^\prime)$ is a
      map $\alpha \colon V^\prime \to V$ for which the following diagrams
      commute.
      \[ \begin{tikzcd}
        W \arrow{r}{g} \arrow{d}{g^\prime} & c(V) \\ c(V^\prime)
        \arrow{ru}[']{c(\alpha)}
      \end{tikzcd} \begin{tikzcd}
        & V \arrow{d}{f} \\ V^\prime \arrow{r}{f^\prime} \arrow{ru}{\alpha} & U
      \end{tikzcd} \]
  \end{itemize}
  If $W$ is a strictly totally disconnected perfectoid space, then the category
  $\mathcal{C}_{W,U,\pi}$ is filtered.
\end{lemma}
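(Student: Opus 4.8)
The plan is to verify the standard criterion that a category is filtered precisely when every finite diagram in it admits a cocone; concretely, I will check that $\mathcal{C}_{W,U,\pi}$ is nonempty, that any two of its objects admit morphisms to a common object, and that any two parallel morphisms are coequalized by a morphism out of their target. All three fall out of a single construction that exploits the tension between the fact that $X_{\mathrm{proet}}$ has all finite limits (computed in $\mathsf{Pro}(X_{\mathrm{et}})$) while its subcategory $X_{\mathrm{proet}}^{\mathrm{ap}}$ of affinoid perfectoid objects does not.

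So fix a finite diagram $(V_i, f_i, g_i)_i$ in $\mathcal{C}_{W,U,\pi}$, where for an arrow $i \to j$ the corresponding morphism is a map $V_j \to V_i$ over $U$ compatible with the $g$'s. First I would form the limit $V$ of the $V_i$ in the slice category $(X_{\mathrm{proet}})_{/U}$, which exists since $X_{\mathrm{proet}}$ has finite limits computed in $\mathsf{Pro}(X_{\mathrm{et}})$; concretely $V = \lim_i V_i$ in $X_{\mathrm{proet}}$ with its induced structure map $f_V\colon V \to U$ when the diagram is nonempty, and $V = U$ when it is empty. (The empty case is why this also yields nonemptiness of $\mathcal{C}_{W,U,\pi}$.) Since $c$ commutes with finite limits, $c(V) = \lim_i c(V_i)$ over $c(U)$, so the maps $g_i\colon W \to c(V_i)$ together with $\pi$ assemble into a single map $g\colon W \to c(V)$ with $c(f_V)\circ g = \pi$. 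The only defect of the triple $(V, f_V, g)$ is that $V$ need not lie in $X_{\mathrm{proet}}^{\mathrm{ap}}$.

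To repair this I would invoke that affinoid perfectoid objects form a basis of $X_{\mathrm{proet}}$: pick a pro-\'etale cover $\{V'_\lambda \to V\}$ with each $V'_\lambda \in X_{\mathrm{proet}}^{\mathrm{ap}}$. Applying $c$ and base-changing along $g$ produces a cover $\{W \times_{c(V)} c(V'_\lambda) \to W\}$ that is pro-\'etale over the perfectoid space $W$, so each member is again a perfectoid space. This is where the hypothesis enters decisively: since $W$ is strictly totally disconnected, this pro-\'etale cover of $W$ splits (cf.\ \cite{Sch17p}), so there is some $\lambda$ and a section $s\colon W \to W \times_{c(V)} c(V'_\lambda)$. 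Setting $\tilde V := V'_\lambda$, $\tilde g := \mathrm{pr}_2 \circ s\colon W \to c(\tilde V)$ --- which lifts $g$ along $c(\tilde V)\to c(V)$ because $\mathrm{pr}_1 \circ s = \id_W$ --- and $\tilde f\colon \tilde V \to V \xrightarrow{f_V} U$, the triple $(\tilde V, \tilde f, \tilde g)$ lies in $\mathcal{C}_{W,U,\pi}$, and the composites $\tilde V \to V \to V_i$ define morphisms $(V_i, f_i, g_i) \to (\tilde V, \tilde f, \tilde g)$ forming a cocone over the given diagram. The remaining compatibilities are a routine diagram chase using that $g$ has components $g_i$ and that each $f_i$ is $f_V$ composed with the $i$-th projection.

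I expect the essential point --- rather than any serious obstacle --- to be exactly the observation that the finite limits one wants to take do not stay inside $X_{\mathrm{proet}}^{\mathrm{ap}}$, so one cannot argue naively with limits of affinoid perfectoids; strict total disconnectedness of $W$ is precisely what lets one descend the map from $W$ along a pro-\'etale cover by affinoid perfectoids and thereby land back in $\mathcal{C}_{W,U,\pi}$. The only points demanding a little care are that the base-changed cover of $W$ really is pro-\'etale --- using that the covering maps in the standard basis of $X_{\mathrm{proet}}$ are pro-\'etale, that $c$ sends pro-\'etale maps of $X_{\mathrm{proet}}$ to pro-\'etale maps of v-sheaves, and that a v-sheaf pro-\'etale over a perfectoid space is a perfectoid space --- together with keeping the several commutative triangles straight.
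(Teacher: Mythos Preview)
Your approach is essentially the paper's: take the finite limit $V$ of the diagram over $U$ in $X_\mathrm{proet}$, obtain the induced map $W \to c(V)$ (using that $c$ commutes with limits), then pass to an affinoid perfectoid pro-\'etale cover of $V$ and lift the map from $W$ using strict total disconnectedness. The only substantive difference is in how the lifting step is executed. Where you invoke a general principle that a pro-\'etale cover of a strictly totally disconnected $W$ splits---and correctly flag as ``needing care'' the claims that $c$ takes pro-\'etale maps in $X_\mathrm{proet}$ to pro-\'etale maps of v-sheaves and that the base-changed cover is representable by perfectoid spaces---the paper makes this step completely concrete: it first restricts to a quasi-compact open $V_{-1} \subseteq V$ through which $W \to c(V)$ factors (using $\lvert W \rvert \to \lvert V \rvert$ and \cite[Lemma~3.10(iii)]{Sch13}), then chooses the affinoid perfectoid cover $\tilde V \to V_{-1}$ explicitly as a transfinite tower $\tilde V = \varprojlim_{\mu<\lambda} V_\mu$ whose successive stages are surjective finite \'etale after one initial surjective \'etale map, and lifts $W$ through this tower one stage at a time by transfinite induction. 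This sidesteps both of your flagged points, since it only uses that $c$ takes (finite) \'etale surjections to (finite) \'etale surjections and that \'etale covers of a strictly totally disconnected space split; the price is a slightly longer argument, while yours is cleaner provided the general splitting statement is available in the form you need.
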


\begin{proof}
  Given a finite (possibly empty) diagram $\{W \to c(V_i) \to c(U)\}_{i \in I}$
  in the category $\mathcal{C}_{W,U,\pi}$, we wish to show that there exists a
  cocone over the diagram. Write $U = V_\infty$ so that the diagram $V_\bullet
  \colon I^\mathrm{op} \to X_\mathrm{proet}$ extends to a diagram
  $(I^\mathrm{op})^\vartriangleright \to X_\mathrm{proet}$ sending the cone
  point to $U = V_\infty$. Consider the limit
  \[
    V = \varprojlim_{(I^\mathrm{op})^\vartriangleright} V_i
  \]
  in $X_\mathrm{proet}$, which exists by \cite[Lemma~3.10(vii)]{Sch13}. We
  obtain a map
  \[
    W \to \varprojlim_{(I^\mathrm{op})^\vartriangleright} c(V_i) \cong c(V)
  \]
  in $\mathsf{Shv}(X_\mathrm{v})$, which induces a continuous map of
  topological spaces $\lvert W \rvert \to \lvert c(V) \rvert \cong \lvert V
  \rvert$ by \cite[Lemma~11.22]{Sch17p}. By \cite[Lemma~3.10(iii)]{Sch13} there
  exists a quasi-compact open subobject $V_{-1} \subseteq V$ in
  $X_\mathrm{proet}$ for which the map $W \to c(V)$ factors through $c(V_{-1})$.

  Choose an affinoid perfectoid pro-\'{e}tale cover $\tilde{V} =
  \varprojlim_{\mu < \lambda} V_\mu \to V_{-1}$ in $X_\mathrm{proet}$ for some
  ordinal $\lambda$, where $V_0 \to V_{-1}$ is surjective \'{e}tale and each
  $V_\mu \to \varprojlim_{\mu^\prime < \mu} V_{\mu^\prime}$ is surjective finite
  \'{e}tale, see \cite{Sch13c}. Because $c$ commutes with all limits, we have
  $c(\tilde{V}) = \varprojlim_{\mu < \lambda} c(V_\mu)$ where $c(V_\mu) \to
  \varprojlim_{\mu^\prime < \mu} c(V_{\mu^\prime})$ are finite surjective
  \'{e}tale. Because $W$ is strictly totally disconnected, any map $W \to c(V)$
  now transfinitely lifts to $W \to c(\tilde{V})$. This gives maps
  \[
    W \to c(\tilde{V}), \quad \tilde{V} \to V_i \to U
  \]
  with $\tilde{V} \in X_\mathrm{proet}^\mathrm{ap}$. This shows that the
  indexing category is indeed filtered.
\end{proof}

\begin{lemma} \label{Lem:PullbackOfProet}
  Let $U \in X_\mathrm{proet}$ be an object, which we regard as an object of
  $\mathsf{Shv}(X_\mathrm{proet})$ using \Cref{Lem:Subcanonical}. Then
  $\gamma^{-1}(U) \cong c(U) \in \mathsf{Shv}(X_\mathrm{v})$.
\end{lemma}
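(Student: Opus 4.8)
The plan is to unwind the formula for $\gamma^{-1}$ established in the proof of \Cref{Lem:Topoi} and show that for $U \in X_\mathrm{proet}$ the resulting colimit already computes $c(U)$, so that no sheafification is needed. Concretely, for a strictly totally disconnected $W \in X_\mathrm{v}$ we must identify
\[
  \varinjlim_{(V,f,g) \in \mathcal{C}_{W,U,\pi}} \Hom_{X_\mathrm{proet}}(V, U)
\]
with the fiber of $\Hom_{\mathsf{Shv}(X_\mathrm{v})}(W, c(U))$ over the chosen map, where I am using that $U$, viewed as a sheaf on $X_\mathrm{proet}$ via \Cref{Lem:Subcanonical}, has $(\gamma^{-1}U)(W) = \varinjlim_{W \to c(V)} \Hom(V, U)$. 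Since $c$ commutes with finite limits and $\Hom(-,U)$ is a subcanonical sheaf on $X_\mathrm{proet}$, the forgetful assignment $(V,f,g) \mapsto (c(f)\circ g \colon W \to c(U))$, or rather $(V, f, g)\mapsto f$, is what realizes the comparison map $\gamma^{-1}(U) \to c(U)$; I would first check this is the map induced by functoriality, then check it is an isomorphism on strictly totally disconnected $W$ and invoke that such $W$ form a basis of $X_\mathrm{v}$ together with exactness of sheafification.

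For surjectivity: given a map $W \to c(U)$ in $\mathsf{Shv}(X_\mathrm{v})$, I want to produce an object $(V,f,g)$ of $\mathcal{C}_{W,U,\pi}$ with $f$ hitting the given class. This is precisely the content of the cocone construction in the proof of \Cref{Lem:Filtered} applied to the empty diagram $I = \emptyset$: one takes $V_{-1} \subseteq U$ a quasi-compact open through which $|W| \to |c(U)| \cong |U|$ factors, then an affinoid perfectoid pro-\'etale cover $\tilde V \to V_{-1}$, and transfinitely lifts $W \to c(V_{-1})$ to $W \to c(\tilde V)$ using strict total disconnectedness; the composite $\tilde V \to V_{-1} \hookrightarrow U$ gives $f$. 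For injectivity: two objects $(V,f,g)$, $(V',f',g')$ mapping to the same class in the colimit means, by filteredness (\Cref{Lem:Filtered}), that they are dominated by a common $(\tilde V, \tilde f, \tilde g)$; pulling both back along the structure maps, the equality of images of $f, f'$ in $\Hom(\tilde V, U)$ follows from the fact that the two maps $W \to c(V), c(V')$ agree after composing to $c(\tilde V)$ and that $\Hom(-,U)$ is a sheaf, hence separated, so equality of the underlying maps of v-sheaves forces equality in the colimit. Here I would be careful that $c$ reflects equality of morphisms out of affinoid perfectoid objects, which holds because $c$ is essentially the identity on underlying spaces and $\Hom(-,U)$ for $U \in X_\mathrm{et}$ is determined by the topological data after a surjective \'etale localization, as in \Cref{Lem:Subcanonical}.

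The main obstacle I expect is the injectivity/well-definedness step: making precise that the comparison map $\gamma^{-1}(U) \to c(U)$ is monic before sheafification on strictly totally disconnected objects, i.e., that distinct elements of the colimit cannot map to the same element of $c(U)(W) = \Hom_{X_\mathrm{v}}(W, U^\lozenge)$. The subtlety is that $c \colon X_\mathrm{proet}^\mathrm{ap} \to X_\mathrm{v}$ is not known to be fully faithful (as flagged in the remark after its definition), so I cannot simply say "a morphism $c(V') \to c(V)$ comes from a morphism $V' \to V$." Instead I would argue at the level of the sheaf $\Hom_{X_\mathrm{proet}}(-, U)$: two maps $V', V \to U$ over a common refinement $\tilde V$ agreeing as maps $\tilde V \to U^\lozenge$ of v-sheaves agree as maps $\tilde V \to U$ of pro-\'etale objects, because $\Hom_{X_\mathrm{proet}}(\tilde V, U) \hookrightarrow \Hom_{\mathsf{Shv}(X_\mathrm{v})}(c(\tilde V), c(U))$ is injective — which in turn reduces, via \Cref{Lem:Subcanonical} and the fact that $c$ is a homeomorphism on underlying spaces, to the separatedness of $\Hom(-, U)$ on $X_\mathrm{proet}$ combined with the corresponding injectivity for $U \in X_\mathrm{et}$ coming from \cite[Proposition~2.2.2]{Hub96}. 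Once this separatedness is in hand the colimit is a filtered colimit of subsets of $\Hom_{X_\mathrm{v}}(W, U^\lozenge)$ with compatible inclusions, so the comparison map is injective, and together with the surjectivity above we conclude $\gamma^{-1}(U) \cong c(U)$.
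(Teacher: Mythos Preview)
Your approach is essentially the paper's, but you overcomplicate the injectivity step and misidentify the obstacle. The paper's argument is a one-liner once \Cref{Lem:Filtered} is in hand: for strictly totally disconnected $W$, the fiber of the comparison map
\[
  \varinjlim_{W \to c(V)} \Hom(V,U) \longrightarrow \Hom(W, c(U))
\]
over any $\pi$ is precisely $\pi_0(\mathcal{C}_{W,U,\pi})$, and a filtered category is connected, so each fiber is a singleton. This dispatches surjectivity and injectivity simultaneously.

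Your surjectivity argument is fine and amounts to the nonemptiness part of filteredness. For injectivity, recall that a morphism in $\mathcal{C}_{W,U,\pi}$ from $(V,f,g)$ to $(V',f',g')$ by definition already includes the condition $f \circ \alpha = f'$. So once filteredness of $\mathcal{C}_{W,U,\pi}$ produces a common cocone $(\tilde V, \tilde f, \tilde g)$, the restrictions of $f$ and $f'$ to $\tilde V$ are both equal to $\tilde f$, and the two elements coincide in the colimit with no further work. Your ``main obstacle'' paragraph --- arguing faithfulness of $c$ on Hom-sets via separatedness of $\Hom(-,U)$ --- is therefore unnecessary, and the final claim that the colimit is a filtered colimit of \emph{subsets} of $\Hom(W, c(U))$ is neither justified nor required: for a fixed $g \colon W \to c(V)$ the map $h \mapsto c(h)\circ g$ from $\Hom(V,U)$ need not be injective.
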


\begin{proof}
  For every $W \in X_\mathrm{v}$, there is an obvious homomorphism
  \[
    f \colon \varinjlim_{W \to c(V)} \Hom(V, U) \to \Hom(W, c(U)),
  \]
  which upon sheafification, defines a map of sheaves $\gamma^{-1}(U) \to c(U)$.
  As in the proof of \Cref{Lem:Topoi}, it suffices to check that the above
  map is bijection when $W$ is strictly totally disconnected. Given a map $\pi
  \colon W \to c(U)$, we now observe that the fiber $f^{-1}(\pi)$ is identified
  with $\pi_0(\mathcal{C}_{W,U,\pi})$, which is a singleton by
  \Cref{Lem:Filtered} as filtered categories are connected.
\end{proof}

\paragraph
Let $T$ be a topological space. There is a sheaf $\underline{T}$ on
$X_\mathrm{proet}$ defined by the formula
\[
  \underline{T}(U) = \Hom_\mathrm{cts}(\lvert U \rvert, T),
\]
where the sheaf axiom follows from \cite[Lemma~3.10(i, iv)]{Sch13}. The same
formula also defines a sheaf $\underline{T}$ on the site $X_\mathrm{v}$.

\begin{corollary} \label{Cor:LocallyProfinteSet}
  For $T$ a locally profinite set, we have $\gamma_\ast \underline{T} \cong
  \underline{T}$ and $\gamma^{-1} \underline{T} \cong \underline{T}$.
\end{corollary}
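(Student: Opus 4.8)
The plan is to establish the two isomorphisms separately. The first, $\gamma_\ast\underline{T}\cong\underline{T}$, is immediate from the explicit formula $\gamma_\ast\mathscr{F} = [U\mapsto\mathscr{F}(c(U))]$ for $U\in X_\mathrm{proet}^\mathrm{ap}$: one computes $(\gamma_\ast\underline{T})(U) = \underline{T}(c(U)) = \Hom_\mathrm{cts}(\lvert c(U)\rvert, T) = \Hom_\mathrm{cts}(\lvert U\rvert, T) = \underline{T}(U)$, where the third equality uses that $\lvert c(U)\rvert\to\lvert U\rvert$ is a homeomorphism (\Cref{Par:VToProet}). These identifications are natural in $U$, so they assemble to an isomorphism of sheaves on $X_\mathrm{proet}^\mathrm{ap}$, hence on $X_\mathrm{proet}$ since affinoid perfectoids form a basis.

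For $\gamma^{-1}\underline{T}\cong\underline{T}$, I would first reduce to the case that $T$ is profinite. Writing a locally profinite $T$ as a disjoint union $T = \coprod_{i}T_i$ of compact open profinite subsets, one has $\underline{T}\cong\coprod_i\underline{T_i}$ on both $X_\mathrm{proet}$ and $X_\mathrm{v}$ (a continuous map from a quasi-compact space into $\coprod_iT_i$ factors through finitely many $T_i$), and $\gamma^{-1}$, being a left adjoint, commutes with coproducts; so it suffices to treat $T$ profinite. Write $T = \varprojlim_nT_n$ with $T_n$ finite. The crucial observation is that $\underline{T}$ is representable on $X_\mathrm{proet}$ by the pro-\'{e}tale object $X\times T := \varprojlim_n\bigl(\coprod_{T_n}X\bigr)$: each $\coprod_{T_n}X$ is finite \'{e}tale over $X$, the transition maps $\coprod_{T_{n+1}}X\to\coprod_{T_n}X$ are surjective finite \'{e}tale, so $X\times T$ is a legitimate object of $X_\mathrm{proet}$, and by \Cref{Lem:Subcanonical} the sheaf it represents sends $U$ to $\varprojlim_n\Hom_{X_\mathrm{proet}}(U,\coprod_{T_n}X) = \varprojlim_n\Hom_\mathrm{cts}(\lvert U\rvert, T_n) = \Hom_\mathrm{cts}(\lvert U\rvert, T) = \underline{T}(U)$, using that an $X$-morphism $U\to\coprod_{T_n}X$ is the same as a clopen decomposition of $\lvert U\rvert$ indexed by $T_n$.

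Now \Cref{Lem:PullbackOfProet} applies directly and yields $\gamma^{-1}\underline{T}\cong\gamma^{-1}(X\times T)\cong c(X\times T)$. Since $c$ commutes with all small limits and $(-)^\lozenge$ with finite coproducts, $c(X\times T)\cong\varprojlim_n\coprod_{T_n}X^\lozenge$. Finally I identify the right-hand side with $\underline{T}$ on $X_\mathrm{v}$: each $\coprod_{T_n}X^\lozenge$, viewed in $\mathsf{Shv}(X_\mathrm{v})\cong\mathsf{Shv}(\mathsf{Perf}_\mathrm{v})_{/X^\lozenge}$, is the $T_n$-fold coproduct of the terminal sheaf $X^\lozenge$, which is the constant sheaf $\underline{T_n}$ given by $\underline{T_n}(W) = \Hom_\mathrm{cts}(\lvert W\rvert, T_n)$; passing to the limit, $(\varprojlim_n\coprod_{T_n}X^\lozenge)(W) = \varprojlim_n\Hom_\mathrm{cts}(\lvert W\rvert, T_n) = \Hom_\mathrm{cts}(\lvert W\rvert, T) = \underline{T}(W)$, naturally in $W$. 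This gives $\gamma^{-1}\underline{T}\cong\underline{T}$.

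The main obstacle to watch is that $\gamma^{-1}$ is only left exact (\Cref{Lem:Topoi}), so it does not commute with the cofiltered limit $T = \varprojlim_nT_n$ — hence one cannot simply reduce to finite $T$, where $\underline{T}$ is a coproduct of terminal sheaves. This is precisely why the argument is routed through the representability of $\underline{T}$ by a genuine object of $X_\mathrm{proet}$ and through \Cref{Lem:PullbackOfProet}, combined with the fact that $c$ (unlike $\gamma^{-1}$) commutes with all small limits. The remaining points — that $X\times T$ really lies in $X_\mathrm{proet}$, that $(-)^\lozenge$ respects finite disjoint unions, and the elementary identifications of clopen decompositions with continuous maps to $T_n$ — are routine.
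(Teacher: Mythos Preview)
Your proof is correct and follows essentially the same approach as the paper: both arguments establish the first isomorphism via the homeomorphism $\lvert U\rvert\cong\lvert c(U)\rvert$, and both prove the second by representing $\underline{T}$ by an explicit object of $X_\mathrm{proet}$ and applying \Cref{Lem:PullbackOfProet}. The only difference is cosmetic: the paper writes a locally profinite $T$ directly as $\varprojlim T_i$ with $T_i$ discrete and transition maps surjective with finite fibers (so each $T_i\times X\in X_\mathrm{et}$ and the limit lies in $X_\mathrm{proet}$), whereas you first reduce to profinite $T$ via coproducts and then take $T_i$ finite---your extra reduction step is harmless but unnecessary.
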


\begin{proof}
  The first isomorphism follows from the homeomorphism $\lvert U \rvert \cong
  \lvert c(U) \rvert$ for $U \in X_\mathrm{proet}^\mathrm{ap}$. For the second,
  we write $\underline{T} = \varprojlim \underline{T_i}$ where $T_i$ are
  discrete sets and transitions maps are surjective with finite fibers. Then
  each $\underline{T_i}$ is representable by the object $T_i \times X \subseteq
  X_\mathrm{et}$ and hence $\underline{T}$ is representable by the object
  \[
    \varprojlim_i (T_i \times X) \in X_\mathrm{proet}.
  \]
  Applying \Cref{Lem:PullbackOfProet} we deduce that $\gamma^{-1}
  \underline{T}$ is representable by the object
  \[
    \varprojlim_i (T_i \times X)^\lozenge = \underline{T} \times X^\lozenge \in
    \mathsf{Shv}(X_\mathrm{v}).
  \]
  This agrees with the sheaf $\underline{T}$ on $X_\mathrm{v}$.
\end{proof}

\begin{proposition}[{\cite[Proposition~4.3]{Han16p}}] \label{Prop:Torsors}
  Let $G$ be a locally profinite group. The functors $\gamma_\ast$ and
  $\gamma^{-1}$ induce an equivalence of categories between
  $\underline{G}$-torsors on $X_\mathrm{proet}$ and $\underline{G}$-torsors on
  $X_\mathrm{v}$.
\end{proposition}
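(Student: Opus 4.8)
The plan is to deduce the equivalence from the general machinery of the morphism of topoi $\gamma \colon \mathsf{Shv}(X_\mathrm{v}) \to \mathsf{Shv}(X_\mathrm{proet})$ together with the fact, recorded in \Cref{Cor:LocallyProfinteSet}, that $\gamma_\ast \underline{G} \cong \underline{G}$ and $\gamma^{-1} \underline{G} \cong \underline{G}$. Since $\gamma^{-1}$ is exact (\Cref{Lem:Topoi}), it carries group objects to group objects and torsors to torsors; because $\gamma^{-1}\underline{G} \cong \underline{G}$ compatibly with the group structure (this compatibility should be checked, but follows from the representability description in the proof of \Cref{Cor:LocallyProfinteSet}), we get a functor from $\underline{G}$-torsors on $X_\mathrm{proet}$ to $\underline{G}$-torsors on $X_\mathrm{v}$. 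In the other direction, $\gamma_\ast$ preserves finite limits and hence group objects, and $\gamma_\ast$ of a $\underline{G}$-torsor is at least a $\gamma_\ast\underline{G} \cong \underline{G}$-module; I would argue it is again a torsor by checking that the local triviality condition is preserved, which is where the structure of the two sites enters.

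The cleanest route is the following. First I would observe that both categories of torsors are classified by $H^1$ with coefficients in $\underline{G}$, i.e., by the non-abelian cohomology computed as isomorphism classes of torsors, and that $\gamma$ induces a map between these. Then I would use the standard criterion: a morphism of topoi $\gamma$ induces an equivalence on $\underline{G}$-torsors provided that (i) $\gamma^{-1}$ and $\gamma_\ast$ induce an equivalence on sections of $\underline{G}$ (more precisely that the unit and counit are isomorphisms on $\underline{G}$ and on $\underline{G}$-torsors locally), and (ii) every v-cover of an object in $X_\mathrm{v}$ can be refined, after pulling back, by the image under $c$ of a pro-étale cover. Concretely, to show essential surjectivity of $\gamma_\ast$: given a $\underline{G}$-torsor $\mathscr{T}$ on $X_\mathrm{v}$, it is trivialized over a v-cover, which by passing to a strictly totally disconnected cover and invoking \Cref{Lem:Filtered} (the filteredness of $\mathcal{C}_{W,U,\pi}$) can be dominated by $c$ of a pro-étale affinoid perfectoid cover; descent data for $\mathscr{T}$ then transfer to descent data on $X_\mathrm{proet}$ because $c$ commutes with the relevant fiber products and $\underline{G}$ matches up on both sides. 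For full faithfulness, morphisms of torsors are sections of an associated torsor (an inner form), so the same trivialization argument reduces the statement to the bijection $\Hom(W, c(U)) \cong \varinjlim_{W\to c(V)} \Hom(V,U)$ from \Cref{Lem:PullbackOfProet}, applied after trivializing.

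I expect the main obstacle to be that $c \colon X_\mathrm{proet}^\mathrm{ap} \to X_\mathrm{v}$ is not known to be fully faithful (as flagged in the remark), so one cannot simply say the two sites have the same sheaves; the argument must genuinely go through the adjunction $(\gamma^{-1}, \gamma_\ast)$ and exploit that $X_\mathrm{v}$ has enough strictly totally disconnected objects while $X_\mathrm{proet}$ has enough affinoid perfectoid objects, linking the two via \Cref{Lem:Filtered}. The delicate point is verifying that a v-torsor, which a priori is only locally trivial for the v-topology, becomes locally trivial for covers coming from $c$; this is exactly where one uses that over a strictly totally disconnected $W$ any v-cover splits, so the torsor is already trivial there, and that such $W$ arise as $c(\tilde V)$ for affinoid perfectoid $\tilde V \in X_\mathrm{proet}$. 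Once local triviality is matched on both sides, descent along $c$ of the cocycle gives the quasi-inverse, and the unit/counit isomorphisms follow from \Cref{Cor:LocallyProfinteSet} and \Cref{Lem:PullbackOfProet}.
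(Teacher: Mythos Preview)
Your proposal has a genuine gap in the essential surjectivity direction, i.e., in showing that every $\underline{G}$-torsor $\mathbb{P}$ on $X_\mathrm{v}$ is of the form $\gamma^{-1}$ of a pro-\'etale torsor. You argue: trivialize $\mathbb{P}$ over a strictly totally disconnected v-cover $W$, then use \Cref{Lem:Filtered} to factor $W \to c(\tilde{V}) \to X^\lozenge$ with $\tilde{V} \in X_\mathrm{proet}^\mathrm{ap}$. But \Cref{Lem:Filtered} gives a map $W \to c(\tilde{V})$, not an identification $W = c(\tilde{V})$; so triviality of $\mathbb{P}$ over $W$ does not imply triviality over $c(\tilde{V})$, and you cannot transfer the descent datum to $X_\mathrm{proet}$ this way. (Your last paragraph asserts ``such $W$ arise as $c(\tilde V)$'', which is stronger than what the lemma provides.) More fundamentally, it is not clear from your setup that a v-torsor for $\underline{G}$ is trivialized by any object in the image of $c$ at all; the v-topology is strictly finer than the pro-\'etale topology, and this is precisely the point that needs an external input.

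The paper's proof supplies that input via \cite[Lemma~10.13]{Sch17p}: for each compact open $K \subseteq G$, the quotient $\underline{K} \backslash \mathbb{P}$ is representable by a locally spatial diamond \'etale over $X^\lozenge$, hence (by \cite[Lemma~15.6]{Sch17p}) by a rigid analytic variety $\mathbb{P}_K$ \'etale over $X$. Then $\varprojlim_K \mathbb{P}_K$ is an honest object of $X_\mathrm{proet}$, and one checks directly that it represents $\gamma_\ast \mathbb{P}$ and is a $\underline{G}$-torsor. This representability result is the substantive ingredient your argument is missing; once it is in place, the remaining verifications (that $\gamma^{-1}\gamma_\ast \mathbb{P} \cong \mathbb{P}$ and $\gamma_\ast \gamma^{-1} \mathbb{P} \cong \mathbb{P}$) proceed by the explicit coequalizer/limit descriptions together with \Cref{Lem:PullbackOfProet} and \Cref{Cor:LocallyProfinteSet}, much as you suggest.
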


\begin{proof}
  Since the question is local in $X$, we may as well assume that $X$ is
  quasi-compact.

  Given a $\underline{G}$-torsor $\mathbb{P}$ on $X_\mathrm{proet}$, we first
  check that $\gamma^{-1} \mathbb{P}$ is a $\underline{G}$-torsor on
  $X_\mathrm{v}$ and that the natural map $\mathbb{P} \to \gamma_\ast
  \gamma^{-1} \mathbb{P}$ is an isomorphism. Because $X$ is quasi-compact,
  there exists an affinoid perfectoid pro-\'{e}tale cover $\tilde{X} \in
  X_\mathrm{proet}^\mathrm{ap}$ on which $\mathbb{P}$ is trivialized. Fixing
  one such a trivialization, we observe that $\mathbb{P}$ is determined by the
  descent datum, which corresponds to a continuous map $\sigma \colon \lvert
  \tilde{X} \times_X \tilde{X} \rvert \to G$. More precisely, we have
  \[
    \mathbb{P} = \operatorname{coeq}(\underline{G} \times \tilde{X} \times_X
    \tilde{X} \rightrightarrows \underline{G} \times \tilde{X}) \in
    \mathsf{Shv}(X_\mathrm{proet}),
  \]
  where the two maps are $\mathrm{pr}_{12}$ and $(g, \tilde{x}_1, \tilde{x}_2)
  \mapsto (g \sigma(\tilde{x}_1, \tilde{x}_2), \tilde{x}_2)$. Because
  $\gamma^{-1}$ commutes with colimits and finite limits, we have
  \[
    \gamma^{-1} \mathbb{P} = \operatorname{coeq}(\underline{G} \times
    c(\tilde{X}) \times_{X^\lozenge} c(\tilde{X}) \rightrightarrows
    \underline{G} \times c(\tilde{X})) \in \mathsf{Shv}(X_\mathrm{v})
  \]
  by \Cref{Lem:PullbackOfProet} and \Cref{Cor:LocallyProfinteSet}.
  Because $c(\tilde{X}) \to X^\lozenge$ is v-surjective, we conclude that
  $\gamma^{-1} \mathbb{P}$ is a $\underline{G}$-torsor on $X_\mathrm{v}$.

  To check that $\mathbb{P} \to \gamma_\ast \gamma^{-1} \mathbb{P}$ is an
  isomorphism, it is enough to check on those affinoid perfectoid pro-\'{e}tales
  $U \in X_\mathrm{proet}^\mathrm{ap}$ with the property that it admits a map $U
  \to \tilde{X}$. Because $\mathbb{P}$ is trivial over $\tilde{X}$, it is also
  trivial over $U$, and similarly because $\gamma^{-1}\mathbb{P}$ is trivial
  over $c(\tilde{X})$, it is trivial over $c(U)$. It follows that both
  $\mathbb{P}(U)$ and $(\gamma_\ast \gamma^{-1} \mathbb{P})(U)$ can be
  identified with $\Hom_\mathrm{cts}(\lvert U \rvert = \lvert c(U) \rvert, G)$.

  We now show for a $\underline{G}$-torsor $\mathbb{P}$ on $X_\mathrm{v}$ that
  $\gamma_\ast \mathbb{P}$ is a $\underline{G}$-torsor and the natural map
  $\gamma^{-1} \gamma_\ast \mathbb{P} \to \mathbb{P}$ is an isomorphism. By
  \cite[Lemma~10.13]{Sch17p}, for each open compact subgroup $K \subseteq G$ we
  have
  \[
    \mathbb{P} = \varprojlim_{K \to 1} (\underline{K} \backslash \mathbb{P})
  \]
  where each $\underline{K} \backslash \mathbb{P}$ is representable by a smooth
  rigid analytic variety $\mathbb{P}_K \to X$, \'{e}tale over $X$, where the
  transition maps are finite \'{e}tale. This defines an object $\varprojlim_{K
  \to 1} \mathbb{P}_K \in X_\mathrm{proet}$. We now claim that the natural map
  \[
    \varprojlim_{K \to 1} \mathbb{P}_K \to \gamma_\ast \mathbb{P}
  \]
  is an isomorphism in $\mathsf{Shv}(X_\mathrm{proet})$.

  To see this, it suffices to check equality after evaluating at objects $U \in
  X_\mathrm{proet}^\mathrm{ap}$ admitting a map $U \to \varprojlim_{K \to 1}
  \mathbb{P}_K$ in $X_\mathrm{proet}$. Because the left hand side is a
  $\underline{G}$-torsor in $X_\mathrm{proet}$, the $U$-points of the left hand
  side is identified with $\Hom_\mathrm{cts}(\lvert U \rvert, G)$. Similarly,
  $\mathbb{P} \to X^\lozenge$ is a $\underline{G}$-torsor splitting over
  $\varprojlim_{K \to 1} \mathbb{P}_K^\lozenge$, and hence the splitting over
  $c(U)$ induces an isomorphism $\mathbb{P}(c(U)) \cong \Hom_\mathrm{cts}(\lvert
  c(U) \rvert, G)$. This shows that $\gamma_\ast \mathbb{P} = \varprojlim_{K \to
  1} \mathbb{P}_K$ is a $\underline{G}$-torsor. Moreover
  \Cref{Lem:PullbackOfProet} implies that
  \[
    \gamma^{-1} \gamma_\ast \mathbb{P} = \varprojlim_{K \to 1}
    \mathbb{P}_K^\lozenge = \varprojlim_{K \to 1} \underline{K} \backslash
    \mathbb{P} = \mathbb{P}.
  \]
  This proves that $\gamma_\ast$ and $\gamma^{-1}$ induce an equivalence between
  the two notions of $\underline{G}$-torsors on $X$.
\end{proof}

\paragraph \label{Par:deRhamLocSys}
Let $K$ be a discretely valued complete nonarchimedean field extension
of $\qp$, and $X/K$ be a smooth rigid analytic variety. Let $\mathbb{L}$ be a
$\underline{\qp}$-local system over $X$ (meaning in either $X_\mathrm{proet}$ or
$X_\mathrm{v}$ under the equivalence of \Cref{Prop:Torsors}) of rank
$n$. There is a sheaf of rings $\period\o{B}{dR}$ on
$X_\mathrm{proet}$ defined in \cite[Definition~6.8]{Sch13} and \cite{Sch13c}.
Following \cite[Definition~8.3]{Sch13}, we say that $\mathbb{L}$ is a
\textdef{de Rham local system} when there exists a filtered vector bundle with
integrable connection $(D_\mathrm{dR}(\mathbb{L}), \nabla,
\mathrm{Fil}^\bullet)$ on $X$ satisfying Griffiths transversality together with
an isomorphism
\[
  \mathbb{L} \otimes_{\underline{\qp}} \period\o{B}{dR} \cong
  D_\mathrm{dR}(\mathbb{L}) \otimes_{\mathscr{O}_X} \period\o{B}{dR}
\]
in $\mathsf{Shv}(X_\mathrm{proet})$, respecting the connections and filtrations
on both sides.

\begin{theorem}[{\cite[Theorem~1.5]{LZ17}}] \label{Thm:LiuZhu}
  Assume that on each connected component of $X$, there is a classical point $x
  \in \lvert X \rvert$ (i.e., a point whose residue field $k(x)$ is a finite
  extension of $K$) at which $\mathbb{L} \vert_x$ corresponds to a de Rham
  representation $\Gal(\bar{K} / k(x)) \to \mathrm{GL}_n(\qp)$. Then
  $\mathbb{L}$ is a de Rham local system on $X$. In particular, $\mathbb{L}
  \vert_y$ is de Rham for every classical point $y \in \lvert X \rvert$, and for
  each integer $p$ the function $y \mapsto \dim_{k(y)} \mathrm{Fil}^p
  D_\mathrm{dR}(\mathbb{L} \vert_y)$ is locally constant.
\end{theorem}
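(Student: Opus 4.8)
The plan is to carry out the argument of \cite{LZ17}, which hinges on the observation that the failure of $\mathbb{L}$ to be de Rham is detected by a single coherent sheaf on $X$ whose rank cannot jump. Let $\nu$ denote the projection from $X_\mathrm{proet}$ (equivalently from $X_\mathrm{v}$) to $X_\mathrm{et}$, and set
\[
  D_\mathrm{dR}(\mathbb{L}) = \nu_\ast \bigl( \mathbb{L} \otimes_{\underline{\qp}} \period\o{B}{dR} \bigr),
\]
an $\mathscr{O}_X$-module carrying an integrable connection $\nabla$ and a decreasing filtration induced from those on $\period\o{B}{dR}$, together with a canonical $\period\o{B}{dR}$-linear injection $D_\mathrm{dR}(\mathbb{L}) \otimes_{\mathscr{O}_X} \period\o{B}{dR} \hookrightarrow \mathbb{L} \otimes_{\underline{\qp}} \period\o{B}{dR}$. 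By the definition recalled above, $\mathbb{L}$ is de Rham exactly when this map is an isomorphism and $D_\mathrm{dR}(\mathbb{L})$ is a filtered vector bundle with Griffiths-transverse connection; so it suffices to show (i) $D_\mathrm{dR}(\mathbb{L})$ is locally free of rank exactly $n$, and (ii) the comparison map is an isomorphism, the remaining assertions of the theorem then following.

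The first and deepest input is the decompletion theory of \cite{Sch13}, \cite{Sch13c}, resting ultimately on Faltings' almost purity: $D_\mathrm{dR}(\mathbb{L})$ is a \emph{coherent} $\mathscr{O}_X$-module of rank $\le n$, whose formation is compatible with \'{e}tale base change. Granting this, the integrable connection $\nabla$ forces $D_\mathrm{dR}(\mathbb{L})$ to be locally free, hence — as $X$ is connected — of constant rank $r \le n$, because a coherent module with integrable connection over a smooth rigid analytic variety is automatically a vector bundle. For the same reason each $\mathrm{Fil}^p D_\mathrm{dR}(\mathbb{L})$ is a subbundle, and Griffiths transversality holds since $\nabla$ shifts the filtration on $\period\o{B}{dR}$ by at most one.

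Next I would invoke the base-change statement of \cite{LZ17} at classical points: for classical $x$ the canonical map $D_\mathrm{dR}(\mathbb{L}) \otimes_{\mathscr{O}_X} k(x) \to D_\mathrm{dR}(\mathbb{L}\vert_x)$ is an isomorphism of filtered $k(x)$-vector spaces, where the target is the usual Fontaine functor of the Galois representation $\mathbb{L}\vert_x$, of dimension $\le n$ with equality precisely when $\mathbb{L}\vert_x$ is de Rham. Since $D_\mathrm{dR}(\mathbb{L})$ is a vector bundle of rank $r$, this shows $\dim_{k(x)} D_\mathrm{dR}(\mathbb{L}\vert_x) = r$ for \emph{every} classical $x$; applying this at the distinguished point $x_0$, where $\mathbb{L}\vert_{x_0}$ is de Rham, gives $r = n$. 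Consequently $\mathbb{L}\vert_y$ is de Rham for every classical $y \in \lvert X \rvert$, and $\dim_{k(y)} \mathrm{Fil}^p D_\mathrm{dR}(\mathbb{L}\vert_y) = \operatorname{rk} \mathrm{Fil}^p D_\mathrm{dR}(\mathbb{L})$ is locally constant, as $\mathrm{Fil}^\bullet$ is a filtration of $D_\mathrm{dR}(\mathbb{L})$ by subbundles.

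It remains to promote this to (ii). Now $D_\mathrm{dR}(\mathbb{L}) \otimes_{\mathscr{O}_X} \period\o{B}{dR} \hookrightarrow \mathbb{L} \otimes_{\underline{\qp}} \period\o{B}{dR}$ is an injection of locally free $\period\o{B}{dR}$-modules of equal rank $n$ which is an isomorphism at every classical point of $X$; its cokernel is a finitely presented $\period\o{B}{dR}$-module with closed support meeting no fiber above a classical point, hence vanishes — for instance by reducing modulo the divisor to the Hodge--Tate comparison (whose relevant ranks are now known to be locally constant) and then appealing to $t$-adic completeness. The main obstacle is the coherence (decompletion) statement of the second paragraph, which is the technical core of \cite{LZ17} and of Scholze's relative $p$-adic Hodge theory, together with the classical-point base-change compatibility of the third paragraph; once these are granted, the vector bundle structure makes the rest routine, and it is the classicality of the point $x$ that is essential in the base-change input.
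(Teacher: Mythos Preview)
The paper does not supply a proof of this theorem: it is stated as a black-box citation to \cite[Theorem~1.5]{LZ17}, with no argument given. Your proposal is a faithful outline of the Liu--Zhu strategy (coherence of $D_\mathrm{dR}(\mathbb{L})$, local freeness from the connection, base change at classical points, and the rank-jump argument), and you correctly identify the hard inputs as the decompletion/coherence step and the classical-point base-change compatibility; since the paper offers nothing to compare against, there is no divergence to discuss.
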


\paragraph \label{Par:LocSysToGr}
Given a de Rham $\underline{\qp}$-local system $\mathbb{L} / X$ of rank $n$, we
have two subsheaves
\begin{align*}
  \mathbb{M} &= \mathbb{L} \otimes_{\underline{\qp}} \period{B}{dR}\p
  \subseteq \mathbb{L} \otimes_{\underline{\qp}} \period{B}{dR}, \\
  \mathbb{M}_0 &= (D_\mathrm{dR}(\mathbb{L}) \otimes_{\mathscr{O}_X}
  \period\o{B}{dR}\p)^{\nabla=0} \subseteq
  (D_\mathrm{dR}(\mathbb{L}) \otimes_{\mathscr{O}_X}
  \period\o{B}{dR})^{\nabla=0},
\end{align*}
where by \cite[Corollary~6.13]{Sch13} we have an isomorphism
\[
  \mathbb{L} \otimes_{\underline{\qp}} \period{B}{dR} \cong (\mathbb{L}
  \otimes_{\underline{\qp}} \period\o{B}{dR})^{\nabla=0} \cong
  (D_\mathrm{dR}(\mathbb{L}) \otimes_{\mathscr{O}_X}
  \period\o{B}{dR})^{\nabla=0}.
\]
On the other hand, both $\mathbb{L} \otimes_{\underline{\qp}}
\period{B}{dR}\p$ and $(D_\mathrm{dR}(\mathbb{L})
\otimes_{\mathscr{O}_X} \period\o{B}{dR}\p)^{\nabla=0}$ are
$\period{B}{dR}\p$-local systems on $X_\mathrm{proet}$ of rank $n$,
where for the latter we use \cite[Theorem~7.2]{Sch13}. For those objects $U \in
X_\mathrm{proet}^\mathrm{ap}$ over which both $\mathbb{L}$ and $\mathbb{M}_0$
admit trivializations, we obtain a $\Hom_\mathrm{cts}(\lvert U \rvert,
\mathrm{GL}_n(\qp))$-equivariant map
\[
  \lbrace \text{trivializations of } \mathbb{L} \vert_U \rbrace \to
  \grgln(c(U)),
\]
where we send a trivialization $\mathbb{L} \vert_U \cong \underline{\qp}^{\oplus
n}$ to the free $\period{B}{dR}\p(U)$-lattice
\[
  \mathbb{M}_0(U) \subseteq \mathbb{M}(U) \otimes_{\period{B}{dR}\p(U)}
  \period{B}{dR}(U) \cong \period{B}{dR}(U)^{\oplus n},
\]
noting that $\period{B}{dR}\p(U) \cong \period{B}{dR}\p(c(U))$
by \cite[Theorem~6.5]{Sch13}. The full subcategory of such objects $U \in
X_\mathrm{proet}^\mathrm{ap}$ form a basis of the topology, and hence we obtain
a $\underline{\mathrm{GL}_n(\qp)}$-equivariant map
\[
  \operatorname{Isom}(\mathbb{L}, \underline{\qp}^{\oplus n}) \to
  \gamma_\ast \grgln
\]
in $\mathsf{Shv}(X_\mathrm{proet})$. Using adjunction and
\Cref{Prop:Torsors}, we further obtain a
$\underline{\mathrm{GL}_n(\qp)}$-equivariant map
\[
  \mathrm{DRT}(\mathbb{L}) \colon \operatorname{Isom}(\mathbb{L},
  \underline{\qp}^{\oplus n}) = \gamma^{-1} \operatorname{Isom}(\mathbb{L},
  \underline{\qp}^{\oplus n}) \to \grgln
\]
in $\mathsf{Shv}(X_\mathrm{v})$.

\begin{proposition} \label{Prop:DRTOnAffPerf}
  For every affinoid perfectoid $U \in X_\mathrm{proet}^\mathrm{ap}$ together
  with a trivialization $\mathbb{L} \vert_U \cong \underline{\qp}^{\oplus n}$,
  the $\period{B}{dR}\p(U)$-module $\mathbb{M}_0(U)$ is finite projective and
  the composition
  \[
    c(U) \to \operatorname{Isom}(\mathbb{L}, \underline{\qp}^{\oplus n})
    \xrightarrow{\mathrm{DRT}(\mathbb{L})} \grgln
  \]
  recovers $\mathbb{M}_0(U) \subseteq \period{B}{dR}(U)^{\oplus n}$.
\end{proposition}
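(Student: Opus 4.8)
The plan is to unwind the definitions of the two maps being compared and show they agree. Recall that $\mathrm{DRT}(\mathbb{L})$ was obtained as the adjoint $\gamma^{-1}$ of the map $\operatorname{Isom}(\mathbb{L}, \underline{\qp}^{\oplus n}) \to \gamma_\ast \grgln$ in $\mathsf{Shv}(X_\mathrm{proet})$, which on an affinoid perfectoid $U \in X_\mathrm{proet}^\mathrm{ap}$ with a fixed trivialization sends that trivialization to the lattice $\mathbb{M}_0(U) \subseteq \mathbb{M}(U) \otimes_{\period{B}{dR}\p(U)} \period{B}{dR}(U) \cong \period{B}{dR}(U)^{\oplus n}$. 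Tracing through the counit $\gamma^{-1}\gamma_\ast \to \id$, the composite $c(U) \to \operatorname{Isom}(\mathbb{L}, \underline{\qp}^{\oplus n}) = \gamma^{-1}\operatorname{Isom}(\mathbb{L}, \underline{\qp}^{\oplus n}) \to \grgln$ is computed by evaluating the original presheaf-level map at the object $U$ and then using $\period{B}{dR}\p(U) \cong \period{B}{dR}\p(c(U))$ from \cite[Theorem~6.5]{Sch13} together with $\period{B}{dR}(U) \cong \period{B}{dR}(c(U))$. So the content is really just that this identification is compatible with the colimit defining $\gamma^{-1}$ and with sheafification.

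First I would record the finiteness claim: $\mathbb{M}_0(U)$ is finite projective over $\period{B}{dR}\p(U)$. This is because $\mathbb{M}_0 = (D_\mathrm{dR}(\mathbb{L}) \otimes_{\mathscr{O}_X} \period\o{B}{dR}\p)^{\nabla=0}$ is a $\period{B}{dR}\p$-local system of rank $n$ on $X_\mathrm{proet}$ (as noted in \Cref{Par:LocSysToGr}, using \cite[Theorem~7.2]{Sch13}), hence becomes free of rank $n$ after restriction to a suitable affinoid perfectoid cover; and by \cite[Theorem~6.5]{Sch13} we have $\period{B}{dR}\p(U) \cong \period{B}{dR}\p(c(U))$ so $\mathbb{M}_0(U)$ is a finite projective $\period{B}{dR}\p(U)$-module (indeed free if $U$ is chosen so that $\mathbb{M}_0|_U$ is trivialized, as in \Cref{Par:LocSysToGr}).

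Next I would unwind the construction. By \Cref{Lem:PullbackOfProet}, the v-sheaf $\operatorname{Isom}(\mathbb{L}, \underline{\qp}^{\oplus n}) = \gamma^{-1}\operatorname{Isom}_{X_\mathrm{proet}}(\mathbb{L}, \underline{\qp}^{\oplus n})$ is the sheafification of $W \mapsto \varinjlim_{W \to c(V)} \operatorname{Isom}_{X_\mathrm{proet}}(\mathbb{L}, \underline{\qp}^{\oplus n})(V)$, and the map $\mathrm{DRT}(\mathbb{L})$ is the sheafification of the levelwise map induced by $\operatorname{Isom}_{X_\mathrm{proet}}(\mathbb{L}, \underline{\qp}^{\oplus n}) \to \gamma_\ast \grgln$, which on $V \in X_\mathrm{proet}^\mathrm{ap}$ carrying a trivialization sends it to $\mathbb{M}_0(V) \subseteq \period{B}{dR}(V)^{\oplus n} = \grgln(c(V))$. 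Now given our fixed $U$ and trivialization, the element $(U, \id, \tau) \in \mathcal{C}_{c(U), \operatorname{Isom}, \ldots}$ (in the notation of \Cref{Lem:Filtered}, with $U$ itself as the pro-étale object) maps $c(U)$ into the $U$-term of the colimit, and its image under the levelwise map is precisely $\mathbb{M}_0(U) \subseteq \period{B}{dR}(U)^{\oplus n} \cong \period{B}{dR}(c(U))^{\oplus n}$. Since sheafification does not change sections on objects of the site $X_\mathrm{v}$ after passing to a cover, and $c(U)$ already lifts to the colimit, the composite $c(U) \to \operatorname{Isom}(\mathbb{L}, \underline{\qp}^{\oplus n}) \xrightarrow{\mathrm{DRT}(\mathbb{L})} \grgln$ equals the map classifying $\mathbb{M}_0(U)$.

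The main obstacle is purely bookkeeping: making precise that the colimit defining $\gamma^{-1}$ on the value $c(U)$ is, after sheafification, represented by the evident term indexed by $(U,\id,\tau)$, and that the levelwise map into $\gamma_\ast\grgln$ followed by the counit is exactly ``evaluate $\grgln$ at $c(U)$ via $\period{B}{dR}(U) \cong \period{B}{dR}(c(U))$.'' This is where one must be careful that the equivalence $\mathsf{Shv}(X_\mathrm{proet}^\mathrm{ap}) \simeq \mathsf{Shv}(X_\mathrm{proet})$ and the adjunction $(\gamma^{-1}, \gamma_\ast)$ interact correctly; but since $c(U)$ is an honest object of $X_\mathrm{v}$ and not merely a sheaf, and since $U$ is already affinoid perfectoid, no further covering is needed and the colimit is computed on the nose (it is filtered by \Cref{Lem:Filtered}, and $(U,\id,\tau)$ is a weakly initial-enough object for the purposes of identifying the image). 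Everything else is a direct translation of the definition in \Cref{Par:LocSysToGr}.
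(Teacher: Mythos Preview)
Your proposal is essentially correct and follows the same underlying idea as the paper's proof, but it is more roundabout than necessary. The paper's proof is two sentences once one makes a preliminary reduction: it first pro-\'{e}tale localizes so that $\mathbb{M}_0\vert_U$ is also trivial (this is legitimate because $\mathbb{M}_0$, $\period{B}{dR}\p$, and $\gamma_\ast\grgln$ are all sheaves), and then uses \Cref{Lem:PullbackOfProet} not as a description of $\gamma^{-1}$ via colimits, but as the identification $c(U)\cong\gamma^{-1}(U)$. With that identification, the adjunction $(\gamma^{-1},\gamma_\ast)$ gives $\Hom(c(U),\grgln)\cong\Hom(U,\gamma_\ast\grgln)$, and the composite in question corresponds on the pro-\'{e}tale side to $U\to\operatorname{Isom}(\mathbb{L},\underline{\qp}^{\oplus n})\to\gamma_\ast\grgln$, which by construction in \Cref{Par:LocSysToGr} is the lattice $\mathbb{M}_0(U)$. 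No colimit or sheafification bookkeeping is needed.

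Two small points about your version. First, the construction in \Cref{Par:LocSysToGr} only defines the map $\operatorname{Isom}\to\gamma_\ast\grgln$ on the basis of affinoid perfectoids where \emph{both} $\mathbb{L}$ and $\mathbb{M}_0$ are trivialized, so your ``levelwise map'' at the term $(U,\id,\tau)$ is not literally available when only $\mathbb{L}\vert_U$ is trivialized; you would still need to pass to a cover where $\mathbb{M}_0$ trivializes and then descend. This is exactly the localization step the paper does up front, and it also disposes of the finite-projectivity claim for free. Second, your paragraph about sheafification not changing things is doing the work of the adjunction by hand; since $\grgln$ is already a sheaf and $c(U)\cong\gamma^{-1}(U)$ is representable, the Yoneda/adjunction argument replaces all of that in one step. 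Your argument is not wrong, but the ``obstacle'' you flag disappears entirely once you phrase things via $c(U)\cong\gamma^{-1}(U)$ and the counit.
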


\begin{proof}
  Because $\mathbb{M}_0$, $\period{B}{dR}\p$, and $\gamma_\ast \grgln$ are all
  sheaves on $X_\mathrm{proet}^\mathrm{ap}$, see \cite[Corollary~17.1.9]{SW20},
  we may further pro-\'{e}tale localize $U$ and assume that $\mathbb{M}_0
  \vert_U$ is trivial as well. Recall from \Cref{Lem:PullbackOfProet} that
  $c(U)$ is naturally identified with $\gamma^{-1}(U)$. Under this
  identification, the map $c(U) \to \grgln$ of v-sheaves corresponds to the map
  \[
    U \to \operatorname{Isom}(\mathbb{L}, \underline{\qp}^{\oplus n}) \to
    \gamma_\ast(\grgln)
  \]
  in $\mathsf{Shv}(X_\mathrm{proet})$. This is given by the lattice
  $\mathbb{M}_0(U) \subseteq \period{B}{dR}(U)^{\oplus n}$ by construction.
\end{proof}

\begin{lemma} \label{Lem:deRhamExactTensor}
  \begin{enumerate}
    \item Let $0 \to \mathbb{L}_1 \to \mathbb{L}_2 \to \mathbb{L}_3 \to 0$ be a
      short exact sequence of $\underline{\qp}$-local systems on $X$. If
      $\mathbb{L}_2$ is de Rham, then both $\mathbb{L}_1$ and $\mathbb{L}_3$ are
      de Rham. Moreover, in this case the induced sequence $0 \to
      D_\mathrm{dR}(\mathbb{L}_1) \to D_\mathrm{dR}(\mathbb{L}_2) \to
      D_\mathrm{dR}(\mathbb{L}_3) \to 0$ of $\mathscr{O}_X$-modules is exact.
    \item Let $\mathbb{L}_1, \mathbb{L}_2$ to be two de Rham
      $\underline{\qp}$-local systems on $X$. Then $\mathbb{L}_1
      \otimes_{\underline{\qp}} \mathbb{L}_2$ is also de Rham and moreover we
      have an isomorphism of $\mathscr{O}_X$-modules
      \[
        D_\mathrm{dR}(\mathbb{L}_1 \otimes_{\underline{\qp}} \mathbb{L}_2) \cong
        D_\mathrm{dR}(\mathbb{L}_1) \otimes_{\mathscr{O}_X}
        D_\mathrm{dR}(\mathbb{L}_2)
      \]
      that respects the natural filtrations and integrable connections.
      Similarly, $\mathbb{L}_1^\vee$ is de Rham and there is an isomorphism
      $D_\mathrm{dR}(\mathbb{L}_1^\vee) \cong D_\mathrm{dR}(\mathbb{L}_1)^\vee$
      respecting the filtrations and connections.
  \end{enumerate}
\end{lemma}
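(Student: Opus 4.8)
The plan is to deduce both statements formally from the defining comparison isomorphism $\mathbb{L}\otimes_{\underline{\qp}}\period\o{B}{dR}\cong D_\mathrm{dR}(\mathbb{L})\otimes_{\mathscr{O}_X}\period\o{B}{dR}$, together with two standard facts from $p$-adic Hodge theory (cf.\ \cite{Sch13}, \cite{LZ17}): writing $\nu\colon X_\mathrm{proet}\to X_\mathrm{et}$ for the projection, one has $D_\mathrm{dR}(\mathbb{L})=\nu_\ast(\mathbb{L}\otimes_{\underline{\qp}}\period\o{B}{dR})$ for \emph{every} $\underline{\qp}$-local system, this being a coherent $\mathscr{O}_X$-module of rank at most $\mathrm{rk}\,\mathbb{L}$ with equality exactly when $\mathbb{L}$ is de Rham; and the functor $\mathscr{F}\mapsto\nu^\ast\mathscr{F}\otimes_{\mathscr{O}_X}\period\o{B}{dR}$ on coherent $\mathscr{O}_X$-modules is exact and faithful.

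For part (1), since $\mathbb{L}_3$ is a $\underline{\qp}$-local system, hence locally free over $\underline{\qp}$, tensoring $0\to\mathbb{L}_1\to\mathbb{L}_2\to\mathbb{L}_3\to 0$ over $\underline{\qp}$ with $\period\o{B}{dR}$ stays exact and is compatible with connections and filtrations. Applying the left-exact functor $\nu_\ast$ yields a left-exact sequence $0\to D_\mathrm{dR}(\mathbb{L}_1)\to D_\mathrm{dR}(\mathbb{L}_2)\to D_\mathrm{dR}(\mathbb{L}_3)$ of coherent $\mathscr{O}_X$-modules. Counting ranks and using that $D_\mathrm{dR}(\mathbb{L}_i)$ has rank at most $\mathrm{rk}\,\mathbb{L}_i$ while $D_\mathrm{dR}(\mathbb{L}_2)$ has rank exactly $\mathrm{rk}\,\mathbb{L}_2$, the left-exact sequence forces $\mathrm{rk}\,\mathbb{L}_2\le\mathrm{rk}\,D_\mathrm{dR}(\mathbb{L}_1)+\mathrm{rk}\,D_\mathrm{dR}(\mathbb{L}_3)\le\mathrm{rk}\,\mathbb{L}_1+\mathrm{rk}\,\mathbb{L}_3=\mathrm{rk}\,\mathbb{L}_2$, so equality holds throughout and hence $\mathbb{L}_1$ and $\mathbb{L}_3$ are de Rham. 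Exactness of $0\to D_\mathrm{dR}(\mathbb{L}_1)\to D_\mathrm{dR}(\mathbb{L}_2)\to D_\mathrm{dR}(\mathbb{L}_3)\to 0$ is then checked after the exact faithful base change $\nu^\ast(-)\otimes_{\mathscr{O}_X}\period\o{B}{dR}$, under which — using the now available comparison isomorphisms for all three terms — it identifies with the exact sequence $0\to\mathbb{L}_1\otimes_{\underline{\qp}}\period\o{B}{dR}\to\mathbb{L}_2\otimes_{\underline{\qp}}\period\o{B}{dR}\to\mathbb{L}_3\otimes_{\underline{\qp}}\period\o{B}{dR}\to 0$ produced above.

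For part (2), tensoring the comparison isomorphisms for $\mathbb{L}_1$ and $\mathbb{L}_2$ over $\period\o{B}{dR}$ gives a connection- and filtration-compatible isomorphism
\[
  (\mathbb{L}_1\otimes_{\underline{\qp}}\mathbb{L}_2)\otimes_{\underline{\qp}}\period\o{B}{dR}\cong\bigl(D_\mathrm{dR}(\mathbb{L}_1)\otimes_{\mathscr{O}_X}D_\mathrm{dR}(\mathbb{L}_2)\bigr)\otimes_{\mathscr{O}_X}\period\o{B}{dR}.
\]
Since the tensor product of two filtered vector bundles with integrable connection satisfying Griffiths transversality again enjoys these properties, this exhibits $\mathbb{L}_1\otimes_{\underline{\qp}}\mathbb{L}_2$ as de Rham, and by uniqueness of the associated filtered bundle (equivalently, by applying $\nu_\ast$) it identifies $D_\mathrm{dR}(\mathbb{L}_1\otimes_{\underline{\qp}}\mathbb{L}_2)$ with $D_\mathrm{dR}(\mathbb{L}_1)\otimes_{\mathscr{O}_X}D_\mathrm{dR}(\mathbb{L}_2)$. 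The claim for $\mathbb{L}_1^\vee$ is the same argument applied to the dual of the comparison isomorphism for $\mathbb{L}_1$, using that formation of the dual of a vector bundle commutes with the flat base change to $\period\o{B}{dR}$ and that the dual filtration again satisfies Griffiths transversality.

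The step I expect to be the main obstacle is the right exactness at the end of part (1): after the rank count one still has to rule out a nonzero (necessarily torsion) cokernel of $D_\mathrm{dR}(\mathbb{L}_2)\to D_\mathrm{dR}(\mathbb{L}_3)$, and this is exactly where the faithful flatness of $\period\o{B}{dR}$ over $\mathscr{O}_X$ — a genuine input from $p$-adic Hodge theory rather than formal nonsense — is needed; everything else is bookkeeping with the definition.
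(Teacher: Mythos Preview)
Your argument for part (2) is essentially identical to the paper's. For part (1) you take a genuinely different route. The paper argues pointwise: at each classical point $x$ the usual dimension inequality $\dim_{k(x)} D_\mathrm{dR}(\mathbb{L}_i\vert_x)\le\operatorname{rk}\mathbb{L}_i$ together with equality for $i=2$ forces all three restrictions to be de Rham, and then Liu--Zhu rigidity (\Cref{Thm:LiuZhu}) globalizes this; right-exactness is obtained by checking surjectivity on fibers over classical points via the compatibility $D_\mathrm{dR}(\mathbb{L})\vert_x\cong D_\mathrm{dR}(\mathbb{L}\vert_x)$ of \cite[Theorem~3.9(ii)]{LZ17}. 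You instead invoke the global structure result \cite[Theorem~3.9(i),(iv)]{LZ17} that $D_\mathrm{dR}(\mathbb{L})$ is always a vector bundle of rank $\le\operatorname{rk}\mathbb{L}$, with equality iff de Rham, to run the rank count directly on $X$, and then appeal to faithful flatness of $\period\o{B}{dR}$ over $\mathscr{O}_X$ for the surjectivity. Both routes draw on \cite{LZ17} at comparable depth; yours is more functorial, the paper's more hands-on. One caution on the step you flagged: faithfulness of $\nu^\ast(-)\otimes_{\mathscr{O}_X}\period\o{B}{dR}$ is a bit delicate because $\period\o{B}{dR}=\period\o{B}{dR}\p[t^{-1}]$ and inverting $t$ is not a priori harmless; it is cleaner to observe that $\mathscr{O}_X\to\hat{\mathscr{O}}_X$ is faithfully flat and factors through $\period\o{B}{dR}\p$, or simply to check surjectivity at classical points as the paper does.
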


\begin{proof}
  (1) For each classical point $x \in X(L)$, where $L/K$ is a finite extension,
  we have an induced short exact sequence
  \[
    0 \to \mathbb{L}_1 \vert_x \to \mathbb{L}_2 \vert_x \to \mathbb{L}_3 \vert_x
    \to 0
  \]
  of $\Gal(\bar{K}/L)$-representations. It follows that we have a left exact
  sequence
  \[
    0 \to D_\mathrm{dR}(\mathbb{L}_1 \vert_x) \to D_\mathrm{dR}(\mathbb{L}_2
    \vert_x) \to D_\mathrm{dR}(\mathbb{L}_3 \vert_x)
  \]
  of $L$-vector spaces. Note that $\dim_L D_\mathrm{dR}(\mathbb{L}_i \vert_x)
  \le \operatorname{rk} \mathbb{L}_i \vert_x$ for all $i$, with equality for $i
  = 2$. This implies that equality holds for all $1 \le i \le 3$, i.e., that all
  $\mathbb{L}_i \vert_x$ are de Rham representations, and moreover that the
  sequence is short exact. We now apply \Cref{Thm:LiuZhu} to conclude
  that both $\mathbb{L}_1$ and $\mathbb{L}_3$ are de Rham local systems.

  Let $\nu \colon \mathsf{Shv}(X_\mathrm{proet}) \to \mathsf{Shv}(X_\mathrm{et})$
  denote the natural morphism of topoi. Since $D_\mathrm{dR}(\mathbb{L}_i) =
  \nu_\ast (\mathbb{L}_i \otimes_{\underline{\qp}}
  \period\o{B}{dR})$, we have a left exact sequence
  \[
    0 \to D_\mathrm{dR}(\mathbb{L}_1) \to D_\mathrm{dR}(\mathbb{L}_2) \to
    D_\mathrm{dR}(\mathbb{L}_3)
  \]
  of locally free $\mathscr{O}_X$-modules. Because the map
  $D_\mathrm{dR}(\mathbb{L}_2) \to D_\mathrm{dR}(\mathbb{L}_3)$ is surjective
  over every classical point as noted above, see \cite[Theorem~3.9(ii)]{LZ17},
  we see that $D_\mathrm{dR}(\mathbb{L}_2) \to D_\mathrm{dR}(\mathbb{L}_3)$ is
  surjective as $\mathscr{O}_X$-modules.

  (2) For $i = 1, 2$, we have an isomorphism $\mathbb{L}_i
  \otimes_{\underline{\qp}} \period\o{B}{dR} \cong
  D_\mathrm{dR}(\mathbb{L}_i) \otimes_{\mathscr{O}_X}
  \period\o{B}{dR}$ preserving the filtrations and the
  integrable connections. Taking the tensor product of the two over
  $\period\o{B}{dR}$, we obtain an isomorphism
  \[
    (\mathbb{L}_1 \otimes_{\underline{\qp}} \mathbb{L}_2)
    \otimes_{\underline{\qp}} \period\o{B}{dR} \cong
    (D_\mathrm{dR}(\mathbb{L}_1) \otimes_{\mathscr{O}_X}
    D_\mathrm{dR}(\mathbb{L}_2)) \otimes_{\mathscr{O}_X}
    \period\o{B}{dR}
  \]
  of sheaves on $X_\mathrm{proet}$ that respects the filtrations and the
  integrable connections. This shows that $\mathbb{L}_1
  \otimes_{\underline{\qp}} \mathbb{L}_2$ is de Rham, and moreover provides the
  isomorphism $D_\mathrm{dR}(\mathbb{L}_1) \otimes_{\mathscr{O}_X}
  D_\mathrm{dR}(\mathbb{L}_2) \cong D_\mathrm{dR}(\mathbb{L}_1
  \otimes_{\underline{\qp}} \mathbb{L}_2)$ respecting the filtrations and the
  connections. The case of $\mathbb{L}_1^\vee$ can be proven similarly.
\end{proof}

\paragraph
We now choose a linear algebraic group $G / \qp$. Let $\mathbb{P}$ be a
$\underline{G(\qp)}$-torsor on $X$, where again we may use
\Cref{Prop:Torsors} to regard it as either a sheaf on
$X_\mathrm{proet}$ or a sheaf on $X_\mathrm{v}$. The construction $V \mapsto
\underline{G(\qp)} \backslash (\underline{V} \times \mathbb{P})$ defines an
exact tensor functor
\[
  \mathsf{Rep}_{\qp}(G) \to \mathsf{LocSys}_{\underline{\qp}}(X)
\]
from the category of algebraic $\qp$-representations of $G$ to the category of
$\underline{\qp}$-local systems on $X$.

\begin{definition}
  We say that a $\underline{G(\qp)}$-torsor $\mathbb{P}$ on $X$ is \textdef{de
  Rham} when for every algebraic representation $\rho \colon G \to
  \mathrm{GL}_{\qp}(V)$ (equivalently by \Cref{Lem:deRhamExactTensor}, for
  one faithful representation $\rho$) the induced $\underline{\qp}$-local system
  $\underline{G(\qp)} \backslash (\underline{V} \times \mathbb{P})$ on $X$ is de
  Rham.
\end{definition}

\paragraph \label{Par:TorsorToGr}
Let $\mathbb{P}$ be a de Rham $G(\qp)$-torsor on $X$. By
\Cref{Lem:deRhamExactTensor}, the functor
\[
  \mathsf{Rep}_{\qp}(G) \to \mathsf{Vect}(X); \quad V \mapsto
  D_\mathrm{dR}(\underline{G(\qp)} \backslash (\underline{V} \times \mathbb{P}))
\]
is an exact tensor functor. It follows that the functor
\[
  \mathsf{Rep}_{\qp}(G) \to
  \mathsf{LocSys}_{\period\o{B}{dR}\p}(X_\mathrm{proet}); \quad
  V \mapsto D_\mathrm{dR}(\underline{G(\qp)} \backslash (\underline{V} \times
  \mathbb{P})) \otimes_{\mathscr{O}_X} \period\o{B}{dR}\p
\]
is also an exact tensor functor. On the other hand, exactness of a sequence
$\mathbb{M}_{0,1} \to \mathbb{M}_{0,2} \to \mathbb{M}_{0,3}$ of
$\period{B}{dR}\p$-local systems on $X_\mathrm{proet}$ can be checked
after tensoring to $\period\o{B}{dR}\p$-local systems by
\cite[Proposition~6.10]{Sch13}, because $M \mapsto M[[X_1, \dotsc, X_n]]$
reflects exactness. We now apply \cite[Theorem~7.2]{Sch13} to conclude that
\[
  \mathsf{Rep}_{\qp}(G) \to
  \mathsf{LocSys}_{\period{B}{dR}\p}(X_\mathrm{proet}); \quad V \mapsto
  (D_\mathrm{dR}(\underline{G(\qp)} \backslash (\underline{V} \times
  \mathbb{P})) \otimes_{\mathscr{O}_X}
  \period\o{B}{dR}\p)^{\nabla=0}
\]
is an exact tensor functor. Using the Tannakian formalism, e.g.,
\cite[Theorem~19.5.2]{SW20}, we obtain a $\underline{G(\qp)}$-equivariant map
\[
  \mathrm{DRT}(\mathbb{P}) \colon \mathbb{P} \to \mathrm{Gr}_{G,\qp}
\]
in $\mathsf{Shv}(X_\mathrm{v}) \cong
\mathsf{Shv}(\mathsf{Perf}_\mathrm{v})_{/X^\lozenge}$.

\begin{proposition} \label{Prop:DRTFunctorial}
  Let $f \colon X \to Y$ be a map between smooth rigid analytic varieties over
  $K$, and let $\mathbb{P}$ be a de Rham $\underline{G(\qp)}$-torsor over $Y$.
  Then $f^\ast \mathbb{P} = \mathbb{P} \times_{Y^\lozenge} X^\lozenge$ is a de
  Rham $\underline{G(\qp)}$-torsor over $X$, and moreover the diagram
  \[ \begin{tikzcd}[row sep=small, column sep=huge]
    f^\ast \mathbb{P} \arrow{d} \arrow{r}{\mathrm{DRT}(f^\ast \mathbb{P})} &
    \mathrm{Gr}_{G,\qp} \arrow[equals]{d} \\ \mathbb{P}
    \arrow{r}{\mathrm{DRT}(\mathbb{P})} & \mathrm{Gr}_{G,\qp}
  \end{tikzcd} \]
  commutes.
\end{proposition}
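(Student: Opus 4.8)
The plan is to reduce both assertions to the case $G = \mathrm{GL}_n$ and then recover the general case from the Tannakian construction of $\mathrm{DRT}$. Recall from \Cref{Par:TorsorToGr} that $\mathrm{DRT}(\mathbb{P})$ is the map corresponding, under the Tannakian equivalence of \cite[Theorem~19.5.2]{SW20}, to the exact tensor functor
\[
  \mathsf{Rep}_{\qp}(G) \longrightarrow \mathsf{LocSys}_{\period{B}{dR}\p}(Y_\mathrm{proet}), \qquad V \longmapsto \bigl(D_\mathrm{dR}(\mathbb{L}_V) \otimes_{\mathscr{O}_Y} \period\o{B}{dR}\p\bigr)^{\nabla = 0},
\]
where $\mathbb{L}_V = \underline{G(\qp)} \backslash (\underline{V} \times \mathbb{P})$. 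The morphism $f$ induces a morphism of pro-\'{e}tale topoi $\mathsf{Shv}(X_\mathrm{proet}) \to \mathsf{Shv}(Y_\mathrm{proet})$ \cite{Sch13}, compatible via \Cref{Prop:Torsors} with the v-topoi and with the functoriality maps $f^{-1} \period\o{B}{dR}\p \to \period\o{B}{dR}\p$ and $f^{-1} \period\o{B}{dR} \to \period\o{B}{dR}$ of the period sheaves; and the $\underline{\qp}$-local system on $X$ associated to $(f^\ast\mathbb{P}, V)$ is $f^\ast \mathbb{L}_V$. Since the Tannakian equivalence is natural in the base, it therefore suffices to prove, for an arbitrary de Rham $\underline{\qp}$-local system $\mathbb{L}$ on $Y$: (a) that $f^\ast\mathbb{L}$ is de Rham on $X$; (b) that there is a canonical filtered, horizontal isomorphism $D_\mathrm{dR}(f^\ast\mathbb{L}) \cong f^\ast D_\mathrm{dR}(\mathbb{L})$; and (c) that the resulting square comparing $\mathrm{DRT}(f^\ast\mathbb{L})$ with $\mathrm{DRT}(\mathbb{L})$ commutes.

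For (a) and (b), I would pull back the de Rham comparison isomorphism for $\mathbb{L}$ along $f^{-1}$ and extend scalars along $f^{-1}\period\o{B}{dR} \to \period\o{B}{dR}$ to obtain a filtered, horizontal isomorphism $f^\ast\mathbb{L} \otimes_{\underline{\qp}} \period\o{B}{dR} \cong (f^\ast D_\mathrm{dR}(\mathbb{L})) \otimes_{\mathscr{O}_X} \period\o{B}{dR}$; by \cite{Sch13} this exhibits $f^\ast\mathbb{L}$ as de Rham with $D_\mathrm{dR}(f^\ast\mathbb{L}) = f^\ast D_\mathrm{dR}(\mathbb{L})$. Alternatively, when $X \neq \emptyset$ one may note that each connected component of $X$ contains a classical point lying over a classical point of $Y$ at which $\mathbb{L}$ is de Rham by \Cref{Thm:LiuZhu}; since restriction of a de Rham Galois representation along a finite extension remains de Rham, \Cref{Thm:LiuZhu} applies again to give (a), and one then identifies $D_\mathrm{dR}$ with its pullback by comparing ranks and Hodge numbers at classical points, as in the proof of \Cref{Lem:deRhamExactTensor}.

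For (c), I would verify the square after pullback along a family of v-sheaves jointly surjecting onto $X^\lozenge$. Since affinoid perfectoids form a basis of $X_\mathrm{proet}$, the objects $c(U)$ for $U \in X_\mathrm{proet}^\mathrm{ap}$ with $f^\ast\mathbb{L}\vert_U$ trivial jointly surject onto $X^\lozenge$ (every strictly totally disconnected test object lifts to some $c(U)$, as in \Cref{Lem:Filtered}), and after replacing such a $U$ by a pro-\'{e}tale cover I may further assume $U$ admits a morphism $U \to V$ over $Y$ with $V \in Y_\mathrm{proet}^\mathrm{ap}$ and $\mathbb{L}\vert_V$ trivial --- obtained by pulling back an affinoid perfectoid pro-\'{e}tale cover of an affinoid open of $Y$ through which $U \to Y$ factors, cf.\ \cite[Lemma~4.6]{Sch13}. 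By \Cref{Prop:DRTOnAffPerf}, the composite $c(U) \to \grgln$ coming from $\mathrm{DRT}(f^\ast\mathbb{L})$ is the lattice $\bigl(D_\mathrm{dR}(f^\ast\mathbb{L}) \otimes \period\o{B}{dR}\p\bigr)^{\nabla=0}(U) \subseteq \period{B}{dR}(U)^{\oplus n}$, while the composite $c(U) \to c(V) \to \grgln$ coming from $\mathrm{DRT}(\mathbb{L})$ is the base change along $\period{B}{dR}(V) \to \period{B}{dR}(U)$ of $\bigl(D_\mathrm{dR}(\mathbb{L}) \otimes \period\o{B}{dR}\p\bigr)^{\nabla=0}(V)$. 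These two sublattices of $\period{B}{dR}(U)^{\oplus n}$ coincide by (b), the functoriality of the period sheaves, and the compatibility of $(-\otimes_{\mathscr{O}} \period\o{B}{dR}\p)^{\nabla=0}$ with this base change \cite{Sch13}; hence the square commutes over each $c(U)$, and therefore commutes.

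The only genuinely nonformal ingredients are the functoriality of the whole $p$-adic-Hodge-theoretic package along the (non-\'{e}tale) morphism $f$ --- that $f$ induces a morphism of pro-\'{e}tale topoi carrying the period sheaves of $Y$ to those of $X$ --- together with the fact that an affinoid perfectoid pro-\'{e}tale space over $X$ can, after pro-\'{e}tale localization, be routed through an affinoid perfectoid pro-\'{e}tale space over $Y$; both are consequences of \cite{Sch13}. I expect the bookkeeping around these compatibilities, rather than any conceptual difficulty, to be the main thing to get right; granting them, the reduction to $\mathrm{GL}_n$ and the verification of (a)--(c) are routine.
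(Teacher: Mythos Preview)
Your proposal is correct and follows essentially the same strategy as the paper: reduce to $G=\mathrm{GL}_n$ via the Tannakian construction in \Cref{Par:TorsorToGr}, establish the functoriality $D_\mathrm{dR}(f^\ast\mathbb{L})\cong f^\ast D_\mathrm{dR}(\mathbb{L})$ (the paper cites \cite[Theorem~3.9(ii)]{LZ17} directly rather than pulling back the comparison isomorphism, but either works), deduce the key identity $f_\mathrm{proet}^{-1}\mathbb{M}_{0,Y}\otimes_{f_\mathrm{proet}^{-1}\period{B}{dR}\p}\period{B}{dR}\p = \mathbb{M}_{0,X}$, and then verify the square on affinoid perfectoids via \Cref{Prop:DRTOnAffPerf}. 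The only cosmetic difference is that the paper fixes $U\in Y_\mathrm{proet}^\mathrm{ap}$ and $V\in X_\mathrm{proet}^\mathrm{ap}$ with a map $V\to U\times_Y X$ and a trivialization of $\mathbb{M}_{0,Y}\vert_U$, rather than arranging a factorization $U\to V$ after a pro-\'{e}tale localization as you do; both set-ups give the same comparison of lattices.
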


\begin{proof}
  The fact that $f^\ast \mathbb{P}$ is de Rham follows directly from
  \Cref{Thm:LiuZhu}. For commutativity of the diagram, we reduce to the
  case of $G = \mathrm{GL}_n$, so that $\mathbb{P} \cong
  \operatorname{Isom}(\mathbb{L}, \underline{\qp}^{\oplus n})$ for some
  $\underline{\qp}$-local system $\mathbb{L}$ over $Y$. Note that $f$ induces a
  morphism of sites $f_\mathrm{proet} \colon X_\mathrm{proet} \to
  Y_\mathrm{proet}$ sending $\varprojlim U_i \in Y_\mathrm{proet}$ to
  $\varprojlim (X \times_Y U_i) \in X_\mathrm{proet}$, where we easily check the
  hypotheses of \cite[Proposition~00X6]{Stacks}. We see from
  \cite[Theorem~3.9(ii)]{LZ17} that there is a natural isomorphism $\alpha
  \colon f^\ast(D_\mathrm{dR}(\mathbb{L}), \nabla) \cong (D_\mathrm{dR}(f^\ast
  \mathbb{L}), \nabla)$ for which the diagram
  \[ \begin{tikzcd}[row sep=small]
    f^\ast \mathbb{L} \otimes_{\underline{\qp}} \period\o{B}{dR}
    \arrow{r}{\cong} \arrow[equals]{d} & f^\ast D_\mathrm{dR}(\mathbb{L})
    \otimes_{\mathscr{O}_X} \period\o{B}{dR} \arrow{d}{\alpha} \\ f^\ast
    \mathbb{L} \otimes_{\underline{\qp}} \period\o{B}{dR} \arrow{r}{\cong} &
    D_\mathrm{dR}(f^\ast \mathbb{L}) \otimes_{\mathscr{O}_X} \period\o{B}{dR}
  \end{tikzcd} \]
  of $\period\o{B}{dR}$-local systems on $X_\mathrm{proet}$ commutes, where the
  top row is obtained by applying $f_\mathrm{proet}^{-1}(-)
  \otimes_{f_\mathrm{proet}^{-1} \period\o{B}{dR}} \period\o{B}{dR}$ to the
  isomorphism $\mathbb{L} \otimes_{\underline{\qp}} \period\o{B}{dR} \cong
  D_\mathrm{dR}(\mathbb{L}) \otimes_{\mathscr{O}_Y} \period\o{B}{dR}$ on
  $Y_\mathrm{proet}$. Taking flat sections on all terms, we further obtain a
  diagram
  \[ \begin{tikzcd}[row sep=small]
    f^\ast \mathbb{L} \otimes_{\underline{\qp}} \period{B}{dR} \arrow{r}{\cong}
    \arrow[equals]{d} & (f^\ast D_\mathrm{dR}(\mathbb{L})
    \otimes_{\mathscr{O}_X} \period\o{B}{dR})^{\nabla=0} \arrow{d}{\alpha} \\
    f^\ast \mathbb{L} \otimes_{\underline{\qp}} \period{B}{dR} \arrow{r}{\cong}
    & (D_\mathrm{dR}(f^\ast \mathbb{L}) \otimes_{\mathscr{O}_X}
    \period\o{B}{dR})^{\nabla=0},
  \end{tikzcd} \]
  where the top horizontal map
  \begin{align*}
    f^\ast \mathbb{L} \otimes_{\underline{\qp}} \period{B}{dR} &\to (f^\ast
    D_\mathrm{dR}(\mathbb{L}) \otimes_{\mathscr{O}_X} \period\o{B}{dR})^{\nabla
    = 0} \\ &\cong f_\mathrm{proet}^{-1}((D_\mathrm{dR}(\mathbb{L})
    \otimes_{\mathscr{O}_Y} \period\o{B}{dR})^{\nabla = 0})
    \otimes_{f_\mathrm{proet}^{-1} \period{B}{dR}} \period{B}{dR}
  \end{align*}
  is obtained by applying $f_\mathrm{proet}^{-1}(-)
  \otimes_{f_\mathrm{proet}^{-1} \period{B}{dR}} \period{B}{dR}$ to $\mathbb{L}
  \otimes_{\underline{\qp}} \period{B}{dR} \cong (D_\mathrm{dR}(\mathbb{L})
  \otimes_{\mathscr{O}_Y} \period\o{B}{dR})^{\nabla=0}$. In particular, if we
  denote
  \[
    \mathbb{M}_{0,X} = (D_\mathrm{dR}(f^\ast \mathbb{L})
    \otimes_{\mathscr{O}_X} \period\o{B}{dR}\p)^{\nabla=0}, \quad
    \mathbb{M}_{0,Y} = (D_\mathrm{dR}(\mathbb{L}) \otimes_{\mathscr{O}_Y}
    \period\o{B}{dR}\p)^{\nabla=0}
  \]
  then $\alpha$ induces a natural identification
  \[ \label{Eq:DRTFunctorial}
    f_\mathrm{proet}^{-1} \mathbb{M}_{0,Y} \otimes_{f_\mathrm{proet}^{-1}
    \period{B}{dR}\p} \period{B}{dR}\p = \mathbb{M}_{0,X}, \tag{$\bowtie$}
  \]
  where both sides are naturally subsheaves of $f^\ast \mathbb{L}
  \otimes_{\underline{\qp}} \period{B}{dR}$.

  To prove the proposition, it is enough to show that for objects $U \in
  Y_\mathrm{proet}^\mathrm{ap}$ and $V \in X_\mathrm{proet}^\mathrm{ap}$, a map
  $V \to U \times_Y X$ in $X_\mathrm{proet}$, and a trivialization $\mathbb{L}
  \vert_U \cong \underline{\qp} \vert_U^{\oplus n}$, such that $\mathbb{M}_{0,Y}
  \vert_U$ is trivial, the diagram
  \[ \begin{tikzcd}[row sep=small, column sep=large]
    c(V) \arrow{r}{\mathrm{DRT}(f^\ast \mathbb{L})} \arrow{d} \arrow{r} & \grgln
    \arrow[equals]{d} \\ c(U) \arrow{r}{\mathrm{DRT}(\mathbb{L})} & \grgln
  \end{tikzcd} \]
  commutes. Because $\mathbb{M}_{0,Y} \vert_U$ is trivial we see from the
  identification \eqref{Eq:DRTFunctorial} that $\mathbb{M}_{0,X} \vert_{U
  \times_Y X}$ is also trivial and moreover
  \[
    \mathbb{M}_{0,Y}(U) \otimes_{\period{B}{dR}\p(U)} \period{B}{dR}\p(V) =
    \mathbb{M}_{0,X}(V) \subseteq \period{B}{dR}\p(V)^{\oplus n}.
  \]
  It now follows from \Cref{Prop:DRTOnAffPerf} that the diagram
  commutes.
\end{proof}

We end with a result that we shall use to construct the Hodge--Tate period map
on Shimura varieties.

\begin{lemma} \label{Lem:DRTCocharacter}
  Let $X/K$ be a connected smooth rigid analytic variety, let $G/\qp$ be a
  connected reductive group, and let $\mathbb{P}/X$ be a de Rham
  $\underline{G(\qp)}$-torsor. Let $\lbrace \mu \colon \mathbb{G}_{m,\qpbar} \to
  G_{\qpbar}\rbrace$ be a geometric conjugacy class of cocharacters with field
  of definition $E \subseteq \qpbar$, and fix an embedding $E \hookrightarrow K$
  so that there is an induced map
  \[
    \mathrm{DRT}(\mathbb{P}) \colon \mathbb{P} \to \grge.
  \]
  If the image intersects $\grgmu$ nontrivially, then it lies within $\grgmu$.
\end{lemma}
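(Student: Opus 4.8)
The plan is to reduce to the case $G = \mathrm{GL}_n$ by picking a faithful representation, and then argue using the relative position of a $\period{B}{dR}\p$-lattice together with the fact that the Schubert stratification of $\grge$ is a stratification by locally closed subsheaves, so that $\grgmu$ is open in its closure. More precisely, first I would observe that since $X$ is connected and $\mathbb{P}$ is a $\underline{G(\qp)}$-torsor, the image of $\mathrm{DRT}(\mathbb{P})$ in $\grge$ is a single $\underline{G(\qp)}$-orbit of points parametrized by $X$, and the property of landing in $\grgmu$ is insensitive to the $\underline{G(\qp)}$-action (which preserves $\grgmu$); so it suffices to show that if the point associated to some classical $x \in X$ lies in $\grgmu$, then the point associated to every classical $y \in X$ also lies in $\grgmu$. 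Here I use that classical points are dense and that $\grgmu \hookrightarrow \grge$ is locally closed, so its image under $\mathrm{DRT}(\mathbb{P})$ is detected on a dense set of points after checking the Schubert datum is locally constant.

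The key input is \Cref{Thm:LiuZhu}: for a faithful representation $\rho \colon G \hookrightarrow \mathrm{GL}_{\qp}(V)$, the associated $\underline{\qp}$-local system $\mathbb{L} = \underline{G(\qp)} \backslash (\underline{V} \times \mathbb{P})$ is de Rham, and the function $y \mapsto \dim_{k(y)} \mathrm{Fil}^p D_\mathrm{dR}(\mathbb{L}\vert_y)$ is locally constant on $|X|$, hence constant since $X$ is connected. By \Cref{Prop:DRTOnAffPerf}, the map $\mathrm{DRT}(\mathbb{L})$ sends an affinoid perfectoid $U$ to the lattice $\mathbb{M}_0(U) \subseteq \period{B}{dR}(U)^{\oplus n}$, whose relative position at a geometric point over a classical $y$ is governed exactly by the jumps of the Hodge filtration $\mathrm{Fil}^\bullet D_\mathrm{dR}(\mathbb{L}\vert_y)$. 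Concretely, the cocharacter type of this lattice at $y$ is the cocharacter of $\mathrm{GL}_n$ recording the filtration jumps, which by the previous paragraph does not depend on $y$. Thus the $\mathrm{GL}_n$-relative-position invariant of $\mathrm{DRT}(\mathbb{L})$ is constant on classical points, hence (again invoking local constancy from \Cref{Thm:LiuZhu}, together with density of classical points and that the Schubert strata partition $\grgln$) constant on all of $X$.

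To transfer this back from $\mathrm{GL}_n$ to $G$: the composite $\mathbb{P} \xrightarrow{\mathrm{DRT}(\mathbb{P})} \grge \hookrightarrow \grgln$ is $\mathrm{DRT}(\mathbb{L})$ by Tannakian compatibility of the constructions in \Cref{Par:TorsorToGr}, so the $G$-relative position of $\mathrm{DRT}(\mathbb{P})$ refines the constant $\mathrm{GL}_n$-relative position. Since by hypothesis the image of $\mathrm{DRT}(\mathbb{P})$ meets $\grgmu$, at least one classical point $x$ has $G$-relative position $\mu$, hence $\mathrm{GL}_n$-relative position $\rho \circ \mu$; by constancy every classical point has $\mathrm{GL}_n$-relative position $\rho \circ \mu$. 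It remains to promote this to $G$-relative position $\mu$ at every classical point: since $\rho$ is faithful, the map $X_\ast(T)^+ / W \to X_\ast(T_{\mathrm{GL}_n})^+/W_{\mathrm{GL}_n}$ induced by $\rho$ is injective on the finitely many coweights below $\mu$ in the dominance order that can actually occur, which suffices — alternatively one argues directly that the relative-position subsheaf $\grgmu \subseteq \grge$ is open in the union of strata $\bigsqcup_{\lambda \le \mu} \grg[\grgmu][\lambda]$ and that $\mathrm{DRT}(\mathbb{P})$ factors set-theoretically through $\grge$, so landing in $\grgln$-stratum $\rho\circ\mu$ forces landing in $\grgmu$ on the locus where it meets $\grgmu$, which is all classical points by connectedness. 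The main obstacle I anticipate is precisely this last bookkeeping step — ensuring that constancy of the $\mathrm{GL}_n$-invariant genuinely pins down the $G$-invariant $\mu$ rather than merely some $\lambda$ with $\rho \circ \lambda = \rho \circ \mu$ as $\mathrm{GL}_n$-cocharacters — but since $\rho$ is faithful this is a finite combinatorial check on the minuscule-type data, and the nontrivial-intersection hypothesis rules out the pathological alternatives. The rest is soft: density of classical points on a smooth rigid analytic variety, local constancy from \Cref{Thm:LiuZhu}, and $\underline{G(\qp)}$-equivariance of $\mathrm{DRT}(\mathbb{P})$ together with the connectedness of $X$.
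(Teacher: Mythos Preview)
Your reduction to $\mathrm{GL}_n$ and the argument there via constancy of the Hodge filtration ranks is correct and matches the paper. The genuine gap is in lifting back to $G$. Your claim that faithfulness of a single $\rho$ makes the passage from $X_\ast(T)^+$ to $\mathrm{GL}_n$-conjugacy classes of cocharacters injective on the relevant locus is false: take $G = \mathrm{GL}_2 \times \mathrm{GL}_2$ with $\rho$ the faithful direct sum of the two standard representations into $\mathrm{GL}_4$; then $\mu = ((1,0),(0,0))$ and $\mu' = ((0,0),(1,0))$ are neither $G$-conjugate nor Galois-conjugate (the group is split), yet $\rho\circ\mu$ and $\rho\circ\mu'$ both have $\mathrm{GL}_4$-type $(1,0,0,0)$. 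There is also no minuscule hypothesis in the lemma to fall back on, and your alternative sketch via openness of $\mathrm{Gr}_{G,\{\mu\},E}$ in $\bigsqcup_{\lambda \le \mu} \mathrm{Gr}_{G,\{\lambda\},E}$ fails for the same reason: you have not shown the image lands in this closure, and the example above shows the preimage of a single $\mathrm{GL}_n$-stratum can contain $G$-strata not bounded by $\mu$.

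The paper's resolution is to run the $\mathrm{GL}_n$ argument for \emph{every} $\qp$-rational representation $\rho$ of $G$, so that for any $\mu'$ appearing in the image the weights of each $\rho$ pair with $\mu$ and $\mu'$ to the same multiset. A convex-hull computation with Weil-restricted highest-weight representations then forces $\mu'$ into the $\Gal(\qpbar/\qp)$-orbit of $\mu$. Since distinct Galois conjugates of a dominant cocharacter are mutually incomparable in the dominance order, the strata $\mathrm{Gr}_{G,\{\sigma\mu\},\qpbar}$ are open and closed in their union; connectedness of $X$ together with $\underline{G(\qp)} \times \underline{\Gal(\qpbar/E)}$-equivariance of $\mathrm{DRT}(\mathbb{P})_{\qpbar}$ then pins the image to the single stratum it meets.
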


\begin{proof}
  \def\grglne{\mathrm{Gr}_{\mathrm{GL}_n,E}}
  \def\grgrhomu{\mathrm{Gr}_{\mathrm{GL}_n,\{\rho \circ \mu\},E}}
  \def\grgmup{\mathrm{Gr}_{G,\{\mu^\prime\},\qpbar}}
  \def\GeomPair{(C^\sharp, C^{\sharp+})}
  We first prove it in the case of $G = \mathrm{GL}_n$. Let $\mathbb{L}$ be the
  $\underline{\qp}$-local system corresponding to $\mathbb{P}$, so that as in
  \Cref{Par:LocSysToGr} the map $\mathrm{DRT}(\mathbb{L}) \colon
  \operatorname{Isom}(\mathbb{L}, \underline{\qp}^{\oplus n}) \to \gamma_\ast
  \grgln$ on $X_\mathrm{proet}$ is constructed via the lattice
  \[
    \mathbb{M}_0 = (D_\mathrm{dR}(\mathbb{L}) \otimes_{\mathscr{O}_X}
    \period\o{B}{dR}\p)^{\nabla=0} \subseteq \mathbb{L}
    \otimes_{\underline{\qp}} \period{B}{dR} = \mathbb{M}
    \otimes_{\period{B}{dR}\p} \period{B}{dR}.
  \]
  Since $X$ is connected, the filtrations $\mathrm{Fil}^i
  D_\mathrm{dR}(\mathbb{L})$ are locally free $\mathscr{O}_X$-modules with
  constant rank. On the other hand, writing $\ker \theta \subseteq
  \period{B}{dR}\p$ for the locally free ideal sheaf,
  \cite[Proposition~7.9]{Sch13} states that
  \[
    (\mathbb{M} \cap (\ker \theta)^i \mathbb{M}_0) / (\mathbb{M} \cap (\ker
    \theta)^{i+1} \mathbb{M}_0) \cong \mathrm{Fil}^{-i}
    D_\mathrm{dR}(\mathbb{L}) \otimes_{\mathscr{O}_X} \hat{\mathscr{O}}_X(i)
  \]
  is a locally free $\hat{\mathscr{O}}_X$-module of rank equal to that of
  $\mathrm{Fil}^{-i} D_\mathrm{dR}(\mathbb{L})$. It follows that the image of
  $\mathrm{DRT}(\mathbb{L})$ lies in a single stratum.

  We now consider the case of a general reductive group $G$. For each
  representation $\rho \colon G \to \mathrm{GL}_{n,\qp}$, the composition
  \[
    \mathbb{P} \xrightarrow{\mathrm{DRT}(\mathbb{P})} \grge \xrightarrow{\rho}
    \grglne
  \]
  factors through $\grgrhomu$. We wish to show that the image of
  every geometric point $\bar{x} \in \mathbb{P}\GeomPair$ under
  $\mathrm{DRT}(\mathbb{P})$ is contained in $\grgmu$. Fixing an embedding
  $\qpbar \hookrightarrow C^\sharp$, we have a decomposition
  \[
    \grge\GeomPair = \coprod_{\mu^\prime \in X_\ast(T)^+} \grgmup\GeomPair,
  \]
  where $T$ is the abstract maximal torus. Say
  $\mathrm{DRT}(\mathbb{P})(\bar{x})$ is in the stratum corresponding to
  $\mu^\prime \in X_\ast(T)^+$. Then for every representation $\rho$, the two
  cocharacters $\rho \circ \mu^\prime$ and $\rho \circ \mu$ of
  $\mathrm{GL}_{n,\qpbar}$ are conjugate, i.e., the weights of $\rho$ in
  $X^\ast(T)$ pair with $\mu^\prime$ and $\mu$ to the same multiset.

  We now claim that $\mu, \mu^\prime \in X_\ast(T)^+$ must be conjugate under
  the action of $\Gal(\qpbar/\qp)$. Let $E_0 / \qp$ be a sufficiently large
  finite Galois extension that splits $G$. For each $\nu \in X^\ast(T)^+$, there
  exists a highest weight representation $\rho_\nu \colon G_{E_0} \to
  \mathrm{GL}_{d_\nu,E_0}$, which induces the Weil restricted representation
  \[
    \operatorname{Res}_{E_0/\qp} \rho_\nu \colon G \to \mathrm{GL}_{d_\nu
    [E_0:\qp], \qp}.
  \]
  The weights of this representation are given as a disjoint union
  \[
    w(\operatorname{Res}_{E_0/\qp} \rho_\nu) = \coprod_{\sigma \in
    \Gal(E_0/\qp)} \sigma \cdot w(\rho_\nu).
  \]
  Paring this with $\mu, \mu^\prime$ and taking the maximal element, we obtain
  \[
    \max_{\sigma \in \Gal(E_0/\qp)} \langle \mu, \sigma \nu \rangle =
    \max_{\sigma \in \Gal(E_0/\qp)} \langle \mu^\prime, \sigma \nu \rangle
  \]
  for all $\nu \in X^\ast(T)^+$, hence for all $\nu \in X_\ast(T)_\mathbb{R}^+$.
  This implies that convex hulls of $\Gal(E_0/\qp) \mu + \mathbb{R}_{\ge 0}
  \Phi^-$ and $\Gal(E_0/\qp) \mu^\prime + \mathbb{R}_{\ge 0} \Phi^-$ agree, and
  by considering the vertices we deduce that $\mu, \mu^\prime$ are in the same
  $\Gal(E_0/\qp)$-orbit.

  This shows that we have a $\underline{G(\qp)} \times
  \underline{\Gal(\qpbar/E)}$-equivariant map
  \[
    \mathrm{DRT}(\mathbb{P})_{\qpbar} \colon \mathbb{P} \times_{\Spd E} \Spd
    \qpbar \to \coprod_{\sigma \in \Gal(\qpbar/\qp) / \Gal(\qpbar/E)}
    \mathrm{Gr}_{G,\{\sigma \mu\},\qpbar} \subseteq \mathrm{Gr}_{G,\qpbar}.
  \]
  Because the Galois conjugates $\{\sigma \mu\}$ are mutually incomparable under
  the partial ordering, each stratum is open and closed in the union. Moreover,
  the stratum $\mathrm{Gr}_{G,\{\mu\},\qpbar}$ is stable under the
  $\underline{G(\qp)} \times \underline{\Gal(\qpbar/E)}$-action. Because
  $X^\lozenge = (\underline{G(\qp)} \times \underline{\Gal(\qpbar/E)})
  \backslash (\mathbb{P} \times_{\Spd E} \Spd \qpbar)$ is assumed to be
  connected and the image of $\mathrm{DRT}(\mathbb{P})_{\qpbar}$ intersects the
  stratum $\mathrm{Gr}_{G,\{\mu\},\qpbar}$, we conclude that the image
  completely lies in $\mathrm{Gr}_{G,\{\mu\},\qpbar}$. Therefore the image of
  $\mathrm{DRT}(\mathbb{P})$ lies in $\grgmu$.
\end{proof}

}
{\section{The Hodge--Tate period map} \label{Sec:HodgeTate}
\def\flgmu{\mathrm{Fl}_{G,\{\mu^{-1}\},E}^\lozenge}
\def\flgcmu{\mathrm{Fl}_{G^\mathrm{c},\{(\mu^\mathrm{c})^{-1}\},E}^\lozenge}

In this section, we apply the results of \Cref{Sec:DeRham} to construct
the Hodge--Tate period map on Shimura varieties
\[
  \pi_\mathrm{HT} \colon \sh\d \to \grg\m,
\]
following \cite[Section~4.1]{PR24} and \cite[Section~5.4]{Han16p}. To deal with
groups $\mathsf{G}$ whose center is not cuspidal, we first consider the
Hodge--Tate period map associated to the $\underline{G^\mathrm{c}(\qp)}$-torsor,
and then lift the map $\pi_\mathrm{HT}^\mathrm{c} \colon \sh\d \to \grg\c\m$ to
$\pi_\mathrm{HT} \colon \sh\d \to \grg\m$.

\paragraph
Let $\gx$ be a Shimura datum with reflex field $\mathsf{E} \subseteq
\mathbb{C}$. Choose a finite place $v \mid p$ of $\mathsf{E}$ and write $E =
\mathsf{E}_v$ and $G = \mathsf{G}_{\qp}$. For each neat compact open subgroup $K
\subseteq \mathsf{G}(\mathbb{A}_\mathbb{Q}^\infty)$, there is a corresponding
Shimura variety $\sh[K][\mathsf{E}]$ over the reflex field $\mathsf{E}$, and we
can base change it to $E$ to obtain $\sh[K]$. For open compact subgroups
$K^\prime \subseteq K$, there are finite \'{e}tale transition maps
$\sh[K^\prime] \to \sh[K]$ that collectively define a pro-\'{e}tale left torsor
\[
  \sh\d = \varprojlim_{K^\prime \to 1} \sh\d[K^\prime] \to \sh\d[K]
\]
for the profinite group $K / (K \cap \mathsf{Z}(\mathbb{Q})^-)$, where
$\mathsf{Z}$ is the center of $\mathsf{G}$ and $\mathsf{Z}(\mathbb{Q})^-$ is the
closure of $\mathsf{Z}(\mathbb{Q})$ in
$\mathsf{G}(\mathbb{A}_\mathbb{Q}^\infty)$.

\begin{remark}
  When $\gx$ is of pre-abelian type, it is known that the limit
  $\varprojlim_{K^\prime \to \{1\}} \sh[K^\prime]$ exists as a perfectoid space
  by \cite{Sch15}, \cite{She17}, \cite{HJ23}. But regardless, we may still
  define the corresponding v-sheaf $\sh\d$ as an inverse limit of v-sheaves.
\end{remark}

\paragraph \label{Par:LocalSystem}
As in \cite[Section~1.5.8]{KSZ21p}, let $\mathsf{Z}_\mathrm{ac} \subseteq
\mathsf{Z}^0$ be the minimal $\mathbb{Q}$-subtorus with the property that the
$\mathbb{Q}$-split rank of $\mathsf{Z}^0 / \mathsf{Z}_\mathrm{ac}$ agrees with
its $\mathbb{R}$-split rank. We define the \textdef{cuspidal part} of
$\mathsf{G}$ as
\[
  \mathsf{G}^\mathrm{c} = \mathsf{G} / \mathsf{Z}_\mathrm{ac}, \quad
  G^\mathrm{c} = (\mathsf{G}^\mathrm{c})_{\qp},
\]
with a caveat that $G^\mathrm{c}$ depends on the global model $\mathsf{G}$
rather than only $G = \mathsf{G}_{\qp}$. By \cite[Lemma~1.5.7]{KSZ21p}, we have
$K \cap \mathsf{Z}(\mathbb{Q}) \subseteq \mathsf{Z}_\mathrm{ac}(\mathbb{Q})$ and
hence $K \cap \mathsf{Z}(\mathbb{Q})^- \subseteq \mathsf{Z}_\mathrm{ac}(\af)$.
It follows that the quotient $\mathsf{G} \to \mathsf{G}^\mathrm{c}$ induces a
group homomorphism
\[
  K / (K \cap \mathsf{Z}(\mathbb{Q})^-) \to
  \mathsf{G}^\mathrm{c}(\mathbb{A}_\mathbb{Q}^\infty).
\]
This allows us to push out the pro-\'{e}tale $\underline{K / (K \cap
\mathsf{Z}(\mathbb{Q})^-)}$-torsor $\sh\d \to \sh\d[K]$ to a pro-\'{e}tale
$\underline{\mathsf{G}^\mathrm{c}(\mathbb{A}_\mathbb{Q}^\infty)}$-torsor, and
hence to a pro-\'{e}tale $\underline{G^\mathrm{c}(\qp)}$-torsor
\[
  \xi_K = \underline{G^\mathrm{c}(\qp)} \times^{\underline{K / (K \cap
  \mathsf{Z}(\mathbb{Q})^-)}} \sh\d \to \sh\d[K]
\]
over $\sh\ad[K]$.

\begin{remark}
  When $\gx$ is of Hodge type, we have $\mathsf{Z}_\mathrm{ac} = 1$ and hence
  $\mathsf{G} = \mathsf{G}^\mathrm{c}$ by \cite[Lemma~5.1.2]{KSZ21p}.
\end{remark}

\begin{theorem}[{\cite[Corollary~4.9]{LZ17}}]
  The pro-\'{e}tale $\underline{G^\mathrm{c}(\qp)}$-torsor $\xi_K$ on the smooth
  rigid analytic Shimura variety $\sh\ad[K]$ is de Rham.
\end{theorem}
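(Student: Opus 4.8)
The plan is to reduce the statement to \Cref{Thm:LiuZhu}. By the definition of a de Rham $\underline{G^{\mathrm{c}}(\qp)}$-torsor together with \Cref{Lem:deRhamExactTensor}, it is enough to fix one faithful algebraic representation $\rho\colon G^{\mathrm{c}}\hookrightarrow\mathrm{GL}_{n,\qp}$ and show that the induced $\underline{\qp}$-local system $\mathbb{L}_\rho=\underline{G^{\mathrm{c}}(\qp)}\backslash(\underline{\qp}^{\oplus n}\times\xi_K)$ on $\sh\ad[K]$ is de Rham. Since $E=\mathsf{E}_v$ is discretely valued and complete over $\qp$ with perfect residue field, and $\sh\ad[K]$ is a smooth rigid analytic variety over $E$, \Cref{Thm:LiuZhu} says it suffices to produce, on each connected component of $\sh\ad[K]$, a single classical point at which $\mathbb{L}_\rho$ restricts to a de Rham Galois representation. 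I would take these to be images of special points.

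Next I would check that special points suffice. A special point $x\in\mathsf{X}$ admits a $\mathbb{Q}$-torus $\mathsf{T}\subseteq\mathsf{G}$, which we may take maximal so that $\mathsf{Z}_{\mathrm{ac}}\subseteq\mathsf{T}$, with $h_x$ factoring through $\mathsf{T}_{\mathbb{R}}$; by Deligne's theory such points exist, are defined over number fields, and are Zariski dense in $\sh[K][\mathsf{E}]$. Base changing along the fixed embedding $\mathsf{E}\hookrightarrow E$, a special point defines a point of $\sh[K]$ whose residue field is finite over $E$, hence a classical point of $\sh\ad[K]$. Moreover analytification preserves $\pi_0$ for finite-type $E$-varieties and geometric connected components are independent of the algebraically closed base field, so $\pi_0(\sh\ad[K])=\pi_0(\sh[K])$ is a Galois quotient of $\pi_0(\sh[K][\overline{\mathbb{Q}}])$; since special points are dense they meet every geometric component, so every connected component of $\sh\ad[K]$ contains the image of a special point.

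The real content is to check that $\mathbb{L}_\rho|_x$ is de Rham at a special point $x$ attached to $(\mathsf{T},\{h_x\})$. By construction of $\xi_K$, the fiber of $\sh\d\to\sh\ad[K]$ over $x$ is a $\underline{K/(K\cap\mathsf{Z}(\mathbb{Q})^-)}$-torsor on $\Spec \kappa(x)$, and pushing out to $\underline{G^{\mathrm{c}}(\qp)}$ and applying $\rho$ identifies $\mathbb{L}_\rho|_x$ with a representation $\Gal(\overline{\mathbb{Q}}/\kappa(x))\to G^{\mathrm{c}}(\qp)\xrightarrow{\rho}\mathrm{GL}_n(\qp)$. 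The reciprocity law at special points of the canonical model (Deligne, Milne) computes this representation: it factors through the $\qp$-points of the quotient torus $\mathsf{T}/\mathsf{Z}_{\mathrm{ac}}\hookrightarrow\mathsf{G}^{\mathrm{c}}$, and its restriction to a decomposition group at a place $w\mid p$ of $\kappa(x)$ agrees, up to an unramified finite-order twist coming from the level $K$, with the composite of the local Artin map with a cocharacter built from $\mu_{h_x}$ and the reflex norm of $(\mathsf{T},\{h_x\})$, then composed with $\rho$. Such a representation is the $p$-adic realisation of an algebraic Hecke character of $\kappa(x)$, hence is de Rham, indeed potentially crystalline, with Hodge--Tate weights read off from $\mu_{h_x}$ via $\rho$. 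Plugging this into \Cref{Thm:LiuZhu} gives that $\mathbb{L}_\rho$ is de Rham on all of $\sh\ad[K]$, i.e.\ $\xi_K$ is a de Rham $\underline{G^{\mathrm{c}}(\qp)}$-torsor.

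I expect the main obstacle to be that last computation: one must extract from the moduli-free description of the Shimura tower the precise Galois action on the pro-\'etale torsor $\sh\d\to\sh\ad[K]$ over a special point and recognise it as coming from an algebraic Hecke character, which is exactly where the theory of canonical models and the reciprocity law at special points enters. When $\gx$ is of Hodge type this can be bypassed: for a suitable $\rho$ the local system $\mathbb{L}_\rho$ is a direct summand of a Tate twist of $R^1f_\ast\underline{\qp}$ for $f$ the universal abelian scheme, so its de Rham-ness is immediate from the de Rham comparison theorem for proper smooth morphisms, and the abelian type case then follows by the usual connected-components descent; but the general reductive case genuinely needs the special-point input above.
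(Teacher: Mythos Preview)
The paper does not supply its own proof of this statement; it is recorded as a theorem with the citation \cite[Corollary~4.9]{LZ17} and nothing more. Your outline is a correct sketch of that argument: reduce to a single faithful representation, invoke \Cref{Thm:LiuZhu} to reduce to one classical point per connected component, take special points (which are Zariski dense), and use the reciprocity law for canonical models to identify the stalk as the $p$-adic realization of an algebraic Hecke character, hence de Rham. This is precisely the strategy of Liu--Zhu.

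It is worth noting that the paper carries out almost exactly the special-point computation you describe immediately after this cited theorem, but for a different purpose: \Cref{Lem:GaloisRepCochar} and the subsequent proposition use the reciprocity law at special points not to verify de Rham-ness (already granted by the citation) but to pin down the Hodge cocharacter, showing that $\pi_\mathrm{HT}^\mathrm{c}$ lands in the correct Schubert cell $\grg\c\m$. So your argument is entirely in the spirit of the paper's methods, and the ``main obstacle'' you flag is handled there in detail---the paper simply chose not to repeat Liu--Zhu's proof of de Rham-ness itself.
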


\paragraph \label{Par:HodgeTateC}
As in \Cref{Par:TorsorToGr}, we obtain a
$\underline{G^\mathrm{c}(\qp)}$-equivariant morphism
\[
  \mathrm{DRT}(\xi_K) \colon \xi_K \to \grg\c
\]
of v-sheaves. Composing with the natural morphism
\[
  \iota_K \colon \sh\d \to \underline{G^\mathrm{c}(\qp)} \times^{\underline{K /
  (K \cap \mathsf{Z}(\mathbb{Q})^-)}} \sh\d = \xi_K; \quad x \mapsto (1, x),
\]
we obtain a morphism
\[
  \pi_{\mathrm{HT},K}^\mathrm{c} \colon \sh\d \to \grg\c
\]
of v-sheaves.

\begin{proposition}
  The map $\pi_\mathrm{HT}^\mathrm{c} = \pi_{\mathrm{HT},K}^\mathrm{c}$ is
  independent of the choice of $K$, and moreover is
  $\underline{\mathsf{G}(\mathbb{A}_\mathbb{Q}^\infty)}$-equivariant, where the
  action on $\grg\c$ factors through $\mathsf{G}(\mathbb{A}_\mathbb{Q}^\infty)
  \twoheadrightarrow G(\qp)$.
\end{proposition}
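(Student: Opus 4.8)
The plan is to establish independence of $K$ first, and then deduce equivariance formally. For independence: given neat compact opens $K' \subseteq K$, the torsor $\xi_{K'}$ on $\sh\ad[K']$ is the pullback of $\xi_K$ along the finite \'{e}tale transition map $\sh\ad[K'] \to \sh\ad[K]$, because both arise by pushing out the pro-\'{e}tale torsor $\sh\d$ along the same homomorphism $K/(K \cap \mathsf{Z}(\mathbb{Q})^-) \to G^\mathrm{c}(\qp)$ composed with the inclusion $K'/(K' \cap \mathsf{Z}(\mathbb{Q})^-) \hookrightarrow K/(K \cap \mathsf{Z}(\mathbb{Q})^-)$. I would then invoke \Cref{Prop:DRTFunctorial} applied to $f \colon \sh\ad[K'] \to \sh\ad[K]$ and the de Rham torsor $\xi_K$: it gives $\mathrm{DRT}(f^\ast \xi_K) = \mathrm{DRT}(\xi_{K'})$ fitting in a commuting square over the identity of $\grg\c$. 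Composing with $\iota_{K'}$ and $\iota_K$ and using that $\iota_{K'}$ is compatible with the pullback of $\iota_K$ along $f$ (both send $x \mapsto (1,x)$ under the identification $f^\ast \xi_K = \xi_{K'}$), one reads off $\pi_{\mathrm{HT},K'}^\mathrm{c} = \pi_{\mathrm{HT},K}^\mathrm{c}$ as maps $\sh\d \to \grg\c$. Since any two neat compact opens are connected by a chain through a common subgroup, this proves independence.

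For the equivariance statement: the Hecke action of $g \in \mathsf{G}(\afp)$ (or more generally $\mathsf{G}(\af)$) on $\sh\d$ is a system of isomorphisms $[\cdot g] \colon \sh\d \to \sh\d$ intertwining the towers $\sh\d[K] \to \sh\d[g^{-1}Kg]$ at finite level. I would use the now-established independence of $K$ to work with $K$ small enough to be normalized appropriately, and check that the right $K/(K \cap \mathsf{Z}(\mathbb{Q})^-)$-torsor $\sh\d \to \sh\d[K]$ is carried by $[\cdot g]$ to the corresponding torsor for $g^{-1}Kg$, compatibly with the pushout to the $\underline{G^\mathrm{c}(\qp)}$-torsor, where the residual action of $g$ on the fibers is through its image in $G^\mathrm{c}(\qp)$, hence through $G(\qp)$. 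Since $\mathrm{DRT}$ of a de Rham torsor depends only on the torsor together with its de Rham structure, and the de Rham structures match under $[\cdot g]$ (the local systems are the same, merely relabeled), $\mathrm{DRT}(\xi_K)$ transports to $\mathrm{DRT}(\xi_{g^{-1}Kg})$ equivariantly for the $\underline{G^\mathrm{c}(\qp)}$-action twisted by $g$. Composing with $\iota_K$, and tracking that precomposing $\iota_K$ with $[\cdot g]$ differs from $\iota_{g^{-1}Kg}$ by the translation-by-$g$ automorphism of the $\grg\c$-torsor, yields $\pi_\mathrm{HT}^\mathrm{c} \circ [\cdot g] = (\text{action of } \bar g) \circ \pi_\mathrm{HT}^\mathrm{c}$, where $\bar g \in G(\qp)$ is the image of $g$.

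The main obstacle I expect is bookkeeping around the quotient by $\mathsf{Z}(\mathbb{Q})^-$ and the anti-cuspidal torus $\mathsf{Z}_\mathrm{ac}$: one must be careful that the pushout construction $\xi_K = \underline{G^\mathrm{c}(\qp)} \times^{\underline{K/(K \cap \mathsf{Z}(\mathbb{Q})^-)}} \sh\d$ is genuinely functorial in $K$ and $\mathsf{G}(\af)$-equivariant, which rests on the inclusion $K \cap \mathsf{Z}(\mathbb{Q})^- \subseteq \mathsf{Z}_\mathrm{ac}(\af)$ from \cite[Lemma~1.5.7]{KSZ21p} (so that the relevant homomorphisms are defined) and on the fact that $\sh\d$ itself carries the full $\mathsf{G}(\af)$-action. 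The de Rham functoriality input is entirely supplied by \Cref{Prop:DRTFunctorial} and \Cref{Thm:LiuZhu}, so the remaining work is diagram-chasing rather than new analysis. Finally, the assertion that the $\grg\c$-action factors through $G(\qp)$ is immediate once one observes $\mathsf{Z}_\mathrm{ac}$ is killed by $\mathsf{G} \to \mathsf{G}^\mathrm{c}$, so only the $\qp$-points of $\mathsf{G}^\mathrm{c}$, i.e.\ the image of $G(\qp)$, act.
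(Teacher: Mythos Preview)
Your proposal is correct and follows essentially the same approach as the paper: independence of $K$ via the identification $\xi_{K'} = f^\ast \xi_K$ together with \Cref{Prop:DRTFunctorial}, and equivariance by transporting $\xi_K$ along the Hecke isomorphism $g \colon \sh[K] \to \sh[gKg^{-1}]$, again invoking \Cref{Prop:DRTFunctorial} plus the $\underline{G^\mathrm{c}(\qp)}$-equivariance of $\mathrm{DRT}(\xi_K)$. The paper organizes the equivariance step into a single commutative diagram with an explicit torsor isomorphism $g^\ast \xi_{K'} \cong \xi_K$, $((h,gx),x) \mapsto (hg,x)$, but the content is the same as what you describe; your remark about needing $K$ ``normalized appropriately'' is unnecessary, and the factorization of the $\grg\c$-action through $G(\qp)$ is part of the statement's setup rather than something to be proved.
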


This is a straightforward consequence of the functoriality of $\mathrm{DRT}$,
see \Cref{Prop:DRTFunctorial}.

\begin{proof}
  Let $K^\prime \subseteq K$ be an open compact subgroup, so that there is an
  induced finite \'{e}tale map $\sh[K^\prime] \to \sh[K]$. By construction, the
  $\underline{G^\mathrm{c}(\qp)}$-torsor $\xi_{K^\prime}$ is the pullback of
  $\xi_K$, and hence the diagram
  \[ \begin{tikzcd}[row sep=-0.2em]
    & \xi_{K^\prime} = \underline{G^\mathrm{c}(\qp)} \times^{\underline{K^\prime
    / (K^\prime \cap \mathsf{Z}(\mathbb{Q})^-)}} \sh\d \arrow{r} \arrow{dd} &
    \grg\c \arrow[equals]{dd} \\ \sh\d \arrow{ru}{\iota_{K^\prime}}
    \arrow{rd}[']{\iota_K} \\ & \xi_K = \underline{G^\mathrm{c}(\qp)}
    \times^{\underline{K / (K \cap \mathsf{Z}(\mathbb{Q})^-)}} \sh\d \arrow{r} &
    \grg\c
  \end{tikzcd} \]
  commutes by \Cref{Prop:DRTFunctorial}. This shows that
  $\pi_{\mathrm{HT},K^\prime}^\mathrm{c} = \pi_{\mathrm{HT},K}^\mathrm{c}$, and
  hence $\pi_\mathrm{HT}^\mathrm{c}$ is independent of the choice of $K$.

  Let $g \in \mathsf{G}(\mathbb{A}_\mathbb{Q}^\infty)$ be an arbitrary element.
  We let $K^\prime = g K g^{-1}$ and consider the isomorphism
  \[
    g \colon \sh[K] \xrightarrow{\cong} \sh[K^\prime].
  \]
  Then there is an isomorphism of $\underline{G^\mathrm{c}(\qp)}$-torsors
  \[
    g^\ast \xi_{K^\prime} = \xi_{K^\prime} \times_{\sh\d[K^\prime]} \sh\d[K]
    \xrightarrow{\cong} \xi_K; \quad ((h, gx), x) \mapsto (hg, x)
  \]
  on $\sh[K]$. It follows that we have a commutative diagram
  \[ \begin{tikzcd}[row sep=small]
    \sh\d \arrow{r}{\iota_K} \arrow{ddd}{g} & \xi_K \arrow{d}{g}
    \arrow{r}{\mathrm{DRT}} & \grg\c \arrow{d}{g} \\ & \xi_K
    \arrow{r}{\mathrm{DRT}} & \grg\c \arrow[equals]{d} \\ & g^\ast
    \xi_{K^\prime} \arrow{u}{\cong} \arrow{d} \arrow{r}{\mathrm{DRT}} & \grg\c
    \arrow[equals]{d} \\ \sh\d \arrow{r}{\iota_{K^\prime}} & \xi_{K^\prime}
    \arrow{r}{\mathrm{DRT}} & \grg\c,
  \end{tikzcd} \]
  where the lower right square commutes by \Cref{Prop:DRTFunctorial}
  and the upper right square commutes by $G^\mathrm{c}(\qp)$-equivariance of
  $\mathrm{DRT}(\xi_K)$. As the top row and the bottom row both compose to
  $\pi_{\mathrm{HT},K^\prime}^\mathrm{c} = \pi_{\mathrm{HT},K}^\mathrm{c}$, we
  conclude that $\pi_\mathrm{HT}^\mathrm{c}$ is
  $\underline{\mathsf{G}(\mathbb{A}_\mathbb{Q}^\infty)}$-equivariant.
\end{proof}

\paragraph
For a point $x \in \mathsf{X}$, we denote by
\[
  \mu_{x,\mathbb{C}} \colon \mathbb{G}_{m,\mathbb{C}} \xrightarrow{z \mapsto (z
  \otimes 1 - iz \otimes i)/2} (\operatorname{Res}_{\mathbb{C}/\mathbb{R}}
  \mathbb{G}_{m,\mathbb{C}})_\mathbb{C} \xrightarrow{x} \mathsf{G}_\mathbb{C}
\]
the induced Hodge cocharacter. Since they are $\mathsf{G}(\mathbb{R})$-conjugate
to each other, we obtain a conjugacy class of cocharacters $\{\mu_\mathbb{C}\}$
over $\mathbb{C}$, which has field of definition $\mathsf{E} \subseteq
\mathbb{C}$. The choice of the place $p \mid v$ induces an embedding $\mathsf{E}
\hookrightarrow E = \mathsf{E}_v$, and hence upon fixing an embedding $E
\hookrightarrow \qpbar$ we obtain a conjugacy class of geometric cocharacters
\[
  \{ \mu \colon \mathbb{G}_{m,\qpbar} \to G_{\qpbar} \}
\]
with field of definition $E$.

\begin{lemma} \label{Lem:GaloisRepCochar}
  Let $G/\qp$ be a connected reductive group, let $E/\qp$ be a finite extension,
  let $r_\mu \colon T = \operatorname{Res}_{E/\qp} \mathbb{G}_{m,E} \to G$ be
  a homomorphism, and let $\mu \colon \mathbb{G}_{m,E} \to G_E$ be the image of
  the cocharacter $(1, 0, \dotsc, 0) \in X_\ast(T)$ under $r_\mu$. Write
  $\mathrm{Art}_E \colon E^\times \hookrightarrow \Gal(E^\mathrm{ab}/E)$ for the
  Artin map, normalized so that a uniformizer maps to a lift of the geometric
  Frobenius. If a continuous Galois representation
  \[
    \rho \colon \Gal(E^\mathrm{ab}/E) \to G(\qp)
  \]
  satisfies the property that $\rho \circ \mathrm{Art}_E \colon E^\times \to
  G(\qp)$ agrees with $r_\mu$ on an open neighborhood of $1 \in E^\times$, then
  $\rho$ is de Rham and moreover the corresponding map
  \[
    \mathrm{DRT}(\rho) \colon \mathbb{P} = [(\underline{G(\qp)} \times \Spd
    E^\mathrm{ab}) / \underline{\Gal(E^\mathrm{ab}/E)}] \to \mathrm{Gr}_{G,E}
  \]
  has image in $\mathrm{Gr}_{G,\{\mu^{-1}\},E}$.
\end{lemma}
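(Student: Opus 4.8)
By \Cref{Lem:DRTCocharacter}, applied over the connected rigid space $\Spa E$ with the geometric conjugacy class $\{\mu^{-1}\}$ (which has field of definition $E$), it suffices to prove that $\rho$ is de Rham and that the image of $\mathrm{DRT}(\rho)$ meets $\mathrm{Gr}_{G,\{\mu^{-1}\},E}$; the plan is to realize $\rho$ as the product of an explicit crystalline ``Lubin--Tate'' representation and a representation with finite image on inertia. Fix a uniformizer $\varpi$ of $E$ and let $\chi_\varpi\colon\Gal(E^{\mathrm{ab}}/E)\to\mathcal{O}_E^\times\subseteq E^\times=T(\qp)$ be the Lubin--Tate character of $\varpi$ or its inverse, normalized so that $\chi_\varpi\circ\mathrm{Art}_E$ restricts on $\mathcal{O}_E^\times$ to the inclusion $\mathcal{O}_E^\times\hookrightarrow E^\times$; classical Lubin--Tate theory gives that $\chi_\varpi$ is crystalline, hence de Rham, with Hodge--Tate--Sen weights supported at the distinguished embedding $E\hookrightarrow\qpbar$. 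Put $\rho''=r_\mu\circ\chi_\varpi\colon\Gal(E^{\mathrm{ab}}/E)\to G(\qp)$. For an algebraic representation $V$ of $G$, the $\underline{\qp}$-local system attached to $\rho''$ and $V$ is the one attached to $\chi_\varpi$ and the $T$-representation $V\circ r_\mu$, hence de Rham; thus $\rho''$ is a de Rham $\underline{G(\qp)}$-torsor over $\Spa E$, and decomposing $V\otimes_{\qp}\qpbar$ into $T$-weight spaces (via $r_\mu$) and reading off the Sen weights of $\chi_\varpi$ one computes that $D_{\mathrm{dR}}$ of this local system has Hodge--Tate weights equal to the weights of the cocharacter $\mu^{-1}$ on $V$. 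By \cite[Proposition~7.9]{Sch13} (as recalled in the proof of \Cref{Lem:DRTCocharacter}, with the sign convention for relative positions as fixed in \Cref{Sec:Preliminaries}), this means the relative position of the lattice $\mathbb{M}_0$ is $\mu^{-1}$, so $\mathrm{DRT}(\rho'')$ has image in $\mathrm{Gr}_{G,\{\mu^{-1}\},E}$; the inversion here is the usual flip between a Hodge cocharacter and the Hodge type of the associated $p$-adic local system.

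Now I compare $\rho$ with $\rho''$. By hypothesis they agree on the open subgroup $J=\mathrm{Art}_E(1+\varpi^N\mathcal{O}_E)$ of $\Gal(E^{\mathrm{ab}}/E)$ for $N\gg0$, so $\rho(J)=r_\mu(1+\varpi^N\mathcal{O}_E)$ is Zariski dense in the torus $r_\mu(T)\subseteq G$. Fix a faithful representation $V$ of $G$ and work over $\qpbar$. As $\rho(J)$ lies in $r_\mu(T)$ it acts semisimply on $V\otimes\qpbar$, with eigenspace decomposition the $T$-weight decomposition $V\otimes\qpbar=\bigoplus_\lambda V_\lambda$; this decomposition is $\Gal(E^{\mathrm{ab}}/E)$-stable because $\rho$ has abelian image. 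On each $V_\lambda$ the Zariski closure of $\rho(\Gal(E^{\mathrm{ab}}/E))$ is a commutative linear algebraic group, so $V_\lambda$ breaks up into generalized eigenspaces on each of which the semisimple part of $\rho$ acts through a character $\eta$ and the unipotent part of $\rho$ is a continuous homomorphism into a unipotent $\qpbar$-group; both kill $J$, because $\rho|_J$ is already scalar on $V_\lambda$. A continuous homomorphism from $\Gal(E^{\mathrm{ab}}/E)$ to a unipotent $\qpbar$-group killing the open subgroup $J\subseteq I_E$ is unramified, since the maximal torsion-free pro-quotient of $\Gal(E^{\mathrm{ab}}/E)/J$ is the unramified $\zp$; and $\eta\cdot(\chi_\lambda\circ\chi_\varpi)^{-1}$ is trivial on $J$, hence potentially unramified and in particular de Rham of Hodge--Tate weight $0$. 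Therefore each generalized eigenspace is, as a $\Gal(E^{\mathrm{ab}}/E)$-representation, the tensor product of the de Rham character $\chi_\lambda\circ\chi_\varpi$ with an unramified representation, so it is de Rham with $D_{\mathrm{dR}}$ pure of Hodge--Tate weight $\langle\chi_\lambda,\mu^{-1}\rangle$. Using \Cref{Lem:deRhamExactTensor} it follows that $V\circ\rho$ is de Rham, so $\rho$ is a de Rham $\underline{G(\qp)}$-torsor; moreover $D_{\mathrm{dR}}(V\circ\rho)$ has the same Hodge--Tate weights as $D_{\mathrm{dR}}(V\circ\rho'')$, so at any geometric point the relative position of $\mathbb{M}_0$ is again $\mu^{-1}$ by \cite[Proposition~7.9]{Sch13}, whence the image of $\mathrm{DRT}(\rho)$ meets $\mathrm{Gr}_{G,\{\mu^{-1}\},E}$. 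With \Cref{Lem:DRTCocharacter} this completes the proof.

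I expect the two genuinely delicate points to be: the sign bookkeeping needed to land in $\mathrm{Gr}_{G,\{\mu^{-1}\},E}$ rather than $\mathrm{Gr}_{G,\{\mu\},E}$ --- combining the normalization of $\mathrm{Art}_E$, the direction of the relative position, the conventions for $\mathrm{Fl}$ and $\mathrm{Gr}$, and the Hodge--Tate weight of the Lubin--Tate character --- and the argument with the semisimple--unipotent decomposition of the abelian image of $\rho$ together with the Zariski density of $\rho(J)$ in $r_\mu(T)$, which is what isolates the potentially unramified correction term.
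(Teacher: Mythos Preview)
Your argument is essentially correct but takes a genuinely different route from the paper. The paper reduces rather than compares: since $\period{B}{dR}(\cp,\mathcal{O}_{\cp})^{I_E}=\breve E$, the de Rham property and the lattice $\mathbb{M}_0$ depend only on $\rho\vert_{I_E}$; choosing a uniformizer splits off Frobenius, and after passing to a finite extension of $E$ one may assume $\rho$ \emph{equals} $\Gal(E^\mathrm{ab}/E)\twoheadrightarrow\mathcal{O}_E^\times\hookrightarrow T(\qp)\xrightarrow{r_\mu}G(\qp)$ on the nose. Functoriality of $\mathrm{Gr}$ under $r_\mu$ then reduces to $G=T$, where the Lubin--Tate Tate module computation gives the exact cocharacter. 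Your approach instead keeps $\rho$ fixed and compares it to $\rho''=r_\mu\circ\chi_\varpi$ on each $r_\mu(T)$-weight space of $V$; this is valid, and in fact the semisimple/unipotent step can be shortcut by observing directly that $\rho\vert_{V_\lambda}\otimes(\lambda\circ\chi_\varpi)^{-1}$ is trivial on $J$, hence factors through the finite-by-$\hat{\mathbb{Z}}$ group $\Gal(E^\mathrm{ab}/E)/J$, hence is potentially unramified and de Rham of weight $0$. One point to tighten: knowing the Hodge--Tate weights on the $r_\mu(T)$-weight spaces of a single faithful $V$ does not by itself pin down the relative position in $G$ (only in $\mathrm{GL}(V)$); you should either run your weight-space argument for all $V$, or note that $\rho$ lands in the Levi $L=Z_G(r_\mu(T))$ so that the relative position lies in $X_*(T_L)$, and the constancy of the weight on each $V_\lambda$ forces it into $X_*(r_\mu(T))$ where it is then determined to be $\mu^{-1}$. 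The paper's reduction to $G=T$ sidesteps this bookkeeping entirely. Finally, your initial appeal to \Cref{Lem:DRTCocharacter} is harmless but vacuous here, since over the point $\Spa E$ the image of $\mathrm{DRT}(\rho)$ already lies in a single stratum.
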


\begin{proof}
  Because $\period{B}{dR}(\cp, \mathcal{O}_{\cp})^{I_E} = \breve{E}$ where $I_E
  = \Gal(\qpbar/\breve{E})$ is the inertia group, the representation $\rho$ is
  de Rham if and only if $\rho \vert_{I_E}$ is, and moreover for each
  representation $G \to \mathrm{GL}(V)$ the corresponding $\mathbb{M}_0 =
  D_\mathrm{dR}(\mathbb{L}) \otimes \period{B}{dR}\p(\cp, \mathcal{O}_{\cp})$ is
  the same for $\rho$ and $\rho \vert_{I_E}$. This shows that we are free to
  replace $\rho$ with a composition $\Gal(E^\mathrm{ab}/E) \twoheadrightarrow
  \Gal(E^\mathrm{ab}/\breve{E}) \xrightarrow{\rho} G(\qp)$, upon choosing a
  uniformizer $\pi \in E$ inducing a splitting $\Gal(E^\mathrm{ab}/E) \cong
  \Gal(E^\mathrm{ab}/\breve{E}) \oplus \hat{\mathbb{Z}}$. Moreover, when we
  replace $E$ by a finite extension, both $r_\mu$ and $\mathrm{Art}_E$ changes
  by composition with the norm, and hence the property that $\rho \circ
  \mathrm{Art}_E$ agrees with $r_\mu$ does not change. Therefore we may as well
  assume that $\rho$ factors as
  \[
    \Gal(E^\mathrm{ab}/E) \twoheadrightarrow \Gal(E^\mathrm{ab}/\breve{E}) \cong
    \mathcal{O}_E^\times \hookrightarrow E^\times =T(\qp) \xrightarrow{r_\mu}
    G(\qp).
  \]
  Because $\mathrm{Gr}_{T,\{\mu^{-1}\},E} \to \grg\m$, we are reduce to the case
  when $G = T$ and $r_\mu = \id$. At this point, we observe that this is
  precisely the $p$-adic Tate module of the Lubin--Tate group for $\pi$, which
  is crystalline with Hodge--Tate weights $(-1, 0, \dotsc, 0)$, see
  \cite[Proposition~B.3]{Con11p} (where the normalization is such that the
  cyclotomic character has weight $-1$).
\end{proof}

\begin{proposition}
  The map $\pi_\mathrm{HT}^\mathrm{c} \colon \sh\d \to \grg\c[E]$ factors
  through
  \[
    \grg\c\m \subseteq \grg\c[E],
  \]
  where $\lbrace \mu^\mathrm{c} \rbrace$ is the projection of $\lbrace \mu
  \rbrace$ under $G \to G^\mathrm{c}$.
\end{proposition}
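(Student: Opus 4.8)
The plan is to reduce the statement to a pointwise claim over classical points of the Shimura variety and then invoke the local-constancy/connectedness input from \Cref{Lem:DRTCocharacter}, together with the Galois-theoretic computation in \Cref{Lem:GaloisRepCochar}. First I would observe that $\sh\d$ is connected component by connected component a quotient of $\sh\d[K]$ for $K$ neat, and that (by $\mathsf{G}(\af)$-equivariance of $\pi_\mathrm{HT}^\mathrm{c}$, already established) it suffices to check the factorization on a single connected component, indeed on a single classical point of $\sh\ad[K]$ in each connected component. By \Cref{Lem:DRTCocharacter} applied to the de Rham $\underline{G^\mathrm{c}(\qp)}$-torsor $\xi_K$ on $\sh\ad[K]$ with the conjugacy class $\{\mu^\mathrm{c}\}$ of field of definition $E$, if the image of $\mathrm{DRT}(\xi_K)$ meets $\grg\c\m$ on one component, then it lies in $\grg\c\m$ there; composing with $\iota_K$ gives the factorization of $\pi_{\mathrm{HT},K}^\mathrm{c}$ on that component. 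So the whole statement reduces to exhibiting, in each connected component of $\sh\ad[K]$, one classical point $y$ at which $\pi_{\mathrm{HT},K}^\mathrm{c}(y) \in \grg\c\m$.

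Next I would pin down the local Galois representation at a classical point. For a classical point $y \in \sh[K](L)$ with $L/E$ finite, the fiber $\xi_K\vert_y$ corresponds to a continuous Galois representation $\rho_y \colon \Gal(\bar L/L) \to G^\mathrm{c}(\qp)$, and by the theory of canonical models together with Milne--Shih (the reciprocity law for special points, or equivalently the description of the $p$-adic Galois action on the pro-étale torsor $\sh\d \to \sh\d[K]$ via the Hodge cocharacter), the restriction of $\rho_y$ to inertia agrees, up to the Artin reciprocity normalization, with $r_\mu$ composed with $G \to G^\mathrm{c}$ — at least for $y$ a special (CM) point, where such an explicit description is classical. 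Since every connected component of a Shimura variety contains a special point, this is enough. I would then apply \Cref{Lem:GaloisRepCochar}, with $r_\mu$ replaced by the composite $T \to G \to G^\mathrm{c}$ and $\mu$ replaced by $\mu^\mathrm{c}$, to conclude that $\rho_y$ is de Rham and that $\mathrm{DRT}(\rho_y)$ lands in $\mathrm{Gr}_{G^\mathrm{c},\{(\mu^\mathrm{c})^{-1}\},E}$. A sign/duality bookkeeping check — the discrepancy between $\{\mu^\mathrm{c}\}$ and $\{(\mu^\mathrm{c})^{-1}\}$ coming from whether one trivializes $\mathbb{L}$ or its dual in \Cref{Par:LocSysToGr}, and the conventions fixed in the introduction for $\pi_\mathrm{HT}$ landing in $\grg\m$ rather than $\grg[\mu^{-1}]$ — needs to be carried out carefully so that the target indeed matches $\grg\c\m$ as stated.

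The main obstacle I expect is the second step: extracting the precise shape of the local Galois representation $\rho_y$ at a special point and checking it satisfies the hypothesis of \Cref{Lem:GaloisRepCochar} on an open neighborhood of $1 \in E^\times$. This is where the reciprocity law for canonical models of Shimura varieties at CM points enters, and one must match normalizations (geometric vs.\ arithmetic Frobenius, the Artin map normalization, and the Hodge-cocharacter convention fixed in the conventions section) so that $\rho_y \circ \mathrm{Art}_E$ agrees with $r_{\mu^\mathrm{c}}$ near the identity rather than with its inverse or a twist. Once that local input is in place, the globalization is immediate from \Cref{Lem:DRTCocharacter} and requires no further work, since that lemma already packages the local-constancy of strata (via \cite[Proposition~7.9]{Sch13}) with the connectedness argument.
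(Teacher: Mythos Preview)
Your proposal is correct and follows essentially the same route as the paper: reduce via \Cref{Lem:DRTCocharacter} to exhibiting one point per connected component of $\sh\ad[K]$ landing in the correct Schubert cell, take that point to be a special point, and read off the local Galois representation from the reciprocity law for canonical models so that \Cref{Lem:GaloisRepCochar} applies. The paper carries out exactly this computation (citing \cite[Definition~3.5]{Del71} for the reciprocity law), and your sign worry dissolves once you note that in the paper's macro conventions $\grg\c\m$ already denotes $\mathrm{Gr}_{G^\mathrm{c},\{(\mu^\mathrm{c})^{-1}\},E}$, matching the output of \Cref{Lem:GaloisRepCochar}.
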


\begin{proof}
  In view of the construction in \Cref{Par:HodgeTateC}, we may choose a
  neat compact open $K$ and show that the morphism $\mathrm{DRT}(\xi_K) \colon
  \xi_K \to \grg\c[E]$ factors through $\grg\c\m$. By
  \Cref{Lem:DRTCocharacter}, it suffices to show that on each connected
  component of $\sh[K]$, there exists a point over which $\mathrm{DRT}(\xi_K)$
  maps to $\grg\c\m$. Let $i \colon (\mathsf{T}, \{x\}) \to \gx$ be a special
  point with reflex field $\mathsf{F}$. Then the points $[x, a]_K \in \sh[K]$
  for $a \in \mathsf{G}(\af)$ are Zariski dense, hence intersect every connected
  component of $\sh[K]$. Hence we focus on the such points.

  Let $F_0 \subseteq F^\mathrm{ab}$ be the field of definition of $[x, a]_K \in
  \sh[K]$, containing $F = \mathsf{F}_w$ for some place $w \mid v$. The
  cocharacter $\mu_x \colon \mathbb{G}_{m,F} \to T_F$ is defined over $F$, and
  hence induces a norm $r_{\mu,x} \colon \operatorname{Res}_{F/\qp}
  \mathbb{G}_{m,F} \to T$. Let $\lambda_\mathsf{T}$ and $\lambda_\mathsf{G}$ be
  the unique continuous extensions
  \[ \begin{tikzcd}
    F^\times \arrow[hook]{r}{r_{\mu,x}} \arrow[hook]{d}{\mathrm{Art}_F} & T(\qp)
    \arrow[hook]{r}{i} & G(\qp) \arrow{d} \\ \Gal(F^\mathrm{ab}/F)
    \arrow[dashed]{rr}{\lambda} & & \mathsf{Z}(\mathbb{Q})^-
    \backslash \mathsf{G}(\af)
  \end{tikzcd} \]
  so that the definition of a canonical model, e.g.,
  \cite[Definition~3.5]{Del71}, states that $\sigma [x,a] = [x, \lambda(\sigma)
  a]$. It follows that the Galois representation corresponding to $\xi_K
  \vert_{[x,a]_K}$ is $a^{-1}$-conjugate to the composition
  \[
    \rho_x^\mathrm{c} \colon \Gal(F^\mathrm{ab}/F_0) \xrightarrow{\lambda}
    \mathsf{Z}(\mathbb{Q})^- \backslash \mathsf{Z}(\mathbb{Q})^- K \cong (K \cap
    \mathsf{Z}(\mathbb{Q})^-) \backslash K \to G^\mathrm{c}(\qp).
  \]
  On the open subgroup $(i \circ r_{\mu,x})^{-1}(K_p) \subseteq F^\times$, the
  composition $\rho_x^\mathrm{c} \circ \mathrm{Art}_F$ agrees with the
  composition
  \[
    F^\times \xrightarrow{r_{\mu,x}} T(\qp) \hookrightarrow G(\qp) \to
    G^\mathrm{c}(\qp).
  \]
  It now follows from \Cref{Lem:GaloisRepCochar} that the image of
  $\mathrm{DRT}(\xi_K \vert_{[x,a]_K})$ lies in the stratum $\grg\c\m[F]
  \subseteq \grg\c[F]$.
\end{proof}

\paragraph \label{Par:HodgeTatePeriod}
Using the Bia\l{}ynicki-Birula isomorphism, \Cref{Thm:BialynickiBirula},
we obtain an isomorphism
\[
  \grg\m \cong \flgmu \xrightarrow{\cong} \flgcmu \cong \grg\c\m
\]
of v-sheaves over $\Spd E$. We define the \textdef{Hodge--Tate period map}
$\pi_\mathrm{HT}$ to be the composition
\[
  \pi_\mathrm{HT} \colon \sh\d \to \grg\c\m \cong \grg\m.
\]

\begin{proposition}
  Let $\gx \to \gx'$ be a morphism of Shimura data with local reflex fields
  $E^\prime \subseteq E$. Then the induced diagram
  \[ \begin{tikzcd}
    \sh\d \arrow{r} \arrow{d}{\pi_\mathrm{HT}} & \sh\d'
    \arrow{d}{\pi_\mathrm{HT}} \\ \grg\m \arrow{r} & \grg\m'
  \end{tikzcd} \]
  commutes.
\end{proposition}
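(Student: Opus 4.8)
The plan is to reduce the statement to finite level and then combine two functorialities of the period map $\mathrm{DRT}$ with the naturality of the Bia\l{}ynicki--Birula isomorphism. Write $f\colon\sh\d\to\sh\d'$ for the map induced by $\gx\to\gx'$. First I would choose neat compact open subgroups $K\subseteq\mathsf{G}(\af)$ and $K'\subseteq\mathsf{G}'(\af)$ with $f(K)\subseteq K'$; this induces a morphism of smooth rigid analytic Shimura varieties $\sh[K]\to\sh'[K']$ and hence $\sh\d[K]\to\sh\d'[K']$. Since $\pi_\mathrm{HT}^\mathrm{c}$ and $\pi_\mathrm{HT}^{\prime\mathrm{c}}$ are independent of the chosen level, and since $\pi_\mathrm{HT}$, $\pi_\mathrm{HT}'$ are obtained from them by applying a fixed chain of isomorphisms $\grg\c\m\cong\grg\m$ (via \Cref{Thm:BialynickiBirula}), it suffices to produce a commutative square relating $\pi_{\mathrm{HT},K}^\mathrm{c}$ and $\pi_{\mathrm{HT},K'}^{\prime\mathrm{c}}$ and to observe that this square is compatible with those isomorphisms. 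Here and below I base change $\grg\m'$ and $\grg\c\m'$ from $\Spd E'$ to $\Spd E$, which is legitimate since $E'\subseteq E$.

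The key geometric input is that $\gx\to\gx'$ induces a morphism of cuspidal quotients $\mathsf{G}^\mathrm{c}\to\mathsf{G}^{\prime\mathrm{c}}$, i.e. that $\mathsf{Z}_\mathrm{ac}$ is carried into $\mathsf{Z}'_\mathrm{ac}$; this is the functoriality of the cuspidal formalism of \cite{KSZ21p}, and it is exactly the kind of anti-cuspidal bookkeeping to which \Cref{Sec:FiniteLevel} is devoted. Granting this, the map $\sh\d\to\sh\d'$ is, over $\sh\d[K]\to\sh\d'[K']$, a morphism of pro-\'etale torsors relative to the induced homomorphism $K/(K\cap\mathsf{Z}(\mathbb{Q})^-)\to K'/(K'\cap\mathsf{Z}'(\mathbb{Q})^-)$. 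Pushing this out and unwinding the definitions of $\xi_K$ and $\xi'_{K'}$ from \Cref{Par:LocalSystem}, the pullback of the de Rham $\underline{G^{\prime\mathrm{c}}(\qp)}$-torsor $\xi'_{K'}$ along $\sh\d[K]\to\sh\d'[K']$ is canonically identified with the pushout $\xi_K\times^{\underline{G^\mathrm{c}(\qp)}}\underline{G^{\prime\mathrm{c}}(\qp)}$, compatibly with the tautological maps $\iota_K$ and $\iota'_{K'}$.

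Now I would use two compatibilities of $\mathrm{DRT}$. First, \Cref{Prop:DRTFunctorial} applied to $\sh[K]\to\sh'[K']$ and the torsor $\xi'_{K'}$ says that $\mathrm{DRT}$ of the pullback is $\mathrm{DRT}(\xi'_{K'})$ precomposed with $\sh\d[K]\to\sh\d'[K']$. Second, for a homomorphism $H\to H'$ of linear algebraic groups over $\qp$ and a de Rham $\underline{H(\qp)}$-torsor $\mathbb{P}$ on a smooth rigid analytic variety, the pushout $\mathbb{P}\times^{\underline{H(\qp)}}\underline{H'(\qp)}$ is again de Rham --- its associated local system for a representation $V'$ of $H'$ is the one $\mathbb{P}$ associates to the restriction $V'|_H$, so one concludes from the definition of a de Rham torsor together with \Cref{Lem:deRhamExactTensor} --- and $\mathrm{DRT}$ of the pushout, restricted along the natural map $\mathbb{P}\to\mathbb{P}\times^{\underline{H(\qp)}}\underline{H'(\qp)}$, is $\mathrm{DRT}(\mathbb{P})$ followed by the natural map $\mathrm{Gr}_{H,\qp}\to\mathrm{Gr}_{H',\qp}$, by the Tannakian construction of \Cref{Par:TorsorToGr}. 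Combining these with the identification of the previous paragraph, composing with $\iota_K$, $\iota'_{K'}$, and using $\underline{G^\mathrm{c}(\qp)}$-equivariance of $\mathrm{DRT}(\xi_K)$, yields a commutative square with verticals $\pi_{\mathrm{HT},K}^\mathrm{c}$, $\pi_{\mathrm{HT},K'}^{\prime\mathrm{c}}$ and lower horizontal the map $\grg\c\m\to\grg\c\m'$ induced by $\mathsf{G}^\mathrm{c}\to\mathsf{G}^{\prime\mathrm{c}}$ (which carries $\{\mu^\mathrm{c}\}$ to $\{\mu^{\prime\mathrm{c}}\}$, so the map makes sense).

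Finally I would transfer this to $\pi_\mathrm{HT}$, $\pi_\mathrm{HT}'$. The Bia\l{}ynicki--Birula isomorphism of \Cref{Thm:BialynickiBirula} and the identification of $\mathrm{Fl}_{G,\{\mu^{-1}\},E}^\lozenge$ with $\mathrm{Fl}_{G^\mathrm{c},\{(\mu^\mathrm{c})^{-1}\},E}^\lozenge$ (flag varieties of parabolics depend only on the common adjoint group $G^\mathrm{ad}=(G^\mathrm{c})^\mathrm{ad}$) are natural with respect to the homomorphisms $G\to G'$, $G^\mathrm{c}\to G^{\prime\mathrm{c}}$ and $G\to G^\mathrm{c}$, as one checks directly from their constructions; for instance, the Bia\l{}ynicki--Birula map is described on the level of representations, where it is visibly functorial. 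Feeding this naturality into the previous step identifies the square constructed there, after applying $\grg\c\m\cong\grg\m$ and its analogue for $\gx'$, with the square in the statement, whose lower horizontal is the map $\grg\m\to\grg\m'$ induced by $G\to G'$; this is the asserted commutativity. I expect the main obstacle to be the torsor comparison of the second paragraph --- that $\gx\to\gx'$ respects the anti-cuspidal torus and that the de Rham torsors match under pullback and pushout --- while the remaining steps are formal consequences of functorialities already established above.
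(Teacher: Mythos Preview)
Your proposal is correct and follows essentially the same approach as the paper: reduce to the cuspidal period map $\pi_\mathrm{HT}^\mathrm{c}$ via the Bia\l{}ynicki--Birula identification, then use \Cref{Prop:DRTFunctorial} together with the fact that the pullback of $\xi'_{K'}$ along $\sh[K]\to\sh'[K']$ agrees with the pushout of $\xi_K$ along $G^\mathrm{c}(\qp)\to G^{\prime\mathrm{c}}(\qp)$. The paper's proof is extremely terse (citing \cite[Lemma~3.8]{IKY23p} for the existence of $G^\mathrm{c}\to G^{\prime\mathrm{c}}$ rather than \cite{KSZ21p}) and leaves implicit the pushout-functoriality of $\mathrm{DRT}$ and the naturality of the Bia\l{}ynicki--Birula map that you spell out carefully, but the logical skeleton is the same.
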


\begin{proof}
  By definition, it suffices to show that
  \[ \begin{tikzcd}
    \sh\d \arrow{r} \arrow{d}{\pi_\mathrm{HT}^\mathrm{c}} & \sh\d'
    \arrow{d}{\pi_\mathrm{HT}^\mathrm{c}} \\ \grg\c\m \arrow{r} & \grg\c\m'
  \end{tikzcd} \]
  commutes, where we note that there is indeed a map $G^\mathrm{c} \to
  G^{\prime\mathrm{c}}$ by \cite[Lemma~3.8]{IKY23p}. This follows from
  \Cref{Prop:DRTFunctorial} together with the fact that the pushout
  of the $G^\mathrm{c}(\qp)$-torsor on $\sh[K]$ along $G^\mathrm{c}(\qp) \to
  G^{\prime\mathrm{c}}(\qp)$ agrees with the pullback of the
  $G^{\prime\mathrm{c}}(\qp)$-torsor on $\sh'[K^\prime]$ along the map $\sh[K]
  \to \sh'[K^\prime]$.
\end{proof}

}
{\section{Igusa stacks and global uniformizations} \label{Sec:Uniformization}
\def\rrrll{\arrow{r} \arrow[shift left=2]{r} \arrow[shift right=2]{r}%
  \arrow[leftarrow, shift left]{r} \arrow[leftarrow, shift right]{r}}
\def\rrl{\arrow[shift left]{r} \arrow[shift right]{r} \arrow[leftarrow]{r}}

In this section, we give an axiomatic definition of an Igusa stack, and
reinterpret the data of an Igusa stack as a certain uniformization of the
Shimura variety. More precisely, we establish an equivalence of categories
between Igusa stacks and global uniformization maps, which are maps of v-sheaves
\[
  \Theta \colon U \times_{\bung\m} \grg\m \to U
\]
satisfying certain conditions. While the entire section consists mostly of
formalisms, one needs to be careful as we are dealing with v-stacks rather than
v-sheaves.

Let $\gx$ be a Shimura datum with reflex field $\mathsf{E}$, fix a place $v \mid
p$, and write $E = \mathsf{E}_v$ and $G = \mathsf{G}_{\qp}$. Recall from
\Cref{Par:HodgeTatePeriod} the $\underline{\mathsf{G}(\af)}$-equivariant
Hodge--Tate period map
\[
  \pi_\mathrm{HT} \colon \sh\d \to \grg\m,
\]
where $\lbrace \mu \colon \mathbb{G}_{m,\qpbar} \to G_{\qpbar} \rbrace$ is the
geometric conjugacy class of the Hodge cocharacter whose field of definition is
$E \subset \qpbar$.

\paragraph \label{Par:GroupAction}
Let $\mathcal{C}$ be a site, and consider the category
$\mathsf{Shv}(\mathcal{C})$ of sheaves on $\mathcal{C}$ as well as the
$(2,1)$-category $\mathsf{Stk}(\mathcal{C})$ of stacks on $\mathcal{C}$. For a
group object $G \in \mathsf{Shv}(\mathcal{C})$ and a stack $X \in
\mathsf{Stk}(\mathcal{C})$, by a $G$-action on $X$ we mean the data of a
morphism $\alpha \colon G \times X \to X$ together with an isomorphism $\eta$
filling in the diagram
\[ \begin{tikzcd}
  G \times G \times X \arrow{d}{m \times \id} \arrow{r}{\id \times \alpha} & G
  \times X \arrow{d}{\alpha} \\ G \times X \arrow{r}{\alpha} & X,
\end{tikzcd} \]
satisfying the property that the diagram
\[ \begin{tikzcd}[row sep=small, column sep=large]
  G \times G \times G \times X \arrow{rr}{\id \times \id \times \alpha}
  \arrow{dr}{\id \times m \times \id} \arrow{dd}{m \times \id \times \id} & & G
  \times G \times X \arrow{dr}{\id \times \alpha} \arrow{dd}[near start]{m
  \times \id} \\ & G \times G \times X \arrow{rr}[near start]{\id \times \alpha}
  \arrow{dd}[near start]{m \times \id} & & G \times X \arrow{dd}{\alpha} \\ G
  \times G \times X \arrow{rr}[near start]{\id \times \alpha} \arrow{dr}{m
  \times \id} & & G \times X \arrow{dr}{\alpha} \\ & G \times X
  \arrow{rr}{\alpha} & & X
\end{tikzcd} \]
2-commutes, where the top, right, front, bottom faces are $\id \times \eta$,
$\eta$, $\eta$, $\eta$.

\subsection{} \label{Par:EquivariantMorphism}
For two stacks $X, Y \in \mathsf{Stk}(\mathcal{C})$ with $G$-actions, a
$G$-equivariant morphism $X \to Y$ is the data of a map $f \colon X \to Y$
together with an isomorphism $\zeta$ filling in
\[ \begin{tikzcd}
  G \times X \arrow{r}{\id \times f} \arrow{d}{\alpha_X} & G \times Y
  \arrow{d}{\alpha_Y} \\ X \arrow{r}{f} & Y
\end{tikzcd} \]
satisfying the property that the diagram
\[ \begin{tikzcd}[row sep=small, column sep=large]
  G \times G \times X \arrow{rd}{\id \times \alpha_Y} \arrow{rr}{\id \times \id
  \times f} \arrow{dd}{m \times \id} & & G \times G \times Y \arrow{dr}{\id
  \times \alpha_Y} \arrow{dd}[near start]{m \times \id} \\ & G \times X
  \arrow{rr}[near start]{\id \times f} \arrow{dd}[near start]{\alpha_X} & & G
  \times Y \arrow{dd}{\alpha_Y} \\ G \times X \arrow{rr}[near start]{\id \times
  f} \arrow{dr}{\alpha} & & G \times Y \arrow{dr}{\alpha_Y} \\ & X \arrow{rr}{f}
  & & Y
\end{tikzcd} \]
2-commutes, where the six faces of the cube are given by $\eta_X$, $\eta_Y$,
$\zeta$, $\zeta$, $\id \times \zeta$, $\id_m \times \id_f$. The forgetful
functor
\[
  \Hom_G(X, Y) \to \Hom(X, Y)
\]
that forgets the $G$-equivariant structure is faithful, i.e., is a $0$-truncated
map of groupoids, where an isomorphism $\xi$ between two $G$-equivariant maps
$f, g \colon X \to Y$ is $G$-equivariant when $\xi \circ \zeta_f$ agrees with
$\zeta_g \circ (\id_G \times \xi)$.

\paragraph \label{Par:Simplicial}
Let $I \in \mathsf{Stk}(\mathcal{C})$, $U \in \mathsf{Shv}(\mathcal{C})$, and
let $f \colon U \to I$ be a morphism. Consider the associated 2-truncated
\v{C}ech nerve
\[ \begin{tikzcd}
  X_2 = U \times_I U \times_I U \rrrll & X_1 = U \times_I U \rrl & X_0 = U
  \arrow{r} & X_{-1} = I,
\end{tikzcd} \]
where we note that $X_0, X_1, X_2$ are $0$-truncated, i.e., are sheaves. This
satisfies the property that the commutative diagrams
\[ \begin{tikzcd}
  X_2 \arrow{r}{d_{2,0}} \arrow{d}{d_{2,2}} & X_1 \arrow{d}{d_{1,1}} \\ X_1
  \arrow{r}{d_{1,0}} & X_0,
\end{tikzcd} \begin{tikzcd}
  X_2 \arrow{r}{d_{2,1}} \arrow{d}{d_{2,2}} & X_1 \arrow{d}{d_{1,1}} \\ X_1
  \arrow{r}{d_{1,1}} & X_0,
\end{tikzcd} \begin{tikzcd}
  X_2 \arrow{r}{d_{2,1}} \arrow{d}{d_{2,0}} & X_1 \arrow{d}{d_{1,0}} \\ X_1
  \arrow{r}{d_{1,0}} & X_0,
\end{tikzcd} \]
are all Cartesian, and also the associativity property that
\[ \begin{tikzcd}
  X_2 \times_{d_{1,0} \circ d_{2,0},X_0,d_{1,1}} X_1 \arrow{d}{d_{2,1} \times
  \id} & X_2 \times_{d_{2,0},X_1,d_{2,2}} X_2 \arrow{l}{\cong}[']{\id \times
  d_{2,0}} \arrow{r}{d_{2,2} \times \id}[']{\cong} & X_1
  \times_{d_{1,0},X_0,d_{1,1} \circ d_{2,2}} X_2 \arrow{d}{\id \times d_{2,1}}
  \\ X_1 \times_{d_{1,0},X_0,d_{1,1}} X_1 \cong X_2 \arrow{r}{d_{2,1}} & X_1 &
  X_2 \cong X_1 \times_{d_{1,0},X_0,d_{1,1}} X_1 \arrow{l}[']{d_{2,1}}
\end{tikzcd} \]
commutes. Conversely, for any 2-truncated simplicial set satisfying these
properties, the maps $X_1 \rightrightarrows X_0$ together with the composition
rule
\[
  X_1 \times_{d_{1,0},X_0,d_{1,1}} X_1 \cong X_2 \xrightarrow{d_{2,1}} X_1
\]
defines a groupoid object, so that we may take its quotient $X_0 \to \lvert
X_\bullet \rvert = [X_0 / X_1]$. This constructions recovers the essential image
$\im f \subseteq I$.

\paragraph \label{Par:AbsoluteFrob}
For every perfectoid $\fp$-algebra $(R, R^+)$, there is an absolute Frobenius
automorphism $\phi_{(R, R^+)} \colon \Spa(R, R^+) \to \Spa(R, R^+)$. It follows
that for every v-stack $X \in \mathsf{Stk}(\mathsf{Perf}_\mathrm{v})$ there is
an induced morphism of v-stacks
\[
  \phi_X \colon X \to X; \quad x \in X(R, R^+) \mapsto \phi_{(R, R^+)}^\ast x
  \in X(R, R^+).
\]
For a morphism $f \colon X \to Y$ of v-stacks, there is a canonical isomorphism
$f \circ \phi_X \cong \phi_Y \circ f$ coming from the naturality of $f$ for the
morphism $\phi_{(R, R^+)}$. We also note that there is a canonical isomorphism
\[
  \phi_{\bung} \cong \id_{\bung} \colon \bung \to \bung
\]
for every linear algebraic group $G/\qp$, coming from the fact that the map of
adic spaces $\mathcal{X}_\mathrm{FF}(R, R^+) \to \mathcal{X}_\mathrm{FF}(R,
R^+)$ induced by $\phi_{(R, R^+)}$ is the identity map.

\begin{definition} \label{Def:Igusa}
  Let $U \subseteq \sh\d$ be an open subsheaf stable under the action of
  $\underline{\mathsf{G}(\af)}$. An \textdef{Igusa stack} for $U$ is the data
  of
  \begin{itemize}
    \item a v-stack $\igsu$ together with a
      $\underline{\mathsf{G}(\afp)}$-action,
    \item a $\underline{\mathsf{G}(\afp)}$-equivariant map of v-stacks
      $\bar{\pi}_\mathrm{HT} \colon \igsu \to \bung\m$, where the action on
      $\bung\m$ is trivial,
    \item a 2-Cartesian diagram of v-stacks
      \[ \begin{tikzcd}
        U \arrow{r}{\pi_\mathrm{HT}} \arrow{d}{q_\mathrm{Igs}} & \grg\m
        \arrow{d}{\mathrm{BL}} \\ \igsu \arrow{r}{\bar{\pi}_\mathrm{HT}} &
        \bung\m,
      \end{tikzcd} \]
      (including the data of an isomorphism between the two compositions)
  \end{itemize}
  such that
  \begin{enumerate}
    \item the $\underline{\mathsf{G}(\af)}$-action on $U \cong \igsu
      \times_{\bung\m} \grg\m$ induced by the
      $\underline{\mathsf{G}(\afp)}$-action on $\igsu$ and the
      $\underline{G(\qp)}$-action on $\grg\m$ (see \Cref{Par:BLEquivariance})
      recovers the Hecke action on $U$,
    \item the $(\phi, \id)$-action on $\igsu \times_{\bung\m} \grg\m$, induced
      by the canonical isomorphism $\phi_{\bung\m} \cong \id_{\bung\m}$ from
      \Cref{Par:AbsoluteFrob}, recovers the identity map on $U$.
  \end{enumerate}
\end{definition}

\begin{remark} \label{Rem:AbsoluteFrobReform}
  Because $\mathrm{BL} \colon \grg\m \to \bung\m$ is v-surjective, see
  \Cref{Prop:BLSurjective}, the second condition may also be rephrased as the
  existence of an isomorphism $\phi_{\igsu} \cong \id_{\igsu}$ living over the
  canonical isomorphism $\phi_{\bung\m} \cong \id_{\bung\m}$. This indeed is a
  condition rather than extra data because $\igsu \to \bung\m$ is $0$-truncated,
  as guaranteed by the existence of an isomorphism $U \cong \igsu
  \times_{\bung\m} \grg\m$ together with \Cref{Prop:BLSurjective}, and hence
  \[
    \Hom(\igsu, \igsu) \to \Hom(\igsu, \bung\m)
  \]
  is also $0$-truncated.
\end{remark}

\begin{remark}
  The condition on the absolute Frobenius $\phi$ is not strictly necessary for
  our uniqueness and functoriality results in the sense that $(\igsu)_{k_E}$
  will still be unique and functorial. However, we include this axiom as it is
  crucial when comparing the cohomology of Igusa stacks to cohomology of Shimura
  varieties.
\end{remark}

\paragraph
Let $i \colon \gx \to \gx'$ be a morphism of Shimura data, and let $v^\prime
\mid p$ be the restriction of $v$ along $\mathsf{E}^\prime \subseteq
\mathsf{E}$. Let $U \subseteq \sh\d$ and $U^\prime \subseteq \sh\d'$ be
$\mathsf{G}(\af)$-stable and $\mathsf{G}^\prime(\af)$-stable open subsheaves,
satisfying the property that $U \to U^\prime$ under the natural map $\sh\d \to
\sh\d'$. Let $\igsu$ and $\igsu'$ to be Igusa stacks for $U$ and $U^\prime$. We
define a \textdef{morphism of Igusa stacks} from $\igsu$ to $\igsu'$ to be a
$\underline{\mathsf{G}(\afp)}$-equivariant map of v-stacks
\[
  f \colon \igsu \to \igsu'
\]
together with a $\underline{\mathsf{G}(\afp)}$-equivariant isomorphism
$\lambda_f$ filling in the diagram
\[ \begin{tikzcd}
  \igsu \arrow{r}{f} \arrow{d}{\bar{\pi}_\mathrm{HT}} & \igsu'
  \arrow{d}{\bar{\pi}_\mathrm{HT}} \\ \bung\m \arrow{r}{i} & \bung\m',
\end{tikzcd} \]
see \Cref{Par:EquivariantMorphism}, satisfying the property that the induced map
\begin{align*}
  U &\cong \igsu \times_{\bung\m} \grg\m \\ &\to \igsu' \times_{\bung\m'}
  \grg\m' \cong U^\prime
\end{align*}
is the restriction of the natural map $\sh\d \to \sh\d'$. We define an
\textdef{isomorphism between two morphism of Igusa stacks} $f, g \colon \igsu
\to \igsu'$ to be a $\underline{\mathsf{G}(\afp)}$-equivariant isomorphism $\rho
\colon f \cong g$ for which the two isomorphisms
\[
  \bar{\pi}_\mathrm{HT} \circ f \xrightarrow{\rho} \bar{\pi}_\mathrm{HT} \circ g
  \xrightarrow{\lambda_g} i \circ \bar{\pi}_\mathrm{HT}, \quad
  \bar{\pi}_\mathrm{HT} \circ f \xrightarrow{\lambda_f} i \circ
  \bar{\pi}_\mathrm{HT}
\]
agree.

\begin{proposition} \label{Prop:IgusaDiscrete}
  The groupoid of morphisms of Igusa stacks $\igsu \to \igsu'$ is
  $(-1)$-truncated, i.e., is either empty or contractible.
\end{proposition}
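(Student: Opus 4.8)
The plan is to reduce the statement to a rigidity property of the diagram defining an Igusa stack, exploiting that the horizontal maps to $\bung\m$ are $0$-truncated after base change along the v-surjective map $\mathrm{BL}\colon\grg\m\to\bung\m$. First I would recall, as in \Cref{Rem:AbsoluteFrobReform}, that the projection $q_\mathrm{Igs}\colon U\to\igsu$ exhibits $\igsu$ as the quotient of a groupoid object on $U$, namely the $2$-truncated \v{C}ech nerve $U\times_{\igsu}U\rightrightarrows U$ of \Cref{Par:Simplicial}; and that since $U\cong\igsu\times_{\bung\m}\grg\m$ and $\mathrm{BL}$ is v-surjective (\Cref{Prop:BLSurjective}), the map $\igsu\to\bung\m$ is $0$-truncated, so $U\to\igsu$ is v-surjective and $\igsu=[U/(U\times_{\igsu}U)]$.

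The core observation is that a morphism of Igusa stacks $f\colon\igsu\to\igsu'$ together with $\lambda_f$ is \emph{determined}, up to a \emph{unique} isomorphism of such, by the data it induces after pulling back to $U$. Indeed, $f$ induces a morphism of the two groupoid presentations: on objects it is forced to be the natural map $U\to U'$ (this is the defining condition on $f$, using the identifications $U\cong\igsu\times_{\bung\m}\grg\m$ and $U'\cong\igsu'\times_{\bung\m'}\grg\m'$), and on morphisms $U\times_{\igsu}U\to U'\times_{\igsu'}U'$ it is then also determined because both sides sit over $U\times U$ via maps that are monomorphisms after the further base change along $\mathrm{BL}\times\mathrm{BL}$ — more precisely, $U\times_{\igsu}U\hookrightarrow U\times U$ is $0$-truncated since $\igsu\to\bung\m$ is, and likewise for the primed version, and the square relating them is forced to commute by the groupoid morphism axioms. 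Descending along the v-cover $U\to\igsu$ then recovers $f$ (with its $2$-cell data) from the map of groupoid objects; two such recoveries differ by a descent datum which, by the same $0$-truncatedness, is a property rather than a choice.

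Concretely I would argue as follows. Suppose $f,g\colon\igsu\to\igsu'$ are two morphisms of Igusa stacks. Both restrict, under $q_\mathrm{Igs}$, to the same map $U\to U'\to\igsu'$ up to the canonical $2$-isomorphisms built into the definitions (the composite $U\xrightarrow{q_\mathrm{Igs}}\igsu\xrightarrow{f}\igsu'$ agrees with $U\to U'\xrightarrow{q'_\mathrm{Igs}}\igsu'$ because $f$ induces the natural map $U\to U'$; same for $g$). Hence $f\circ q_\mathrm{Igs}$ and $g\circ q_\mathrm{Igs}$ are canonically isomorphic. To promote this to an isomorphism $\rho\colon f\cong g$, one must check the cocycle/descent condition on $U\times_{\igsu}U$; this holds automatically because $\Hom(\igsu,\igsu')\to\Hom(\igsu,\bung\m')$ is $0$-truncated (as $\igsu'\to\bung\m'$ is $0$-truncated, again by \Cref{Prop:BLSurjective} applied to $\gx'$), so that the isomorphism on the nose of the $\bung\m'$-components — which \emph{is} the tautological one, as $\bar\pi_\mathrm{HT}\circ f=i\circ\bar\pi_\mathrm{HT}=\bar\pi_\mathrm{HT}\circ g$ via $\lambda_f,\lambda_g$ — uniquely lifts to the desired $\rho$, and uniquely forces compatibility with $\lambda_f,\lambda_g$ in the sense required for $\rho$ to be an isomorphism of morphisms of Igusa stacks. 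The same $0$-truncatedness shows $\rho$ is unique, giving contractibility of the (nonempty) groupoid.

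\medskip\noindent\textbf{Main obstacle.} The bookkeeping of $2$-cells is where the real work lies: one is manipulating equivariant morphisms of \emph{stacks} (not sheaves), so every "commutes'' must be upgraded to a coherent system of isomorphisms, and the $\underline{\mathsf{G}(\afp)}$-equivariance must be carried through all of it (cf.\ the cube diagrams in \Cref{Par:GroupAction}--\Cref{Par:EquivariantMorphism}). The conceptual point that makes it go through — and the step I expect to need the most care — is the repeated use of the fact that $\igsu\to\bung\m$ and $\igsu'\to\bung\m'$ are $0$-truncated, which turns what look like choices of higher coherence data into mere properties; verifying that this truncation statement interacts correctly with the v-descent along $U\to\igsu$ (rather than just along the cover $\grg\m\to\bung\m$) is the crux.
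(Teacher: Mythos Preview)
Your proposal is correct and follows essentially the same route as the paper: both arguments reduce to the \v{C}ech nerve presentation $U\to\igsu$, use the identification $U\times_{\igsu}U\cong U\times_{\bung\m}\grg\m$ to see that the induced map on the groupoid object is independent of $f$ versus $g$, and invoke $0$-truncatedness of $\igsu'\to\bung\m'$ (via \Cref{Prop:BLSurjective}) to handle uniqueness of the $2$-cell $\rho$. The paper phrases the existence step slightly more concretely---checking directly that $i\times_f i=i\times_g i$ as maps of v-sheaves on the identified simplicial levels---whereas you frame it as a descent/cocycle verification, but these are the same computation.
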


\begin{proof}
  We first show that the groupoid of morphisms of Igusa stacks $\igsu \to
  \igsu'$ is $0$-truncated. It suffices to verify that when there exist two
  isomorphism $\rho, \rho^\prime \colon f \cong g$ then $\rho = \rho^\prime$. By
  definition, we have
  \[
    (\bar{\pi}_\mathrm{HT} \circ f \xrightarrow{\rho} \bar{\pi}_\mathrm{HT}
    \circ g) = (\bar{\pi}_\mathrm{HT} \circ f \xrightarrow{\lambda_f} i \circ
    \bar{\pi}_\mathrm{HT} \xrightarrow{\lambda_g^{-1}} \bar{\pi}_\mathrm{HT}
    \circ g) = (\bar{\pi}_\mathrm{HT} \circ f \xrightarrow{\rho^\prime}
    \bar{\pi}_\mathrm{HT} \circ g).
  \]
  As in \Cref{Rem:AbsoluteFrobReform}, the map $\bar{\pi}_\mathrm{HT}
  \colon \igsu' \to \bung\m'$ is $0$-truncated by
  \Cref{Prop:BLSurjective}, and hence so is
  \[
    \bar{\pi}_\mathrm{HT} \circ - \colon \Hom(\igsu, \igsu') \to \Hom(\igsu,
    \bung\m').
  \]
  Thus the above equality implies $\rho = \rho^\prime$ as isomorphisms $f \cong
  g$.

  Next, we show that if $f, g$ are two morphisms are Igusa stacks $\igsu \to
  \igsu'$ then there exists an isomorphism $f \cong g$. Taking the fiber product
  of the commutative diagram $\lambda_f$ with the maps $\grg\m \to \bung\m$ and
  $\grg\m' \to \bung\m'$, we obtain a 2-commutative diagram
  \[ \begin{tikzcd}[row sep=small, column sep=large]
    U \arrow{dr}{\pi_\mathrm{HT}} \arrow{rr}{i} \arrow{dd}{q_\mathrm{Igs}} & &
    U^\prime \arrow{dr}{\pi_\mathrm{HT}} \arrow{dd}[near start]{q_\mathrm{Igs}}
    \\ & \grg\m \arrow{rr}[near start]{i} \arrow{dd}[near start]{\mathrm{BL}} &
    & \grg\m' \arrow{dd}{\mathrm{BL}} \\ \igsu \arrow{rr}[near start]{f}
    \arrow{dr}{\bar{\pi}_\mathrm{HT}} & & \igsu'
    \arrow{dr}{\bar{\pi}_\mathrm{HT}} \\ & \bung\m \arrow{rr}{i} & & \bung\m'
  \end{tikzcd} \]
  where all morphisms have a natural $\underline{\mathsf{G}(\afp)}$-equivariance
  structure and all 2-morphisms on faces of the cube are
  $\underline{\mathsf{G}(\afp)}$-equivariant. Because both maps $\mathrm{BL}$
  are v-surjective and 0-truncated by \Cref{Prop:BLSurjective}, we
  may identify $\igsu$ with the quotient of $U$ along
  $R = U \times_{\igsu} U$ with composition law $\mathrm{pr}_{13} \colon R
  \times_U R \to R$. That is, we may recover $f$ and $\lambda_f$ from the
  diagram $\underline{\mathsf{G}(\afp)}$-equivariant diagram
  \[ \begin{tikzcd}
    U \times_{\igsu} U \times_{\igsu} U \rrrll \arrow{d}{i \times_f i \times_f
    i} & U \times_{\igsu} U \arrow{d}{i \times_f i} \rrl & U \arrow{d}{i} \\
    U^\prime \times_{\igsu'} U^\prime \times_{\igsu'} U^\prime \rrrll & U^\prime
    \times_{\igsu'} U^\prime \rrl  & U^\prime,
  \end{tikzcd} \]
  where all objects are $0$-truncated, i.e., v-sheaves. It now suffices to show
  that
  \[
    i \times_f i = i \times_g i, \quad i \times_f i \times_f i = i \times_g i
    \times_g i.
  \]

  We observe that we have a commutative diagram
  \[ \begin{tikzcd}
    U \times_{\igsu} U \arrow{d}{\id \times \pi_\mathrm{HT}} \arrow{r}{i
    \times_f i} & U^\prime \times_{\igsu'} U^\prime \arrow{d}{\id \times
    \pi_\mathrm{HT}} \\ U \times_{\bung\m} \grg\m \arrow{r}{i \times_i i} &
    U^\prime \times_{\bung\m'} \grg\m'
  \end{tikzcd} \]
  where the vertical maps are isomorphisms and the bottom horizontal map does
  not depend on whether we take $f$ or $g$, and hence $i \times_f i = i \times_g
  i$. We similarly use the isomorphism
  \[
    U \times_{\igsu} U \times_{\igsu} U \cong U \times_{\bung\m} \grg\m
    \times_{\bung\m} \grg\m
  \]
  to check that $i \times_f i \times_f i = i \times_g i \times_g i$.
\end{proof}

\paragraph \label{Par:CechNerve}
We now give a different interpretation of an Igusa stack. Given an Igusa stack
$\igsu \to \bung\m$, we consider the 2-truncated \v{C}ech nerve
\[ \begin{tikzcd}
  U \times_{\igsu} U \times_{\igsu} U \arrow{r} \rrrll & U \times_{\igsu} U \rrl
  & U \arrow{r} & \igsu
\end{tikzcd} \]
living over the \v{C}ech nerve for $\mathrm{BL} \colon \grg\m \to \bung\m$.
Using the isomorphism $U \cong \igsu \times_{\bung\m} \grg\m$, we may identify
\begin{align*}
  \id \times \pi_\mathrm{HT} &\colon U \times_{\igsu} U \cong U \times_{\bung\m}
  \grg\m, \\ \id \times \pi_\mathrm{HT} \times \pi_\mathrm{HT} &\colon U
  \times_{\igsu} U \times_{\igsu} U \\ &\cong U \times_{\bung\m} \grg\m
  \times_{\bung\m} \grg\m,
\end{align*}
and consider the corresponding 2-truncated simplicial diagram
\[ \begin{tikzcd}
  \raisebox{2em}{$X_2 = U \times_{\bung\m} \grg\m \times_{\bung\m} \grg\m$}
  \hspace{-15em} \rrrll & X_1 = U \times_{\bung\m} \grg\m \rrl & X_0 = U.
\end{tikzcd} \]
Denote by
\[
  \Theta = d_{1,0} \colon U \times_{\bung\m} \grg\m \cong U \times_{\igsu} U
  \xrightarrow{\mathrm{pr}_2} U
\]
the face map. We see that the rest of the degeneracy and the face maps can be
described in terms of $\Theta$ as
\begin{align*}
  s_{0,0} &= (\id, \pi_\mathrm{HT}) \colon X_0 \to X_1, \quad d_{1,0} = \Theta
  \colon X_1 \to X_0, \quad d_{1,1} = \mathrm{pr}_1 \colon X_1 \to X_0, \\
  s_{1,0} &= s_{0,0} \times \id \colon X_1 \to X_2, \quad s_{1,1} = \id \times
  \Delta \colon X_1 \to X_2, \\ d_{2,0} &= \Theta \times \id \colon X_2 \to X_1,
  \quad d_{2,1} = \mathrm{pr}_{13} \colon X_2 \to X_1, \quad d_{2,2} =
  \mathrm{pr}_{12} \colon X_2 \to X_1.
\end{align*}
This motivates the following definition.

\begin{definition} \label{Def:GlobalUniformization}
  Let $\gx$ be a Shimura datum and consider an open subsheaf $U \subseteq \sh\d$
  stable under the action of $\underline{G(\qp)}$. A \textit{global
  uniformization} for $U$ is a map of v-sheaves
  \[
    \Theta \colon U \times_{\bung\m} \grg\m \to U
  \]
  such that
  \begin{enumerate}
    \item the map $\Theta$ is $\underline{\mathsf{G}(\af)} \times
      \underline{G(\qp)}$-equivariant where the target $U$ is given the action
      $((g^p, g_p), h_p).u = (g^p, h_p).u$ for $g^p \in
      \underline{\mathsf{G}(\afp)}$ and $g_p, h_p \in \underline{G(\qp)}$,
    \item the map $\Theta$ is intertwines the action of $\phi \times \id$ on the
      source with the action of $\id$ on the target,
    \item \label{Item:GlobalUnifDiagrams} the diagrams
      \[ \begin{tikzcd}
        U \arrow{d}[']{(\id, \pi_\mathrm{HT})} \arrow{r}{\id} & U
        \arrow{d}{\pi_\mathrm{HT}} \\ U \times_{\bung\m} \grg\m
        \arrow{ru}{\Theta} \arrow{r}{\mathrm{pr}_2} & \grg\m,
      \end{tikzcd} \]
      \[ \begin{tikzcd}
        U \times_{\bung\m} \grg\m \times_{\bung\m} \grg\m
        \arrow{d}{\mathrm{pr}_{13}} \arrow{r}{\Theta \times \id} & U
        \times_{\bung\m} \grg\m \arrow{d}{\Theta} \\ U \times_{\bung\m} \grg\m
        \arrow{r}{\Theta} & U,
      \end{tikzcd} \]
      commute.
  \end{enumerate}
\end{definition}

\begin{lemma} \label{Lem:InverseAutomatic}
  Let $\Theta \colon U \times_{\bung\m} \grg\m \to U$ be a global uniformization
  map. Then the commutative diagram
  \[ \begin{tikzcd}
    U \times_{\bung\m} \grg\m \arrow{r}{\Theta} \arrow{d}{\pi_\mathrm{HT} \times
    \id} & U \arrow{d}{\pi_\mathrm{HT}} \\ \grg\m \times_{\bung\m} \grg\m
    \arrow{r}{\mathrm{pr}_2} & \grg\m
  \end{tikzcd} \]
  is Cartesian.
\end{lemma}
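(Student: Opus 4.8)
The plan is to check the claim on $S$-points for $S \in \mathsf{Perf}$: since $\mathrm{BL} \colon \grg\m \to \bung\m$ is $0$-truncated (\Cref{Prop:BLSurjective}), both $\grg\m \times_{\bung\m} \grg\m$ and hence $U \times_{\grg\m} (\grg\m \times_{\bung\m} \grg\m)$ are v-sheaves, so the lemma asserts that the natural map $\Psi$ induced by $(\Theta, \pi_\mathrm{HT} \times \id)$ is an isomorphism of v-sheaves, and it suffices to produce an inverse on $S$-points functorially in $S$. First I would unwind the two sides: an $S$-point of $U \times_{\bung\m} \grg\m$ is a triple $(u, x, \gamma)$ with $u \in U(S)$, $x \in \grg\m(S)$, and $\gamma$ an isomorphism of $G$-torsors $\mathrm{BL}(\pi_\mathrm{HT}(u)) \xrightarrow{\cong} \mathrm{BL}(x)$ on $\mathcal{X}_\mathrm{FF}$, while, using $\mathrm{pr}_2$ to eliminate the redundant factor, an $S$-point of $U \times_{\grg\m} (\grg\m \times_{\bung\m} \grg\m)$ is a triple $(v, a, \delta)$ with $v \in U(S)$, $a \in \grg\m(S)$, and $\delta \colon \mathrm{BL}(a) \xrightarrow{\cong} \mathrm{BL}(\pi_\mathrm{HT}(v))$.

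Next I would record that the lower triangle of the first diagram in \Cref{Def:GlobalUniformization} gives $\pi_\mathrm{HT} \circ \Theta = \mathrm{pr}_2$; this both shows the square of the lemma commutes (with canonical $2$-cell, as $\grg\m$ is a sheaf) and identifies $\Psi$ with the assignment $(u, x, \gamma) \mapsto (\Theta(u, x, \gamma), \pi_\mathrm{HT}(u), \gamma)$, the required compatibility $\pi_\mathrm{HT}(\Theta(u, x, \gamma)) = x$ being exactly this identity. I would then propose the inverse $(v, a, \delta) \mapsto (\Theta(v, a, \delta^{-1}), \pi_\mathrm{HT}(v), \delta)$, checking first that $(v, a, \delta^{-1})$ really is an $S$-point of $U \times_{\bung\m} \grg\m$ and that $\pi_\mathrm{HT}(\Theta(v, a, \delta^{-1})) = a$.

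Verifying that the two composites are the identity reduces, in both directions, to the single identity $\Theta\bigl(\Theta(u, x, \gamma), \pi_\mathrm{HT}(u), \gamma^{-1}\bigr) = u$ together with its mirror image. Each of these I would deduce by applying the associativity square (the second diagram in \Cref{Def:GlobalUniformization}) to the $S$-point $(u, x, \pi_\mathrm{HT}(u), \gamma, \gamma^{-1})$ of $X_2 = U \times_{\bung\m} \grg\m \times_{\bung\m} \grg\m$: that square rewrites $\Theta \circ (\Theta \times \id)$ as $\Theta \circ \mathrm{pr}_{13}$, so the left-hand side becomes $\Theta(u, \pi_\mathrm{HT}(u), \id)$, which equals $u$ by the upper triangle — the identity law — of the first diagram.

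The step that requires the most care is the bookkeeping of the gluing isomorphisms over the stack $\bung\m$: at each stage one must confirm that tuples such as $(v, a, \delta^{-1})$ and $(u, x, \pi_\mathrm{HT}(u), \gamma, \gamma^{-1})$ genuinely satisfy the compatibility conditions defining the fibre products in question, which they do by repeated use of $\pi_\mathrm{HT} \circ \Theta = \mathrm{pr}_2$, so that the diagrams of \Cref{Def:GlobalUniformization} may legitimately be applied to them. I do not expect any obstacle beyond this — the lemma is a formal consequence of the groupoid-type axioms packaged into the definition of a global uniformization map.
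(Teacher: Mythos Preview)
Your argument is correct and follows essentially the same route as the paper: both reduce the Cartesian claim to showing that the map $(\Theta, \pi_{\mathrm{HT}} \circ \mathrm{pr}_1)$ is an isomorphism, and both verify this using the associativity square applied to the point $(u, x, \pi_{\mathrm{HT}}(u), \gamma, \gamma^{-1})$ of $X_2$ together with the identity axiom. The only cosmetic difference is that the paper packages the inverse as the observation that $\beta = (\Theta, \pi_{\mathrm{HT}} \circ \mathrm{pr}_1)$ is an involution ($\beta^2 = \id$), so a single computation suffices, whereas you write out the inverse explicitly on $S$-points and check both composites; the underlying use of the axioms is identical.
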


\begin{proof}
  The statement is equivalent to showing that the map
  \[
    \alpha = (\pi_\mathrm{HT} \circ \mathrm{pr}_1, \Theta) \colon U
    \times_{\bung\m} \grg\m \to \grg\m \times_{\bung\m} U
  \]
  of v-sheaves is an isomorphism. Write $\beta = \mathrm{swap} \circ \alpha =
  (\Theta, \pi_\mathrm{HT} \circ \mathrm{pr}_1)$, and observe that it suffices
  to prove $\beta^2 = \id$. It is clear from the construction that
  \[ \begin{tikzcd}
    U \times_{\bung\m} \grg\m \arrow{r}{\beta} \arrow{d}{\pi_\mathrm{HT} \times
    \id} & U \times_{\bung\m} \grg\m \arrow{d}{\pi_\mathrm{HT} \times \id} \\
    \grg\m \times_{\bung\m} \grg\m \arrow{r}{\mathrm{swap}} & \grg\m
    \times_{\bung\m} \grg\m
  \end{tikzcd} \]
  commutes, and hence $(\pi_\mathrm{HT} \times \id) \circ \beta^2 =
  (\pi_\mathrm{HT} \times \id)$. Because the map of v-sheaves
  \[
    U \times_{\bung\m} \grg\m \xrightarrow{(\mathrm{pr}_1, \pi_\mathrm{HT}
    \times \id)} U \times (\grg\m \times_{\bung\m} \grg\m)
  \]
  is injective, as $\grg\m$ is a v-sheaf, it remains to show that $\mathrm{pr}_1
  \circ \beta^2 = \mathrm{pr}_1$. We check that $\mathrm{pr}_1 \circ \beta^2$
  can be understood as the composition
  \begin{align*}
    U \times_{\bung\m} &\grg\m \\ &\xrightarrow{(\id, \pi_\mathrm{HT}) \times
    \id} U \times_{\bung\m} \grg\m \times_{\bung\m} \grg\m \\
    &\xrightarrow{\mathrm{swap}_{23}} U \times_{\bung\m} \grg\m \times_{\bung\m}
    \grg\m \\ &\xrightarrow{\Theta \times \id} U \times_{\bung\m} \grg\m
    \xrightarrow{\Theta} U,
  \end{align*}
  which simplifies to $\mathrm{pr}_1$ by the axioms for $\Theta$.
\end{proof}

\begin{proposition} \label{Prop:IgusaGlobalUnif}
  The constructions of \Cref{Par:Simplicial} and
  \Cref{Par:CechNerve} defines a bijection between the set of isomorphism
  classes of Igusa stacks $\igsu \to \bung\m$ and the set of global
  uniformization maps $\Theta \colon U \times_{\bung\m} \grg\m \to U$.
\end{proposition}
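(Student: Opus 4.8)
\textit{Proof sketch.} The plan is to check that the two constructions of \Cref{Par:Simplicial} and \Cref{Par:CechNerve} are mutually inverse on isomorphism classes. Given an Igusa stack $\igsu\to\bung\m$, one forms the $2$-truncated \v{C}ech nerve of $q_\mathrm{Igs}\colon U\to\igsu$, uses the supplied isomorphism $U\cong\igsu\times_{\bung\m}\grg\m$ to identify $U\times_{\igsu}U\cong U\times_{\bung\m}\grg\m$ (via $\id\times\pi_\mathrm{HT}$) and $U\times_{\igsu}U\times_{\igsu}U\cong U\times_{\bung\m}\grg\m\times_{\bung\m}\grg\m$, and sets $\Theta:=d_{1,0}$. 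In the other direction, given a global uniformization $\Theta$, one takes the $2$-truncated simplicial v-sheaf $X_\bullet$ with the face and degeneracy maps written out in \Cref{Par:CechNerve}, checks it satisfies the hypotheses of \Cref{Par:Simplicial}, sets $\igsu:=\lvert X_\bullet\rvert$, and equips it with the remaining Igusa data.

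For the first construction, one verifies the three conditions of \Cref{Def:GlobalUniformization}. The two commutative diagrams there are exactly the simplicial identities $d_{1,0}\circ s_{0,0}=\id$, $\pi_\mathrm{HT}\circ d_{1,0}=\mathrm{pr}_2$ and $d_{1,0}\circ d_{2,0}=d_{1,0}\circ d_{2,1}$ for the \v{C}ech nerve, transported along the identifications above. For the equivariance condition one traces the $\underline{\mathsf{G}(\af)}\times\underline{G(\qp)}$-action through $U\times_{\igsu}U\cong U\times_{\bung\m}\grg\m$: by the first axiom of \Cref{Def:Igusa} the $\underline{\mathsf{G}(\afp)}$-action on $\igsu$ and the $\underline{G(\qp)}$-action on $\grg\m$ together recover the Hecke action on $U$, so that $U\times_{\igsu}U$ carries the diagonal $\underline{\mathsf{G}(\afp)}$-action and, on each of its two legs, a $\underline{G(\qp)}$-action over the trivial action on $\igsu$, for which $d_{1,0}=\mathrm{pr}_2$ is equivariant with precisely the prescribed target action. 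The Frobenius condition $\Theta\circ(\phi\times\id)=\Theta$ follows from the second axiom of \Cref{Def:Igusa}, which says that the point of $U\cong\igsu\times_{\bung\m}\grg\m$ with $q_\mathrm{Igs}$-coordinate $\phi_{\igsu}(q_\mathrm{Igs}(u))$ and $\pi_\mathrm{HT}$-coordinate $\pi_\mathrm{HT}(u)$ is $u$; combined with the canonical isomorphism $q_\mathrm{Igs}\circ\phi_U\cong\phi_{\igsu}\circ q_\mathrm{Igs}$ this forces $\phi\times\id$ to act trivially on the second leg.

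For the second construction, the three squares of \Cref{Par:Simplicial} for the explicit $X_\bullet$ are Cartesian: two of them are immediate from the fiber-product presentations of $X_1=U\times_{\bung\m}\grg\m$ and $X_2$, while the square expressing $X_2\cong X_1\times_{d_{1,0},X_0,d_{1,0}}X_1$ combines the second of the commutative diagrams in \Cref{Def:GlobalUniformization} with \Cref{Lem:InverseAutomatic} to build the inverse; the associativity diagram of \Cref{Par:Simplicial} likewise reduces to $\Theta\circ(\Theta\times\id)=\Theta\circ\mathrm{pr}_{13}$. Then $\igsu:=\lvert X_\bullet\rvert$ is a v-stack with $q_\mathrm{Igs}\colon U\to\igsu$ a $0$-truncated v-surjection and $U\times_{\igsu}U\cong X_1$, $U\times_{\igsu}U\times_{\igsu}U\cong X_2$. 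The $\underline{\mathsf{G}(\afp)}$-action descends from $X_0$ because the whole diagram $X_\bullet$ is $\underline{\mathsf{G}(\afp)}$-equivariant, using equivariance of $\Theta$ and of $\pi_\mathrm{HT}$ (for the trivial action on $\grg\m$); the map $\bar\pi_\mathrm{HT}$ is the descent of $\mathrm{BL}\circ\pi_\mathrm{HT}$, which is $X_1$-invariant since $\mathrm{BL}\circ\pi_\mathrm{HT}\circ\Theta=\mathrm{BL}\circ\mathrm{pr}_2=\mathrm{BL}\circ\pi_\mathrm{HT}\circ\mathrm{pr}_1$ on $U\times_{\bung\m}\grg\m$; and base-changing $X_1\rightrightarrows U\to\igsu$ along the v-surjection $q_\mathrm{Igs}$ exhibits the required isomorphism $U\cong\igsu\times_{\bung\m}\grg\m$. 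For the first axiom of \Cref{Def:Igusa} one notes that the point $(u,g_p\cdot\pi_\mathrm{HT}(u))\in X_1$ (using equivariance of $\Theta$ and the first commutative diagram of \Cref{Def:GlobalUniformization}) witnesses that $u$ and $g_p\cdot u$ lie in the same $X_1$-orbit, so the Hecke $\underline{G(\qp)}$-action on $U$ is $q_\mathrm{Igs}$-invariant and matches the $\underline{G(\qp)}$-action on $\grg\m$; and for the second axiom the point $(\phi_U(u),\pi_\mathrm{HT}(u))\in X_1$, legitimate because $\phi_{\bung\m}\cong\id$, gives $q_\mathrm{Igs}\circ\phi_U=q_\mathrm{Igs}$ once one invokes the Frobenius and diagram conditions on $\Theta$.

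Finally, the two constructions are inverse: the \v{C}ech nerve of an Igusa stack is isomorphic to the $X_\bullet$ built from its $\Theta$, so \Cref{Par:Simplicial} returns $\igsu$ up to isomorphism — unique by \Cref{Prop:IgusaDiscrete} — together with all its structure, and conversely the \v{C}ech nerve of $q_\mathrm{Igs}\colon U\to\lvert X_\bullet\rvert$ is again $X_\bullet$, so $d_{1,0}$ returns $\Theta$. An isomorphism of Igusa stacks induces an isomorphism of \v{C}ech nerves compatible with the identifications with $U\times_{\bung\m}\grg\m$ and hence gives equality of the associated maps $\Theta$, so the assignment descends to a well-defined bijection on isomorphism classes. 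I expect the bulk of the work to be the $2$-categorical bookkeeping: tracking the coherence isomorphisms of \Cref{Par:GroupAction} and \Cref{Par:EquivariantMorphism}, descending the $\underline{\mathsf{G}(\afp)}$-equivariant structure along with $\bar\pi_\mathrm{HT}$ and the Cartesian isomorphism along the $0$-truncated v-surjection $q_\mathrm{Igs}\colon U\to\igsu$ whose target is a stack rather than a sheaf, and making the translation between the axioms of \Cref{Def:Igusa} (phrased on $\igsu$) and \Cref{Def:GlobalUniformization} (phrased on $U\times_{\bung\m}\grg\m$) fully precise.
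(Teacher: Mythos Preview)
Your proposal is correct and follows the same overall strategy as the paper. The one organizational difference worth noting is in the reverse direction ($\Theta \mapsto \igsu$): the paper packages everything through a morphism of $2$-truncated simplicial v-sheaves $X_\bullet \to Y_\bullet$, where $Y_\bullet$ is the \v{C}ech nerve of $\mathrm{BL}\colon \grg\m \to \bung\m$; every square in this morphism is Cartesian by \Cref{Lem:InverseAutomatic}, so taking quotient stacks immediately yields the Cartesian square defining the Igusa structure, and endowing $X_\bullet$ and $Y_\bullet$ with actions of the simplicial groups $\underline{G(\qp)}^{\bullet+1}$ and $(\phi^{\mathbb{Z}})^{\bullet+1}$ delivers both axioms of \Cref{Def:Igusa} at once. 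Your more pointwise approach---descending $\mathrm{BL}\circ\pi_\mathrm{HT}$ directly and then verifying the axioms by exhibiting explicit witnesses $(u,g_p\cdot\pi_\mathrm{HT}(u))$ and $(\phi_U(u),\pi_\mathrm{HT}(u))$ in $X_1$---amounts to the same computations unwound, and is perfectly valid; the paper's simplicial packaging just absorbs some of the $2$-categorical bookkeeping you flag at the end.
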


\begin{proof}
  Given an Igusa stack $\igsu \to \bung\m$, we take its 2-truncated \v{C}ech
  nerve, identify it with the 2-truncated simplicial v-sheaf
  \[ \begin{tikzcd}
    X_2 \rrrll & X_1 = U \times_{\bung\m} \grg\m \rrl & X_0 = U,
  \end{tikzcd} \]
  and define $\Theta = d_{1,0} \colon X_1 \to X_0$ as in \Cref{Par:CechNerve}.
  Commutativity of the upper triangle in the first diagram of
  (\ref{Item:GlobalUnifDiagrams}) comes from the fact that $d_{1,0} \circ
  s_{0,0} = \id$ on the simplicial v-sheaf $X_\bullet$, commutativity of the
  lower triangle comes from the fact that $s_{0,0} \circ d_{1,0} = d_{2,0} \circ
  s_{1,1}$ as maps $X_1 \to X_1$, and commutativity of the second diagram comes
  from the fact that $d_{1,0} \circ d_{2,0} = d_{1,0} \circ d_{2,1}$ as maps
  $X_2 \to X_1$.

  To see that $\Theta$ is $\underline{\mathsf{G}(\af)} \times
  \underline{G(\qp)}$-equivariant, we note that $\Theta$ can also be described
  as the composition
  \begin{align*}
    U &\times_{\bung\m} \grg\m \\ &\cong \igsu \times_{\bung\m} \grg\m
    \times_{\bung\m} \grg\m \\ &\xrightarrow{\mathrm{pr}_{13}} \igsu
    \times_{\bung\m} \grg\m \cong U
  \end{align*}
  and use the fact that the isomorphism $f \colon U \cong \igsu \times_{\bung\m}
  \grg\m$ is $\underline{\mathsf{G}(\af)}$-equivariant. Similarly, the
  isomorphism $f$ intertwines $\id$ with $\phi \times \id$, and hence $f$
  intertwines $\phi$ with $\id \times \phi$, and it follows that $\Theta$
  intertwines $\phi \times \id$ with $\id$. This verifies the $\Theta$ is indeed
  a global uniformization for $U$.

  In the reverse direction, given $\Theta$ a global uniformization, we construct
  a 2-truncated simplicial diagram $X_\bullet$ as in
  \Cref{Par:CechNerve}. One verifies from the commutativity of the
  diagrams in (\ref{Item:GlobalUnifDiagrams}) of
  \Cref{Def:GlobalUniformization} that this is indeed a 2-truncated
  simplicial diagram, i.e., satisfies all the relations between the face and
  degeneracy maps. It also satsifies the conditions described in
  \Cref{Par:Simplicial}; the only nontrivial condition is the property that the
  second square in (\ref{Item:GlobalUnifDiagrams}) is Cartesian, but this
  follows from \Cref{Lem:InverseAutomatic} because both the lower square and the
  outer square in
  \[ \begin{tikzcd}[row sep=small]
    U \times_{\bung\m} \grg\m \times_{\bung\m} \grg\m
    \arrow{d}{\mathrm{pr}_{13}} \arrow{r}{\Theta \times \id} & U
    \times_{\bung\m} \grg\m \arrow{d}{\Theta} \\ U \times_{\bung\m} \grg\m
    \arrow{r}{\Theta} \arrow{d}{\pi_\mathrm{HT} \times \id} & U
    \arrow{d}{\pi_\mathrm{HT}} \\ \grg\m \times_{\bung\m} \grg\m
    \arrow{r}{\mathrm{pr}_2} & \grg\m
  \end{tikzcd} \]
  are Cartesian.

  The 2-truncated simplicial diagram $X_\bullet$ naturally lives over the
  2-truncated \v{C}ech nerve $Y_\bullet$ of $\mathrm{BL} \colon \grg\m \to
  \bung\m$,
  \[ \begin{tikzcd}
    X_2 \rrrll \arrow{d}{\pi_\mathrm{HT} \times \id \times \id} & X_1 = U
    \times_{\bung\m} \grg\m \rrl \arrow{d}{\pi_\mathrm{HT} \times \id} & X_0 = U
    \arrow{d}{\pi_\mathrm{HT}} \\ Y_2 \rrrll & Y_1 = \grg\m \times_{\bung\m}
    \grg\m \rrl & Y_0 = \grg\m.
  \end{tikzcd} \]
  In this morphism $X_\bullet \to Y_\bullet$ between simplicial diagrams, every
  square corresponding to a morphism from $X_i \to X_j$ to $Y_i \to Y_j$ is
  Cartesian by \Cref{Lem:InverseAutomatic}. Therefore by taking the
  quotient stacks of both simplicial diagrams, we obtain a diagram
  \[ \begin{tikzcd}
    X_0 = U \arrow{r} \arrow{d}{\pi_\mathrm{HT}} & \lbrack X_0/X_1 \rbrack
    \arrow{d} \\ Y_0 = \grg\m \arrow{r}{\mathrm{BL}} & \bung\m
  \end{tikzcd} \]
  together with an isomorphism $\eta$ filling in the square, cf.,
  \cite[Theorem~6.1.3.9]{Lur09}.

  Recall that $\Theta$ is a global uniformization map, hence is
  $\underline{\mathsf{G}(\afp)} \times \underline{G(\qp)}^2$-equivariant. It
  follows that the above diagram of v-sheaves is equivariant for the action of
  $\underline{\mathsf{G}(\afp)}$, where the action on $X_\bullet$ is on $U$ and
  the action on $Y_\bullet$ is trivial. It follows that the v-stack $[X_0/X_1]$
  and the map $[X_0/X_1] \to \bung\m$ has as natural
  $\underline{\mathsf{G}(\afp)}$-equivariant structure, where $\eta$ is also
  equivariant.

  Because $\Theta$ is also $\underline{G(\qp)}^2$-equivariant, there is an
  action of the simplicial group $\underline{G(\qp)}^{\bullet+1}$ on both
  $X_\bullet$ and $Y_\bullet$, under which $X_\bullet \to Y_\bullet$ is
  equivariant. Similarly, there is an action of $(\phi^\mathbb{Z})^{\bullet+1}$
  on both $X_\bullet$ and $Y_\bullet$, under which the map of simplicial objects
  $X_\bullet \to Y_\bullet$ is equivariant. It follows that the
  $\id \times \underline{G(\qp)}$-action and the $\id \times
  \phi^\mathbb{Z}$-action on $[X_0/X_1] \times_{\bung\m} \grg\m$ corresponds to
  the natural $\underline{G(\qp)}$-action and the $\phi^\mathbb{Z}$-action on
  $U$. This shows that $[X_0/X_1] \to \bung\m$ is indeed an Igusa stack.

  It is evident that the two constructions are inverse to each other. Therefore
  they give rise to a bijection between the set of isomorphism classes of Igusa
  stacks and the set of global uniformization maps.
\end{proof}

\begin{proposition} \label{Prop:IgusaThetaFunct}
  Let $\gx \to \gx'$ be a morphism of Shimura data, and let $U \subseteq \sh\d$
  and $U^\prime \subseteq \sh\d'$ be $\mathsf{G}(\af)$-stable and
  $\mathsf{G}^\prime(\af)$-stable open subsheaves. Let $\igsu \to \bung\m$ and
  $\igsu' \to \bung\m'$ be Igusa stacks for $U$ and $U^\prime$, corresponding to
  global uniformization maps $\Theta \colon U \times_{\bung\m} \grg\m \to U$ and
  $\Theta^\prime \colon U^\prime \times_{\bung\m'} \grg\m' \to U^\prime$. Then
  there exists a morphism of Igusa stacks $\igsu \to \igsu'$ if and only if the
  diagram
  \begin{equation} \label{Eq:IgusaThetaFunct} \begin{tikzcd}
    U \times_{\bung\m} \grg\m \arrow{d}{\Theta} \arrow{r} & U^\prime
    \times_{\bung\m'} \grg\m' \arrow{d}{\Theta^\prime} \\ U \arrow{r} & U^\prime
  \end{tikzcd} \tag{$\ast$} \end{equation}
  of v-sheaves commutes.
\end{proposition}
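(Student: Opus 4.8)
The plan is to use the dictionary established in \Cref{Prop:IgusaGlobalUnif} between Igusa stacks and global uniformization maps, reducing the existence of a morphism $\igsu \to \igsu'$ to a statement purely about the $\Theta$'s. First I would treat the forward direction: given a morphism of Igusa stacks $f \colon \igsu \to \igsu'$ with its equivariant $2$-isomorphism $\lambda_f$, I would take $2$-truncated \v{C}ech nerves of $\mathrm{BL} \colon \grg\m \to \bung\m$ and $\mathrm{BL} \colon \grg\m' \to \bung\m'$, and observe that $f$ together with $\lambda_f$ induces a morphism of the associated simplicial v-sheaves $X_\bullet \to X_\bullet'$ as in \Cref{Par:CechNerve}, compatibly with the identifications $X_1 \cong U \times_{\bung\m} \grg\m$ and $X_1' \cong U' \times_{\bung\m'} \grg\m'$. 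Chasing the face map $d_{1,0} = \Theta$ through this morphism of simplicial objects, together with the fact that on $X_0 = U$ the induced map $U \to U'$ is (by the definition of a morphism of Igusa stacks) the restriction of $\sh\d \to \sh\d'$, yields exactly the commutativity of \eqref{Eq:IgusaThetaFunct}. One has to be a little careful that the top horizontal map $U \times_{\bung\m} \grg\m \to U' \times_{\bung\m'} \grg\m'$ in the diagram is the one induced by $\sh\d \to \sh\d'$ and $\grg\m \to \grg\m'$; this is where the isomorphism $\lambda_f$ over $i \colon \bung\m \to \bung\m'$ and the compatibility $\pi_\mathrm{HT}$ of the previous section enter, and it should follow from unwinding the identification $X_1 \cong U \times_{\igsu} U \cong U \times_{\bung\m} \grg\m$ on both sides.

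For the reverse direction, suppose \eqref{Eq:IgusaThetaFunct} commutes. I would build the morphism $f$ by descent along the simplicial presentations. Concretely, the commuting square \eqref{Eq:IgusaThetaFunct} together with the commuting square for $\pi_\mathrm{HT}$ from the previous section, and \Cref{Lem:InverseAutomatic} applied on both sides, assembles into a morphism of $2$-truncated simplicial v-sheaves $X_\bullet \to X_\bullet'$ over the morphism of \v{C}ech nerves $Y_\bullet \to Y_\bullet'$, where $Y_\bullet$ is the nerve of $\mathrm{BL} \colon \grg\m \to \bung\m$. Here one checks that \eqref{Eq:IgusaThetaFunct} gives compatibility with $d_{1,0}$, the $\pi_\mathrm{HT}$-square gives compatibility with $s_{0,0}$ and $d_{1,1}$, and the higher compatibilities on $X_2$ follow formally (using that the relevant squares are Cartesian, as in the proof of \Cref{Prop:IgusaGlobalUnif}) since $X_2 \cong U \times_{\bung\m} \grg\m \times_{\bung\m} \grg\m$ and all face maps are expressed in terms of $\Theta$, $\mathrm{pr}$, $\Delta$, $\pi_\mathrm{HT}$. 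Taking quotient stacks, cf.\ \cite[Theorem~6.1.3.9]{Lur09}, produces the map $f \colon [X_0/X_1] \to [X_0'/X_1']$, i.e.\ $\igsu \to \igsu'$, together with the $2$-isomorphism $\lambda_f$ over $i$ coming from the induced isomorphism of the quotient data over $\bung\m \to \bung\m'$.

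It then remains to equip $f$ with its $\underline{\mathsf{G}(\afp)}$-equivariant structure and to check that the induced map on fibers over $\grg\m$ is the restriction of $\sh\d \to \sh\d'$. The equivariance follows because $\Theta$ and $\Theta'$ are $\underline{\mathsf{G}(\af)} \times \underline{G(\qp)}$-equivariant and the square \eqref{Eq:IgusaThetaFunct} is automatically equivariant for these actions (the diagonal $\underline{\mathsf{G}(\afp)}$-action is the relevant one on the quotients), so the morphism of simplicial objects $X_\bullet \to X_\bullet'$ is equivariant for the simplicial group actions, and passing to quotients gives the equivariant structure on $f$ and the equivariance of $\lambda_f$; the argument is the same as the one establishing the equivariant structure on $\igsu$ itself in \Cref{Prop:IgusaGlobalUnif}. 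That the induced map $U \cong \igsu \times_{\bung\m} \grg\m \to \igsu' \times_{\bung\m'} \grg\m' \cong U'$ recovers $\sh\d \to \sh\d'$ is immediate from the construction: on $X_0 = U$ the component of the simplicial morphism is precisely $\sh\d \to \sh\d'$ by hypothesis. The main obstacle I anticipate is purely bookkeeping: carefully matching up the various canonical isomorphisms (the identification $U \times_{\igsu} U \cong U \times_{\bung\m} \grg\m$, the $2$-isomorphism $\eta$ filling the defining Cartesian square, and the $2$-isomorphism $\lambda_f$) so that the morphism of simplicial diagrams is strictly compatible, rather than merely compatible up to higher coherences, exactly as in the proof of \Cref{Prop:IgusaGlobalUnif}. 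Given \Cref{Prop:IgusaDiscrete}, no coherence beyond the $2$-truncated level needs to be tracked, which is what makes this manageable.
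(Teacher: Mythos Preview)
Your proposal is correct and follows essentially the same approach as the paper: in both directions you pass between a morphism of Igusa stacks and a morphism of the associated $2$-truncated simplicial v-sheaves from \Cref{Par:CechNerve}, with the face map $d_{1,0}$ furnishing exactly the square \eqref{Eq:IgusaThetaFunct}. The paper's proof is terser---it simply invokes the diagram from the proof of \Cref{Prop:IgusaDiscrete} and the construction of \Cref{Par:CechNerve} without spelling out the $X_2$-level compatibilities or the equivariance check---but your more explicit bookkeeping is accurate and fills in precisely what the paper leaves implicit.
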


\begin{proof}
  As in the proof of \Cref{Prop:IgusaDiscrete}, a morphism of Igusa
  stacks gives rise to a $\underline{\mathsf{G}(\afp)}$-equivariant diagram
  \[ \begin{tikzcd}[row sep=small]
    U \times_{\igsu} U \times_{\igsu} U \rrrll \arrow{d} & U \times_{\igsu} U
    \arrow{d} \rrl & U \arrow{d} \\ U^\prime \times_{\igsu'} U^\prime
    \times_{\igsu'} U^\prime \rrrll & U^\prime \times_{\igsu'} U^\prime \rrl &
    U^\prime.
  \end{tikzcd} \]
  By identifying $U \times_{\igsu} U \cong U \times_{\bung\m} \grg\m$ and
  similarly for $U^\prime$, the commutativity of the square corresponding to the
  face map $d_{1,0}$ corresponds the commutativity of
  \eqref{Eq:IgusaThetaFunct}.

  Conversely, if \eqref{Eq:IgusaThetaFunct} commutes, then the construction of
  \Cref{Par:CechNerve} gives a $\underline{\mathsf{G}(\afp)}$-equivariant
  diagram
  \[ \begin{tikzcd}[row sep=small]
    X_2 \rrrll \arrow{d} & U \times_{\bung\m} \grg\m \rrl \arrow{d} & U
    \arrow{d} \\ X_2^\prime \rrrll & U^\prime \times_{\bung\m'} \grg\m' \rrl &
    U^\prime.
  \end{tikzcd} \]
  Taking the quotient stack for both rows, we obtain a morphism of Igusa stacks
  $\igsu \to \igsu'$.
\end{proof}

}
{\section{Global uniformization at finite level} \label{Sec:FiniteLevel}
\def\grgmu{\mathrm{Gr}_{G,\{\mu\},E}}
\def\grgmuc{\mathrm{Gr}_{G^\mathrm{c},\{\mu^\mathrm{c}\},E}}

For reasons we will see later, we wish to work with finite level Shimura
varieties $\sh[K]$ instead of its profinite cover $\sh$. For $K \subseteq
\mathsf{G}(\af)$ a neat compact open subgroup, we construct v-sheaves
\begin{align*}
  \mathcal{R} &= U \times_{\bung\m} \grg\m \\ &\to \mathcal{R}_K
  \twoheadrightarrow \mathcal{Q}_K^0 \subseteq \mathcal{Q}_K = U_K
  \times_{\bung\c\m} [\underline{K_p^\mathrm{c}} \backslash \grg\c\m]
\end{align*}
where $\mathcal{R}_K \twoheadrightarrow \mathcal{Q}_K^0$ is a torsor for a
discrete abelian group and $\mathcal{Q}_K^0 \hookrightarrow \mathcal{Q}_K$ is a
open and closed embedding. Much of the complication originates from the fact
that the center $\mathsf{Z}$ of $\mathsf{G}$ need not be cuspidal, and hence we
cannot simply identify $\sh[K]$ with $[\underline{K} \backslash \sh]$; in the
case when $\mathsf{G} = \mathsf{G}^\mathrm{c}$, e.g., when $\gx$ is of Hodge
type, we have $\mathcal{R}_K = \mathcal{Q}_K^0 = \mathcal{Q}_K$.

\paragraph
Throughout this section, we let $\gx$ be a Shimura datum, let $E = \mathsf{E}_v$
be a local reflex field, and let $U \subseteq \sh\d$ be an open subsheaf stable
under the action of $\mathsf{G}(\af)$. Consider the v-sheaf
\[
  \mathcal{R} = U \times_{\bung\m} \grg\m,
\]
which is the source of the global uniformization map. Since $\pi_\mathrm{HT}
\colon U \to \grg\m$ is $\underline{\mathsf{G}(\af)}$-equivariant and there is a
natural $\underline{G(\qp)}$-equivariant structure on $\mathrm{BL} \colon \grg\m
\to \bung\m$, there is a natural $\underline{\mathsf{G}(\af)} \times
\underline{G(\qp)}$-action on $\mathcal{R}$. Similarly, we define
\[
  \mathcal{Q} = U \times_{\bung\c\m} \grg\c\m,
\]
which naturally has a $\underline{\mathsf{G}(\af)} \times
\underline{G^\mathrm{c}(\qp)}$-action.

\begin{lemma} \label{Lem:ActionOnR}
  The closed subgroup
  \[
    (\id_{\mathsf{G}(\afp)} \times \Delta_{G(\qp)})(\mathsf{Z}(\mathbb{Q})^-)
    \subseteq \mathsf{G}(\af) \times G(\qp)
  \]
  acts trivially on $\mathcal{R}$. Similarly, the closed subgroup
  \[
    (\id_{\mathsf{G}(\afp)} \times
    \Delta^\mathrm{c}_{G(\qp)})(\mathsf{Z}(\mathbb{Q})^-) \subseteq
    \mathsf{G}(\af) \times G^\mathrm{c}(\qp)
  \]
  acts trivially on $\mathcal{Q}$, where $\Delta_{G(\qp)}^\mathrm{c} \colon
  G(\qp) \to G(\qp) \times G^\mathrm{c}(\qp)$ is the graph of the natural
  projection $G(\qp) \to G^\mathrm{c}(\qp)$.
\end{lemma}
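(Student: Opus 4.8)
The plan is a direct computation on points, treating $\mathcal{Q}$ exactly like $\mathcal{R}$ after replacing $G$, $\grg\m$, $\mathrm{BL} \colon \grg\m \to \bung\m$, $\pi_\mathrm{HT}$ by $G^\mathrm{c}$, $\grg\c\m$, the corresponding Beauville--Laszlo map, $\pi_\mathrm{HT}^\mathrm{c}$, and using that the surjection $G \to G^\mathrm{c}$ carries central elements to central elements. Fix $z = (z^p, z_p) \in \mathsf{Z}(\mathbb{Q})^-$; since $\mathsf{Z}(\qp)$ is closed in $G(\qp)$, the component $z_p$ is central. It suffices to show that $(\id \times \Delta)(z) = ((z^p, z_p), z_p) \in \mathsf{G}(\af) \times G(\qp)$ fixes every point $(u, x, \iota)$ of $\mathcal{R} = U \times_{\bung\m} \grg\m$, where $\iota \colon \mathrm{BL}(\pi_\mathrm{HT}(u)) \xrightarrow{\sim} \mathrm{BL}(x)$ is the gluing datum of the fiber product.

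Three facts go into this. First, the Hecke action of $\mathsf{Z}(\mathbb{Q})$ on $\sh\d$ is trivial: a central element acts trivially on $\mathsf{X}$ by conjugation and commutes with $\mathsf{G}(\af)$, so it induces the identity on each $\sh[K]$; since the locus on which a fixed group element acts as the identity is closed in the v-sheaf $\sh\d$, the action of $\mathsf{Z}(\mathbb{Q})^-$ on $\sh\d$, hence on $U$, is trivial, so $z \cdot u = u$. Second, a central $z_p$ acts trivially on $\grg\m$; this is immediate from the description of $\grg$ in \Cref{Par:AffineGrass}, since $z_p \in G(\qp) \subseteq G(\period{B}{dR}\p)$ and it translates each coset to itself, so $z_p \cdot x = x$. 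Third, for a central $z_p$ and any left $G$-torsor $\mathscr{P}$ there is a canonical automorphism $[z_p] \colon \mathscr{P} \to \mathscr{P}$, $p \mapsto z_p \cdot p$, functorial in $\mathscr{P}$; unwinding \Cref{Par:BLEquivariance}, the coherence isomorphism $r_{z_p} \colon \mathrm{BL}(z_p \cdot x) \to \mathrm{BL}(x)$, being the unique one extending right multiplication by $z_p$ off the divisor, coincides with $[z_p]$ once we identify $z_p \cdot x = x$.

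The computation then runs as follows. Acting by $((z^p, z_p), z_p)$ sends $u \mapsto z \cdot u = u$ and $x \mapsto z_p \cdot x = x$ by the first two facts, and transports the gluing datum to $\iota' = (r_{z_p})^{-1} \circ \iota \circ r_{z_p}$: the $r_{z_p}$ on the right is the coherence isomorphism contributed over $\bung\m$ by the Hecke action on $U$, which, because $\pi_\mathrm{HT}$ is $\underline{\mathsf{G}(\af)}$-equivariant through $G(\qp)$, is $r_{z_p}$ on $\mathrm{BL}(\pi_\mathrm{HT}(u))$, while the one on the left is the coherence of the $\underline{G(\qp)}$-equivariance of $\mathrm{BL}$ on $\mathrm{BL}(x)$. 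By the third fact both equal the canonical central automorphism, and functoriality gives $\iota \circ [z_p] = [z_p] \circ \iota$, so $\iota' = [z_p]^{-1} \circ [z_p] \circ \iota = \iota$. Hence $(\id \times \Delta)(z)$ acts as the identity on $\mathcal{R}$; running the same argument with $G^\mathrm{c}$, $\grg\c\m$ and the central element $z_p^\mathrm{c} \in G^\mathrm{c}(\qp)$ proves the claim for $\mathcal{Q}$.

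The main obstacle I anticipate is the $2$-categorical bookkeeping over the stack $\bung\m$ in the third paragraph: one must check that the coherence isomorphism the Hecke action contributes over $\bung\m$ is genuinely the same $r_{z_p}$ appearing in the $\underline{G(\qp)}$-equivariance of $\mathrm{BL}$ (this uses the equivariance of $\pi_\mathrm{HT}$ together with the cocycle condition on the family $r$), and that for central $z_p$ it reduces to an automorphism functorial in the torsor, hence respected by $\iota$. This is also the reason the graph embeddings $\Delta_{G(\qp)}$, $\Delta^\mathrm{c}_{G(\qp)}$ are essential: without the extra copy of $z_p$ (resp. $z_p^\mathrm{c}$) acting on $\grg\m$ (resp. $\grg\c\m$) the datum $\iota$ would be altered by the nontrivial automorphism $r_{z_p}$ and the action would fail to be trivial.
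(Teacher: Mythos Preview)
Your proof is correct and follows essentially the same route as the paper: both reduce to showing that for central $z_p$ the coherence isomorphism $r_{z_p}$ on $\mathrm{BL}(x)$ coincides with left multiplication by $z_p$, which is functorial in morphisms of left $G$-torsors and hence commutes with the gluing datum. The paper streamlines the bookkeeping slightly by rewriting $\mathcal{R} = U \times_{\grg\m} (\grg\m \times_{\bung\m} \grg\m)$, so that the question reduces to the diagonal $Z(\qp)$-action on $\grg\m \times_{\bung\m} \grg\m$ and one avoids tracking the $U$-side of the $2$-fiber product separately.
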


\begin{proof}
  Write
  \[
    \mathcal{R} = U \times_{\grg\m} (\grg\m \times_{\bung\m} \grg\m).
  \]
  Because $\mathsf{Z}(\mathbb{Q})^- \subseteq \mathsf{G}(\af)$ acts trivially on
  $\sh$, it suffices to show that the diagonal $Z(\qp) \subseteq G(\qp)^2$ acts
  trivially on $\grg\m \times_{\bung\m} \grg\m$. By the description in
  \Cref{Par:BLEquivariance}, this translates to the fact that for two
  untilts $R^{\sharp_1}, R^{\sharp_2}$ of $R$ over $E$, two points $x_i \in
  \grg\m(R^{\sharp_i}, R^{\sharp_i+})$ inducing $G$-torsors $\mathscr{P}_i /
  \mathcal{X}_\mathrm{FF}(R, R^+)$, an isomorphism $\lambda \colon \mathscr{P}_1
  \cong \mathscr{P}_2$, and an element $z \in \underline{Z(\qp)}(R, R^+)$ the
  diagram
  \[ \begin{tikzcd}
    \mathscr{P}_1 \arrow{r}{\lambda} \arrow{d}{r_z} & \mathscr{P}_2
    \arrow{d}{r_z} \\ \mathscr{P}_1 \arrow{r}{\lambda} & \mathscr{P}_2
  \end{tikzcd} \]
  commutes. Here $r_z \colon \mathscr{P}_i \to \mathscr{P}_i$ because $z \in
  \underline{Z(\qp)}$ implies $z x_i = x_i$, and $r_z$ is the unique extension
  right multiplication by $z$ on
  \[
    \mathscr{P}_i \vert_{\mathcal{X}_\mathrm{FF}(R, R^+) \setminus
    \Spa(R^{\sharp_i}, R^{\sharp_i+})} \cong G \times
    (\mathcal{X}_\mathrm{FF}(R, R^+) \setminus \Spa(R^{\sharp_i},
    R^{\sharp_i+})).
  \]
  Because $z$ is central, right multiplication by $z$ is also left
  multiplication by $z$, which extends along the divisor $\Spa(R^{\sharp_i},
  R^{\sharp_i+})$ as well. Therefore $r_z$ is left multiplication by $z$ on all
  of $\mathcal{X}_\mathrm{FF}(R, R^+)$. As $\lambda$ is $G$-equivariant by
  definition, we see that the above diagram commutes.

  Similarly, we write
  \[
    \mathcal{Q} = U \times_{\grg\m} (\grg\m \times_{\bung\c\m} \grg\c\m)
  \]
  and argue as above that $\Delta_{G(\qp)}^\mathrm{c}(Z(\qp)) \subseteq Z(\qp)
  \times Z^\mathrm{c}(\qp)$ acts trivially on the v-sheaf $\grg\m
  \times_{\bung\c\m} \grg\c\m$.
\end{proof}

\paragraph \label{Par:RToQ}
From the diagram
\[ \begin{tikzcd}[column sep=large]
  U \arrow[equals]{d} \arrow{r}{\mathrm{BL} \circ \pi_\mathrm{HT}} & \bung\m
  \arrow{d} & \grg\m \arrow{l}[']{\mathrm{BL}} \arrow{d}{\cong} \\ U
  \arrow{r}{\mathrm{BL} \circ \pi_\mathrm{HT}} & \bung\c\m & \grg\c\m
  \arrow{l}[']{\mathrm{BL}}
\end{tikzcd} \]
we obtain a natural map
\[
  \mathcal{R} = U \times_{\bung\m} \grg\m \to U \times_{\bung\c\m} \grg\c\m =
  \mathcal{Q}.
\]
It is clear from the construction that this map is equivariant with respect to
the actions of $\underline{\mathsf{G}(\af)} \times \underline{G(\qp)} \to
\underline{\mathsf{G}(\af)} \times \underline{G^\mathrm{c}(\qp)}$.

\begin{lemma} \label{Lem:BLCompareCuspidal}
  The commutative diagram
  \[ \begin{tikzcd}[row sep=small]
    \bung\m \arrow{d} & \lbrack \underline{G(\qp)} \backslash \grg\m \rbrack
    \arrow{d} \arrow{l}[']{\mathrm{BL}} \\ \bung\c\m & \lbrack
    \underline{G^\mathrm{c}(\qp)} \backslash \grg\c\m \rbrack
    \arrow{l}[']{\mathrm{BL}}
  \end{tikzcd} \]
  is Cartesian.
\end{lemma}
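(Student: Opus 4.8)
The plan is to reduce the assertion to a statement about the anti-cuspidal torus $\mathsf{Z}_\mathrm{ac}$ and then exploit its defining property.

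First I would observe that the vertical map $\grg\m \to \grg\c\m$ is an isomorphism: by the Bia\l{}ynicki-Birula isomorphism (\Cref{Thm:BialynickiBirula}) both sides are the diamond of a flag variety, and since $\mathsf{Z}_\mathrm{ac}$ is central it lies in every parabolic $P_\mu$, so $\mathrm{Fl}_{G,\{\mu\}} = G/P_\mu = G^\mathrm{c}/P_{\mu^\mathrm{c}} = \mathrm{Fl}_{G^\mathrm{c},\{\mu^\mathrm{c}\}}$ over $E$. This identification is equivariant for the $\underline{G(\qp)}$- and $\underline{G^\mathrm{c}(\qp)}$-actions through the projection $\underline{G(\qp)} \to \underline{G^\mathrm{c}(\qp)}$ — indeed the $\underline{G(\qp)}$-action on $\grg\m$ already factors through $\underline{G(\qp)/Z(\qp)}$ because central elements of $G(\qp) \subseteq G(\period{B}{dR}\p)$ act trivially (\Cref{Par:AffineGrass}) — and it is compatible with the two Beauville--Laszlo maps via the commutative square of \Cref{Par:RToQ}. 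Hence there is a natural comparison morphism $\Phi \colon [\underline{G(\qp)}\backslash\grg\m] \to \bung\m \times_{\bung\c\m} [\underline{G^\mathrm{c}(\qp)}\backslash\grg\c\m]$, and it remains to prove $\Phi$ is an equivalence of v-stacks. (Equivalently, using the general identity $[\underline{G(\qp)}\backslash\grg\m] \cong B\underline{G(\qp)} \times_{B\underline{G^\mathrm{c}(\qp)}} [\underline{G^\mathrm{c}(\qp)}\backslash\grg\c\m]$, one must match the $Z_\mathrm{ac}$-gerbe cut out by $\bung\m \to \bung\c\m$ along $\mathrm{BL}$ with the one cut out by $B\underline{G(\qp)} \to B\underline{G^\mathrm{c}(\qp)}$.)

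Since $\mathrm{BL} \colon \grg\m \to \bung\m$ and $\mathrm{BL} \colon \grg\c\m \to \bung\c\m$ are v-surjective (\Cref{Prop:BLSurjective}), it suffices to test $\Phi$ after base change along the v-cover $\grg\c\m \to [\underline{G^\mathrm{c}(\qp)}\backslash\grg\c\m]$. For this I would use the description of $[\underline{G(\qp)}\backslash\grg\m]$ as the v-stack of triples $(\mathbb{V}, \mathcal{E}, \iota)$, where $\mathbb{V}$ is a $\underline{G(\qp)}$-torsor on $\mathcal{X}_\mathrm{FF}$, $\mathcal{E}$ is a $G$-bundle with $[\mathcal{E}] \in \bung\m$, and $\iota$ exhibits $\mathcal{E}$ as a type-$\{\mu\}$ modification at the divisor of the bundle $\mathcal{E}(\mathbb{V})$ attached to $\mathbb{V}$; the map to $\bung\m$ records $\mathcal{E}$ and the map to $[\underline{G^\mathrm{c}(\qp)}\backslash\grg\c\m]$ pushes everything out along $G \to G^\mathrm{c}$. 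Over a point of $\grg\c\m$ — a $G^\mathrm{c}(\qp)$-torsor $\mathbb{W}$ with a type-$\{\mu^\mathrm{c}\}$ modification datum $m^\mathrm{c}$ producing $\mathcal{F} \in \bung\c\m$ — together with a $G$-bundle $\mathcal{E} \in \bung\m$ and an isomorphism $\mathcal{E}\times^G G^\mathrm{c} \cong \mathcal{F}$, the fiber of $\Phi$ is the groupoid of lifts of $(\mathbb{W}, m^\mathrm{c})$ to a $\underline{G(\qp)}$-torsor $\mathbb{V}$ and a type-$\{\mu\}$ $G$-modification datum $\iota$ with underlying bundle $\mathcal{E}$. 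Because $1 \to \mathsf{Z}_\mathrm{ac} \to \mathsf{G} \to \mathsf{G}^\mathrm{c} \to 1$ is a central extension, this groupoid of lifts, if nonempty, is a torsor under the Picard groupoid of $\underline{Z_\mathrm{ac}(\qp)}$-torsors on $\mathcal{X}_\mathrm{FF}$ (here $Z_\mathrm{ac} = (\mathsf{Z}_\mathrm{ac})_{\qp}$), and its nonemptiness is obstructed by a class in $H^2(\mathcal{X}_\mathrm{FF}, \underline{Z_\mathrm{ac}(\qp)})$. Thus the lemma reduces to showing that the $\bung\m$- and $\bung\c\m$-conditions force this obstruction to vanish and the torsor to be a point.

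The last step is the crux, and I expect it to be the main obstacle, since it is where the anti-cuspidal hypothesis on $\mathsf{Z}_\mathrm{ac}$ genuinely enters. The strategy is to note that a type-$\{\mu\}$ modification is trivial in the $\mathsf{Z}_\mathrm{ac}$-direction: as $\mathsf{Z}_\mathrm{ac}$ is a central torus with $\mathsf{Z}_\mathrm{ac} \cap \mathsf{G}^\mathrm{der}$ finite, the lattice $X_\ast(\mathsf{Z}_\mathrm{ac})$ injects into $\pi_1(\mathsf{G})$, so the $\mathsf{Z}_\mathrm{ac}$-component of the modification type carries no information, which pins down the lift of $\iota$ once the underlying bundle is pinned down. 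It then remains to see that $\mathcal{E}$ itself is canonically recovered from $\mathcal{F}$: two $G$-bundles $\mathcal{E}, \mathcal{E}'$ with $[\mathcal{E}],[\mathcal{E}'] \in \bung\m$ and isomorphic pushouts to $G^\mathrm{c}$ differ by a class $z \in B(Z_\mathrm{ac})$, and comparing Kottwitz invariants in $\pi_1(G)_{\Gal(\qpbar/\qp)}$ and Newton points — using that $\mathsf{Z}_\mathrm{ac}$ is $\mathbb{Q}$-anisotropic while $\mathsf{Z}^0/\mathsf{Z}_\mathrm{ac}$ has equal $\mathbb{Q}$- and $\mathbb{R}$-split ranks — one should conclude $z = 0$, and similarly that the relevant higher cohomology $H^{\ge 1}(\mathcal{X}_\mathrm{FF},\underline{Z_\mathrm{ac}(\qp)})$ vanishes in the range that matters, so the fiber of $\Phi$ is contractible. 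The delicate part is precisely to convert this global structural property of $\mathsf{Z}_\mathrm{ac}$ into the required vanishing statements over $\qp$ and over $\mathcal{X}_\mathrm{FF}$.
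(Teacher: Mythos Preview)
Your first step, identifying $\grg\m \cong \grg\c\m$ via the Bia\l{}ynicki--Birula isomorphism, is shared with the paper. After that your strategy diverges, and the final paragraph contains a genuine gap. First, the sentence ``It then remains to see that $\mathcal{E}$ itself is canonically recovered from $\mathcal{F}$'' is confused: you are computing the fiber of $\Phi$ over a point of the fiber product $\bung\m \times_{\bung\c\m} [\underline{G^\mathrm{c}(\qp)}\backslash\grg\c\m]$, and $\mathcal{E}$ is part of that datum, not something to be reconstructed. Second, and more seriously, the anti-cuspidal property of $\mathsf{Z}_\mathrm{ac}$ is irrelevant to this lemma. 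The statement and its proof are entirely local at $p$ and go through verbatim for any quotient of $G$ by a central subgroup; invoking $\mathbb{Q}$-anisotropy and equality of $\mathbb{Q}$- and $\mathbb{R}$-split ranks cannot produce any statement over $\qp$ or over $\mathcal{X}_\mathrm{FF}$, so this line of argument would not close.

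The paper avoids obstruction theory altogether by constructing the lift directly. Given $\mathscr{P}$ on $\mathcal{X}_\mathrm{FF}$ (your $\mathcal{E}$), the pointwise-trivial $G^\mathrm{c}$-torsor $\mathscr{Q}^\mathrm{c}$, and the meromorphic $\alpha^\mathrm{c}$ of type $\{\mu^\mathrm{c}\}$, the isomorphism of Schubert cells lifts $\alpha^\mathrm{c}$ over $\period{B}{dR}$ uniquely to a $G$-modification datum $\alpha^\mathrm{dR}$ and a $G$-torsor $\mathscr{Q}^\mathrm{dR}$ on $\Spec\period{B}{dR}\p$; Beauville--Laszlo gluing of $\mathscr{Q}^\mathrm{dR}$ with $\mathscr{P}$ away from the divisor then produces a canonical $G$-torsor $\mathscr{Q}$ on $\mathcal{X}_\mathrm{FF}$ lifting $\mathscr{Q}^\mathrm{c}$, together with the lifted $\alpha$. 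Existence and uniqueness of the lift are thus immediate from the Schubert-cell isomorphism you already have. The only remaining point is that $\mathscr{Q}$ is trivial at every geometric point, and this is a short computation in $B(G)$: since $\mathscr{Q}^\mathrm{c}$ is trivial the Newton point $\nu_b$ factors through $Z_\mathrm{ac}$, so $[b]$ is basic, and the Kottwitz invariant of $\mathscr{Q}$ vanishes because $\mathscr{P}\in\bung\m$ forces $\kappa(\mathscr{P})=-\mu^\natural$ while the modification of type $\{\mu\}$ shifts by $\mu^\natural$. Basic with trivial Kottwitz invariant means trivial, and no property of $\mathsf{Z}_\mathrm{ac}$ beyond centrality is used.
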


\begin{proof}
  As in \cite[Proposition~23.3.1]{SW20}, the stack $[\underline{G(\qp)}
  \backslash \grg\m]$ has the moduli interpretation of sending $(R, R^+)$ to an
  untilt $E \to R^\sharp$ together with a meromorphic map
  \[
    \mathscr{P} \vert_{\mathcal{X}_\mathrm{FF}(R, R^+) \setminus \Spa(R^\sharp,
    R^{\sharp+})} \xrightarrow{\cong} \mathscr{Q}
    \vert_{\mathcal{X}_\mathrm{FF}(R, R^+) \setminus \Spa(R^\sharp,
    R^{\sharp+})}
  \]
  of $G$-torsors $\mathscr{P}, \mathscr{Q}$ over $\mathcal{X}_\mathrm{FF}(R,
  R^+)$ with relative position $\{\mu\}$, where $\mathscr{Q}$ is trivial over
  $\mathcal{X}_\mathrm{FF}(C, C^+)$ for all geometric points $R \to C$.
  Therefore it suffices to show that given a $G$-torsor $\mathscr{P} /
  \mathcal{X}_\mathrm{FF}(R, R^+)$, a $G^\mathrm{c}$-torsor
  $\mathscr{Q}^\mathrm{c} / \mathcal{X}_\mathrm{FF}(R, R^+)$ that is trivial
  over all geometric points, and a meromorphic map
  \[
    \alpha^\mathrm{c} \colon G^\mathrm{c} \times^G \mathscr{P}
    \vert_{\mathcal{X}_\mathrm{FF}(R, R^+) \backslash \Spa(R^\sharp,
    R^{\sharp+})} \cong \mathscr{Q}^\mathrm{c} \vert_{\mathcal{X}_\mathrm{FF}(R,
    R^+) \backslash \Spa(R^\sharp, R^{\sharp+})}
  \]
  of relative position $\{\mu^\mathrm{c}\}$, there exists a unique lift of
  $\mathscr{Q}^\mathrm{c}$ to a $G$-torsor $\mathscr{Q}$ together with a lift
  \[
    \alpha \colon \mathscr{P} \vert_{\mathcal{X}_\mathrm{FF}(R, R^+) \backslash
    \Spa(R^\sharp, R^{\sharp+})} \cong \mathscr{Q}
    \vert_{\mathcal{X}_\mathrm{FF}(R, R^+) \backslash \Spa(R^\sharp,
    R^{\sharp+})}
  \]
  of $\alpha^\mathrm{c}$, with the property that $\mathscr{Q}$ is trivial over
  all geometric points.

  Because the natural map $\grgmu \to \grgmuc$ induced by $G \to G^\mathrm{c}$
  is an isomorphism, there exists a unique lift of $\alpha^\mathrm{c} \colon
  G^\mathrm{c} \times^G \mathscr{P} \vert_{\period{B}{dR}(R^\sharp,
  R^{\sharp+})} \cong \mathscr{Q}^\mathrm{c} \vert_{\period{B}{dR}(R^\sharp,
  R^{\sharp+})}$ to a $G$-torsor $\mathscr{Q}^\mathrm{dR}$ on
  $\period{B}{dR}\p(R^\sharp, R^{\sharp+})$ and an isomorphism
  $\alpha^\mathrm{dR} \colon \mathscr{P} \vert_{\period{B}{dR}(R^\sharp,
  R^{\sharp+})} \cong \mathscr{Q}^\mathrm{dR} \vert_{\period{B}{dR}(R^\sharp,
  R^{\sharp+})}$ of relative position $\{\mu\}$. Then we may glue
  $\mathscr{Q}^\mathrm{dR}$ and $\mathscr{P} \vert_{\mathcal{X}_\mathrm{FF}(R,
  R^+) \setminus \Spa(R^\sharp, R^{\sharp+})}$ using \cite[Lemma~5.2.9]{SW20} to
  obtain a $G$-torsor $\mathscr{Q}$ on $X_\mathrm{FF}(R^\sharp, R^{\sharp+})$
  together with
  \[
    \alpha \colon \mathscr{P} \vert_{\mathcal{X}_\mathrm{FF}(R, R^+) \backslash
    \Spa(R^\sharp, R^{\sharp+})} \cong \mathscr{Q}
    \vert_{\mathcal{X}_\mathrm{FF}(R, R^+) \backslash \Spa(R^\sharp,
    R^{\sharp+})}
  \]
  as desired.

  It remains to verify that $\mathscr{Q}$ is trivial over all geometric points
  $R^\sharp \to C^\sharp$. By \cite[Theorem~5.1]{Far20}, isomorphism classes of
  $G$-torsors on $\mathcal{X}_\mathrm{FF}(C, C^+)$ are classified by elements of
  $B(G)$, and say $\mathscr{Q} \vert_{\mathcal{X}_\mathrm{FF}(C, C^+)}$
  corresponds to $[b] \in B(G)$. Its image under $B(G) \to B(G^\mathrm{c})$ is
  trivial, and in particular, the composition $\mathbb{D} \xrightarrow{\nu_b} G
  \to G^\mathrm{c}$ is trivial where $\mathbb{D}$ is as in
  \cite[Section~3]{Kot85}. Thus $\nu_b$ factors through $Z_\mathrm{ac} \subseteq
  G$, which means that $[b]$ is basic. On the other hand, we have by assumption
  that $\mathscr{P} \vert_{\mathcal{X}_\mathrm{FF}(C, C^+)}$ has Kottwitz
  invariant $-\mu^\natural$, and because the rational map $\mathscr{P} \to
  \mathscr{Q}$ has relative position $\{\mu\}$, we see that $\mathscr{Q}
  \vert_{\mathcal{X}_\mathrm{FF}(C, C^+)}$ has trivial Kottwitz invariant.
  Therefore $[b] \in B(G)$ is trivial.
\end{proof}

\paragraph \label{Par:QZero}
Using \Cref{Lem:BLCompareCuspidal} we identify
\[
  \bung\m \times_{\bung\c\m} \grg\c\m \cong \underline{G^\mathrm{c}(\qp)}
  \times^{\underline{G(\qp)}} \grg\m,
\]
and hence
\[
  \mathcal{Q} = U \times_{\bung\c\m} \grg\c\m \cong U \times_{\bung\m}
  (\underline{G^\mathrm{c}(\qp)} \times^{\underline{G(\qp)}} \grg\m).
\]
Because the map of Lie algebras $\operatorname{Lie}(G) \to
\operatorname{Lie}(G^\mathrm{c})$ is surjective, the image $\im(G(\qp) \to
G^\mathrm{c}(\qp))$ is an open subgroup of $G^\mathrm{c}(\qp)$. Hence we may
define the open and closed subsheaf
\[
  \mathcal{Q}^0 = U \times_{\bung\m} (\underline{\im(G(\qp) \to
  G^\mathrm{c}(\qp))} \times^{\underline{G(\qp)}} \grg\m) \subseteq \mathcal{Q}.
\]
This can be identified with the quotient
\[
  \mathcal{Q}^0 \cong (\{1\} \times \underline{Z_\mathrm{ac}(\qp)}) \backslash
  \mathcal{R},
\]
where $Z_\mathrm{ac} = (\mathsf{Z}_\mathrm{ac})_{\qp}$ and the action is via the
natural inclusion $\{1\} \times Z_\mathrm{ac}(\qp) \hookrightarrow
\mathsf{G}(\af) \times G(\qp)$. In particular, $\mathcal{Q}^0 \subseteq
\mathcal{Q}$ can also be thought of as the v-sheaf image of the natural map
$\mathcal{R} \to \mathcal{Q}$. Because $\mathcal{R} \to \mathcal{Q}$ is
equivariant with respect to the group actions of $\underline{\mathsf{G}(\af)}
\times \underline{G(\qp)} \to \underline{\mathsf{G}(\af)} \times
\underline{G^\mathrm{c}(\qp)}$, we see that $\mathcal{Q}^0$ is stable under the
action of
\[
  \underline{\mathsf{G}(\af)} \times \underline{\im(G(\qp) \to
  G^\mathrm{c}(\qp))} \subseteq \underline{\mathsf{G}(\af)} \times
  \underline{G^\mathrm{c}(\qp)}.
\]

\paragraph \label{Par:QFinite}
Let $K = K_p K^p \subseteq \mathsf{G}(\af)$ be a neat compact open subgroup.
Recall from \Cref{Par:LocalSystem} that there is a de Rham
$\underline{G^\mathrm{c}(\qp)}$-torsor $\xi_K$ on $\sh[K]$. Because $\sh\d \to
\sh\d[K]$ is a $\underline{K / (K \cap \mathsf{Z}(\mathbb{Q})^-)}$-torsor, see
\cite[Theorem~5.28]{Mil05} for example, any
open subsheaf $U \subseteq \sh\d$ stable under the $\mathsf{G}(\af)$-action
descends to an open subsheaf $U_K \subseteq \sh\d[K]$, and moreover the natural
projection map $U \to U_K$ is also a $\underline{K / (K \cap
\mathsf{Z}(\mathbb{Q})^-)}$-torsor. We naturally have a commutative diagram
\[ \begin{tikzcd}
  U \arrow{d} \arrow{r}{\mathrm{BL} \circ \pi_\mathrm{HT}} &[4em] \bung\m
  \arrow{d} & \grg\m \arrow{l}[']{\mathrm{BL}} \arrow{d}{\cong} \\ U_K
  \arrow{r}{\mathrm{BL} \circ \mathrm{DRT}(\xi_K)} & \bung\c\m & \grg\c\m.
  \arrow{l}[']{\mathrm{BL}}
\end{tikzcd} \]
Let $K_p^\mathrm{c} \subseteq G^\mathrm{c}(\qp)$ be the image of $K_p \subseteq
G(\qp)$, which is an open compact subgroup since the map on Lie algebras
$\operatorname{Lie}(G) \to \operatorname{Lie}(G^\mathrm{c})$ is surjective. We
define
\[
  \mathcal{Q}_K = U_K \times_{\bung\c\m} [\underline{K_p^\mathrm{c}} \backslash
  \grg\c\m],
\]
so that there is a map
\[
  \mathcal{R} = U \times_{\bung\m} \grg\m \to \mathcal{Q}_K
\]
of v-sheaves. It follows from the construction that once we take the limit over
all $K$, we recover the map
\[
  \mathcal{R} \to \varprojlim_K \mathcal{Q}_K = U \times_{\bung\c\m} \grg\c\m =
  \mathcal{Q}
\]
from \Cref{Par:RToQ}.

\begin{lemma} \label{Lem:QFinQuotOfQ}
  The projection map $\mathcal{Q} \to \mathcal{Q}_K$ identifies $\mathcal{Q}_K$
  as the quotient
  \[
    \mathcal{Q}_K \cong \biggl[ \underline{\frac{K \times K_p^\mathrm{c}}{(\id
    \times \Delta^\mathrm{c})(K \cap \mathsf{Z}(\mathbb{Q})^-)}} \bigg\backslash
    \mathcal{Q} \biggr].
  \]
\end{lemma}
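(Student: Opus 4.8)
The plan is to derive the presentation of $\mathcal{Q}_K$ formally, by rewriting the defining fibre product $\mathcal{Q}_K = U_K \times_{\bung\c\m} [\underline{K_p^\mathrm{c}} \backslash \grg\c\m]$ as an iterated quotient of $\mathcal{Q} = U \times_{\bung\c\m} \grg\c\m$ and then identifying the resulting group. Throughout I write $\Gamma = K/(K \cap \mathsf{Z}(\mathbb{Q})^-)$.

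First I would record, from \Cref{Par:QFinite}, that the projection $U \to U_K$ is a $\underline{\Gamma}$-torsor and that the structure map $\mathrm{BL} \circ \pi_\mathrm{HT}^\mathrm{c} \colon U \to \bung\c\m$ factors through $U_K$, hence is $\underline{\Gamma}$-invariant (concretely, it is the restriction along $\iota_K$ of $\mathrm{BL} \circ \mathrm{DRT}(\xi_K) \colon \xi_K \to \bung\c\m$ followed by $\xi_K \to U_K$). Since the formation of quotient stacks commutes with base change, this yields $\mathcal{Q}_K = [\underline{\Gamma} \backslash (U \times_{\bung\c\m} [\underline{K_p^\mathrm{c}} \backslash \grg\c\m])]$, with $\underline{\Gamma}$ acting through the $U$-factor. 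The same argument applied to the other leg — using that $\mathrm{BL} \colon \grg\c\m \to \bung\c\m$ is $\underline{K_p^\mathrm{c}}$-invariant for the action of \Cref{Par:BLEquivariance} — gives $U \times_{\bung\c\m} [\underline{K_p^\mathrm{c}} \backslash \grg\c\m] = [\underline{K_p^\mathrm{c}} \backslash \mathcal{Q}]$ with $\underline{K_p^\mathrm{c}}$ acting through the $\grg\c\m$-factor. As these two actions live on different factors of $\mathcal{Q}$ they commute, and combining the two identifications gives $\mathcal{Q}_K = [\underline{\Gamma \times K_p^\mathrm{c}} \backslash \mathcal{Q}]$.

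It then remains to match $\Gamma \times K_p^\mathrm{c}$ with the group in the statement. The evident surjection $K \times K_p^\mathrm{c} \twoheadrightarrow \Gamma \times K_p^\mathrm{c}$ has kernel $(K \cap \mathsf{Z}(\mathbb{Q})^-) \times \{1\}$; and by \cite[Lemma~1.5.7]{KSZ21p}, recalled in \Cref{Par:LocalSystem}, we have $K \cap \mathsf{Z}(\mathbb{Q})^- \subseteq \mathsf{Z}_\mathrm{ac}(\af)$, so the $p$-component of any element of $K \cap \mathsf{Z}(\mathbb{Q})^-$ lies in $\mathsf{Z}_\mathrm{ac}(\qp) = \ker(G(\qp) \to G^\mathrm{c}(\qp))$. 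Hence $\Delta^\mathrm{c}_{G(\qp)}$ sends such an element into $G(\qp) \times \{1\}$, so that $(\id_{\mathsf{G}(\afp)} \times \Delta^\mathrm{c}_{G(\qp)})(K \cap \mathsf{Z}(\mathbb{Q})^-) = (K \cap \mathsf{Z}(\mathbb{Q})^-) \times \{1\}$ as subgroups of $K \times K_p^\mathrm{c}$, giving the desired group identification. Finally I would check that under this identification the $\Gamma \times K_p^\mathrm{c}$-action built above agrees with the restriction of the $\underline{\mathsf{G}(\af)} \times \underline{G^\mathrm{c}(\qp)}$-action of \Cref{Lem:ActionOnR}; this comes down to the fact that on $\mathcal{Q}$ the $\underline{\mathsf{G}(\af)}$-action moves only the $U$-factor (twisting the gluing isomorphism over $\bung\c\m$ via the canonical isomorphisms of \Cref{Par:BLEquivariance}), so the Hecke action of $k \in K$ is exactly the $U$-factor action occurring in the first step.

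I expect the only point requiring care to be this last compatibility of the two descriptions of the group action on $\mathcal{Q}$, together with being careful about which structure maps the base-change-of-quotients step is applied to; the rest is formal, and in the Hodge-type case, where $\mathsf{Z}_\mathrm{ac} = 1$ and $G = G^\mathrm{c}$, it reduces to $\mathcal{Q}_K = [\underline{K \times K_p} \backslash \mathcal{Q}]$.
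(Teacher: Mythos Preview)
Your proof is correct and takes a more direct route than the paper's. The crucial observation you make---that $(\id \times \Delta^\mathrm{c})(K \cap \mathsf{Z}(\mathbb{Q})^-) = (K \cap \mathsf{Z}(\mathbb{Q})^-) \times \{1\}$ because the $p$-component lands in $Z_\mathrm{ac}(\qp) = \ker(G(\qp) \to G^\mathrm{c}(\qp))$---lets you identify the quotienting group as a genuine product $\Gamma \times K_p^\mathrm{c}$, so that the two base-change-of-quotient steps can be applied independently on the two legs of $\mathcal{Q} = U \times_{\bung\c\m} \grg\c\m$.

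The paper instead rewrites $\mathcal{Q}$ as a fibre product over $\grg\c\m$, namely $U \times_{\grg\c\m} (\grg\c\m \times_{\bung\c\m} \grg\c\m)$, and invokes a general lemma to the effect that for $X_1 \to X_2 \leftarrow X_3$ equivariant for $G_1 \to G_2 \leftarrow G_3$ with $G_1 \times G_3 \to G_2$ surjective, one has $[G_1 \backslash X_1] \times_{[G_2 \backslash X_2]} [G_3 \backslash X_3] \cong [(G_1 \times_{G_2} G_3) \backslash (X_1 \times_{X_2} X_3)]$. This produces the group as a fibre product $\Gamma \times_{K_p^\mathrm{c}/(K_p^\mathrm{c} \cap Z^\mathrm{c}(\qp))} ((K_p^\mathrm{c} \times K_p^\mathrm{c})/\Delta(K_p^\mathrm{c} \cap Z^\mathrm{c}(\qp)))$, which is then identified with the group in the statement. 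The paper's route is more uniform---it does not exploit the vanishing of the $G^\mathrm{c}$-component of $K \cap \mathsf{Z}(\mathbb{Q})^-$---and records a reusable general fact, but your argument is shorter and makes the structure of the group transparent from the outset. Your care about matching the constructed $\Gamma \times K_p^\mathrm{c}$-action with the restriction of the ambient $\underline{\mathsf{G}(\af)} \times \underline{G^\mathrm{c}(\qp)}$-action is exactly the point where attention is needed, and your explanation via the canonical isomorphisms of \Cref{Par:BLEquivariance} is correct.
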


\begin{proof}
  As in the proof of \Cref{Lem:ActionOnR}, we identify $\mathcal{Q}$ with
  the fiber product of
  \[
    U \to \grg\c\m \xleftarrow{\mathrm{pr}_1} \grg\c\m \times_{\bung\c\m}
    \grg\c\m,
  \]
  which is equivariant for the action of
  \[
    \frac{\mathsf{G}(\af)}{\mathsf{Z}(\mathbb{Q})^-} \to
    \frac{G^\mathrm{c}(\qp)}{Z^\mathrm{c}(\qp)} \xleftarrow{\mathrm{pr}_1}
    \frac{G^\mathrm{c}(\qp) \times
    G^\mathrm{c}(\qp)}{\Delta(Z^\mathrm{c}(\qp))},
  \]
  where we write $Z^\mathrm{c} = Z/Z_\mathrm{ac} \subseteq G^\mathrm{c}$.
  We also record the general fact that if morphisms of v-sheaves $X_1 \to X_2
  \leftarrow X_3$ are equivariant for homomorphisms $G_1 \to G_2 \leftarrow G_3$
  of v-sheaves of groups, then
  \[
    [G_1 \backslash X_1] \times_{[G_2 \backslash X_2]} [G_3 \backslash X_3]
    \cong [(G_1 \times_{G_2} G_3) \backslash (X_1 \times_{X_2} X_3)]
  \]
  provided that the multiplication map $G_1 \times G_3 \to G_2$ is surjective.

  We note that the map $U_K \to \bung\c\m$ factors as
  \[
    U_K \to [\underline{K_p^\mathrm{c}} \backslash \xi_K]
    \xrightarrow{\mathrm{DRT}(\xi_K)} [\underline{K_p^\mathrm{c}} \backslash
    \grg\c\m] \xrightarrow{\mathrm{BL}} \bung\c\m
  \]
  because the map $K / (K \cap \mathsf{Z}(\mathbb{Q})^-) \to G^\mathrm{c}(\qp)$
  has image contained in $K_p^\mathrm{c}$, and hence the
  $\underline{G^\mathrm{c}(\qp)}$-torsor $\xi_K$ has a natural
  $K_p^\mathrm{c}$-structure. On the other hand, we have Cartesian squares
  \[ \begin{tikzcd}[row sep=small]
    \Bigl\lbrack \underline{\frac{K_p^\mathrm{c}}{K_p^\mathrm{c} \cap
    Z^\mathrm{c}(\qp)}} \Big\backslash \grg\c\m \Bigr\rbrack & \Bigl\lbrack
    \underline{\frac{K_p^\mathrm{c} \times K_p^\mathrm{c}}{\Delta(K_p^\mathrm{c}
    \cap Z^\mathrm{c}(\qp))}} \Big\backslash \mathcal{F} \Bigr\rbrack
    \arrow{l}{\mathrm{pr}_1} \\ \lbrack \underline{K_p^\mathrm{c}} \backslash
    \grg\c\m \rbrack \arrow{d} \arrow{u} & \lbrack (\underline{K_p^\mathrm{c}
    \times K_p^\mathrm{c}}) \backslash \mathcal{F} \rbrack
    \arrow{d}{\mathrm{pr}_2} \arrow{l}{\mathrm{pr}_1} \arrow{u} \\ \bung\c\m &
    \lbrack \underline{K_p^\mathrm{c}} \backslash \grg\c\m \rbrack, \arrow{l}
  \end{tikzcd} \]
  where we temporarily write
  \[
    \mathcal{F} = \grg\c\m \times_{\bung\c\m} \grg\c\m.
  \]
  It follows that we have isomorphisms
  \begin{align*}
    \mathcal{Q}_K &\cong U_K \times_{[\underline{K_p^\mathrm{c}} \backslash
    \grg\c\m]} [\underline{K_p^\mathrm{c} \times K_p^\mathrm{c}} \backslash
    \mathcal{F}] \\ &\cong U_K \times_{\bigl[
      \underline{\frac{K_p^\mathrm{c}}{K_p^\mathrm{c} \cap Z^\mathrm{c}(\qp)}}
      \big\backslash \grg\c\m \bigr]} \biggl[ \underline{\frac{K_p^\mathrm{c}
    \times K_p^\mathrm{c}}{\Delta(K_p^\mathrm{c} \cap Z^\mathrm{c}(\qp))}}
    \bigg\backslash \mathcal{F} \biggr].
  \end{align*}
  Because we have $U_K = [(\underline{K/(K \cap Z(\mathbb{Q})^-)}) \backslash
  U]$ and the map $K/(K \cap \mathsf{Z}(\mathbb{Q})^-) \to K_p^\mathrm{c} /
  (K_p^\mathrm{c} \cap Z^\mathrm{c}(\qp))$ is surjective we conclude that
  $\mathcal{Q}_K$ is the quotient of $\mathcal{Q}$ by
  \begin{align*}
    \frac{K \times K_p^\mathrm{c}}{(\id \times
    \Delta^\mathrm{c})(K \cap \mathsf{Z}(\mathbb{Q})^-)} &= \frac{K}{K \cap
    \mathsf{Z}(\mathbb{Q})^-} \times_{\frac{K_p^\mathrm{c}}{K_p^\mathrm{c} \cap
    Z^\mathrm{c}(\qp)}} \frac{K_p^\mathrm{c} \times
    K_p^\mathrm{c}}{\Delta(K_p^\mathrm{c} \cap Z^\mathrm{c}(\qp))} \\ &\subseteq
    \frac{\mathsf{G}(\af)}{\mathsf{Z}(\mathbb{Q})^-}
    \times_{\frac{G^\mathrm{c}(\qp)}{Z^\mathrm{c}(\qp)}} \frac{G^\mathrm{c}(\qp)
    \times G^\mathrm{c}(\qp)}{\Delta(Z^\mathrm{c}(\qp))} = \frac{\mathsf{G}(\af)
    \times G^\mathrm{c}(\qp)}{(\id \times
    \Delta^\mathrm{c})(\mathsf{Z}(\mathbb{Q})^-)}
  \end{align*}
  as desired.
\end{proof}

\paragraph
The open substack $\mathcal{Q}^0 \subseteq \mathcal{Q}$ defined in
\Cref{Par:QZero} is stable under the action of $\underline{K} \times
\underline{K_p^\mathrm{c}}$, because $K_p^\mathrm{c}$ is the image of $K_p$. It
follows from \Cref{Lem:QFinQuotOfQ} that $\mathcal{Q}^0$ descends to an
open subsheaf
\[
  \mathcal{Q}_K^0 = \biggl[ \underline{\frac{K \times K_p^\mathrm{c}}{(\id
  \times \Delta^\mathrm{c})(K \cap \mathsf{Z}(\mathbb{Q})^-)}} \bigg\backslash
  \mathcal{Q}^0 \biggr] \subseteq \mathcal{Q}_K.
\]
On the other hand, the action on $\mathcal{Q}^0 \cong [(\{1\} \times
\underline{Z_\mathrm{ac}(\qp)}) \backslash \mathcal{R}]$ is induced by the
action on $\mathcal{R}$ from the short exact sequence
\[
  1 \to \{1\} \times Z_\mathrm{ac}(\qp) \to \frac{\mathsf{G}(\af) \times
  G(\qp)}{(\id \times \Delta)(\mathsf{Z}(\mathbb{Q})^-)} \to
  \frac{\mathsf{G}(\af) \times \im(G(\qp) \to G^\mathrm{c}(\qp))}{(\id \times
  \Delta^\mathrm{c})(\mathsf{Z}(\mathbb{Q})^-)} \to 1.
\]
It follows that we can describe $\mathcal{Q}_K^0$ as the quotient
\[
  \mathcal{Q}_K^0 \cong \biggl[ \underline{\frac{K \times K_p
  Z_\mathrm{ac}(\qp)}{(\id \times \Delta)(K \cap \mathsf{Z}(\mathbb{Q})^-)}}
  \bigg\backslash \mathcal{R} \biggr].
\]

\paragraph \label{Par:RFinite}
For $K = K^p K_p \subseteq \mathsf{G}(\af)$ a neat compact subgroup, we define
\[
  \mathcal{R}_K = \biggl[ \underline{\frac{K \times K_p}{(\id \times \Delta)(K
  \cap \mathsf{Z}(\mathbb{Q})^-)}} \bigg\backslash \mathcal{R} \biggr].
\]
Because we have a short exact sequence
\[
  1 \to \frac{K \times K_p}{(\id \times \Delta)(K \cap
  \mathsf{Z}(\mathbb{Q})^-)} \to \frac{K \times K_p Z_\mathrm{ac}(\qp)}{(\id
  \times \Delta)(K \cap \mathsf{Z}(\mathbb{Q})^-)} \to
  \frac{Z_\mathrm{ac}(\qp)}{K_p \cap Z_\mathrm{ac}(\qp)} \to 1
\]
of profinite groups, the natural projection map $\mathcal{R}_K \to
\mathcal{Q}_K^0$ is a torsor for the discrete abelian group $Z_\mathrm{ac}(\qp)
/ (K_p \cap Z_\mathrm{ac}(\qp))$. It follows that the map $\mathcal{R}_K
\twoheadrightarrow \mathcal{Q}_K^0 \hookrightarrow \mathcal{Q}_K$ is \'{e}tale,
see \cite[Proposition~10.11(iv)]{Sch17p}.

\paragraph \label{Par:ThetaFinite}
Assume from now on that there is a global uniformization map
\[
  \Theta \colon \mathcal{R} = U \times_{\bung\m} \grg\m \to U,
\]
which is by definition equivariant for the action of
\[
  \mathrm{pr}_{13} \colon \frac{\mathsf{G}(\af) \times G(\qp)}{(\id \times
  \Delta)(\mathsf{Z}(\mathbb{Q})^-)} \to
  \frac{\mathsf{G}(\af)}{\mathsf{Z}(\mathbb{Q})^-}.
\]
Then for every neat compact open subgroup $K = K^p K_p \subseteq
\mathsf{G}(\af)$, the composition $\mathcal{R} \to U \to U_K$ is invariant for
the action of the closed subgroup $(K \times K_p) / (\id \times \Delta)(K \cap
\mathsf{Z}(\mathbb{Q})^-)$ hence descends to a map
\[
  \Theta_K \colon \mathcal{R}_K \to U_K.
\]
Conversely, given the collection of the maps $\Theta_K$, we may recover the
global uniformization map $\Theta$ by taking the limit as $K \to \{1\}$.

\paragraph \label{Par:ThetaFiniteAxiom}
Because
\[
  (\id, \pi_\mathrm{HT}) \colon U \to U \times_{\bung\m} \grg\m =
  \mathcal{R}
\]
is equivariant for $(\id, \Delta) \colon \mathsf{G}(\af) /
\mathsf{Z}(\mathbb{Q})^- \to (\mathsf{G}(\af) \times G(\qp)) / (\id \times
\Delta)(\mathsf{Z}(\mathbb{Q})^-)$ we can quotient by the images of $K \to K
\times K_p$ on both sides to obtain a map
\[
  (\id, \pi_\mathrm{HT}) \colon U_K \to \mathcal{R}_K.
\]
Then by \Cref{Def:GlobalUniformization} the diagram
\[ \begin{tikzcd}[column sep=large]
  U_K \arrow[equals]{r} \arrow{d}[']{(\id,\pi_\mathrm{HT})} & U_K
  \arrow{d}{\pi_\mathrm{HT}^\mathrm{c}} \\ \mathcal{R}_K \arrow{ru}{\Theta_K}
  \arrow{r}{\pi_\mathrm{HT}^\mathrm{c}} & \lbrack \underline{K_p^\mathrm{c}}
  \backslash \grg\c\m \rbrack
\end{tikzcd} \]
naturally commutes, where the bottom map $\pi_\mathrm{HT}^\mathrm{c}$ is the
composition
\[
  \mathcal{R}_K \to \mathcal{Q}_K = U_K \times_{\bung\c\m}
  [\underline{K_p^\mathrm{c}} \backslash \grg\c\m] \xrightarrow{\mathrm{pr}_2}
  [\underline{K_p^\mathrm{c}} \backslash \grg\c\m].
\]

\paragraph \label{Par:RFinitePt}
For $y \colon \Spec F \to \sh[K]$ a closed point, where $F/E$ is a finite
extension, there is a corresponding map $y \colon \Spd F \to \sh\d[K]$. Consider
the map
\[
  \mathcal{R}_K \to \mathcal{Q}_K = U_K \times_{\bung\c\m}
  [\underline{K_p^\mathrm{c}} \backslash \grg\c\m] \xrightarrow{\mathrm{pr}_1}
  U_K.
\]
If $y$ is contained in $U_K \subseteq \sh\d[K]$, we may consider its fiber
\[
  \mathcal{R}_{K,y} = \Spd F \times_{U_K} \mathcal{R}_K,
\]
which is \'{e}tale over
\[
  \mathcal{Q}_{K,y} = \Spd F \times_{U_K} \mathcal{Q}_K = \Spd F
  \times_{\bung\c\m} [\underline{K_p^\mathrm{c}} \backslash \grg\c\m]
\]
by the discussion in \Cref{Par:RFinite}. For the corresponding geometric
point $\bar{y} \colon \Spec \cp \to \sh[K]$, we similarly write
\[
  \mathcal{R}_{K,\bar{y}} = \Spd \cp \times_{U_K} \mathcal{R}_K, \quad
  \mathcal{Q}_{K,\bar{y}} = \Spd \cp \times_{U_K} \mathcal{Q}_K.
\]
Given a global uniformization map $\Theta$, we may restrict $\Theta_K$ to
$\mathcal{R}_{K,y}$ and obtain
\[
  \Theta_{K,y} \colon \mathcal{R}_{K,y} \to U_K, \quad \Theta_{K,\bar{y}} \colon
  \mathcal{R}_{K,\bar{y}} \to U_K.
\]

\begin{lemma} \label{Lem:RFiniteGeom}
  Let $x \colon \Spec \cp \to \sh$ be a geometric point lifting
  $\bar{y}$ to infinite level. Then the induced map
  \[
    \Spd \cp \times_{\bung\m} \grg\m \cong \Spd \cp \times_U \mathcal{R} \to
    \Spd \cp \times_{U_K} \mathcal{R}_K = \mathcal{R}_{K,\bar{y}}
  \]
  identifies $\mathcal{R}_{K,\bar{y}}$ with
  \[
    \mathcal{R}_{K,\bar{y}} \cong \Spd \cp \times_{\bung\m} [\underline{K_p}
    \backslash \grg\m].
  \]
\end{lemma}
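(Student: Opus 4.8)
The plan is to unwind the quotient-stack presentation of $\mathcal{R}_K$ and then trivialize the relevant torsor using the lift $x$. Recall from \Cref{Par:RFinite} that $\mathcal{R}_K = [\underline{\Gamma}\backslash\mathcal{R}]$ with $\Gamma = (K\times K_p)/(\id\times\Delta)(K\cap\mathsf{Z}(\mathbb{Q})^-)$, and recall from \Cref{Par:RFinitePt} that the structure map $\mathcal{R}_K\to U_K$ used to form $\mathcal{R}_{K,\bar y}$ is induced by the first projection $\mathrm{pr}_1\colon\mathcal{R}=U\times_{\bung\m}\grg\m\to U$ followed by $U\to U_K$; concretely it is the morphism $[\underline{\Gamma}\backslash\mathcal{R}]\to[\underline{H}\backslash U]=U_K$ attached to the $\Gamma$-equivariant map $\mathrm{pr}_1$, where $H=K/(K\cap\mathsf{Z}(\mathbb{Q})^-)$ and $\Gamma$ acts on $U$ through the surjection $\Gamma\twoheadrightarrow H$ by the Hecke action.

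First I would record the structure of the extension $1\to N\to\Gamma\to H\to1$. Writing the elements of $K\times K_p$ inside $\mathsf{G}(\afp)\times G(\qp)\times G(\qp)$, the subgroup $(\id\times\Delta)(K\cap\mathsf{Z}(\mathbb{Q})^-)$ consists of the triples $(z^p,z_p,z_p)$ with $z=(z^p,z_p)\in K\cap\mathsf{Z}(\mathbb{Q})^-$. The kernel $N$ is the image of the subgroup of triples $(1,1,b)$, $b\in K_p$; since such a triple can equal $(z^p,z_p,z_p)$ only for $z^p=z_p=1$, the assignment $b\mapsto[(1,1,b)]$ is injective and identifies $N\cong K_p$. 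There is moreover a splitting $s\colon H\to\Gamma$ sending the class of $g=(g^p,g_p)\in K$ to $[(g^p,g_p,g_p)]$: replacing $g$ by $gz$ for $z\in K\cap\mathsf{Z}(\mathbb{Q})^-$ multiplies $(g^p,g_p,g_p)$ by $(z^p,z_p,z_p)\in(\id\times\Delta)(K\cap\mathsf{Z}(\mathbb{Q})^-)$, so $s$ is well defined, and it is a homomorphism because taking $p$-components is. Thus $\Gamma\cong K_p\rtimes H$.

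Next I would identify the action of $N\cong K_p$ on $\mathcal{R}$. Under the inclusion $\Gamma\hookrightarrow(\mathsf{G}(\af)\times G(\qp))/(\id\times\Delta)(\mathsf{Z}(\mathbb{Q})^-)$ the subgroup $N$ maps onto $\{1\}\times K_p$, which acts on $\mathcal{R}=U\times_{\bung\m}\grg\m$ trivially on $U$ and through the natural $\underline{K_p}\subseteq\underline{G(\qp)}$-action on $\grg\m$ (\Cref{Par:BLEquivariance}). Hence
\[
  [\underline{N}\backslash\mathcal{R}]\cong U\times_{\bung\m}[\underline{K_p}\backslash\grg\m],
\]
and likewise, base changing along $\mathrm{BL}\circ\pi_\mathrm{HT}$ evaluated at $x$,
\[
  [\underline{N}\backslash(\Spd\cp\times_U\mathcal{R})]\cong\Spd\cp\times_{\bung\m}[\underline{K_p}\backslash\grg\m].
\]

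Finally I would base change along $\bar y$. Since $\mathcal{R}\to U_K$ factors through $\mathrm{pr}_1\colon\mathcal{R}\to U$, the lift $x\in U(\cp)$ of $\bar y$ trivializes the $\underline{H}$-torsor $U\times_{U_K}\Spd\cp$; as $H$ acts freely on $U$ the stabilizer of $x$ in $\Gamma$ is exactly $N$, and using the splitting $s$ one gets a $\Gamma$-equivariant identification of $\mathcal{R}\times_{U_K}\Spd\cp$ with the induced v-sheaf $\underline{\Gamma}\times^{\underline{N}}(\Spd\cp\times_U\mathcal{R})$, with $N$ acting by right translation on $\underline{\Gamma}$ and as above on $\Spd\cp\times_U\mathcal{R}$. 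Therefore
\[
  \mathcal{R}_{K,\bar y}=[\underline{\Gamma}\backslash(\mathcal{R}\times_{U_K}\Spd\cp)]\cong[\underline{N}\backslash(\Spd\cp\times_U\mathcal{R})]\cong\Spd\cp\times_{\bung\m}[\underline{K_p}\backslash\grg\m],
\]
and tracing through the identifications shows that the map in the statement is the quotient projection $\Spd\cp\times_{\bung\m}\grg\m\to\Spd\cp\times_{\bung\m}[\underline{K_p}\backslash\grg\m]$. I expect the main obstacle to be the group-theoretic bookkeeping in the first two steps --- pinning down $N$ and the splitting $s$ in the presence of a possibly non-cuspidal center $\mathsf{Z}$ --- after which the remainder is a formal computation with quotient stacks.
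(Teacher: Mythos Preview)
Your proof is correct and follows essentially the same route as the paper: both identify $\mathcal{R}_{K,\bar y}$ as the quotient of $\Spd\cp\times_U\mathcal{R}$ by $\ker(\Gamma\to H)\cong K_p$, with the paper invoking the general fact (recorded in the proof of \Cref{Lem:QFinQuotOfQ}) that $[G_1\backslash X_1]\times_{[G_2\backslash X_2]}[G_3\backslash X_3]\cong[(G_1\times_{G_2}G_3)\backslash(X_1\times_{X_2}X_3)]$ applied to $1\to H\leftarrow\Gamma$, whereas you reprove this instance by hand via the induced-sheaf identification. Your splitting $s\colon H\to\Gamma$ is superfluous---surjectivity of $\Gamma\to H$ alone suffices to verify that $\underline{\Gamma}\times^{\underline{N}}(\Spd\cp\times_U\mathcal{R})\to\mathcal{R}\times_{U_K}\Spd\cp$ is an isomorphism---so the group-theoretic bookkeeping you flagged as the main obstacle is in fact lighter than you anticipated.
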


\begin{proof}
  The maps of v-sheaves
  \[
    \Spd \cp \xrightarrow{x} U \leftarrow \mathcal{R}
  \]
  is equivariant for the action of
  \[
    1 \to \frac{K}{K \cap \mathsf{Z}(\mathbb{Q})^-}
    \xleftarrow{\alpha = \mathrm{pr}_1} \frac{K \times K_p}{(\id \times
    \Delta)(K \cap \mathsf{Z}(\mathbb{Q})^-)},
  \]
  where the map $\alpha$ is surjective. It follows that the fiber product of the
  quotients agrees with quotient of the fiber product, see the proof of
  \Cref{Lem:QFinQuotOfQ}, and hence
  \[
    \mathcal{R}_{K,\bar{y}} = \Spd \cp \times_{U_K} \mathcal{R}_K \cong
    [\underline{\ker \alpha} \backslash (\Spd \cp \times_U \mathcal{R})]
  \]
  by definition in \Cref{Par:RFinite}. On the other hand, we compute that
  $\ker \alpha = \{1\} \times K_p$.
\end{proof}

\begin{definition}
  We say that a point $x \in \sh(\cp)$ is \textdef{classical} when its image in
  $\sh[K](\cp)$ is defined over a finite extension of $E$ for some
  (equivalently, for every) neat compact open subgroup $K \subseteq
  \mathsf{G}(\af)$.
\end{definition}

\paragraph \label{Par:ClassicalPtFinite}
Instead of first passing to finite level and then restricting to a closed point,
we can restrict to a classical $\cp$-point and then pass to finite level. Given
a classical point $x \colon \Spd \cp \to U$, we can restrict a global
uniformization map $\Theta \colon U \times_{\bung\m} \grg\m \to U$ to a
$\underline{G(\qp)}$-equivariant map
\[
  \Theta_x \colon \Spd \cp \times_{\bung\m} \grg\m \to U.
\]
Since the projection map $U \to U_K$ is invariant for the action of $K_p$, its
composition with $\Theta_x$ descends to
\[
  \Theta_{x,K} \colon \Spd \cp \times_{\bung\m} [\underline{K_p} \backslash
  \grg\m] \to U_K.
\]
Let $\bar{y} \in \sh[K](\cp)$ be the image of $x$ under $\sh \to \sh[K]$. Under
the natural isomorphism $\Spd \cp \times_{\bung\m} [\underline{K_p} \backslash
\grg\m] \cong \mathcal{R}_{K,\bar{y}}$ from \Cref{Lem:RFiniteGeom}, this
map is identified with $\Theta_{K,\bar{y}} \colon \mathcal{R}_{K,\bar{y}} \to
U_K$.

}
{\section{Interlude: local Shimura varieties} \label{Sec:LocalShimura}
\def\flgmu{\mathrm{Fl}_{G,\{\mu\},E}}
\def\grgmu{\mathrm{Gr}_{G,\{\mu\},E}}

In this section, we generalize the notion of a local Shimura variety. Instead of
starting with the data of $(G, b, \{\mu\})$ where $b \in G(\qpbr)$ corresponds
to a map $\Spd \fpbar \to \bung$, hence induces a map $\Spd \breve{E} \to \Spd
\fpbar \to \bung$, we allow more general maps $\tau \colon \Spd \breve{E} \to
\bung$ that do not factor through $\Spd \breve{E} \to \Spd \fpbar$. The
resulting local Shimura varieties $\shloc{\tau}$ are isomorphic to classical
local Shimura varieties $\shloc{b}$ upon base changing to $\cp$, but not over a
finite extension of $\breve{E}$.

\begin{definition}
  Let $G/\qp$ be a connected reductive group, let $\lbrace \mu \colon
  \mathbb{G}_{m,\qpbar} \to G_{\qpbar} \rbrace$ be a geometric conjugacy class
  with field of definition $E \subset \qpbar$, let $X \to \Spd E$ be a v-stack,
  and let $f \colon X \to \bung$ be a map. We define \textdef{local Shimura
  variety at infinite level} as the v-stack
  \[
    \shloc{f} = X \times_{\bung \times \Spd E} \grg\m \to X,
  \]
  where $\mathrm{BL} \colon \grg\m \to \bung$ is the Beauville--Laszlo map from
  \Cref{Par:BeauvilleLaszlo}. As $\underline{G(\qp)}$ acts on $\grg\m$,
  it also acts on $\shloc{f}$. For $K_p \subseteq G(\qp)$ an open subgroup, we
  define the \textdef{local Shimura variety} at level $K_p$ as
  \[
    \shloc{f}[K_p] = [\underline{K_p} \backslash \shloc{f}] = X \times_{\bung
    \times \Spd E} [\underline{K_p} \backslash \grg\m] \to X,
  \]
  see \Cref{Par:BLEquivariance}.
\end{definition}

\begin{example} \label{Exam:ClassicalLocShi}
  When $X = \Spd \breve{E}$ and $b \in G(\qpbr)$ and $K_p \subseteq G(\qp)$ is
  compact open, we may consider the composition $\tau \colon X \to \Spd \fpbar
  \xrightarrow{b} \bung$. The corresponding local Shimura variety
  \[
    \shloc{b}[K_p] \to \Spd \breve{E},
  \]
  agrees with the standard definition of a local Shimura variety, e.g., the
  definition in \cite[Section~23.3]{SW20}.
\end{example}

\paragraph \label{Par:TwistorToDeRham}
Let $X \to \Spd \qp$ be a v-stack, and let $f \colon X \to \bung$ be a map of
v-stacks. For every perfectoid test object $x \colon \Spd(R^\sharp, R^{\sharp+})
\to X$ over $\Spd \qp$, the map $f$ induces a $G$-torsor $\mathscr{P}_x$ on the
sousperfectoid space $\mathcal{X}_\mathrm{FF}(R, R^+)$. Restricting to the
Cartier divisor $\Spa(R^\sharp, R^{\sharp+}) \hookrightarrow
\mathcal{X}_\mathrm{FF}(R, R^+)$, we further obtain a $G$-torsor
$\mathscr{P}_{x,\mathrm{dR}}$ on $\Spa(R^\sharp, R^{\sharp+})$. They
collectively define a torsor $D_\mathrm{dR}(f) \to X$ under the v-topology for
the group $G^\lozenge \to \Spd \qp$.

\begin{lemma} \label{Lem:DdRComparison}
  Let $X/K$ be a smooth rigid analytic variety over $K$ a complete discretely
  valued field over $\qp$ with perfect residue field, let $G/\qp$ be a connected
  linear algebraic group, let $\mathbb{P}/X$ be a de Rham
  $\underline{G(\qp)}$-torsor, and consider the composition
  \[
    \tau \colon X^\lozenge \xrightarrow{\mathrm{DRT}(\mathbb{P})}
    [\underline{G(\qp)} \backslash \grg] \xrightarrow{\mathrm{BL}} \bung.
  \]
  Then the $G^\lozenge$-torsor $D_\mathrm{dR}(\tau) \to X^\lozenge$ is
  canonically isomorphic to the v-sheaf associated to the $G$-torsor
  $D_\mathrm{dR}(\mathbb{P}) \to X$.
\end{lemma}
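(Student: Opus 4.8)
The plan is to reduce to the case $G = \mathrm{GL}_n$ by Tannakian formalism, and then to unwind both sides in terms of the lattice $\mathbb{M}_0$ constructed in \Cref{Par:LocSysToGr}. Concretely, fix a perfectoid affinoid $\Spd(R^\sharp, R^{\sharp+}) \to X^\lozenge$ which factors through some $c(U)$ for $U \in X_\mathrm{proet}^\mathrm{ap}$ trivializing both $\mathbb{L}$ and $\mathbb{M}_0$ (such $U$ form a basis, and both $D_\mathrm{dR}(\mathbb{P})$ and $D_\mathrm{dR}(\tau)$ are v-sheaves, so it suffices to produce a canonical isomorphism on such test objects compatibly with the transition maps). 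On the $\tau$-side, by construction $\mathrm{DRT}(\mathbb{P})$ sends this test object to the point of $\grgln$ given by the lattice $\mathbb{M}_0(U) \subseteq \period{B}{dR}(U)^{\oplus n}$, using \Cref{Prop:DRTOnAffPerf}; the Beauville--Laszlo map then produces the $G$-bundle on $\mathcal{X}_\mathrm{FF}(R,R^+)$ obtained by modifying the trivial bundle along this lattice at the divisor $\Spa(R^\sharp, R^{\sharp+})$, and its restriction $\mathscr{P}_{x,\mathrm{dR}}$ to the Cartier divisor is, essentially by definition of the modification, the bundle corresponding to the finite projective $\period{B}{dR}\p(R^\sharp, R^{\sharp+})^{+}/\xi = R^{\sharp+}$-module $\mathbb{M}_0(U)/\xi\mathbb{M}_0(U)$, i.e. to $\mathbb{M}_0(U) \otimes_{\period{B}{dR}\p(U)} R^\sharp$. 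On the other hand, by the definition in \cite[Section~6]{Sch13}, $D_\mathrm{dR}(\mathbb{L})$ is $\nu_\ast$ of $\mathbb{L} \otimes \period\o{B}{dR}$, and the key point of \cite[Proposition~7.9]{Sch13} is precisely that $\mathbb{M}_0/\xi\mathbb{M}_0 \cong D_\mathrm{dR}(\mathbb{L}) \otimes_{\mathscr{O}_X} \hat{\mathscr{O}}_X$ as $\hat{\mathscr{O}}_X$-modules (the graded piece $i=0$); evaluating at $c(U)$ this gives $D_\mathrm{dR}(\mathbb{L})(U) \otimes_{\mathscr{O}_X(U)} R^\sharp$, which is exactly the v-sheaf associated to $D_\mathrm{dR}(\mathbb{P})$ evaluated at the same test object (here $D_\mathrm{dR}(\mathbb{P}) = D_\mathrm{dR}(\mathbb{L})$ after unwinding the $\mathrm{GL}_n$-torsor). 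Matching these two descriptions yields the canonical isomorphism, and its canonicity follows because it is induced by the single comparison isomorphism $\mathbb{L}\otimes \period\o{B}{dR} \cong D_\mathrm{dR}(\mathbb{L})\otimes\period\o{B}{dR}$, which is independent of all choices of trivialization and compatible with pro-\'etale localization.

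For a general connected linear algebraic group $G$, I would pass to the Tannakian description: a $G^\lozenge$-torsor on $X^\lozenge$ is the same as an exact tensor functor from $\mathsf{Rep}_{\qp}(G)$ to $G^\lozenge$-equivariant-style sheaf data, and both $D_\mathrm{dR}(\tau)$ and (the v-sheaf attached to) $D_\mathrm{dR}(\mathbb{P})$ are built functorially from exactly the same tensor functor $V \mapsto D_\mathrm{dR}(\underline{G(\qp)}\backslash(\underline{V}\times\mathbb{P}))$: on the $\mathbb{P}$-side this is the functor used in \Cref{Par:TorsorToGr}, and on the $\tau$-side the $\mathrm{GL}_n$-case just established shows that pushing $\tau$ along $\rho\colon G \to \mathrm{GL}_n$ and applying $D_\mathrm{dR}(-)$ recovers $D_\mathrm{dR}$ of the local system $\underline{G(\qp)}\backslash(\underline{\qp^n}\times\mathbb{P})$, compatibly with tensor products and duals by \Cref{Lem:deRhamExactTensor}. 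Since $\mathrm{DRT}(\mathbb{P})$ is itself \emph{defined} via this Tannakian formalism (\Cref{Par:TorsorToGr}), the two $G^\lozenge$-torsors correspond to canonically isomorphic fiber functors, hence are canonically isomorphic.

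The main obstacle I anticipate is bookkeeping the $\xi$-adic normalizations: one must check that the lattice-theoretic modification defining $\mathrm{BL}$ and the ``$\mathrm{Fil}^0$'' graded piece in \cite[Proposition~7.9]{Sch13} agree on the nose (sign/degree conventions), and that the identification $\period{B}{dR}\p(U) \cong \period{B}{dR}\p(c(U))$ from \cite[Theorem~6.5]{Sch13} intertwines the restriction-to-Cartier-divisor map on the Fargues--Fontaine side with reduction mod $\xi$ on the period-sheaf side. Everything else — reduction to $\mathrm{GL}_n$, basis arguments, v-descent, and canonicity — is formal given the results already assembled in Sections~\ref{Sec:Preliminaries}--\ref{Sec:DeRham}.
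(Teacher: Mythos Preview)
Your proposal is correct and follows essentially the same route as the paper: Tannakian reduction to $G=\mathrm{GL}_n$, localization to affinoid perfectoids $c(U)$ with $U$ trivializing $\mathbb{L}$ and $\mathbb{M}_0$, identification of $D_\mathrm{dR}(\tau)\vert_{c(U)}$ with $\mathbb{M}_0(U)\otimes_{\period{B}{dR}\p(U)} R^\sharp$ via \Cref{Prop:DRTOnAffPerf}, and then matching this with $D_\mathrm{dR}(\mathbb{L})\otimes_{\mathscr{O}_X}\hat{\mathscr{O}}_X$.

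The only substantive difference is in how the last identification is justified. The paper does not invoke \cite[Proposition~7.9]{Sch13}; instead it passes through $\period\o{B}{dR}\p$ directly, using that $\mathbb{M}_0\otimes_{\period{B}{dR}\p}\period\o{B}{dR}\p \cong D_\mathrm{dR}(\mathbb{L})\otimes_{\mathscr{O}_X}\period\o{B}{dR}\p$ (this is \cite[Theorem~7.2]{Sch13} and the definition of $\mathbb{M}_0$) and then reducing along $\period\o{B}{dR}\p \to \hat{\mathscr{O}}_X$. Your citation of Proposition~7.9 is slightly off: that proposition computes the graded pieces $(\mathbb{M}\cap\xi^i\mathbb{M}_0)/(\mathbb{M}\cap\xi^{i+1}\mathbb{M}_0)\cong \mathrm{Fil}^{-i}D_\mathrm{dR}(\mathbb{L})\otimes\hat{\mathscr{O}}_X(i)$, which is the filtered refinement, not the bare statement $\mathbb{M}_0/\xi\mathbb{M}_0\cong D_\mathrm{dR}(\mathbb{L})\otimes\hat{\mathscr{O}}_X$ you need. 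The latter is more elementary and the paper's chain of equalities isolates exactly that. This is a citation issue, not a gap in the argument.
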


\begin{proof}
  Using the Tannakian formalism, we may reduce to the case when $G =
  \mathrm{GL}_n$, and write $\mathbb{L}$ for the corresponding
  $\underline{\qp}$-local system. Because the affinoid perfectoid spaces $c(U) =
  \Spa(R^\sharp, R^{\sharp+}) \to X^\lozenge$ for $U \in
  X_\mathrm{proet}^\mathrm{ap}$ trivializing $\mathbb{L}$, $\mathbb{M}_0$,
  $D_\mathrm{dR}(\mathbb{L})$ collectively cover $X^\lozenge$, it suffices to
  show that the locally free $R^\sharp$-modules $D_\mathrm{dR}(\mathbb{L})
  \vert_{\Spa(R^\sharp, R^{\sharp+})}$ and $D_\mathrm{dR}(\tau)
  \vert_{\Spa(R^\sharp, R^{\sharp+})}$ are canonically isomorphic for such $U$.
  By \Cref{Prop:DRTOnAffPerf}, the latter is identified with
  \begin{align*}
    \mathbb{M}_0(U) \otimes_{\period{B}{dR}\p(U)} R^\sharp &= (\mathbb{M}_0(U)
    \otimes_{\period{B}{dR}\p(U)} \period\o{B}{dR}\p(U))
    \otimes_{\period\o{B}{dR}\p(U)} R^\sharp \\ &= (\mathbb{M}_0
    \otimes_{\period{B}{dR}\p} \period\o{B}{dR}\p)(U)
    \otimes_{\period\o{B}{dR}\p(U)} R^\sharp \\ &\cong
    (D_\mathrm{dR}(\mathbb{L}) \otimes_{\mathscr{O}_X} \period\o{B}{dR}\p)(U)
    \otimes_{\period\o{B}{dR}\p(U)} R^\sharp \\ &= (D_\mathrm{dR}(\mathbb{L})
    \otimes_{\mathscr{O}_X} \hat{\mathscr{O}}_X)(U),
  \end{align*}
  which is $D_\mathrm{dR}(\mathbb{L}) \vert_{\Spa(R^\sharp, R^{\sharp+})}$.
\end{proof}

\paragraph
Consider the v-sheaves
\[
  L^+G(R, R^+) = \coprod_{R \cong (R^\sharp)^\flat} G(\period{B}{dR}\p(R^\sharp,
  R^{\sharp+})), \quad LG(R, R^+) = \coprod_{R \cong (R^\sharp)^\flat}
  G(\period{B}{dR}(R^\sharp, R^{\sharp+}))
\]
where $L^+G, LG \to \Spd \qp$ are naturally group objects. Associated to the map
$f \colon X \to \bung$ is a $L^+G$-torsor $\mathscr{T}_f \to X^\lozenge$
parametrizing trivializations $G \times \period{B}{dR}\p(R^\sharp, R^{\sharp+})
\cong \mathscr{P} \vert_{\period{B}{dR}\p(R^\sharp, R^{\sharp+})}$, where
$\mathscr{P}$ is the $G$-torsor on $\mathcal{X}_\mathrm{FF}(R, R^+)$
corresponding to $\Spa(R^\sharp, R^{\sharp^+}) \to X^\lozenge \to \bung$, as in
\Cref{Par:TwistorToDeRham}. There
is a natural group homomorphism $L^+G \to G^\lozenge$ induced by $\theta \colon
\period{B}{dR}\p(R^\sharp, R^{\sharp+}) \to R^\sharp$, and by construction we
have an natural isomorphism
\[
  G^\lozenge \times^{L^+G} \mathscr{T}_f \cong D_\mathrm{dR}(f)
\]
of $G^\lozenge$-torsors over $X$.

\paragraph \label{Par:GMPeriod}
We now assume that $\{\mu\}$ is minuscule and let $X \to \Spd E$ be a v-stack.
Using \cite[Theorem~8.5.12]{KL15}, \cite[Theorem~22.5.2]{SW20}, and following
the argument of \cite[Proposition~22.3.1]{SW20}, we identify the $(R,
R^+)$-points of $\shloc{f}[G(\qp)] = [\underline{G(\qp)} \backslash \shloc{f}]$
with the following data:
\begin{itemize}
  \item an untilt $\iota \colon R \cong (R^\sharp)^\flat$,
  \item a map $\Spa(R^\sharp, R^{\sharp+}) \to X$, which gives rise to a
    $G$-torsor $\mathscr{P} / \mathcal{X}_\mathrm{FF}(R, R^+)$,
  \item a $G$-torsor $\mathscr{Q} / \mathcal{X}_\mathrm{FF}(R, R^+)$ that is
    trivial over every geometric point of $\Spa(R^\sharp, R^{\sharp+})$,
  \item a meromorphic isomorphism
    \[
      \alpha \colon \mathscr{P} \vert_{\mathcal{X}_\mathrm{FF}(R, R^+) \setminus
      \Spa(R^\sharp, R^{\sharp+})} \xrightarrow{\cong} \mathscr{Q}
      \vert_{\mathcal{X}_\mathrm{FF}(R, R^+) \setminus \Spa(R^\sharp,
      R^{\sharp+})}
    \]
    with relative position $\{\mu\}$ in the sense of
    \Cref{Par:RelativePosition}.
\end{itemize}
By \cite[Theorem~22.6.2]{SW20}, the natural map
\[
  \shloc{f}[G(\qp)] \hookrightarrow L^+G \backslash (\mathscr{T}_f \times_E
  \grgmu),
\]
which forgets the condition that $\mathscr{Q}$ is trivial at every geometric
point, is an open embedding. On the other hand, the natural action of $L^+G$ on
$\grgmu \cong \flgmu^\lozenge$ defined in \Cref{Par:AffineGrass} factors
through $L^+G \to G^\lozenge$. It follows that the right hand side can be
rewritten as
\[
  [G^\lozenge \backslash ((G^\lozenge \times^{L^+G} \mathscr{T}_f) \times_E
  \flgmu^\lozenge)] = [G^\lozenge \backslash (D_\mathrm{dR}(f) \times_E
  \flgmu^\lozenge)].
\]
Therefore the \textdef{Grothendieck--Messing period map}
\[
  \pi_\mathrm{GM} \colon \shloc{f}[K_p] \to \shloc{f}[G(\qp)] \hookrightarrow
  [G^\lozenge \backslash (D_\mathrm{dR}(f) \times_E \flgmu^\lozenge)]
\]
is a composition of an \'{e}tale map and an open embedding, hence is \'{e}tale.

\begin{lemma} \label{Lem:RelativeFlagVar}
  Let $G/\qp$ be a connected reductive group, let $\{ \mu \colon
  \mathbb{G}_{m,\qpbar} \to G_{\qpbar} \}$ be a geometric conjugacy class of
  minuscule cocharacters with field of definition $E \subset \qpbar$. Let $K
  \supseteq E$ be a complete nonarchimedean field and let $X/K$ be a smooth
  rigid analytic variety. For every $G$-torsor $\mathscr{P} \to X$, the quotient
  \[
    \pi \colon [G^\lozenge \backslash (\mathscr{P}^\lozenge \times_E
    \flgmu^\lozenge)] \to X^\lozenge
  \]
  is representable by a smooth rigid analytic variety $Y/K$, and moreover the
  corresponding map $\pi \colon Y \to X$ of rigid analytic varieties is smooth.
\end{lemma}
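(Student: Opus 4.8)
The plan is to reduce to a statement purely about rigid analytic varieties, descend along an fppf (indeed étale, or even smooth) surjection, and then appeal to representability of the flag variety bundle $\mathrm{Fl}_{G,\{\mu\},E}$ by a projective scheme over $E$. First I would note that since $\{\mu\}$ is minuscule, $\mathrm{Fl}_{G,\{\mu\},E}$ is a smooth projective $E$-variety (the partial flag variety $G/P_\mu$ with its descent datum), and the assignment attaching to a $G$-torsor the associated $\mathrm{Fl}_{G,\{\mu\}}$-bundle makes sense already on the level of rigid analytic varieties over $K$: given $\mathscr{P}\to X$, one forms $\mathscr{P}\times^{G}(\mathrm{Fl}_{G,\{\mu\},E}\times_E K)$, which is a smooth proper rigid analytic variety $Y/K$ with a smooth projective morphism $Y\to X$. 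This uses that $\mathscr{P}\to X$ is a $G$-torsor for the étale (or Zariski, locally) topology on the smooth rigid variety $X$, so the associated bundle exists by étale descent of the relatively projective scheme $\mathrm{Fl}_{G,\{\mu\},E}\times_E K$ along $\mathscr{P}\to X$; smoothness of $Y\to X$ is étale-local on $X$, where it reduces to smoothness of $\mathrm{Fl}_{G,\{\mu\},E}$.

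The substance is then to identify the v-stack $[G^\lozenge\backslash(\mathscr{P}^\lozenge\times_E\mathrm{Fl}_{G,\{\mu\},E}^\lozenge)]$ with $Y^\lozenge$. I would argue v-locally on $X^\lozenge$: choose a v-cover (equivalently, étale-localize on $X$ so that) $\mathscr{P}$ becomes trivial, $\mathscr{P}\cong G\times_K X$. Then the source becomes $[G^\lozenge\backslash(G^\lozenge\times X^\lozenge\times_E\mathrm{Fl}_{G,\{\mu\},E}^\lozenge)]$, where $G^\lozenge$ acts diagonally on the first factor by left translation and on the flag variety in the standard way; but the left $G^\lozenge$-action on $G^\lozenge$ is free with quotient a point (in the sense of v-stacks, $[G^\lozenge\backslash G^\lozenge]\cong \Spd K$), so this stack is just $X^\lozenge\times_E\mathrm{Fl}_{G,\{\mu\},E}^\lozenge=(X\times_E(\mathrm{Fl}_{G,\{\mu\},E}\times_E K))^\lozenge$, which is the trivial $\mathrm{Fl}$-bundle's associated v-sheaf. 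These identifications are compatible with the transition isomorphisms of the v-cover — here one uses that the diamond functor $(-)^\lozenge$ from (pre-)adic spaces over $\Spa\zp$ is fully faithful on the relevant categories, so the cocycle for $\mathscr{P}$ descends the trivial bundles to the associated bundle $Y$ — and hence glue to an isomorphism $[G^\lozenge\backslash(\mathscr{P}^\lozenge\times_E\mathrm{Fl}_{G,\{\mu\},E}^\lozenge)]\cong Y^\lozenge$ over $X^\lozenge$. Under this identification $\pi$ becomes $Y^\lozenge\to X^\lozenge$, which is the diamondification of the smooth projective morphism $Y\to X$; since $(-)^\lozenge$ preserves smoothness (a smooth map of rigid spaces gives a smooth map of v-sheaves, e.g. by \cite[Proposition~24.5]{Sch17p} or the discussion in \cite[Section~10]{Sch17p}), this gives the desired conclusion.

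The main obstacle I expect is bookkeeping the descent: making precise that the quotient stack on the left is computed by the v-topology (rather than, say, the pro-étale topology) and that trivializing $\mathscr{P}$ and gluing back is legitimate, i.e. that $[G^\lozenge\backslash(-)]$ commutes with the étale descent that builds $Y$ from trivial pieces. Concretely one must check that for a $G$-torsor $\mathscr{P}$ on $X$, the v-sheaf $\mathscr{P}^\lozenge$ is a $G^\lozenge$-torsor on $X^\lozenge$ (this follows from $(-)^\lozenge$ being a limit-preserving, conservative-enough functor together with the fact that $G$-torsors on a smooth rigid variety are étale-locally trivial, cf. the torsor formalism of \cite{SW20}), and that forming the contracted product $G^\lozenge\times^{L^+G}(-)$ type constructions used elsewhere in the section are compatible with $\mathscr{P}\mapsto\mathscr{P}^\lozenge$. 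Once this compatibility is in place, everything else is formal, and smoothness of $Y\to X$ together with smoothness of $\pi$ follows from smoothness of the partial flag variety $\mathrm{Fl}_{G,\{\mu\},E}$ over $E$.
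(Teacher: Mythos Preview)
Your approach is correct and essentially equivalent to the paper's, but the paper takes a more concrete route that sidesteps the descent bookkeeping you flag as an obstacle. Rather than gluing trivial $\flgmu$-bundles along an \'etale cover and invoking effectiveness of descent for relatively projective morphisms of rigid spaces, the paper first observes that $\flgmu$ carries a $G$-equivariant ample line bundle $\mathscr{L}$ (constructed by choosing a splitting field $\tilde{E}/E$, taking a $G_{\tilde{E}}$-equivariant ample $\tilde{\mathscr{L}}$ on $G_{\tilde{E}}/P_{\mu,\tilde{E}}$, and norming down), and then writes $Y$ globally as
\[
  Y = \operatorname{Proj}_X^\mathrm{an}\Bigl(\bigoplus_{n\ge 0} G\backslash(\mathscr{P}\times H^0(\flgmu,\mathscr{L}^{\otimes n}))\Bigr).
\]
Smoothness of $Y\to X$ is then checked Zariski-locally on $X$, where it becomes the analytification of a smooth projective scheme over an affinoid base.

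The two arguments are really the same: the Proj construction is precisely what makes the \'etale descent you invoke effective, and the ingredient you would need to justify your ``\'etale descent of the relatively projective scheme'' in rigid geometry is exactly the $G$-equivariant ample line bundle the paper constructs. Your version has the virtue of making the identification of the v-stack quotient with $Y^\lozenge$ more explicit (the paper simply asserts that $Y$ represents the quotient), while the paper's version makes the descent step unnecessary by building $Y$ directly over $X$.
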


\begin{proof} 
  We note that $\flgmu$ has a $G$-equivariant ample line bundle. Indeed, we
  choose a finite Galois extension $\tilde{E}/E$ over which $G$ splits, so that
  $\{\mu\}$ is represented by a cocharacter $\mu \colon \mathbb{G}_{m,\tilde{E}}
  \to G_{\tilde{E}}$. Then we have $\mathrm{Fl}_{G,\{\mu\},\tilde{E}} =
  G_{\tilde{E}} / P_{\mu,\tilde{E}}$, and hence there exists a
  $G_{\tilde{E}}$-equivariant ample line bundle $\tilde{\mathscr{L}}$ on
  $\mathrm{Fl}_{G,\{\mu\},\tilde{E}}$, see \cite[Proposition~4.4]{Jan03} for
  example. Then we may take its norm $\mathscr{L} =
  \operatorname{Nm}_{\tilde{E}/E} \tilde{\mathscr{L}}$, which is a
  $G$-equivariant ample line bundle on $\flgmu$ by \cite[Lemma~0BD0]{Stacks}. It
  follows that the rigid analytic variety
  \[
    Y = \operatorname{Proj}_X^\mathrm{an} \biggl( \bigoplus_{n \ge 0} G
    \backslash (\mathscr{P} \times H^0(\flgmu, \mathscr{L}^{\otimes n})) \biggr)
  \]
  represents the quotient, where the relative Proj construction is discussed in
  \cite[Section~2.3]{Con06}. Smoothness of $Y \to X$ can be checked locally on
  $X$. When $X = \Spa(A, A^+)$, the rigid analytic variety $Y$ is the
  analytification of a smooth projective scheme over $\Spec A$, and hence is
  smooth over $X$.
\end{proof}

\begin{corollary} \label{Cor:LocShiRepresent}
  Let $G/\qp$ be a connected reductive group, let $\{ \mu \colon
  \mathbb{G}_{m,\qpbar} \to G_{\qpbar} \}$ be a geometric conjugacy class of
  minuscule cocharacters with field of definition $E \subset \qpbar$.
  Let $X/K$ be a smooth rigid analytic variety over a complete discretely valued
  nonarchimedean field $K \supseteq E$ with perfect residue field, let
  $\mathbb{P}/X$ be a de Rham $\underline{G(\qp)}$-torsor, and consider the
  composition
  \[
    \tau \colon X^\lozenge \xrightarrow{\mathrm{DRT}(\mathbb{P})}
    [\underline{G(\qp)} \backslash \grg] \xrightarrow{\mathrm{BL}} \bung.
  \]
  Then for every open subgroup $K_p \subseteq G(\qp)$, the local Shimura variety
  $\shloc{\tau}[K_p]$ is represented by a smooth rigid analytic variety over
  $K$, and moreover the corresponding map $\pi \colon \shloc{\tau}[K_p] \to X$
  of rigid analytic varieties is smooth.
\end{corollary}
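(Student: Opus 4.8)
The plan is to assemble the cited results into a short chain that first realizes $\shloc{\tau}[K_p]$ as an \'etale v-sheaf over the diamond of a smooth rigid analytic variety, and then descends it to an honest rigid analytic variety. First I would apply \Cref{Lem:DdRComparison} to identify the $G^\lozenge$-torsor $D_\mathrm{dR}(\tau) \to X^\lozenge$, defined in \Cref{Par:TwistorToDeRham}, with the v-sheaf attached to the algebraic $G$-torsor $D_\mathrm{dR}(\mathbb{P}) \to X$ over the rigid analytic variety $X$. Plugging this into the description of the Grothendieck--Messing period map from \Cref{Par:GMPeriod} gives that $\pi_\mathrm{GM}$ has target $[G^\lozenge \backslash (D_\mathrm{dR}(\mathbb{P})^\lozenge \times_E \flgmu^\lozenge)]$, and by \Cref{Lem:RelativeFlagVar} applied to $\mathscr{P} = D_\mathrm{dR}(\mathbb{P})$ this quotient is the v-sheaf $Y^\lozenge$ of a smooth rigid analytic variety $Y/K$ for which the structure map $\pi \colon Y \to X$ is smooth.

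Next I would invoke \Cref{Par:GMPeriod} once more to record that $\pi_\mathrm{GM} \colon \shloc{\tau}[K_p] \to Y^\lozenge$ is \'etale, being the composition of the \'etale quotient map $\shloc{\tau}[K_p] = [\underline{K_p} \backslash \shloc{\tau}] \to [\underline{G(\qp)} \backslash \shloc{\tau}]$ (\'etale since $K_p \subseteq G(\qp)$ is open, so the fibers are discrete) with an open embedding. Since $K$ is discretely valued of characteristic $0$ with perfect residue field, $Y$ is a locally Noetherian analytic adic space over $\qp$, so the comparison of \'etale sites $Y_\mathrm{et} \simeq (Y^\lozenge)_\mathrm{et}$ for such adic spaces (see \cite[Lemma~15.6]{Sch17p}, cf.\ \cite{SW20}) yields a unique rigid analytic variety $Z/K$, \'etale over $Y$, together with an isomorphism $\shloc{\tau}[K_p] \cong Z^\lozenge$ compatible with the maps to $X^\lozenge$. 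An \'etale morphism of rigid analytic varieties is smooth, so $Z/K$ is smooth, and $\pi \colon Z \to X$ is smooth as the composition $Z \to Y \xrightarrow{\pi} X$ of an \'etale map followed by a smooth map; this is exactly the assertion.

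The one step carrying genuine content beyond bookkeeping is the final descent: that an \'etale v-sheaf over the diamond of a locally Noetherian analytic adic space over $\qp$ is itself the diamond of an adic space \'etale over it. I expect the care here to be in verifying that the resulting $Z$ genuinely lands in the category of rigid analytic varieties over $K$ used throughout the paper (rather than a pathological, e.g.\ non-taut, adic space) and that its formation is compatible with the $X^\lozenge$-structure, but the equivalence of \'etale sites is standard and introduces no new obstruction here. Everything else is a matter of unwinding the quotient presentations of $\shloc{\tau}[K_p]$ and $\shloc{\tau}[G(\qp)]$ and applying \Cref{Lem:DdRComparison}, \Cref{Par:GMPeriod}, and \Cref{Lem:RelativeFlagVar} in turn.
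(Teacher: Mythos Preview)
Your proposal is correct and follows essentially the same route as the paper's proof, which is simply the one-line citation of \Cref{Par:GMPeriod}, \Cref{Lem:DdRComparison}, \Cref{Lem:RelativeFlagVar}, and \cite[Lemma~15.6]{Sch17p}. You have accurately unpacked this chain: identify $D_\mathrm{dR}(\tau)$ with $D_\mathrm{dR}(\mathbb{P})^\lozenge$, realize the target of $\pi_\mathrm{GM}$ as $Y^\lozenge$ for a smooth rigid analytic $Y/X$, note $\pi_\mathrm{GM}$ is \'etale, and descend via the equivalence of \'etale sites.
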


\begin{proof}
  This follows immediately from the discussion of \Cref{Par:GMPeriod},
  \Cref{Lem:DdRComparison}, \Cref{Lem:RelativeFlagVar}, and
  \cite[Lemma~15.6]{Sch17p}.
\end{proof}

Because $\shloc{\tau}[K_p]$ is itself a smooth rigid analytic space and has a
natural $\underline{K_p}$-torsor over it, we may ask what properties this
pro-\'{e}tale torsor has. We start with a lemma.

\begin{lemma} \label{Lem:LocShiLocSys}
  In the setting of \Cref{Cor:LocShiRepresent}, consider the
  pro-\'{e}tale $K_p$-torsor $\shloc{\tau} \to Y = \shloc{\tau}[K_p]$, and let
  $\pi^\ast \mathbb{P} / Y$ be the pro-\'{e}tale $G(\qp)$-torsor pulled back
  from $X$. There exists a canonical isomorphism of $LG$-torsors
  \[
    LG \times^{\underline{K_p}} \shloc{\tau} \cong LG
    \times^{\underline{G(\qp)}} \pi^\ast \mathbb{P}
  \]
  on the pro-\'{e}tale site $Y_\mathrm{proet}$, where $LG$ denotes the
  pushforward $\gamma_\ast (LG) \in \mathsf{Shv}(Y_\mathrm{proet})$, see
  \Cref{Par:VToProet}.
\end{lemma}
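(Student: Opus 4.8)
The claim is a comparison between two $LG$-torsors on $Y_{\mathrm{proet}}$, one built from the pro-\'etale $K_p$-torsor $\shloc{\tau} \to Y$ and one from $\pi^\ast\mathbb{P}$. The natural strategy is to produce a map between the underlying $L^+G$- (or $G$-torsor) data and then extend to $LG$ by the meromorphic modification. First I would reduce to $G = \mathrm{GL}_n$ via the Tannakian formalism, as is done in the proofs of \Cref{Lem:DdRComparison} and \Cref{Lem:RelativeFlagVar}; everything in sight is an exact tensor functor, so it suffices to match up the vector-bundle incarnations compatibly. Then I would work on a basis of $Y_{\mathrm{proet}}$ given by affinoid perfectoid objects $V \in Y_{\mathrm{proet}}^{\mathrm{ap}}$ over which both $\pi^\ast\mathbb{L}$ and $\mathbb{M}_0$ (the $\period{B}{dR}\p$-lattice from \Cref{Par:LocSysToGr}) trivialize, and where moreover the point of $\grg\m$ cut out by the $K_p$-torsor becomes explicit.

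The geometric content I would use is the description of $(R,R^+)$-points of $\shloc{\tau}[G(\qp)]$ from \Cref{Par:GMPeriod}: a point of $Y$ over $c(V) = \Spa(R^\sharp, R^{\sharp+})$ is a pair of $G$-torsors $\mathscr{P}, \mathscr{Q}$ on $\mathcal{X}_{\mathrm{FF}}(R,R^+)$ with a meromorphic isomorphism $\alpha$ of relative position $\{\mu\}$, where $\mathscr{P}$ comes from $\tau = \mathrm{BL}\circ\mathrm{DRT}(\mathbb{P})$, hence $\mathscr{P}$ is the Beauville--Laszlo modification of the trivial bundle along the lattice $\mathbb{M}_0 \subseteq \mathbb{M}\otimes\period{B}{dR}$ read off from $\pi^\ast\mathbb{L}$ via \Cref{Prop:DRTOnAffPerf}. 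The $K_p$-torsor $\shloc{\tau}\to Y$ at such a point records a trivialization of the second lattice $\mathscr{Q}$ (equivalently an element of $G(\period{B}{dR})$ modulo $G(\period{B}{dR}\p)$ and modulo $K_p$), while $\pi^\ast\mathbb{P}$ records a trivialization of $\pi^\ast\mathbb{L}$ (modulo $G(\qp)$). Pushing out to $LG$ on the left, both become $LG$-torsors of trivializations over $\period{B}{dR}$; I would define the comparison isomorphism by sending a trivialization of $\mathscr{Q}\vert_{\period{B}{dR}}$ to the induced trivialization of $\pi^\ast\mathbb{L}\otimes\period{B}{dR} \cong \mathscr{P}\vert_{\period{B}{dR}} \xrightarrow{\alpha} \mathscr{Q}\vert_{\period{B}{dR}}$, using $\alpha$ to transport one to the other. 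Checking this is $LG$-equivariant, independent of the chosen $V$, and glues to a map of sheaves on $Y_{\mathrm{proet}}$ is then a matter of unwinding definitions, and that it is an isomorphism is clear since on the trivializing basis both sides are $LG$-bitorsors and the map is a bitorsor isomorphism.

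The main obstacle I anticipate is bookkeeping the passage between the three sites involved: the $K_p$-torsor $\shloc{\tau}\to Y$ is naturally a v-sheaf (a $\underline{K_p}$-torsor on $Y_{\mathrm{v}}$ in the sense of \Cref{Par:BLEquivariance}), and one must use \Cref{Prop:Torsors} together with $\gamma_\ast$ to descend it to $Y_{\mathrm{proet}}$, while the identification $\period{B}{dR}\p(V) \cong \period{B}{dR}\p(c(V))$ (from \cite[Theorem~6.5]{Sch13}, as in \Cref{Par:LocSysToGr}) is what lets one compare the $LG$-torsor coming from $\shloc{\tau}$ — defined in terms of $\period{B}{dR}$ of untilts of characteristic-$p$ perfectoids — with $LG \times^{\underline{G(\qp)}}\pi^\ast\mathbb{P}$, which lives intrinsically on $Y_{\mathrm{proet}}$ via $\gamma_\ast$. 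Getting the two conventions to line up coherently, and verifying that the modification $\alpha$ indeed glues into a well-defined isomorphism of $\gamma_\ast(LG)$-torsors (rather than merely an isomorphism after v-sheafification), is the delicate point; everything else is formal once $G = \mathrm{GL}_n$ and one is working over the trivializing basis.
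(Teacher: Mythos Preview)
Your proposal is correct and shares the essential idea with the paper's proof: both identify the two $LG$-torsors as torsors of trivializations of $\mathscr{Q}\vert_{\period{B}{dR}}$ and $\mathscr{P}\vert_{\period{B}{dR}}$ respectively, and use the meromorphic isomorphism $\alpha$ (which becomes an honest isomorphism over $\period{B}{dR}$) to transport one to the other.

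The paper's execution is more direct in two respects, both of which dissolve the ``main obstacle'' you anticipate. First, rather than a Tannakian reduction to $\mathrm{GL}_n$, the paper reduces to the case $K_p = G(\qp)$: the pushout $\underline{G(\qp)} \times^{\underline{K_p}} \shloc{\tau}$ is simply the pullback of the universal $\underline{G(\qp)}$-torsor along $\shloc{\tau}[K_p] \to \shloc{\tau}[G(\qp)]$, so one may work over $\shloc{\tau}[G(\qp)]$ with $G$-torsors throughout and never pass to vector bundles. Second, instead of working on a trivializing basis of $Y_{\mathrm{proet}}$ and checking gluing, the paper observes that both sides are $\gamma_\ast$ of $LG$-torsors on $Y_{\mathrm{v}}$, so it suffices to construct the isomorphism there; on the v-site the moduli description of \Cref{Par:GMPeriod} applies verbatim at every test object, and the identification of $LG \times^{\underline{G(\qp)}} \mathbb{P}^\lozenge$ with trivializations of $\mathscr{P}\vert_{\period{B}{dR}}$ is immediate from the definition of $\tau$ as $\mathrm{BL} \circ \mathrm{DRT}(\mathbb{P})$. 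Your approach works, but the site-juggling and $\mathrm{GL}_n$ reduction are avoidable overhead.
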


\begin{remark}
  The two $\underline{G(\qp)}$-torsors $\underline{G(\qp)}
  \times^{\underline{K_p}} \shloc{\tau}$ and $\pi^\ast \mathbb{P}$ are almost
  never isomorphic, before pushing out along $\underline{G(\qp)} \to LG$.
\end{remark}

\begin{proof}
  Because $\underline{G(\qp)} \times^{\underline{K_p}} \shloc{\tau}$ is the
  pullback of the pro-\'{e}tale $G(\qp)$-torsor $\shloc{\tau} \to
  \shloc{\tau}[G(\qp)]$ along the natural projection $\shloc{\tau}[K_p] \to
  \shloc{\tau}[G(\qp)]$, it suffices to construct the isomorphism in the case of
  $K_p = G(\qp)$. We also note that both sides can be naturally viewed as the
  pushfoward $\gamma_\ast$ of the $G^\lozenge$-torsors on the v-site
  $Y_\mathrm{v}$, and therefore it suffices to construct a canonical isomorphism
  of $LG$-torsors
  \[
    LG \times^{\underline{G(\qp)}} \shloc{\tau} \cong LG
    \times^{\underline{G(\qp)}} (\mathbb{P}^\lozenge \times_{X^\lozenge}
    \shloc{\tau}[G(\qp)])
  \]
  as v-sheaves over $\shloc{\tau}[G(\qp)]$.

  This follows from unraveling both sides using the moduli interpretation given
  in \Cref{Par:GMPeriod}. Over $\shloc{\tau}[G(\qp)]$, which parametrizes tuples
  $(R^\sharp, \iota \colon R \cong (R^\sharp)^\flat, x \colon \Spa(R^\sharp,
  R^{\sharp+}) \to X, \mathscr{Q}, \alpha \colon \mathscr{P} \dashrightarrow
  \mathscr{Q})$, the left hand side parametrizes trivializations of the
  $G$-torsor $\mathscr{Q} \vert_{\period{B}{dR}(R^\sharp, R^{\sharp+})}$. On the
  other hand $\alpha$ induces an isomorphism
  \[
    \alpha \colon \mathscr{P} \vert_{\period{B}{dR}(R^\sharp, R^{\sharp+})}
    \cong \mathscr{Q} \vert_{\period{B}{dR}(R^\sharp, R^{\sharp+})},
  \]
  and therefore $LG \times^{\underline{G(\qp)}} \shloc{\tau}$ also parametrizes
  trivializations of the $G$-torsor $\mathscr{P} \vert_{\period{B}{dR}(R^\sharp,
  R^{\sharp+})}$. We now claim that, on the right hand side, the $LG$-torsor
  \[
    LG \times^{\underline{G(\qp)}} \mathbb{P}^\lozenge \to X^\lozenge
  \]
  also parametrizes trivializations of $\mathscr{P}
  \vert_{\period{B}{dR}(R^\sharp, R^{\sharp+})}$ for every $\Spa(R^\sharp,
  R^{\sharp+}) \to X^\lozenge$ inducing the $G$-torsor $\mathscr{P} /
  \mathcal{X}_\mathrm{FF}(R, R^+)$ under $\tau$. This follows from the fact that
  $\tau$ was defined as the composition
  \[
    X^\lozenge = [\underline{G(\qp)} \backslash \mathbb{P}^\lozenge] \to
    [\underline{G(\qp)} \backslash \grg] \xrightarrow{\mathrm{BL}} \bung,
  \]
  because $\mathrm{BL}$ is given by modification of the $G$-torsor at the
  divisor $\Spa(R^\sharp, R^{\sharp+})$.
\end{proof}

\begin{proposition} \label{Prop:LocShiDdR}
  In the setting of \Cref{Cor:LocShiRepresent}, consider the
  pro-\'{e}tale $\underline{G(\qp)}$-torsor $\eta / Y = \shloc{\tau}[K_p]$
  corresponding to $\underline{G(\qp)} \times^{\underline{K_p}} \shloc{\tau}$. 
  \begin{enumerate}
    \item The $\underline{G(\qp)}$-torsor $\eta$ is de Rham.
    \item There exists a canonical isomorphism $(D_\mathrm{dR}(\eta), \nabla)
      \cong \pi^\ast(D_\mathrm{dR}(\mathbb{P}), \nabla)$ of $G$-torsors over $Y$
      with integrable connection.
    \item The map $\mathrm{DRT}(\eta) \colon \underline{G(\qp)}
      \times^{\underline{K_p}} \shloc{\tau} \to \grg$ agrees with the
      composition
      \[
        \underline{G(\qp)} \times^{\underline{K_p}} \shloc{\tau}
        \xrightarrow{\id \times \mathrm{pr}_2} \underline{G(\qp)}
        \times^{\underline{K_p}} \grg\m \xrightarrow{\mathrm{act}} \grg\m,
      \]
      where we recall $\shloc{\tau} = X^\lozenge \times_{\bung \times \Spd E}
      \grg\m$.
  \end{enumerate}
\end{proposition}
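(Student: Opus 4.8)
The plan is to deduce all three parts from the case $G = \mathrm{GL}_n$ by the Tannakian formalism of \cite[Theorem~19.5.2]{SW20}: (1) is by definition a condition on representations, and the isomorphisms asserted in (2) and (3) are determined by their effect on fibre functors, so it suffices to prove the $\mathrm{GL}_n$-statements for every $\rho\colon G\to\mathrm{GL}_n$. Having reduced to $\mathrm{GL}_n$, I would write $\mathbb{P} = \operatorname{Isom}(\mathbb{L},\underline{\qp}^{\oplus n})$ for a de Rham local system $\mathbb{L}$ on $X$ and let $\mathbb{N}$ be the local system on $Y := \shloc{\tau}[K_p]$ with $\eta \cong \operatorname{Isom}(\mathbb{N},\underline{\qp}^{\oplus n})$. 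The engine of the whole argument is \Cref{Lem:LocShiLocSys}: passing from its $LG$-torsor statement to the associated $\period{B}{dR}$-local systems on $Y_{\mathrm{proet}}$ — for $\mathrm{GL}_n$, pushing a $\underline{\qp}$-torsor out along $\underline{\mathrm{GL}_n(\qp)}\to LG$ is exactly $-\otimes_{\underline{\qp}}\period{B}{dR}$ — produces a canonical $\period{B}{dR}$-linear isomorphism $\mathbb{N}\otimes_{\underline{\qp}}\period{B}{dR} \cong \pi^\ast(\mathbb{L}\otimes_{\underline{\qp}}\period{B}{dR})$. I would keep in mind throughout that this does not respect the $\underline{\qp}$-structures: it identifies both sides with the restriction to the Fargues--Fontaine Cartier divisor of the bundle $\mathscr{P}_\tau$ pulled back from $X$ along $\tau$, using on the left the tautological modification of type $\{\mu\}$ from the moduli description of \Cref{Par:GMPeriod} and on the right Beauville--Laszlo together with \Cref{Lem:DdRComparison}.

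For (1) I would invoke \Cref{Thm:LiuZhu}: it is enough to exhibit, in each connected component of the smooth rigid analytic variety $Y$ (\Cref{Cor:LocShiRepresent}), one classical point $y$ at which $\mathbb{N}$ is de Rham, and every connected component of a smooth rigid variety over $K$ contains one. For such a $y$, with residue field $L/K$ finite, the point $x := \pi(y)\in X$ is again classical, so $\mathbb{L}|_x$ is de Rham by \Cref{Thm:LiuZhu}; restricting the displayed isomorphism to $y$ gives an isomorphism $\mathbb{N}|_y\otimes_{\qp}B_{\mathrm{dR}} \cong \mathbb{L}|_x\otimes_{\qp}B_{\mathrm{dR}}$ of $B_{\mathrm{dR}}$-semilinear $\Gal(\bar L/L)$-representations, and triviality of the right-hand side (de Rhamness of $\mathbb{L}|_x$) forces triviality of the left, i.e.\ $\mathbb{N}|_y$ is de Rham.

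For (2) I would tensor the $\period{B}{dR}$-isomorphism up over $\period{B}{dR}$ to $\period\o{B}{dR}$ (using $\mathbb{N}\otimes\period\o{B}{dR} \cong (\mathbb{N}\otimes\period{B}{dR})\otimes_{\period{B}{dR}}\period\o{B}{dR}$, cf.\ \cite[Corollary~6.13]{Sch13}), then post-compose with the comparison $\pi^\ast\mathbb{L}\otimes\period\o{B}{dR} \cong \pi^\ast D_{\mathrm{dR}}(\mathbb{L})\otimes_{\mathscr{O}_Y}\period\o{B}{dR}$ afforded by de Rhamness of $\pi^\ast\mathbb{L}$ (\Cref{Thm:LiuZhu}, \cite[Theorem~3.9(ii)]{LZ17}; cf.\ the proof of \Cref{Prop:DRTFunctorial}), obtaining a $\nabla$-compatible isomorphism $\mathbb{N}\otimes\period\o{B}{dR} \cong \pi^\ast D_{\mathrm{dR}}(\mathbb{L})\otimes_{\mathscr{O}_Y}\period\o{B}{dR}$. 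Applying $\nu_\ast$ for $\nu\colon Y_{\mathrm{proet}}\to Y_{\mathrm{et}}$, together with the projection formula and $\nu_\ast\period\o{B}{dR} = \mathscr{O}_Y$, then identifies $D_{\mathrm{dR}}(\eta) = \nu_\ast(\mathbb{N}\otimes\period\o{B}{dR})$ with $\pi^\ast D_{\mathrm{dR}}(\mathbb{P})$ compatibly with the connections — but, as the statement already signals by asking only for a connection-isomorphism, not with the Hodge filtrations, since the modification $\alpha$ twists the filtration.

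For (3), since both $\mathrm{DRT}(\eta)$ and $\mathrm{act}\circ(\id\times\mathrm{pr}_2)$ are $\underline{G(\qp)}$-equivariant on $\underline{G(\qp)}\times^{\underline{K_p}}\shloc{\tau}$, which is induced from the $\underline{K_p}$-space $\shloc{\tau}$, I would reduce to checking that their restrictions along $s\mapsto[1,s]$ agree as $\underline{K_p}$-equivariant maps $\shloc{\tau}\to\grg\m$. The second restriction is visibly $\mathrm{pr}_2$. For the first, $\eta$ is trivialized over the $\underline{K_p}$-torsor $\shloc{\tau}\to Y$, and by \Cref{Prop:DRTOnAffPerf} the composite, over a trivializing pro-\'{e}tale affinoid perfectoid $U$ over $\shloc{\tau}$, is the lattice $\mathbb{M}_{0,\mathbb{N}}(U) = (D_{\mathrm{dR}}(\mathbb{N})\otimes_{\mathscr{O}_Y}\period\o{B}{dR}\p)^{\nabla=0}(U)\subseteq\mathbb{N}(U)\otimes\period{B}{dR}(U)$. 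The crux is then to show, under the identification of \Cref{Lem:LocShiLocSys}, that this lattice equals the tautological lattice $\mathscr{P}_\tau|_{\period{B}{dR}\p}$ cutting out the $\grg\m$-point $\mathrm{pr}_2(s)$: both coincide with the pullback $\pi^\ast\mathbb{M}_{0,\mathbb{L}}$ — the first by (2) and the pullback-functoriality of de Rham lattices established in the proof of \Cref{Prop:DRTFunctorial}, the second because $\mathscr{P}_\tau = \tau^\ast(\cdot)$ with $\tau = \mathrm{BL}\circ\mathrm{DRT}(\mathbb{L})$ and Beauville--Laszlo reads off exactly this lattice — and, in addition, that the accompanying trivializations over $\period{B}{dR}$ agree, which in particular places the image of $\mathrm{DRT}(\eta)$ inside $\grg\m$. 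This last step, carrying lattice positions and trivializations through \Cref{Lem:LocShiLocSys} (which on its own records only the ambient $\period{B}{dR}$-bundle, not its integral sub-lattices), is the part I expect to be the main obstacle; everything else is formal.
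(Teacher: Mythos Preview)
Your proposal is correct and follows the paper's argument closely: both hinge on \Cref{Lem:LocShiLocSys}, push the resulting $LG$-isomorphism to an $\period\o{B}{dR}$-isomorphism of local systems, and read off the three statements. The paper also reduces to representations of $G$ (equivalently, to $\mathrm{GL}_n$) and further reduces to $K_p = G(\qp)$ at the outset; your (2) and (3) are essentially identical to the paper's.

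The one genuine difference is in (1). You argue pointwise: restrict the $\period{B}{dR}$-isomorphism to a classical point $y$, obtain a Galois-equivariant $B_{\mathrm{dR}}$-semilinear isomorphism $\mathbb{N}|_y \otimes B_{\mathrm{dR}} \cong \mathbb{L}|_{\pi(y)} \otimes B_{\mathrm{dR}}$, and invoke \Cref{Thm:LiuZhu}. The paper instead argues globally: the $\period\o{B}{dR}$-comparison already shows $D_{\mathrm{dR}}(\mathbb{N}) \cong \pi^\ast D_{\mathrm{dR}}(\mathbb{L})$ is a vector bundle of rank $\dim V$, and this alone forces de Rhamness by \cite[Theorem~3.9(ii),(iv)]{LZ17}. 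The paper's route is slightly cleaner because it avoids the restriction-to-a-point step, and because it derives (1) as an immediate byproduct of the isomorphism already needed for (2). Your route is also valid but needs the (true, if unstated) observation that the canonical isomorphism of \Cref{Lem:LocShiLocSys} is compatible with pullback to a point, hence Galois-equivariant there.

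Your flagged obstacle in (3) --- carrying the $\period{B}{dR}\p$-lattice through \Cref{Lem:LocShiLocSys} --- is exactly what the paper spells out: one unwinds the moduli description in the proof of \Cref{Lem:LocShiLocSys} to see that under the identification, $\pi_{\mathrm{proet}}^{-1}\mathbb{M}_0 \otimes \period{B}{dR}\p$ becomes the tautological lattice $(G \backslash (\mathscr{P} \times V))|_{\period{B}{dR}\p}$ inside $(G \backslash (\mathscr{Q} \times V))|_{\period{B}{dR}}$. So your instinct that this is the crux is right, and the resolution is precisely the explicit unraveling you anticipate.
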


\begin{proof}
  By functoriality, see \cite[Theorem~3.9(ii)]{LZ17} and
  \Cref{Prop:DRTFunctorial}, it suffices to treat the case of $K_p =
  G(\qp)$. For every representation $\rho \colon G \to \mathrm{GL}(V)$, by
  \Cref{Lem:LocShiLocSys} we have an isomorphism
  \begin{align*}
    (\underline{G(\qp)} \backslash (\shloc{\tau} \times \underline{V}))
    \otimes_{\underline{\qp}} \period\o{B}{dR} &\cong (\underline{G(\qp)}
    \backslash (\pi^\ast \mathbb{P} \times \underline{V}))
    \otimes_{\underline{\qp}} \period\o{B}{dR} \\ &\cong
    D_\mathrm{dR}(\underline{G(\qp)} \backslash (\pi^\ast \mathbb{P} \times
    \underline{V})) \otimes_{\mathscr{O}_Y} \period\o{B}{dR}
  \end{align*}
  of sheaves on $Y_\mathrm{proet}$ compatible the connection (but not
  necessarily with the filtration). We also have from
  \cite[Theorem~3.9(ii)]{LZ17} an isomorphism of vector bundles with connection
  \[
    (D_\mathrm{dR}(\underline{G(\qp)} \backslash (\pi^\ast \mathbb{P} \times
    \underline{V})), \nabla) \cong \pi^\ast (D_\mathrm{dR}(\underline{G(\qp)}
    \backslash (\mathbb{P} \times \underline{V})), \nabla).
  \]

  (1) Because $D_\mathrm{dR}(\underline{G(\qp)} \backslash (\shloc{\tau} \times
  \underline{V})) \cong \pi^\ast D_\mathrm{dR}(\underline{G(\qp)} \backslash
  (\mathbb{P} \times \underline{V}))$ is a vector bundle of rank $\dim_{\qp} V$,
  the $\underline{\qp}$-local system $\underline{G(\qp)} \backslash
  (\shloc{\tau} \times \underline{V})$ is de Rham. This can be seen for example
  by combining parts (ii) and (iv) of \cite[Theorem~3.9]{LZ17}.

  (2) This follows immediately from the isomorphisms above, which are compatible
  with connections.

  (3) Write $\pi_\mathrm{proet} \colon \mathsf{Shv}(Y_\mathrm{proet}) \to
  \mathsf{Shv}(X_\mathrm{proet})$ for morphism of topoi induced by the smooth
  map $\pi \colon Y \to X$. On $X_\mathrm{proet}$ we have the
  $\period{B}{dR}\p$-local system
  \[
    \mathbb{M}_0 = (D_\mathrm{dR}(\underline{G(\qp)} \backslash (\mathbb{P}
    \times \underline{V})) \otimes_{\mathscr{O}_X}
    \period\o{B}{dR}\p)^{\nabla=0} \subseteq (\underline{G(\qp)} \backslash
    (\mathbb{P} \times \underline{V})) \otimes_{\underline{\qp}} \period{B}{dR}.
  \]
  For any vector bundle with integrable connection $(\mathscr{V}, \nabla)$,
  pulling back along $\pi$ gives an identification
  \[
    \pi_\mathrm{proet}^{-1}(\mathscr{V} \otimes_{\mathscr{O}_X}
    \period\o{B}{dR}\p) \otimes_{\pi_\mathrm{proet}^{-1} \period\o{B}{dR}\p}
    \period\o{B}{dR}\p = \pi_\mathrm{proet}^{-1} \mathscr{V}
    \otimes_{\pi_\mathrm{proet}^{-1} \mathscr{O}_ X} \period\o{B}{dR}\p =
    \pi^\ast \mathscr{V} \otimes_{\mathscr{O}_Y} \period\o{B}{dR}\p,
  \]
  see \cite[Lemma~03EL]{Stacks}, which preserves the connection. It follows from
  \cite[Theorem~7.2]{Sch13} that
  \[
    \pi_\mathrm{proet}^{-1} \mathbb{M}_0 \otimes_{\pi_\mathrm{proet}^{-1}
    \period{B}{dR}\p} \period{B}{dR}\p = (D_\mathrm{dR}(\underline{G(\qp)}
    \backslash (\pi^\ast \mathbb{P} \times V)) \otimes_{\mathscr{O}_Y}
    \period\o{B}{dR}\p)^{\nabla=0}
  \]
  as subsheaves of $(\underline{G(\qp)} \backslash (\pi^\ast \mathbb{P} \times
  \underline{V})) \otimes_{\underline{\qp}} \period{B}{dR}$. Following the
  isomorphism constructed in \Cref{Lem:LocShiLocSys}, we see that the inclusion
  \[
    \pi_\mathrm{proet}^{-1} \mathbb{M}_0 \otimes_{\pi_\mathrm{proet}^{-1}
    \period{B}{dR}\p} \period{B}{dR}\p \hookrightarrow (\underline{G(\qp)}
    \backslash (\shloc{\tau} \times \underline{V})) \otimes_{\underline{\qp}}
    \period{B}{dR}
  \]
  is the tautological $\period{B}{dR}\p$-lattice. Here, if $Y =
  \shloc{\tau}[G(\qp)]$ parametrizes rational maps $\mathscr{P} \dashrightarrow
  \mathscr{Q}$ where $\mathscr{Q}$ is trivial at geometric points, by the
  tautological $\period{B}{dR}\p$-lattice we mean
  \[
    (G \backslash (\mathscr{P} \times V)) \vert_{\period{B}{dR}\p} \subseteq (G
    \backslash (\mathscr{P} \times V)) \vert_{\period{B}{dR}} \cong (G
    \backslash (\mathscr{Q} \times V)) \vert_{\period{B}{dR}},
  \]
  where trivializations of $\mathscr{Q}$ correspond to the
  $\underline{G(\qp)}$-local system. It now follows from the construction in
  \Cref{Par:LocSysToGr} that $\mathrm{DRT}(\eta)$ agrees with the natural
  projection $\shloc{\tau} \to \grg\m$.
\end{proof}

}
{\section{Functoriality over the diagonal} \label{Sec:SameUntilt}
\def\flgmu{\mathrm{Fl}_{G,\{\mu\},E}}
\def\flgmuc{\mathrm{Fl}_{G^\mathrm{c},\{\mu^\mathrm{c}\},E}}
\def\flgmucp{\mathrm{Fl}_{G^{\prime\mathrm{c}},\{\mu^{\prime\mathrm{c}}\},E^\prime}}

For $E/\qp$ a finite extension, there is a closed embedding of v-sheaves
\[
  \Spd E \xrightarrow{\Delta} \Spd E \times_{\Spd \fp} \Spd E.
\]
In this section, we study the local structure of a global uniformization map
$\Theta$ restricted to such a locus, i.e., the composition
\[
  \Theta^E \colon U \times_{\bung\m \times \Spd E} \grg\m \hookrightarrow U
  \times_{\bung\m} \grg\m \xrightarrow{\Theta} U,
\]
and show that $\Theta^E$ is automatically functorial with respect to morphisms
of Shimura data.

\begin{lemma} \label{Lem:TorsorConnection}
  Let $K \supseteq \qp$ be a complete nonarchimedean field and let $X/K$ be a
  smooth rigid analytic variety with a rational point $x \in X(K)$. Let $G/\qp$
  be a connected reductive group, and let $\mathscr{P}/X$ be a $G$-torsor
  together with an integrable connection $\nabla$. Then there exists an open
  neighborhood $x \in U \subseteq X$ and an isomorphism $(\mathscr{P}, \nabla)
  \cong (\mathscr{P}_x \times U, d)$ of $G$-torsors with integrable connection,
  which restricts to the identity at $x$.
\end{lemma}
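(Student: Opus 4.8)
The statement is a rigid-analytic analogue of the standard fact that a torsor with integrable connection is locally (in the analytic topology) trivialized by horizontal sections. The plan is to reduce to the case $G = \mathrm{GL}_n$ via a faithful representation, construct the trivialization by solving the differential equation $\nabla s = 0$ formally at $x$, and then invoke convergence.

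First I would choose a faithful representation $\rho \colon G \hookrightarrow \mathrm{GL}_n$, so that $\mathscr{P}$ gives rise to a rank $n$ vector bundle with integrable connection $(\mathscr{V}, \nabla)$ on $X$; a trivialization of $(\mathscr{P},\nabla)$ restricting to the identity at $x$ can be recovered from a horizontal basis of $\mathscr{V}$ near $x$ adapted to the $G$-structure, using that $G$ is its own image under $\rho$. Shrinking $X$ we may assume $X$ is a smooth affinoid admitting an étale map to a polydisc $t \colon X \to \mathbb{D}^d$ with $t(x) = 0$; then $\Omega^1_{X/K}$ is free on $dt_1,\dots,dt_d$ and $\nabla$ is determined by commuting operators $\nabla_{\partial_i}$. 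Working in a local coordinate chart, the equation $\nabla s = 0$ for a section $s$ near $x$ becomes a system of first-order linear PDEs; solving it order by order in the Taylor expansion at $0$ produces a unique formal solution $\hat{s} \in (\mathscr{V} \otimes_{\mathscr{O}_X} K[[t_1,\dots,t_d]])$ with prescribed value at $x$, exactly as in the formal existence theorem for ODEs.

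The main point — and the step I expect to be the main obstacle — is convergence: one must show the formal horizontal sections actually converge on some affinoid neighborhood of $x$, so that they define genuine analytic sections. This is where I would invoke the standard estimate for $p$-adic differential equations: because $\nabla$ is integrable and $X$ is smooth, after rescaling the coordinates (passing to a smaller polydisc of radius $|p|^{c}$ for suitable $c$) the connection matrices have bounded denominators, and the Taylor coefficients of $\hat{s}$ grow at worst geometrically, hence converge on a small enough polydisc. Concretely this is the rigid-analytic Cauchy–Kovalevskaya/Frobenius theorem for integrable connections; I would cite it rather than reprove it. Once convergence is established, the $d$ independent horizontal solutions with initial conditions a basis of $\mathscr{V}_x$ assemble into an isomorphism $(\mathscr{V}, \nabla)|_U \cong (\mathscr{V}_x \otimes_K \mathscr{O}_U, d)$ on a neighborhood $U$ of $x$.

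Finally I would transport this back to the $G$-torsor. Since $\mathscr{P} = \mathrm{Isom}^\otimes(\text{triv}, \mathscr{P})$ is cut out inside the frame bundle of $\mathscr{V}$ by the $G$-structure (equivalently by the tensors defining $G$ inside $\mathrm{GL}_n$), and $\nabla$ preserves this structure, the horizontal trivialization of $(\mathscr{V},\nabla)$ automatically respects the reduction to $G$; hence it yields an isomorphism $(\mathscr{P},\nabla)|_U \cong (\mathscr{P}_x \times U, d)$ of $G$-torsors with connection. By construction this isomorphism is the identity at $x$, which completes the proof. The only subtlety in this last step is choosing the initial frame at $x$ to lie in $\mathscr{P}_x$, which one can do since $\mathscr{P}_x$ is a $G$-torsor over the point $x$, i.e., nonempty after possibly replacing $K$ by nothing — $\mathscr{P}_x$ is automatically trivial over $\mathrm{Spec}\, k(x) = \mathrm{Spec}\, K$ because $x \in X(K)$.
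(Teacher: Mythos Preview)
Your approach is essentially the same as the paper's: reduce to $\mathrm{GL}_n$ via a faithful representation, invoke the rigid-analytic Frobenius theorem to trivialize the associated vector bundle with connection (the paper simply cites \cite[Theorem~9.7]{Shi22} for this step rather than sketching the convergence argument), and then use a collection of tensors cutting out $G \subset \mathrm{GL}_n$ to transport the trivialization back to the $G$-torsor. Your penultimate paragraph already contains the correct argument: since the tensors $s_\alpha$ are flat, the horizontal trivialization $(\mathscr{V},\nabla)|_U \cong (\mathscr{V}_x \otimes_K \mathscr{O}_U, d)$ carries the sections $s_{\alpha,U}$ to the constant sections $s_{\alpha,x}$, and hence identifies the subscheme $\mathscr{P} = \operatorname{Isom}((\mathscr{V},\{s_{\alpha,U}\}),(V,\{s_\alpha\}))$ with $\mathscr{P}_x \times U$.

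Your final sentence, however, is both incorrect and unnecessary. A $G$-torsor over $\Spec K$ need not be trivial when $K$ is not algebraically closed --- for instance nontrivial $\mathrm{PGL}_n$-torsors over $\qp$ exist, corresponding to nonsplit central simple algebras. Fortunately you never need a $K$-point of $\mathscr{P}_x$: the claim is $(\mathscr{P},\nabla) \cong (\mathscr{P}_x \times U, d)$, not $(\mathscr{P},\nabla) \cong (G \times U, d)$, and the tensor argument yields exactly this without ever choosing a frame inside $\mathscr{P}_x$. Simply drop that sentence and the proof stands.
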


\begin{proof}
  When $G = \mathrm{GL}_n$, the data $(\mathscr{P}, \nabla)$ corresponds to a
  rank $n$ vector bundle with integrable connection, and the result follows from
  integrating the connection,
  see \cite[Theorem~9.7]{Shi22} for example. We now treat the case of a general
  connected reductive group $G$. Fix a faithful representation $\rho \colon G
  \to \operatorname{GL}(V)$ together with a collection of tensors $\lbrace
  s_\alpha \rbrace \subseteq \bigoplus_{n,m} V^{\otimes n} \otimes
  (V^\vee)^{\otimes m}$ whose pointwise stabilizer is $G$, using
  \cite[Proposition~I.3.1(c)]{DMOS82}. Choose $U$ small enough so that
  $(\mathscr{V} = G \backslash (\mathscr{P} \times V), \nabla)$ is trivial on
  $U$, and moreover $U$ is connected. Fixing the unique isomorphism
  \[
    \varphi \colon (\mathscr{V}, \nabla) \xrightarrow{\cong} (U \times
    \mathscr{V}_x, d)
  \]
  extending the identity over $x$, we see that $\varphi$ maps the flat sections
  $s_{\alpha,U} \in H^0(U, \mathscr{V}^\otimes)$ to the constant sections
  $s_{\alpha,x} \in \mathscr{V}_x^\otimes$. It follows that $\mathscr{P} =
  \operatorname{Isom}((\mathscr{V}, \{s_{\alpha,U}\}), (V, \{s_\alpha\}))$
  together with $\nabla$ is identified with $\mathscr{P}_x \times U$ together
  with the constant connection $d$, as every finite-dimensional representation
  of $G$ is a direct summand of some $V^{\otimes n} \otimes (V^\vee)^{\otimes
  m}$.
\end{proof}

\paragraph \label{Par:PsiSetup}
Let $K$ be a complete discretely valued nonarchimedean field over $\qp$ with
perfect residue field, and let $X/K$ be a smooth rigid analytic variety. Let
$G/\qp$ be a connected reductive group, let $\{\mu \colon \mathbb{G}_{m,\qpbar}
\to G_{\qpbar}\}$ be a conjugacy class of a minuscule geometric cocharacter with
field of definition $E \subseteq \qpbar$, and fix an embedding $E
\hookrightarrow K$. Let $\mathbb{P}/X$ be a de Rham $G(\qp)$-torsor, and assume
that $\mathrm{DRT}(\mathbb{P}) \colon \mathbb{P} \to \grg\m \subseteq \grg$.
Consider the composition
\[
  \tau \colon X^\lozenge \xrightarrow{\mathrm{DRT}(\mathbb{P})}
  [\underline{G(\qp)} \backslash \grg\m] \xrightarrow{\mathrm{BL}} \bung\m.
\]
We then have a graph
\begin{align*}
  s \colon X^\lozenge &\to X^\lozenge \times_{\bung \times \Spd E}
  [\underline{G(\qp)} \backslash \grg\m] \\ &\qquad = \shloc{\tau}[G(\qp)]
  \subseteq [G^\lozenge \backslash (D_\mathrm{dR}(\mathbb{P}) \times
  \flgmu^\lozenge)],
\end{align*}
which is a section of the natural projection map. Note that both v-sheaves are
represented by smooth rigid analytic varieties over $K$ by
\Cref{Cor:LocShiRepresent}, hence $s$ corresponds to a map $s \colon X \to [G
\backslash (D_\mathrm{dR}(\mathbb{P}) \times \flgmu)]$ of rigid analytic
varieties by \cite[Proposition~10.2.3]{SW20}.

\paragraph \label{Par:PsiMap}
Let $x \in X(K)$ be a rational point. Using \Cref{Lem:TorsorConnection}, we
find a connected open neighborhood $x \in U \subseteq X$ together with an
isomorphism $\alpha_x \colon (D_\mathrm{dR}(\mathbb{P}), \nabla) \cong
(D_\mathrm{dR}(\mathbb{P})_x \times_K U, d)$. Then $s$ defines a section
\[
  U \to [G \backslash (D_\mathrm{dR}(\mathbb{P}) \vert_U \times \flgmu)] \cong
  [G \backslash (D_\mathrm{dR}(\mathbb{P})_x \times \flgmu)] \times_K U,
\]
which corresponds to a map
\[
  \Psi_x \colon U \to [G \backslash (D_\mathrm{dR}(\mathbb{P})_x \times \flgmu)]
\]
of smooth rigid analytic varieties. We record a functoriality result.

\begin{proposition} \label{Prop:PsiFunctoriality}
  Assume we are in the setting of \Cref{Par:PsiSetup}. Let $f \colon Y
  \to X$ be a map of smooth rigid analytic varieties over $K$, and let $y \in
  Y(K)$ be a rational point mapping to $f(y) = x \in X(K)$. Choose a connected
  open $x \in U \subseteq X$ as in \Cref{Par:PsiMap}, and then a
  connected open $y \in V \subseteq Y$ small enough as in
  \Cref{Par:PsiMap} so that $f \colon V \to U$. Then the diagram
  \[ \begin{tikzcd}
    V \arrow{r}{f} \arrow{d}{\Psi_y} & U \arrow{d}{\Psi_x} \\ \lbrack G \backslash
    (D_\mathrm{dR}(f^\ast \mathbb{P})_y \times \flgmu) \rbrack \arrow{r}{\cong}
    & \lbrack G \backslash (D_\mathrm{dR}(\mathbb{P})_x \times \flgmu) \rbrack
  \end{tikzcd} \]
  commutes.
\end{proposition}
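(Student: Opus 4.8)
The plan is to reduce everything to the case $G = \mathrm{GL}_n$ via the Tannakian formalism (choosing a faithful representation $\rho\colon G \to \mathrm{GL}(V)$ and tensors $\{s_\alpha\}$ as in the proof of \Cref{Lem:TorsorConnection}), so that $\mathbb{P}$ corresponds to a de Rham $\underline{\qp}$-local system $\mathbb{L}/X$, and $\Psi_x$ is built from the $\period{B}{dR}\p$-lattice $\mathbb{M}_0 = (D_\mathrm{dR}(\mathbb{L}) \otimes_{\mathscr{O}_X} \period\o{B}{dR}\p)^{\nabla=0}$ trivialized along the chosen flat trivialization $\alpha_x$ near $x$. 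First I would recall that $f^\ast\mathbb{P}$ is again de Rham by \Cref{Thm:LiuZhu}, and that by the functoriality isomorphism \eqref{Eq:DRTFunctorial} in the proof of \Cref{Prop:DRTFunctorial}, there is a canonical identification $f^\ast(D_\mathrm{dR}(\mathbb{L}), \nabla) \cong (D_\mathrm{dR}(f^\ast\mathbb{L}), \nabla)$ of filtered bundles with connection, coming from \cite[Theorem~3.9(ii)]{LZ17}. The content of the proposition is then that this identification is compatible with the flat trivializations $\alpha_x$ on $U$ and $\alpha_y$ on $V$ that were used to define $\Psi_x$ and $\Psi_y$.

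The key point is that the flat trivialization is unique: on a connected open neighborhood of a point, a vector bundle with integrable connection has a unique isomorphism to the constant bundle extending a chosen trivialization at the point (this is exactly how $\alpha_x$ and $\alpha_y$ were produced in \Cref{Lem:TorsorConnection}). So I would argue as follows. Shrink $V$ so that $f(V) \subseteq U$. Pull back $\alpha_x$ along $f$ to get an isomorphism $f^\ast(D_\mathrm{dR}(\mathbb{P}), \nabla) \cong (D_\mathrm{dR}(\mathbb{P})_x \times_K V, d)$; composing with the canonical identification $f^\ast D_\mathrm{dR}(\mathbb{P}) \cong D_\mathrm{dR}(f^\ast\mathbb{P})$ gives a flat trivialization of $D_\mathrm{dR}(f^\ast\mathbb{P})$ over $V$. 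On the other hand $\alpha_y$ is also a flat trivialization of $D_\mathrm{dR}(f^\ast\mathbb{P})$ over $V$. After identifying the fibers $D_\mathrm{dR}(f^\ast\mathbb{P})_y \cong D_\mathrm{dR}(\mathbb{P})_x$ (via $f$ applied to the point $y \mapsto x$, which agrees with the canonical identification on fibers), both trivializations restrict to the identity at $y$; by uniqueness they coincide. This is precisely the statement that the square of trivializations commutes, which translates directly into the commutativity of the square in the proposition once one unwinds the definition of $\Psi_x, \Psi_y$ as sections of the relative flag variety expressed via these trivializations.

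Concretely, I would write the bottom isomorphism of the square as the one induced by $\alpha_x$, $\alpha_y$ and the canonical identification $D_\mathrm{dR}(f^\ast\mathbb{P})_y \cong D_\mathrm{dR}(\mathbb{P})_x$, note that $f\colon V \to U$ is covered by the identity on $D_\mathrm{dR}(f^\ast\mathbb{P}) \to D_\mathrm{dR}(\mathbb{P})$ (via the canonical identification), and then observe that $\Psi_y$ is by construction the section $s$ of the relative flag variety over $Y$ restricted to $V$ and rewritten via $\alpha_y$, while $f^\ast$ of the section over $X$ restricted to $U$ rewritten via $\alpha_x$ is $\Psi_x \circ f$; these agree because the global section $s$ is functorial (it is the graph of $\mathrm{DRT}$, which is functorial by \Cref{Prop:DRTFunctorial}), and the two ways of rewriting it differ only by the two flat trivializations, which we have just shown agree on $V$. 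The only step requiring any care is the verification that $f$ applied to the point $y$ induces, on $D_\mathrm{dR}$-fibers, exactly the canonical identification from \eqref{Eq:DRTFunctorial}; this is a compatibility of \cite[Theorem~3.9(ii)]{LZ17} with base change to a classical point, which I expect to be the main (though minor) obstacle, and which follows from the naturality of that theorem's construction applied to $\Spa k(y) \hookrightarrow Y \xrightarrow{f} X$.
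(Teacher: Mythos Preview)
Your proposal is correct and follows essentially the same route as the paper: use \cite[Theorem~3.9(ii)]{LZ17} for the canonical identification $f^\ast(D_\mathrm{dR}(\mathbb{P}),\nabla)\cong(D_\mathrm{dR}(f^\ast\mathbb{P}),\nabla)$, compare the two flat trivializations $f^\ast\alpha_x$ and $\alpha_y$ on $V$, observe they agree at $y$ and hence (by connectedness and uniqueness of horizontal trivializations) on all of $V$, and finish with the functoriality of $\mathrm{DRT}$ from \Cref{Prop:DRTFunctorial}. The paper carries this out directly for $G$-torsors without your preliminary Tannakian reduction to $\mathrm{GL}_n$, since the uniqueness of flat trivializations is already available for $G$-torsors via \Cref{Lem:TorsorConnection}; your reduction is harmless but unnecessary.
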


\begin{proof}
  We have $f^\ast(D_\mathrm{dR}(\mathbb{P}), \nabla) \cong (D_\mathrm{dR}(f^\ast
  \mathbb{P}), \nabla)$ by \cite[Theorem~3.9(ii)]{LZ17}. We now consider the
  diagram
  \[ \begin{tikzcd}
    f^\ast(D_\mathrm{dR}(\mathbb{P}) \vert_U, \nabla) \vert_V \arrow{r}{\cong}
    \arrow{d}{f^\ast \alpha_x}[']{\cong} & (D_\mathrm{dR}(f^\ast \mathbb{P})
    \vert_V, \nabla) \arrow{d}{\alpha_y}[']{\cong} \\
    f^\ast(D_\mathrm{dR}(\mathbb{P})_x \times_K U, d) \vert_V \arrow{r}{\cong} &
    (D_\mathrm{dR}(f^\ast \mathbb{P})_y \times_K V, d).
  \end{tikzcd} \]
  This is a diagram of isomorphisms of $G$-torsors with integrable connection
  that commutes over $y$. Because $V$ is connected, it also commutes over $V$.
  The statement now follows from the functoriality of $\mathrm{DRT}$,
  \Cref{Prop:DRTFunctorial}.
\end{proof}

\paragraph \label{Par:ThetaEFinPt}
Throughout the rest of this section, we assume that we are in the setting of
\Cref{Def:GlobalUniformization} and that we are given a global
uniformization map
\[
  \Theta \colon \mathcal{R} = U \times_{\bung\m} \grg\m \to U
\]
for $U$. We can restrict it to a map
\[
  \Theta^E \colon \mathcal{R}^E = U \times_{\bung\m \times \Spd E} \grg\m \to U,
\]
of v-sheaves over $\Spd E$. As in \Cref{Par:RFinitePt}, for a neat open
compact subgroup $K \subseteq \mathsf{G}(\af)$, a finite extension $F/E$, and a
classical point $y \in U_K(F)$, we may descend $\Theta^E$ to $\mathcal{R}^E_K =
\mathcal{R}_K \times_{\Spd E \times \Spd E} \Spd E$ and restrict to
$\mathcal{R}^E_{K,y} = \mathcal{R}_{K,y} \times_{\Spd F \times \Spd E} \Spd F$
to obtain a morphism
\[
  \Theta^E_{K,y} \colon \mathcal{R}^E_{K,y} \to U_{K,F}
\]
of v-sheaves over $\Spd F$. As in the discussion of
\Cref{Par:ThetaFiniteAxiom}, the diagram
\[ \begin{tikzcd}[column sep=large]
  \Spd F \arrow{r}{y} \arrow{d}{y_0} & U_{K,F}
  \arrow{d}{\pi_\mathrm{HT}^\mathrm{c}} \\ \mathcal{R}^E_{K,y}
  \arrow{ru}{\Theta^E_{K,y}} \arrow{r}{\pi_\mathrm{HT}^\mathrm{c}} & \lbrack
  \underline{K_p^\mathrm{c}} \backslash \grg\c\m[F] \rbrack
\end{tikzcd} \]
naturally commutes. Here $y_0$ is the natural base point induced by $(\id,
\pi_\mathrm{HT}) \colon U_K \to \mathcal{R}^E_K$, and the bottom map
$\pi_\mathrm{HT}^\mathrm{c}$ is the composition
\begin{align*}
  \mathcal{R}^E_{K,y} \to \mathcal{Q}^E_{K,y} &= \Spd F \times_{\bung\c\m \times
  \Spd E} [\underline{K_p^\mathrm{c}} \backslash \grg\c\m[E]] \\ &=
  \shloc\c{\tau_y^\mathrm{c}}[K_p^\mathrm{c}] \xrightarrow{\mathrm{pr}_ 2}
  [\underline{K_p^\mathrm{c}} \backslash \grg\c\m[F]],
\end{align*}
where $\tau_y^\mathrm{c} \colon \Spd F \to \bung\c$ is the natural map
\[
  \tau \colon \sh\d[K] \xrightarrow{\pi_\mathrm{HT}} [\underline{K_p} \backslash
  \grg\m] \xrightarrow{\mathrm{BL}} \bung
\]
restricted to $y$ and mapped under $\bung \to \bung\c$.

\begin{remark} \label{Rem:ShtukaAutomatic}
  Using \Cref{Prop:LocShiDdR}, we see that commutativity of the lower triangle
  is actually automatic. Indeed, as $\Theta^E$ is $\mathsf{G}(\af) \times
  G(\qp)$-equivariant, the natural $\underline{K_p^\mathrm{c}}$-torsor on
  $U_{K,F}$ pulls back to the $\underline{K_p^\mathrm{c}}$-torsor
  \[
    \underline{K_p^\mathrm{c}} \times^{\underline{(K \times K_p) / (\id \times
    \Delta)(K \cap \mathsf{Z}(\mathbb{Q})^-)}} \mathcal{R}^E \to \mathcal{R}_K^E
  \]
  restricted to $\mathcal{R}_{K,y}^E$, which agrees with the pullback of the
  natural $\underline{K_p^\mathrm{c}}$-torsor on $\mathcal{Q}_{K,y}^E =
  \shloc\c{\tau_y^\mathrm{c}}[K_p^\mathrm{c}]$. Then functoriality of the
  Hodge--Tate period map, \Cref{Prop:DRTFunctorial}, implies the
  desired commutativity.
\end{remark}

\paragraph \label{Par:ThetaERepresent}
By \Cref{Cor:LocShiRepresent}, the v-sheaf $\mathcal{Q}^E_{K,y}
= \shloc\c{\tau_y^\mathrm{c}}[K_p^\mathrm{c}]$ is representable by a smooth
rigid analytic variety over $F$. Because $\mathcal{R}^E_{K,y} \to
\mathcal{Q}^E_{K,y}$ is \'{e}tale as shown in \Cref{Par:RFinite}, the
v-sheaf $\mathcal{R}^E_{K,y}$ is also representable by a smooth rigid analytic
variety over $F$ by \cite[Lemma~15.6]{Sch17p}. It follows from
\cite[Proposition~10.2.3]{SW20} that $\Theta^E_{K,y} \colon \mathcal{R}^E_{K,y}
\to U_{K,F}$ corresponds uniquely to a map between smooth rigid analytic
varieties over $F$.

\paragraph \label{Par:PsiForShimura} Recall from \Cref{Par:HodgeTatePeriod} that
the composition
\[
  \tau^\mathrm{c} \colon \sh\d[K] \xrightarrow{\tau} \bung \to \bung\c
\]
agrees with the map
\[
  \sh\d[K] \xrightarrow{\mathrm{DRT}(\xi_K)} [\underline{K_p^\mathrm{c}}
  \backslash \grg\c] \xrightarrow{\mathrm{BL}} \bung\c
\]
coming from the de Rham $\underline{G^\mathrm{c}(\qp)}$-torsor $\xi_K$ on
$\sh[K]$. As in \Cref{Par:PsiMap}, we find a connected open neighborhood 
\[
  y \in V_{K,y} \subseteq U_{K,F} \subseteq \sh\ad[K][F]
\]
together with a map
\[
  \Psi_{K,y} \colon V_{K,y} \to [G^\mathrm{c} \backslash (D_\mathrm{dR}(\xi_K)_y
  \times_E \flgmuc)].
\]

\begin{proposition} \label{Prop:LocalUniformization}
  Let $W_{K,y} \subseteq \mathcal{R}^E_{K,y}$ be a connected open neighborhood
  of the base point $y_0$, small enough so that $\Theta^E_{K,y}$ maps $W_{K,y}$
  into $V_{K,y}$. Then the diagram
  \[ \begin{tikzcd} \label{Eq:LocalUniformization}
    W_{K,y} \arrow{r}{\Theta^E_{K,y}} \arrow{d} & V_{K,y} \arrow{d}{\Psi_{K,y}}
    \\ \mathcal{Q}^E_{K,y} = \shloc\c{\tau_y^\mathrm{c}}[K_p^\mathrm{c}]
    \arrow{r}{\pi_\mathrm{GM}} & \lbrack G^\mathrm{c} \backslash
    (D_\mathrm{dR}(\xi_K)_y \times_E \flgmuc) \rbrack
  \end{tikzcd} \tag{$\clubsuit$} \]
  commutes, where the bottom map $\pi_\mathrm{GM}$ is constructed in
  \Cref{Par:GMPeriod}. Moreover, upon shrinking $V_{K,y}$ and $W_{K,y}$,
  we may arrange $\Theta^E_{K,y} \colon W_{K,y} \to V_{K,y}$ to be an
  isomorphism and $\Psi_{K,y}$ to be an open embedding.
\end{proposition}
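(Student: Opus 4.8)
The plan is to prove commutativity of $(\clubsuit)$ first, and then to deduce the isomorphism and open-embedding statements from \'{e}taleness together with a dimension count. By \Cref{Par:ThetaERepresent} every space in $(\clubsuit)$ is a smooth rigid analytic variety over $F$ and all arrows are morphisms of such; since the target $[G^\mathrm{c} \backslash (D_\mathrm{dR}(\xi_K)_y \times_E \flgmuc)]$ is a twisted form of a flag variety, hence separated, and $W_{K,y}$ is connected, it suffices to identify the two composites $W_{K,y} \rightrightarrows [G^\mathrm{c} \backslash (D_\mathrm{dR}(\xi_K)_y \times_E \flgmuc)]$ as the \emph{same} de Rham modification datum, transported to the fixed twisted flag variety along two \emph{a priori} different flat trivializations of $D_\mathrm{dR}(\xi_K)$, and then to check that those trivializations coincide.

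First I would recall how the two composites are built. The map $\Psi_{K,y}$ is, by \Cref{Par:PsiForShimura} and \Cref{Par:PsiMap}, the section $s$ recording the de Rham modification of the $\underline{G^\mathrm{c}(\qp)}$-torsor $\xi_K$ --- equivalently $\pi_\mathrm{HT}^\mathrm{c}$ together with the Bia\l{}ynicki-Birula identification --- transported by the flat trivialization $\alpha_y \colon (D_\mathrm{dR}(\xi_K), \nabla)|_{V_{K,y}} \xrightarrow{\cong} (D_\mathrm{dR}(\xi_K)_y \times_F V_{K,y}, d)$ normalized to be the identity over $y$. On the other side, by \Cref{Rem:ShtukaAutomatic} (which invokes \Cref{Prop:LocShiDdR}) the pullback $(\Theta^E_{K,y})^\ast \xi_K$ is canonically isomorphic, as a de Rham $\underline{K_p^\mathrm{c}}$-torsor on $\mathcal{R}^E_{K,y}$, to the pullback of the tautological $\underline{K_p^\mathrm{c}}$-torsor on $\mathcal{Q}^E_{K,y} = \shloc\c{\tau_y^\mathrm{c}}[K_p^\mathrm{c}]$; and by \Cref{Prop:LocShiDdR} the tautological torsor has $D_\mathrm{dR}$ canonically the \emph{constant} torsor $D_\mathrm{dR}(\tau_y^\mathrm{c}) = D_\mathrm{dR}(\xi_K)_y$ and $\mathrm{DRT}$ equal to the projection $\mathcal{Q}^E_{K,y} \to [\underline{K_p^\mathrm{c}} \backslash \grg\c\m[F]]$. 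Unwinding the definition of $\pi_\mathrm{GM}$ from \Cref{Par:GMPeriod}, the composite $\pi_\mathrm{GM} \circ (W_{K,y} \to \mathcal{Q}^E_{K,y})$ is therefore this same de Rham modification datum --- using \Cref{Rem:ShtukaAutomatic}, which already gives $\pi_\mathrm{HT}^\mathrm{c} \circ \Theta^E_{K,y} = \pi_\mathrm{HT}^\mathrm{c}$ on $\mathcal{R}^E_{K,y}$, and \Cref{Prop:DRTFunctorial} for the torsor pullback --- transported by the canonical constant identification of $D_\mathrm{dR}$. Both of these trivializations of $(\Theta^E_{K,y})^\ast D_\mathrm{dR}(\xi_K) = D_\mathrm{dR}((\Theta^E_{K,y})^\ast \xi_K)$ over $W_{K,y}$ are flat (one is the pullback of $\alpha_y$, the other comes from the flat isomorphism of \Cref{Prop:LocShiDdR}) and both restrict at the base point $y_0$ to the identity of $D_\mathrm{dR}(\xi_K)_y$, using $\Theta^E_{K,y}(y_0) = y$; since $W_{K,y}$ is connected, a flat trivialization is pinned down by its value at one point, so the two agree and $(\clubsuit)$ commutes.

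For the remaining assertions I would argue formally. The left vertical arrow $W_{K,y} \hookrightarrow \mathcal{R}^E_{K,y} \to \mathcal{Q}^E_{K,y}$ is \'{e}tale (\Cref{Par:RFinite}), and $\pi_\mathrm{GM}$ is \'{e}tale (\Cref{Par:GMPeriod}), so by commutativity $\Psi_{K,y} \circ \Theta^E_{K,y}$ is \'{e}tale; in particular its tangent map at $y_0$ is an isomorphism, so $d(\Theta^E_{K,y})_{y_0}$ is injective. On the other hand $\mathcal{R}^E_{K,y}$ is smooth over $F$ of dimension $d = \langle 2\rho, \mu \rangle$ --- it is \'{e}tale over $\mathcal{Q}^E_{K,y}$, which by \Cref{Par:GMPeriod} is \'{e}tale over a twisted form of $\flgmuc$, and $\dim \flgmuc = \langle 2\rho, \mu^\mathrm{c} \rangle = d$ as $\{\mu\}$ is minuscule and $G, G^\mathrm{c}$ have the same roots --- while $U_{K,F} \subseteq \sh\ad[K][F]$ is smooth over $F$ of the same dimension $d$. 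Hence $d(\Theta^E_{K,y})_{y_0}$ is an isomorphism, so $\Theta^E_{K,y}$ is \'{e}tale at $y_0$; shrinking $W_{K,y}$ to a connected open on which $\Theta^E_{K,y}$ is injective and replacing $V_{K,y}$ by its image makes $\Theta^E_{K,y} \colon W_{K,y} \to V_{K,y}$ an isomorphism. Finally $d(\Psi_{K,y})_y = d(\Psi_{K,y} \circ \Theta^E_{K,y})_{y_0} \circ (d(\Theta^E_{K,y})_{y_0})^{-1}$ is an isomorphism, so $\Psi_{K,y}$ is \'{e}tale at $y$; shrinking $V_{K,y}$ to a connected open on which it is injective, and replacing $W_{K,y}$ by the preimage, turns $\Psi_{K,y}$ into an open embedding, as desired.

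The step I expect to require the most care is the one in the second paragraph: faithfully matching the construction of $\pi_\mathrm{GM}$ and of $\Psi_{K,y}$ with the $\mathrm{DRT}$-maps of de Rham torsors, and checking that the comparison of \Cref{Rem:ShtukaAutomatic}, the identification of $D_\mathrm{dR}$ of the tautological torsor with a constant torsor from \Cref{Prop:LocShiDdR}, and the base-point normalization of $\alpha_y$ are mutually compatible at $y_0$. Everything downstream of that identification is formal.
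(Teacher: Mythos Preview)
Your proof is correct and follows essentially the same approach as the paper: the commutativity step is exactly the paper's argument---identify $\pi_\mathrm{GM}$ as the $\Psi$-map for the canonical constant trivialization of $D_\mathrm{dR}(\eta_K)$ from \Cref{Prop:LocShiDdR}(2), then reduce to the torsor isomorphism $(\Theta^E_{K,y})^\ast\xi_K \cong (W_{K,y}\to\mathcal{Q}^E_{K,y})^\ast\eta_K$ supplied by \Cref{Rem:ShtukaAutomatic}---except that you inline the content of \Cref{Prop:PsiFunctoriality} (uniqueness of a flat trivialization on a connected base with prescribed value at a point) rather than citing it. The \'etaleness-plus-dimension-count half, and the passage from \'etale at a rational point to a local isomorphism, are identical to the paper's.
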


\begin{proof}
  We first verify the commutativity of the diagram. Let $\eta_K /
  \mathcal{Q}_{K,y}^E$ be the pro-\'{e}tale $\underline{K_p^\mathrm{c}}$-torsor
  corresponding to $\shloc\c{\tau_y^\mathrm{c}} \to
  \shloc\c{\tau_y^\mathrm{c}}[K_p^\mathrm{c}]$. From
  \Cref{Prop:LocShiDdR}(2) we see that there is a canonical
  trivialization
  \[
    (D_\mathrm{dR}(\eta_K), \nabla) \cong (D_\mathrm{dR}(\xi_K)_y \times_F
    \mathcal{Q}_{K,y}^E, d).
  \]
  Using this trivialization, we may carry out the construction in
  \Cref{Par:PsiMap} to obtain a map
  \[
    \Psi \colon \mathcal{Q}_{K,y}^E =
    \shloc\c{\tau_y^\mathrm{c}}[K_p^\mathrm{c}] \to [G^\mathrm{c} \backslash
    (D_\mathrm{dR}(\xi_K)_y \times_E \flgmuc)].
  \]
  On the other hand, by \Cref{Prop:LocShiDdR} and the construction in
  \Cref{Par:PsiSetup}, this is identified with the Grothendieck--Messing
  period map $\pi_\mathrm{GM}$.

  By \Cref{Prop:PsiFunctoriality}, it suffices to show that there is
  an isomorphism of $\underline{G^\mathrm{c}(\qp)}$-torsors
  \[
    (\mathcal{R}_{K,y}^E \to U_{K,F})^\ast \xi_K \cong (\mathcal{R}_{K,y}^E \to
    \mathcal{Q}_{K,y}^E)^\ast \eta_K.
  \]
  Because $\Theta^E$ is assumed to be
  $G(\qp)$-equivariant, both torsors are naturally identified with the pushout
  of $\mathcal{R}^E \to \mathcal{R}_K^E$ along
  \[
    \underline{\frac{K \times K_p}{(\id \times \Delta)(K \cap
    \mathsf{Z}(\mathbb{Q})^-)}} \to \underline{K_p^\mathrm{c}},
  \]
  restricted to $W_{K,y} \subseteq \mathcal{R}_{K,y}^E \to \mathcal{R}_K^E$,
  see \Cref{Rem:ShtukaAutomatic}.

  We now verify that by shrinking $V_{K,y}$ and $W_{K,y}$, we may arrange
  $\Theta^E_{K,y} \colon W_{K,y} \cong V_{K,y}$ to be an isomorphism and
  $\Psi_{K,y} \colon V_{K,y} \to \shloc\c{\tau_y^\mathrm{c}}[K_p^\mathrm{c}]$ to
  be an open embedding. Since all objects are smooth rigid analytic varieties
  over $F$, we may consider the map on tangent spaces
  \[
    T_{y_0} \mathcal{R}^E_{K,y} \xrightarrow{d\Theta^E_{K,y}} T_y V_{K,y}
    \xrightarrow{d\Psi_{K,y}} T_{y_0} [G^\mathrm{c} \backslash
    (D_\mathrm{dR}(\xi_K)_y \times \flgmuc)].
  \]
  Because $\mathcal{R}^E_{K,y} \to \mathcal{Q}^E_{K,y} \cong
  \shloc\c{\tau_y^\mathrm{c}}[K_p^\mathrm{c}]$ is \'{e}tale, see
  \Cref{Par:RFinite}, and $\pi_\mathrm{GM} \colon \mathcal{Q}^E_{K,y} \to
  [G^\mathrm{c} \backslash (D_\mathrm{dR}(\xi_K)_y \times \flgmuc)]$ is also
  \'{e}tale, see \Cref{Par:GMPeriod}, this composition is an isomorphism.
  Moreover, we have $\dim V_{K,y} = \dim \sh[K] = \dim
  \shloc\c{\tau_y^\mathrm{c}}[K_p]$ and hence both $d\Theta^E_{K,y}$ and
  $d\Psi_{K,y}$ are isomorphisms. Then \cite[Proposition~1.6.9(iii)]{Hub96}
  implies that $\Theta^E_{K,y}$ is \'{e}tale at $y_0$ and $\Psi_{K,y}$ is
  \'{e}tale at $y$. As in the proof of \cite[Proposition~9.6]{Shi22}, \'{e}tale
  morphisms at rational points are local isomorphisms. Therefore we may shrink
  $V_{K,y}$ and $W_{K,y}$ so that both $\Theta^E_{K,y} \colon W_{K,y} \to
  V_{K,y}$ is an isomorphism and $\Psi_{K,y} \colon V_{K,y} \to
  \shloc\c{\tau_y^\mathrm{c}}[K_p^\mathrm{c}]$ is an isomorphism onto an open
  subset.
\end{proof}

\begin{remark}
  It is unconditionally (i.e., without assuming the existence of $\Theta$)
  proven in \cite[Corollary~5.3.3]{DLLZ23} that the map $\Psi_{K,y}$ is
  \'{e}tale. This also guarantees the local existence of the map
  $\Theta_{K,y}^E$.
\end{remark}

We record some topological facts that we need to prove the functoriality result.

\begin{lemma} \label{Lem:DensityClassical}
  Let $U \subseteq \sh\d$ be an open subsheaf, and let 
  \[
    \mathcal{R}^E = U \times_{\bung\m \times \Spd E} \grg\m.
  \]
  Then the subset
  \[
    S = \bigcup_f \im(\lvert \Spd \cp \times_{\bung\m \times \Spd E} \grg\m
    \rvert \xrightarrow{f \times \id} \lvert \mathcal{R}^E \rvert) \subseteq
    \lvert \mathcal{R}^E \rvert
  \]
  is dense, where $f$ ranges over classical $\cp$-points of $U$.
\end{lemma}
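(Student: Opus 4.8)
The goal is to show that the union $S$ of images of classical-point fibers is dense in $|\mathcal{R}^E|$. The natural strategy is to reduce to finite level, where $\mathcal{R}^E_K$ is representable by a smooth rigid analytic variety over $E$, and then use that classical points are dense in smooth rigid analytic varieties. More precisely, I would first observe that $\mathcal{R}^E = \varprojlim_K \mathcal{R}^E_K$, and that the transition maps, as well as the projection $\mathcal{R}^E \to \mathcal{R}^E_K$, are (pro-)\'etale and in particular have open and surjective underlying maps of topological spaces (cf.\ \cite[Proposition~10.11]{Sch17p}). Hence $|\mathcal{R}^E| \to |\mathcal{R}^E_K|$ is a continuous open surjection, and density of a subset of $|\mathcal{R}^E|$ can be checked after projecting to each $|\mathcal{R}^E_K|$: if $T_K \subseteq |\mathcal{R}^E_K|$ is the image of $S$ and each $T_K$ is dense, then $S$ is dense in the limit.

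\textbf{The finite-level statement.} So the crux is: for each neat compact open $K \subseteq \mathsf{G}(\af)$, the set of points of $|\mathcal{R}^E_K|$ lying over a classical point $y \in U_K$ is dense. Now $\mathcal{R}^E_K \to U_K$ factors as $\mathcal{R}^E_K \to \mathcal{Q}^E_K \to U_K$, where $\mathcal{R}^E_K \to \mathcal{Q}^E_K$ is \'etale (hence open surjective on topological spaces, by \Cref{Par:RFinite}) and $\mathcal{Q}^E_K \to U_K$ is the local-Shimura-variety projection. By \Cref{Cor:LocShiRepresent} (applied to the de Rham torsor $\xi_K$ on $\sh[K]$), $\mathcal{Q}^E_K$ is representable by a smooth rigid analytic variety over $E$, and similarly $\mathcal{R}^E_K$ is representable by a smooth rigid analytic variety over $E$ by \cite[Lemma~15.6]{Sch17p}; moreover $\mathcal{R}^E_K \to U_K$ is a smooth map of smooth rigid analytic varieties. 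A smooth map of rigid analytic varieties has open image on points and is surjective onto its image with the property that every classical point downstairs has a classical point upstairs above it (one can find an \'etale-local section through classical points, as in the argument of \cite[Proposition~9.6]{Shi22}, or just use that the fibers are smooth positive-dimensional rigid spaces, which always have classical points). Since classical points of $U_K$ are dense in $|U_K|$ (they are dense in $|\sh\ad[K]|$ as $\sh\ad[K]$ is a smooth rigid analytic variety, being Zariski dense in the algebraic Shimura variety $\sh[K]$, and $U_K$ is open), and the smooth map $\mathcal{R}^E_K \to U_K$ is open, the preimages of classical points are dense in $|\mathcal{R}^E_K|$.

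\textbf{Matching with the definition of $S$.} It remains to identify the preimage in $|\mathcal{R}^E_K|$ of a classical point $y \in U_K(\cp)$ with the image of $|\Spd\cp \times_{\bung\m \times \Spd E} \grg\m|$ under $f \times \id$ for a suitable classical $\cp$-point $f$ of $U$. Here I would invoke \Cref{Lem:RFiniteGeom}: lifting $y$ to a classical point $x \in \sh(\cp)$ (the lift exists since $\sh \to \sh[K]$ is a pro-\'etale torsor), the fiber $\mathcal{R}_{K,\bar y}$ is identified with $\Spd\cp \times_{\bung\m} [\underline{K_p}\backslash\grg\m]$, and intersecting with $\Spd E \hookrightarrow \Spd E \times \Spd E$ gives $\mathcal{R}^E_{K,\bar y} \cong \Spd\cp \times_{\bung\m \times \Spd E} [\underline{K_p}\backslash\grg\m]$, which receives $\Spd\cp \times_{\bung\m \times \Spd E} \grg\m$ surjectively (as $\grg\m \to [\underline{K_p}\backslash\grg\m]$ is a $\underline{K_p}$-torsor, hence surjective on points). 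Chasing this through, the image of $|\Spd\cp \times_{\bung\m\times\Spd E}\grg\m|$ in $|\mathcal{R}^E|$ maps onto the fiber over $y$ in $|\mathcal{R}^E_K|$; since these fibers over classical $y$ are dense in $|\mathcal{R}^E_K|$ for all $K$, we conclude $S$ is dense in $|\mathcal{R}^E|$.

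\textbf{Main obstacle.} The one point requiring genuine care is the passage through the limit: one must check that density in $|\mathcal{R}^E|$ really reduces to density of the projections in each $|\mathcal{R}^E_K|$, which uses that $|\mathcal{R}^E| = \varprojlim_K |\mathcal{R}^E_K|$ as topological spaces (true because the v-sheaf is a cofiltered limit along qcqs transition maps, cf.\ \cite[Lemma~11.22]{Sch17p}) together with openness of the transition maps so that a basic open of $|\mathcal{R}^E|$ has open image in some finite level. Everything else is a routine assembly of the representability results of \Cref{Sec:LocalShimura} with the elementary fact that classical points are dense in smooth rigid analytic varieties and survive along smooth morphisms.
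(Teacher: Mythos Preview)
Your argument is correct and essentially complete, but it takes a genuinely different route from the paper's proof.

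The paper works entirely at infinite level. It establishes that the projection $\mathrm{pr}_1 \colon \mathcal{R}^E \to U$ is open by invoking the $\ell$-cohomological smoothness of $[\underline{G(\qp)} \backslash \grg\m] \to \bung\m \times_{\Spd \fp} \Spd E$ from \cite[Theorem~IV.1.19]{FS21p} together with \cite[Proposition~23.11]{Sch17p}, and then intermediates through the locally spatial diamond $Y = [\underline{G(\qp)} \backslash \mathcal{R}^E]$ and the tower $U \times_{\bung\m \times \Spd E} [\underline{K_i} \backslash \grg\m]$ (finite level only in the $\grg\m$-factor) to see that $\lvert \mathcal{R}^E \rvert \to \lvert U \rvert$ is open. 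Given a nonempty open $V \subseteq \lvert \mathcal{R}^E \rvert$, it then projects to $U$, descends to some $U_K$, picks a classical point there, lifts it to $U$, and uses \cite[Proposition~12.10]{Sch17p} to hit $V$.

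Your approach instead passes fully to finite level $\mathcal{R}^E_K$ and uses \Cref{Cor:LocShiRepresent} to represent $\mathcal{R}^E_K$ by a smooth rigid analytic variety with a smooth (hence open) map to $U_K$, after which density of preimages of classical points is immediate from classical rigid geometry. This trades the $\ell$-cohomological smoothness input for the representability results of \Cref{Sec:LocalShimura}, which the paper has already established but does not invoke here. Your route is arguably more elementary and makes the geometry at finite level transparent; the paper's route is more uniform with the v-sheaf machinery used elsewhere (and is reused in the proof of \Cref{Cor:RFinFactor}). The ``main obstacle'' you flag about the limit is handled exactly as you say: since $\lvert \mathcal{R}^E \rvert = \varprojlim_K \lvert \mathcal{R}^E_K \rvert$ with a cofiltered index set, preimages of opens at finite level form a basis, so density of $S$ follows from density of its image $T_K$ at every level.
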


\begin{proof}
  Fix a prime $\ell \neq p$. The proof of \cite[Theorem~IV.1.19]{FS21p}
  shows that the map
  \[
    [\underline{G(\qp)} \backslash \grg\m] \to \bung\m \times_{\Spd \fp} \Spd E
  \]
  is separated and $\ell$-cohomologically smooth. We now apply
  \cite[Proposition~23.11]{Sch17p} to conclude that
  \[
    Y = [(\{1\} \times \underline{G(\qp)}) \backslash \mathcal{R}^E] = U
    \times_{\bung\m \times \Spd E} [\underline{G(\qp)} \backslash \grg\m] \to U,
  \]
  is open.

  On the other hand, since $U$ is open in a limit of rigid analytic spaces,
  \cite[Lemma~11.22]{Sch17p} implies that $U$ is a locally spatial diamond. As
  $[\underline{G(\qp)} \backslash \grg\m] \to \bung\m \times \Spd E$ is
  $\ell$-cohomologically smooth, it is also representable by locally spatial
  diamonds, and hence $Y$ is a locally spatial diamond. Upon choosing a cofinal
  sequence of open compact subgroups $K_i \subseteq G(\qp)$, we observe that $U
  \times_{\bung\m \times \Spd E} [\underline{K_i} \backslash \grg\m]$ is
  \'{e}tale over $Y$ by \cite[Lemma~10.13]{Sch17p}, and hence is a locally
  spatial diamond. Applying \cite[Lemma~11.22]{Sch17p} again, we obtain a
  homeomorphism
  \begin{align*}
    \lvert \mathcal{R}^E \rvert &= \Bigl\lvert \varprojlim_i \bigl( U
    \times_{\bung\m \times \Spd E} [\underline{K_i} \backslash \grg\m] \bigr)
    \Bigr\rvert \\ &\cong \varprojlim_i \bigl\lvert U \times_{\bung\m \times
    \Spd E} [\underline{K_i} \backslash \grg\m] \bigr\rvert.
  \end{align*}
  As \'{e}tale maps are open, each map $\lvert U \times_{\bung\m \times \Spd E}
  [\underline{K_i} \backslash \grg\m] \rvert \to \lvert Y \rvert$ is open. Since
  the projection maps $\lvert \mathcal{R}^E \rvert \to \lvert U \times_{\bung\m}
  [\underline{K_i} \backslash \grg\m] \rvert$ are surjective, we conclude that
  $\lvert \mathcal{R}^E \rvert \to \lvert Y \rvert$ is open. This shows that the
  map
  \[
    \mathrm{pr}_1 \colon \mathcal{R}^E = U \times_{\bung\m \times \Spd E} \grg\m
    \to U
  \]
  is open.

  We finally show that $S \subseteq \lvert \mathcal{R}^E \rvert$ is dense. Let
  $V \subseteq \lvert \mathcal{R}^E \rvert$ be any nonempty open. As $\lvert
  \mathcal{R}^E \rvert \to \lvert U \rvert$ is open, the image $W =
  \mathrm{pr}_1(V) \subseteq U$ is nonempty open. Because $\sh\d = \varprojlim_K
  \sh\d[K]$, by \cite[Lemma~11.22]{Sch17p} there exists a level $K$ and a
  nonempty open $W_K \subseteq \sh\d[K]$ for which $W$ contains the inverse
  image of $W_K$. Since $\lvert \sh\d[K] \rvert \cong \lvert \sh\ad[K] \rvert$
  by \cite[Lemma~15.6]{Sch17p} and $\sh\ad[K]$ is a smooth rigid analytic
  variety over $E$, there exists a classical $\cp$-point in $W_K$, and we may
  lift it to a classical $\cp$-point $f \colon \Spd \cp \to W \subseteq U$. By
  \cite[Proposition~12.10]{Sch17p}, the map
  \[
    \lvert \Spd \cp \times_{\bung\m} \grg\m \rvert \to \lvert \mathrm{pr}_1
    \rvert^{-1}(\im \lvert f \rvert) \subseteq \lvert \mathcal{R}^E \rvert
  \]
  is surjective. Since $\lvert \mathrm{pr}_1 \rvert^{-1}(\im \lvert f \rvert)$
  intersects nontrivially with $V$, this gives an point in $S \cap V$.
\end{proof}

\begin{lemma} \label{Lem:TorsorOverConnected}
  Let $X / \Spa \zp$ be a strongly Noetherian connected analytic adic space, let
  $G$ be a locally profinite group, and let $\mathbb{P}/X$ be a pro-\'{e}tale
  $\underline{G}$-torsor. Then $G$ acts transitively on
  \[
    \varprojlim_{H \to \{1\}} \pi_0([\underline{H} \backslash \mathbb{P}]),
  \]
  where the limit is over compact open subgroups $H \subseteq G$.
\end{lemma}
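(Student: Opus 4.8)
The plan is to reduce the statement to a connectedness argument on a tower of finite étale covers of $X$. First I would observe that each $[\underline{H}\backslash \mathbb{P}]$ is representable by an analytic adic space $\mathbb{P}_H \to X$ étale over $X$, with $\mathbb{P}_{H'} \to \mathbb{P}_H$ finite étale for $H' \subseteq H$, and $\mathbb{P} = \varprojlim_{H} \mathbb{P}_H$ as pro-étale objects (this is the standard description of a pro-étale $\underline{G}$-torsor, cf. the discussion in the proof of \Cref{Prop:Torsors}). Since $X$ is strongly Noetherian and analytic, the $\mathbb{P}_H$ are again strongly Noetherian analytic adic spaces, so the theory of connected components and the bijection $\pi_0(\mathbb{P}_H) = \pi_0(|\mathbb{P}_H|)$ behaves well; in particular $\pi_0(\mathbb{P}_H)$ is the set of connected components in the usual sense and $\varprojlim_H \pi_0(\mathbb{P}_H)$ is the (pro-)set of ``connected components of the tower.'' The group $G$ acts on the tower $\{\mathbb{P}_H\}_H$ compatibly, inducing the claimed action on the limit, and I want to show this action is transitive.

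The key step is to fix a component $c_0$ in the limit, represented by a compatible system of connected components $c_{0,H} \subseteq \mathbb{P}_H$, and show that any other component $c$ lies in the $G$-orbit of $c_0$. For this I would work level by level: for each $H$, the finite group $G/H$ (when $H$ is normal, which I may assume by passing to a cofinal system of normal compact open subgroups) acts on the finite set $\pi_0(\mathbb{P}_H)$, and transitivity of the $G$-action on $\varprojlim_H \pi_0(\mathbb{P}_H)$ is equivalent to transitivity of $G/H$ on $\pi_0(\mathbb{P}_H/\mathbb{P}_{\text{(deepest common level)}})$ compatibly — more precisely, it follows from the fact that for each $H$, $G$ acts transitively on $\pi_0(\mathbb{P}_H)$. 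The latter is where connectedness of $X$ enters: $\mathbb{P}_H \to X$ is a finite étale $G/H$-torsor (for $H$ normal of finite index), and a finite étale torsor under a finite group over a connected base has its components permuted transitively by the group, because the stabilizer of a component is an open subgroup whose corresponding subcover is a connected component surjecting onto $X$, and the total space is the disjoint union of the translates. Then a Mittag-Leffler / inverse-limit-of-nonempty-finite-sets argument (the fibers of $\pi_0(\mathbb{P}_{H'}) \to \pi_0(\mathbb{P}_H)$ over a point in the $G$-orbit are nonempty and the transition maps are $G$-equivariant) upgrades level-wise transitivity to transitivity on the limit: given $c$, at each level the set of $g \in G$ (modulo the level-$H$ stabilizer of $c_{0,H}$) carrying $c_{0,H}$ to $c_H$ is a nonempty finite coset space, these form an inverse system of nonempty finite sets, hence the limit is nonempty, producing a single $g$ working at all levels.

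The main obstacle I anticipate is the careful bookkeeping in the final inverse-limit step: the stabilizers of components at different levels need not be cofinal among all compact open subgroups in an obvious way, so I must be a little careful that ``the set of group elements carrying $c_0$ to $c$ at level $H$'' genuinely forms an inverse system with surjective-enough (or at least nonempty-fiber) transition maps. This is handled by noting that for $H' \subseteq H$ normal, the map $\pi_0(\mathbb{P}_{H'}) \to \pi_0(\mathbb{P}_H)$ is $G$-equivariant and surjective (again by connectedness of $X$, every component of $\mathbb{P}_H$ is dominated by one of $\mathbb{P}_{H'}$), so the relevant transition maps of coset spaces are surjective, and an inverse limit of a surjective system of nonempty finite sets is nonempty. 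The topological input ($\pi_0$ of a strongly Noetherian analytic adic space is well-behaved, finite for finite étale covers of quasi-compact connected spaces, and commutes with the relevant cofiltered limits) I would cite from the general theory; everything else is formal.
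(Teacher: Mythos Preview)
Your overall strategy---prove a level-wise transitivity statement and then pass to the inverse limit via a compactness argument---matches the paper's. The gap is in the level-wise step. You write that you may pass to a cofinal system of \emph{normal} compact open subgroups $H \subseteq G$ and then treat $\mathbb{P}_H \to X$ as a finite \'etale $G/H$-torsor. But $G$ is only locally profinite, not profinite: for the intended application $G = G(\qp)$ with $G$ reductive (e.g.\ $\mathrm{GL}_n(\qp)$, $n \ge 2$) there are \emph{no} compact open normal subgroups at all, and for any compact open $H$ the coset space $H \backslash G$ is infinite discrete. So $\mathbb{P}_H \to X$ is \'etale but not finite \'etale, $\pi_0(\mathbb{P}_H)$ is typically infinite, and your claim that at each level $G/H$ acts on a \emph{finite} set $\pi_0(\mathbb{P}_H)$ collapses. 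Consequently the sets ``$\{g : g \cdot c_{0,H} = c_H\}$'' in your Mittag--Leffler step are not finite either, so ``inverse limit of nonempty finite sets is nonempty'' does not apply as stated.

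The paper repairs both points without assuming $H$ normal. For the level-wise input it shows directly that every connected component $Y \subseteq \mathbb{P}_H$ surjects onto $X$: the key observation is that $\mathbb{P}_H \times_X \mathbb{P}_H \to \mathbb{P}_H$ decomposes, via the double cosets $HgH$, as a disjoint union of \emph{finite} \'etale maps (each $HgH$ contains $H$ with finite index), so the image of $Y \times_X \mathbb{P}_H$ is open and closed, forcing $\operatorname{im}(Y \to X)$ to be open and closed in the connected space $X$. For the limit step, one fixes a geometric point with a lift and sets $S_H = \{Hg : g\tilde{x} \in Y_H\} \subseteq H \backslash G$; surjectivity of $Y_{H'} \to Y_H$ (finite \'etale onto a connected target) gives surjectivity of $S_{H'} \to S_H$, and the fibers are finite because they sit inside $H' \backslash H$. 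A cofiltered limit of nonempty sets along surjective maps with finite fibers is nonempty (Tychonoff), which is the correct replacement for your ``nonempty finite sets'' argument.
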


\begin{remark}
  As $X$ is not necessarily quasi-compact and $G$ is not necessarily profinite,
  it is unclear whether $\pi_0(\mathbb{P}^\lozenge) \twoheadrightarrow
  \varprojlim_{H \to \{1\}} \pi_0([\underline{H} \backslash \mathbb{P}])$ is
  injective. Hence it is also unclear whether $G$ acts transitively on
  $\pi_0(\mathbb{P}^\lozenge)$.
\end{remark}

\begin{proof}
  We first note that connected components of strongly Noetherian analytic adic
  spaces are open, because Noetherian rings have finitely many idempotents, see
  \cite[Lemma~04MF]{Stacks} for example. As finite \'{e}tale morphisms between
  strongly Noetherian adic spaces are both open and closed by
  \cite[Lemma~1.4.5(ii)]{Hub96} and \cite[Proposition~1.7.8]{Hub96}, every
  connected component maps surjectively onto a connected component under a
  finite \'{e}tale map.

  By \cite[Lemma~10.13]{Sch17p} and \cite[Lemma~15.6]{Sch17p}, each
  $\mathbb{P}_H = [\underline{H} \backslash \mathbb{P}]$ corresponds to a
  strongly Noetherian analytic adic space, \'{e}tale over $X$. Therefore each
  connected component of $\mathbb{P}_H$ is open. We now claim that for every
  connected component $Y \subseteq \mathbb{P}_H$, the projection $Y \to X$ is
  surjective. We consider its base change
  \[
    f \colon Y \times_X \mathbb{P}_H \to \mathbb{P}_H
  \]
  along the natural projection $\mathbb{P}_H \to X$. Here $Y \times_X
  \mathbb{P}_H \subseteq \mathbb{P}_H \times_X \mathbb{P}_H$ is an open and
  closed subspace, and moreover the projection $\mathbb{P}_H \times_X
  \mathbb{P}_H \to \mathbb{P}_H$ is a disjoint union of finite \'{e}tale maps
  because each double coset $H g H$ contains $H$ with finite index. Hence $f$ is
  also a disjoint union of finite \'{e}tale maps, and so its image $\im(f)$ is a
  union of connected components. On the other hand, this image $\im(f)$ is the
  preimage of $\im(Y \to X) \subseteq X$ under the projection map $\mathbb{P}_H
  \to X$ by \cite[Proposition~12.10]{Sch17p}. Because the surjective map
  $\mathbb{P}_H \to X$ is a topological quotient map as it is \'{e}tale hence
  open, we conclude that the image $\im(Y \to X)$ is a nonempty open and closed
  subset. Therefore $Y \to X$ is surjective as $X$ is connected.

  Fix a geometric point $x \colon \Spa(C^\sharp, C^{\sharp+}) \to X$ and a lift
  $\tilde{x} \colon \Spd(C^\sharp, C^{\sharp+}) \to \mathbb{P}$. Its orbit
  defines $G$-equivariant map
  \[
    G \to \varprojlim_{H \to \{1\}} \pi_0(\mathbb{P}_H).
  \]
  It remains to see that this map is surjective. Fix a family of connected
  components $Y_H \subseteq \mathbb{P}_H$ with the property that $H^\prime
  \subseteq H$ then $Y_{H^\prime} \to Y_H$. Let $S_H \subseteq H \backslash G$
  be the subset with the property that $g \tilde{x}$ maps to $Y_H \subseteq
  \mathbb{P}_H$ if and only if $g \in H S_H$. Because $Y_H \twoheadrightarrow X$
  is surjective, we see that $S_H$ is nonempty for all $H \subseteq G$. On the
  other hand, because $\mathbb{P}_{H^\prime} \to \mathbb{P}_H$ is finite
  \'{e}tale, the map $Y_{H^\prime} \to Y_H$ is surjective, and hence
  $S_{H^\prime} \to S_H$ is also surjective with finite fibers. Therefore when
  we take the limit, the subset
  \[
    \varprojlim_{H \to \{1\}} S_H \subseteq \varprojlim_{H \to \{1\}} H
    \backslash G
  \]
  is nonempty, where a cofiltered limit of nonempty sets with finite
  surjective transition maps is nonempty by Tychonoff's theorem. Taking $g$ in
  the intersection, we see that $g$ maps to the desired element $(Y_H)_H \in
  \varprojlim_H \pi_0(\mathbb{P}_H)$.
\end{proof}

We are now ready to prove functoriality of global uniformization maps $\Theta^E$
over the diagonal $\Spd E \hookrightarrow \Spd E \times \Spd E$.

\begin{proposition} \label{Prop:SameUntilt}
  Let $\gx \to \gx'$ be a morphism of Shimura data, inducing a morphism of
  Shimura varieties $\sh \to \sh'$. Let $U \subseteq \sh\d$ and $U^\prime
  \subseteq \sh\d'$ be open subsheaves stable under
  $\underline{\mathsf{G}(\af)}$ and $\underline{\mathsf{G}^\prime(\af)}$, where
  $U$ maps to $U^\prime$. Let
  \[
    \Theta \colon U \times_{\bung\m} \grg\m \to U, \quad \Theta^\prime \colon
    U^\prime \times_{\bung\m'} \grg\m' \to U^\prime
  \]
  be global uniformization maps for $U$ and $U^\prime$. Then the diagram
  \[ \begin{tikzcd}
    U \times_{\bung\m \times \Spd E} \grg\m \arrow{r} \arrow{d}{\Theta^E} &
    U^\prime \times_{\bung\m' \times \Spd E^\prime} \grg\m'
    \arrow{d}{\Theta^{\prime E^\prime}} \\ U \arrow{r} & U^\prime
  \end{tikzcd} \]
  commutes.
\end{proposition}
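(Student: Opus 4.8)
The plan is to reduce the assertion to a statement about maps between smooth rigid analytic varieties near classical points, apply the local structure theory of \Cref{Sec:LocalShimura} there, and then propagate the resulting agreement using connectedness and $\underline{G(\qp)}$-equivariance. Write $\alpha, \beta \colon \mathcal{R}^E = U \times_{\bung\m \times \Spd E} \grg\m \to \sh\d'$ for the two composites in the diagram; both factor through $U^\prime \subseteq \sh\d'$, with $\alpha$ factoring through $\Theta^E$ and $\beta$ through $\Theta^{\prime E^\prime}$, and both are equivariant for $\underline{\mathsf{G}(\af)}$ acting on $U^\prime$ through $\mathsf{G}(\af) \to \mathsf{G}^\prime(\af)$. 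Using this equivariance together with \Cref{Par:ThetaFinite}, it suffices to check that the induced maps $\alpha_K, \beta_K \colon \mathcal{R}^E_K \to U^\prime_{K^\prime}$ agree for every neat compact open $K$ (with $K^\prime$ its image in $\mathsf{G}^\prime(\af)$), since taking the limit over $K$ recovers $\alpha$ and $\beta$. Here $\mathcal{R}^E_K$ is representable by a smooth rigid analytic variety over $E$ by the argument of \Cref{Par:ThetaERepresent}, hence is reduced with irreducible connected components; since $U^\prime_{K^\prime}$ is separated, the locus where $\alpha_K$ and $\beta_K$ agree is a closed analytic subvariety, and by \Cref{Lem:DensityClassical} the classical fibres $\mathcal{R}^E_{K,y}$ for $y \in U_K$ classical cover a dense subset of $|\mathcal{R}^E_K|$. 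Thus it is enough to show $\alpha_K$ and $\beta_K$ agree on $|\mathcal{R}^E_{K,y}|$ for each such $y$.

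To this end, fix a classical $y$ with image $y^\prime$, and first work near the canonical base point $y_0 \in \mathcal{R}^E_{K,y}$. Applying \Cref{Prop:LocalUniformization} to both $\gx$ and $\gx^\prime$ and shrinking, we may identify $\Theta^E_{K,y}$ on a connected open $W_{K,y} \ni y_0$ with the composite of the projection $\mathcal{R}^E_{K,y} \to \mathcal{Q}^E_{K,y}$, the Grothendieck--Messing period map $\pi_\mathrm{GM}$, and an inverse of the local trivialization $\Psi_{K,y}$, and similarly for $\gx^\prime$. The identity $\alpha_K = \beta_K$ near $y_0$ then reduces to the functoriality of each of these ingredients: of the projection $\mathcal{R}^E_{K,y} \to \mathcal{Q}^E_{K,y}$ and of $\pi_\mathrm{GM}$ (built into \Cref{Par:GMPeriod} and \Cref{Prop:LocShiDdR}), of the local trivializations $\Psi$ (\Cref{Prop:PsiFunctoriality}), and of $\mathrm{DRT}$ and the Hodge--Tate period map together with the compatibility of the de Rham torsors $\xi_K$ and $\xi^\prime_{K^\prime}$ (\Cref{Prop:DRTFunctorial} and the functoriality of $\pi_\mathrm{HT}$ proved at the end of \Cref{Sec:HodgeTate}). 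Hence $\alpha_K$ and $\beta_K$ agree on $W_{K,y}$, and since $\mathcal{R}^E_{K,y}$ is a smooth rigid analytic variety over $F$ with $U^\prime_{K^\prime}$ separated, they agree on the whole connected component of $y_0$.

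It remains to pass from that one component to all of $\mathcal{R}^E_{K,y}$, for which I would work at infinite level. Let $\bar y$ be the geometric point below $y$; by \Cref{Lem:RFiniteGeom} we have $\mathcal{R}^E_{K_p,\bar y} \cong \Spd \cp \times_{\bung\m \times \Spd E} [\underline{K_p} \backslash \grg\m]$ for each level $K_p$, these form a pro-\'{e}tale $\underline{K_p}$-tower with total space $\mathcal{R}^E_{\bar y} = \Spd \cp \times_{\bung\m \times \Spd E} \grg\m$, and $\mathcal{R}^E_{\bar y}$ carries a $\underline{G(\qp)}$-action. The closed locus $Z_{\bar y} \subseteq \mathcal{R}^E_{\bar y}$ on which $\alpha$ and $\beta$ agree is $\underline{G(\qp)}$-stable, and by the previous paragraph it contains the connected component of the base point at every finite level. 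Using that the mod-$\underline{G(\qp)}$ quotient $\Spd \cp \times_{\bung\m \times \Spd E} [\underline{G(\qp)} \backslash \grg\m]$ is connected --- the input of \cite{GLX23p} --- together with \Cref{Lem:TorsorOverConnected} applied to the connected components, one gets that $\underline{G(\qp)}$ acts transitively on $\varprojlim_{K_p} \pi_0(\mathcal{R}^E_{K_p,\bar y})$; hence $Z_{\bar y}$ meets, and so contains, every connected component and equals $\mathcal{R}^E_{\bar y}$ topologically. Since each $\mathcal{R}^E_{K_p,\bar y}$ is reduced this upgrades to $\alpha = \beta$ on $\mathcal{R}^E_{\bar y}$, and feeding these back through \Cref{Lem:DensityClassical} and reducedness of $\mathcal{R}^E_K$ yields $\alpha_K = \beta_K$, hence the proposition.

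I expect the last propagation step to be the main obstacle: one must combine the $\underline{G(\qp)}$-equivariance with the control of $\pi_0$ in the pro-\'{e}tale tower (\Cref{Lem:TorsorOverConnected} and the connectedness input of \cite{GLX23p}) and with density of classical points, and then carefully ensure that the resulting topological identity promotes, via reducedness, to a genuine equality of morphisms of v-sheaves. Verifying the local functoriality in the second paragraph is more routine but still requires threading through all the identifications of \Cref{Sec:LocalShimura} and \Cref{Sec:HodgeTate}.
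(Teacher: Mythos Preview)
Your proposal is correct and follows essentially the same approach as the paper: reduce via density of classical points (\Cref{Lem:DensityClassical}), use \Cref{Prop:LocalUniformization} and \Cref{Prop:PsiFunctoriality} to obtain agreement on a neighborhood of the base point, extend to the connected component, and then propagate to all components using the $\underline{G(\qp)}$-action together with connectedness of $\shloc{\tau_x}[G(\qp)]$ and \Cref{Lem:TorsorOverConnected}.

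Two small remarks. First, your organization differs slightly from the paper's: you descend to finite level before restricting to classical points, which forces you to go back up to infinite level for the $G(\qp)$-propagation and then back down, whereas the paper restricts to a classical $x \in U(\cp)$ at infinite level first and works entirely with $\shloc{\tau_x}$ and its finite-level quotients; the paper's ordering is cleaner and avoids the need to establish representability of the full $\mathcal{R}^E_K$ (\Cref{Par:ThetaERepresent} only treats the fibres $\mathcal{R}^E_{K,y}$, though your claim does follow from \Cref{Cor:LocShiRepresent} applied to $X = U_K$). Second, your appeals to ``reducedness'' are unnecessary: the equalizer of two maps of v-sheaves to a separated target is a closed sub-v-sheaf, and closed sub-v-sheaves are determined by their underlying closed subset, so topological equality already gives equality of v-sheaves without any reducedness hypothesis.
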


\begin{proof}
  Because $U^\prime$ is a separated v-sheaf, the locus on which the two
  compositions agree is a closed v-subsheaf of $U \times_{\bung\m \times \Spd E}
  \grg\m$. By \Cref{Lem:DensityClassical}, it is enough to show for all
  classical points $x \in U(\cp)$ mapping to $x^\prime \in U^\prime(\cp)$ that
  \[ \begin{tikzcd} \label{Eq:ClasicalPt}
    \shloc{\tau_x} \arrow{r} \arrow{d}{\Theta^E_x} & \shloc'{\tau_{x^\prime}}
    \arrow{d}{\Theta^{\prime E^\prime}_{x^\prime}} \\ U_{\cp} \arrow{r} &
    U^\prime_{\cp}
  \end{tikzcd} \tag{$\dagger$} \]
  commutes. For $K \to K^\prime$ neat open compact subgroups of $\mathsf{G}(\af)
  \to \mathsf{G}^\prime(\af)$, let $\bar{y} \in U_K(\cp)$ and $\bar{y}^\prime
  \in U^\prime_{K^\prime}(\cp)$ be the images of $x$ and $x^\prime$. Writing $F$
  for the residue field of $y$, we have a diagram
  \[ \begin{tikzcd} \label{Eq:FiniteLevel}
    \mathcal{R}_{K,y} \arrow{r} \arrow{d}{\Theta^E_{K,y}} &
    \mathcal{R}_{K^\prime,y^\prime} \arrow{d}{\Theta^{\prime
    E^\prime}_{K^\prime,y^\prime}} \\ U_{K,F} \arrow{r} & U^\prime_{K^\prime,F}
  \end{tikzcd} \tag{$\ddagger$} \]
  over $\Spd F$. As discussed in \Cref{Par:ClassicalPtFinite}, the
  diagram \eqref{Eq:ClasicalPt} can be obtain by base changing
  \eqref{Eq:FiniteLevel} to $\cp$ and taking the limit as $K, K^\prime \to
  \{1\}$. Because all v-sheaves in \eqref{Eq:FiniteLevel} are representable by
  smooth rigid analytic varieties over $F$, we may use
  \cite[Proposition~10.2.3]{SW20} to view it as a diagram of smooth rigid
  analytic varieties.

  We now use \Cref{Prop:LocalUniformization} to find connected open
  neighborhoods
  \begin{align*}
    y_0 &\in W_{K,y} \subseteq \mathcal{R}^E_{K,y}, & y_0^\prime &\in
    W_{K^\prime,y^\prime} \subseteq \mathcal{R}^{E^\prime}_{K^\prime,y^\prime},
    \\ y &\in V_{K,y} \subseteq U_{K,F} \subseteq \sh\ad[K][F], & y^\prime &\in
    V_{K^\prime,y^\prime} \subseteq U_{K^\prime,F} \subseteq
    \sh\ad'[K^\prime][F]
  \end{align*}
  and shrink $W_{K,y}$ and $V_{K,y}$ if necessary so that we have a diagram
  \[ \begin{tikzcd} \label{Eq:LocalFunct}
    W_{K,y} \arrow{rrr} \arrow{dd} \arrow{dr}{\Theta^E_{K,y}}[']{\cong}
    &[-4.5em] &[-1em] &[-4.5em] W_{K^\prime,y^\prime} \arrow{dd}
    \arrow{dl}[']{\Theta^{\prime E^\prime}_{K^\prime,y^\prime}}{\cong}
    \arrow{dd} \\ & V_{K,y} \arrow{r} \arrow[hook]{dl}[']{\Psi_{K,y}} &
    V_{K^\prime,y^\prime} \arrow[hook]{dr}{\Psi_{K^\prime,y^\prime}} \\ \lbrack
    G^\mathrm{c} \backslash (D_\mathrm{dR}(\xi_K)_y \times \flgmuc) \rbrack
    \arrow{rrr} & & & \lbrack G^{\prime\mathrm{c}} \backslash
    (D_\mathrm{dR}(\xi_{K^\prime}^\prime)_{y^\prime} \times \flgmucp) \rbrack
  \end{tikzcd} \tag{$\heartsuit$} \]
  where the triangles commute by \Cref{Prop:LocalUniformization} and
  both $\Psi_{K,y}$ and $\Psi_{K^\prime,y^\prime}$ are open embeddings. By
  \Cref{Prop:PsiFunctoriality} the lower trapezoid commutes, and so
  does the outer rectangle. Because $\Psi_{K^\prime,y^\prime}$ is an injection,
  it follows that the upper trapezoid commutes as well.

  Because \eqref{Eq:FiniteLevel} is a diagram of separated smooth rigid analytic
  varieties over $\cp$, the locus 
  \[
    \mathcal{E}_{x,K} = \operatorname{eq}(\shloc{\tau_x}[K_p] \cong
    \mathcal{R}^E_{K,y} \times_{\Spa F} \Spa \cp \rightrightarrows
    U^\prime_{K^\prime,\cp}) \subseteq \shloc{\tau_x}[K_p]
  \]
  on which \eqref{Eq:FiniteLevel} commutes is a closed rigid analytic subvariety
  of $\shloc{\tau_x}[K_p]$. On the other hand, the commutativity of the
  upper trapezoid of \eqref{Eq:LocalFunct} shows that this locus contains
  $W_{K,y}$, which is an open neighborhood of $x_0$. It follows that
  $\mathcal{E}_{x,K}$ contains the connected component $Y_{K_p} \subseteq
  \shloc{\tau_x}[K_p]$ of $x_0$ by \cite[Lemma~2.1.4]{Con99}. Taking the limit,
  we see that the closed v-subsheaf
  \[
    \mathcal{E}_x = \operatorname{eq}(\shloc{\tau_x} \rightrightarrows
    U^\prime_{\cp}) \subseteq \shloc{\tau_x}
  \]
  on which \eqref{Eq:ClasicalPt} commutes contains $\varprojlim_{K_p \to \{1\}}
  Y_{K_p} \subseteq \shloc{\tau_x}$.

  On the other hand, note that rigid analytic varieties are strongly Noetherian
  and the quotient
  \[
    [\underline{G(\qp)} \backslash \shloc{\tau_x}] = \shloc{\tau_x}[G(\qp)]
    \subseteq \grg\m[\cp]
  \]
  is a connected rigid analytic variety over $\cp$ by \cite[Theorem~1.1]{GL22p}.
  Therefore by \Cref{Lem:TorsorOverConnected} the $G(\qp)$-translates of
  $\varprojlim_{K_p} Y_{K_p}$ cover all of $\shloc{\tau_x}$. Because both maps
  $\shloc{\tau_x} \to U^\prime_{\cp}$ are $\underline{G(\qp)}$-equivariant, the
  locus $\mathcal{E}_x$ is stable under $\underline{G(\qp)}$, and therefore
  $\mathcal{E}_x = \shloc{\tau_x}$ as desired.
\end{proof}

}
{\section{Interlude: stably v-complete spaces} \label{Sec:StablyVComp}
\def\hotimes{\operatorname{\hat{\otimes}}}

Given two adic spaces $X, Y \to \Spa \zp$, there is a natural map
\[
  \Hom_{\Spa \zp}(X, Y) \to \Hom_{\Spd \zp}(X^\lozenge, Y^\lozenge).
\]
The goal of this section is to provide a criterion for this map to be a
bijection. This result will be used in the following section to prove
functoriality of the global uniformization map $\Theta$ on all of $\Spd E \times
\Spd E$.

\begin{definition}[{\cite[Definition~9.6]{HK25p}}]
  Let $A$ be a Tate ring in which $p$ is topologically nilpotent. We say that
  $A$ is \textdef{v-complete} when the natural map
  \[
    A \to \varprojlim_{A \to R^\sharp} R^\sharp
  \]
  is a bijection, where the limit is over all continuous ring homomorphisms $A
  \to R^\sharp$ where $R^\sharp$ is a perfectoid Tate ring.
\end{definition}

The following lemma is implicit in the discussion of \cite[Section~9]{HK25p}.

\begin{lemma} \label{Lem:TopLimitVComp}
  Let $(A, A^+)$ be a Tate--Huber pair in which $p$ is topological nilpotent,
  and assume that $A$ is v-complete. Then $A$ is uniform, and moreover the
  natural maps
  \[
    A \to \varprojlim_{(A,A^+) \to (R^\sharp,R^{\sharp+})} R^{\sharp}, \quad A^+
    \to \varprojlim_{(A,A^+) \to (R^\sharp,R^{\sharp+})} R^{\sharp+}
  \]
  are isomorphisms of topological rings, where the right hand sides have the
  inverse limit topology.
\end{lemma}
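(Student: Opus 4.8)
The plan is to deduce both claims from the defining bijection $A \xrightarrow{\cong} \varprojlim_{A \to R^\sharp} R^\sharp$ by a careful bookkeeping of topologies. First I would observe that it suffices to work with rings of integral elements: for any Tate--Huber pair $(A, A^+)$ with $p$ topologically nilpotent and $A$ v-complete, every continuous map $A \to R^\sharp$ to a perfectoid Tate ring sends $A^+$ into some ring of integral elements $R^{\sharp+}$ (namely the integral closure of the image of $A^+$ together with $R^{\sharp\circ\circ}$), and conversely any continuous map of Tate--Huber pairs $(A, A^+) \to (R^\sharp, R^{\sharp+})$ restricts to a continuous map $A \to R^\sharp$. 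Hence the indexing category of perfectoid Tate--Huber pairs receiving $(A, A^+)$ is cofinal over the indexing category of perfectoid Tate rings receiving $A$, up to replacing $R^{\sharp+}$; this gives the bijectivity of $A \to \varprojlim R^\sharp$ in the statement. For the integral version, I would note $A^+ = \{ a \in A : a \in A^\circ \text{ and } \text{(boundedness)} \}$ is determined by $A^+$ being an open integrally closed subring; one checks $A^+ = A \cap \bigl( \varprojlim_{(A,A^+)\to(R^\sharp,R^{\sharp+})} R^{\sharp+} \bigr)$ using that an element $a \in A$ lies in $A^+$ if and only if its image in every such $R^\sharp$ lies in $R^{\sharp+}$ — the forward direction is clear, and the reverse uses that $A^+$ is the intersection of the preimages of the valuation rings cutting it out, each of which admits a (rank-one generization) map to a perfectoid field, so membership in $A^+$ can be tested perfectoid-locally.

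Next I would prove uniformity. Since $A$ is v-complete and each $R^\sharp$ is uniform, the subring $A^\circ \subseteq A$ of power-bounded elements maps into $\varprojlim R^{\sharp\circ} = \varprojlim R^{\sharp+}$ when we choose $R^{\sharp+} = R^{\sharp\circ}$; combined with the integral identification above (which works for the particular choice $A^+ = A^\circ$, noting $A^\circ$ is itself an open integrally closed subring so $(A, A^\circ)$ is a valid Tate--Huber pair), we get $A^\circ \xrightarrow{\cong} \varprojlim_{(A,A^\circ)\to(R^\sharp, R^{\sharp\circ})} R^{\sharp\circ}$. Boundedness of $A^\circ$ is then inherited from the boundedness of each $R^{\sharp\circ}$: picking a pseudo-uniformizer $\varpi \in A^\circ$ (e.g. a power of $p$, which is topologically nilpotent), the image of $A^\circ$ in each $R^\sharp$ is contained in $\varpi^{-1} R^{\sharp\circ}$ is false in general, so instead I would argue that $A^\circ$ is bounded because $A^\circ = \varprojlim R^{\sharp\circ}$ carries the inverse limit topology (to be justified in the last step) and each $R^{\sharp\circ}$ is $\varpi$-adically complete and bounded, making the limit $\varpi$-adically bounded. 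This forces $A$ uniform: $A^\circ$ bounded is exactly the definition.

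The main obstacle is the topological statement — that $A \to \varprojlim R^\sharp$ and $A^+ \to \varprojlim R^{\sharp+}$ are isomorphisms of \emph{topological} rings for the inverse limit topology, not merely bijections. The inverse limit topology on $\varprojlim R^{\sharp+}$ has as a basis of open neighborhoods of $0$ the preimages of $\varpi^n R_i^{\sharp+}$ over finite subfamilies, but since the diagram is cofiltered and each $R^{\sharp+}$ is $\varpi$-adically complete, this coincides with the $\varpi$-adic topology on $\varprojlim R^{\sharp+}$; so I must show the given topology on $A^+$ is $\varpi$-adic, i.e. $A$ is already $\varpi$-adically complete on $A^+$ — equivalently that $A^+$ is $\varpi$-adically separated and complete. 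Separatedness follows since $\bigcap_n \varpi^n A^+ \subseteq \bigcap_n \varpi^n R^{\sharp+} = 0$ after mapping to any $R^\sharp$, using v-completeness to conclude the intersection is $0$ in $A^+$. For completeness I would run a diagram chase: a Cauchy sequence $(a_k)$ in $A^+$ maps to a $\varpi$-adically convergent sequence in each $R^{\sharp+}$ with limit $b_{R^\sharp}$, these are compatible under the transition maps by continuity, hence assemble to an element $b \in \varprojlim R^{\sharp+} = A^+$; checking $a_k \to b$ in $A^+$ amounts to checking it in each $R^{\sharp+}$, which holds by construction. This identifies the topology on $A^+$ with the $\varpi$-adic one and hence with the inverse-limit topology; inverting $\varpi$ gives the statement for $A$. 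I expect the cofinality argument relating perfectoid Tate rings to perfectoid Tate--Huber pairs (the first step) and this final topological identification to require the most care, while uniformity will then be essentially formal.
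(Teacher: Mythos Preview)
Your cofinality argument and your characterization of $A^+$ via testing on perfectoid residue fields are correct and match the paper. The gap is in the topological step. Your assertion that ``the given topology on $A^+$ is $\varpi$-adic'' is ``equivalent to $A^+$ being $\varpi$-adically separated and complete'' is false: these concern two a priori different topologies on the same ring. When $A$ is not uniform, $A^+$ carries a subspace topology strictly finer than the $\varpi$-adic one, yet the $\varpi$-adic topology can still be separated and complete. Your Cauchy-sequence argument establishes only $\varpi$-adic completeness of $A^+$, and the closing line ``checking $a_k \to b$ in $A^+$ amounts to checking it in each $R^{\sharp+}$'' presupposes that $A^+$ already carries the inverse limit topology, which is exactly the conclusion sought. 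Since your uniformity argument is explicitly deferred to this step, it inherits the same circularity.

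The paper breaks the circle by importing two facts from Hansen--Kedlaya. Uniformity is cited directly (their Lemma~9.10(a)). For the topology, their Remark~9.2 and Lemma~9.7 supply a single perfectoid $R_0^\sharp$ together with a continuous injection $A \hookrightarrow R_0^\sharp$ under which $A$ carries the subspace topology. Once this exists, the inverse limit topology on $A = \varprojlim R^\sharp$ is sandwiched between the given topology on $A$ (since every projection $A \to R^\sharp$ is continuous) and the subspace topology from $R_0^\sharp$ (since the projection to $R_0^\sharp$ is continuous for the inverse limit topology), and these agree by hypothesis. Without such a single-embedding input your direct argument does not close.
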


\begin{proof}
  Uniformity of $A$ is \cite[Lemma~9.10(a)]{HK25p}. We observe that there is a
  forgetful functor $\lbrace (A, A^+) \to (R^\sharp, R^{\sharp+}) \rbrace \to
  \lbrace A \to R^\sharp \rbrace$. This functor is initial in the sense of
  \cite[Definition~09WP]{Stacks} because given $A \to R^\sharp$ we obtain $(A,
  A^+) \to (R^\sharp, R^{\sharp\circ})$, and similarly given $(A, A^+) \to
  (R_1^\sharp, R_1^{\sharp+}), (R_2^\sharp, R_2^{\sharp+})$ and $R_1^\sharp,
  R_2^\sharp \to S^\sharp$ we obtain $(R_1^\sharp, R_1^{\sharp+}), (R_2^\sharp,
  R_2^{\sharp+}) \to (S^\sharp, S^{\sharp\circ})$. This shows that
  \[
    f \colon A \xrightarrow{\cong} \varprojlim_{(A,A^+) \to
    (R^\sharp,R^{\sharp+})} R^\sharp
  \]
  as abstract rings. By \cite[Remark~9.2, Lemma~9.7]{HK25p}, there exists a
  continuous injection $A \hookrightarrow R^\sharp$ for which $A$ has the
  subspace topology, and it follows that $f$ is a homeomorphism.

  To see that $A^+ \to \varprojlim_{(A,A^+) \to (R^\sharp,R^{\sharp+})}
  R^{\sharp+}$ is bijective, it suffices to show that an element $f \in A$
  satisfies $\varphi(f) \in R^{\sharp+}$ for all $\varphi \colon (A, A^+) \to
  (R^\sharp, R^{\sharp+})$ if and only if $f \in A^+$. This can be seen for
  example by testing against all algebraic completions of residue fields of
  $\Spa(A, A^+)$ using \cite[Lemma~3.3(i)]{Hub93}. On the other hand, both sides
  have the subspace topology from $A \cong \varprojlim_{(A,A^+) \to
  (R^\sharp,R^{\sharp+})} R^\sharp$, and hence it is moreover a homeomorphism.
\end{proof}

We provide a criterion for verifying that a Tate ring is v-complete, which is
moreover stable under rational localizations, finite \'{e}tale maps, and passing
from $A$ to $A\langle T^{\pm 1} \rangle$.

\begin{proposition} \label{Prop:VCompCriter}
  Let $A$ be a Tate ring, and assume that there exists a continuous ring
  homomorphism $\qp\langle T_1^{\pm 1}, \dotsc, T_n^{\pm 1} \rangle \to A$ for
  which the completed tensor product
  \[
    \tilde{A} = A \hotimes_{\qp\langle T_1^{\pm 1}, \dotsc, T_n^{\pm 1} \rangle}
    \cp\langle T_1^{\pm 1/p^\infty}, \dotsc, T_n^{\pm 1/p^\infty}
    \rangle
  \]
  is perfectoid. Then $A$ is v-complete.
\end{proposition}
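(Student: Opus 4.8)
The goal is to show that the map $A \to \varprojlim_{A \to R^\sharp} R^\sharp$ is a bijection, given the hypothesis that $\tilde{A} = A \mathbin{\hat{\otimes}}_{\qp\langle T_i^{\pm 1}\rangle} \cp\langle T_i^{\pm 1/p^\infty}\rangle$ is perfectoid. The key observation is that $\qp\langle T_1^{\pm 1},\dotsc,T_n^{\pm 1}\rangle$ admits the perfectoid tower obtained by adjoining $p$-power roots of the $T_i$ and passing to $\cp$, and this tower is pro-\'{e}tale (indeed a $\mathbb{Z}_p(1)^n \rtimes (\text{Galois})$-cover after completion). So the plan is: first, reduce to checking that the natural map is injective and surjective separately; second, handle injectivity by noting that $A$ embeds continuously into $\tilde{A}$ (faithful flatness / completed base change along the faithfully flat pro-finite-\'{e}tale-up-to-completion map $\qp\langle T_i^{\pm 1}\rangle \to \cp\langle T_i^{\pm 1/p^\infty}\rangle$), and $\tilde A$ being perfectoid is itself v-complete by \cite[Lemma~9.7]{HK25p} or the tautological fact that a perfectoid Tate ring maps isomorphically to the limit over its own perfectoid quotients; third, handle surjectivity by a descent argument along the cover $\Spa(\tilde A, \tilde A^+) \to \Spa(A, A^+)$.

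More precisely, for surjectivity I would take a compatible family $(f_{R^\sharp})_{R^\sharp}$ in $\varprojlim_{A \to R^\sharp} R^\sharp$. Restricting to the perfectoid ring $\tilde A$ and to further perfectoid quotients of $\tilde A$, v-completeness of $\tilde A$ produces a unique element $\tilde f \in \tilde A$ inducing all the $f_{R^\sharp}$ that factor through $\tilde A$. The point is then to descend $\tilde f$ to $A$. Because $\tilde A = A \mathbin{\hat{\otimes}}_{B} \hat B_\infty$ where $B = \qp\langle T_i^{\pm 1}\rangle$ and $\hat B_\infty = \cp\langle T_i^{\pm 1/p^\infty}\rangle$, and because $B \to \hat B_\infty$ is a v-cover (it is a pro-(finite \'{e}tale) map followed by completed base change to $\cp$, both of which are v-covers on the level of $\Spa$), the descent datum is governed by the two projections $\tilde A \rightrightarrows \tilde A \mathbin{\hat{\otimes}}_A \tilde A$. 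One checks that $\tilde A \mathbin{\hat{\otimes}}_A \tilde A = A \mathbin{\hat{\otimes}}_B (\hat B_\infty \mathbin{\hat{\otimes}}_B \hat B_\infty)$ is again perfectoid (a completed tensor product of $\tilde A$-perfectoid algebras along the perfectoid $\hat B_\infty \mathbin{\hat{\otimes}}_B \hat B_\infty$), hence also v-complete; and the two images of $\tilde f$ in it agree because they induce the same maps to all perfectoid quotients (both come from the original compatible family $(f_{R^\sharp})$, using that any map $\tilde A \mathbin{\hat{\otimes}}_A \tilde A \to R^\sharp$ restricts to a map $A \to R^\sharp$). By the sheaf property of the structure presheaf on the perfectoid (hence sheafy) spaces involved — or directly by the injectivity of $A \to \tilde A$ together with the equalizer $A = \mathrm{eq}(\tilde A \rightrightarrows \tilde A \mathbin{\hat{\otimes}}_A \tilde A)$, which holds because $\Spa(\tilde A,\tilde A^+) \to \Spa(A,A^+)$ is a v-cover and $\mathscr{O}$ is a v-sheaf by \cite[Theorem~8.7]{Sch17p} — we conclude $\tilde f \in A$, and this element induces the original family.

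\textbf{The main obstacle.} The genuinely delicate step is the descent, i.e.\ verifying that $A$ is exactly the equalizer of $\tilde A \rightrightarrows \tilde A \mathbin{\hat{\otimes}}_A \tilde A$ in topological rings, and that the fiber products stay perfectoid so that v-completeness can be invoked at each stage. This requires knowing that $\qp\langle T_i^{\pm 1}\rangle \to \cp\langle T_i^{\pm 1/p^\infty}\rangle$ really is a v-cover on adic spectra (it factors as the pro-finite-\'{e}tale Galois tower $\qp\langle T_i^{\pm 1}\rangle \to \qp^{\mathrm{cycl}}\langle T_i^{\pm 1/p^\infty}\rangle$ followed by the completed base change to $\cp$, and one uses that completed base change of a v-cover of Tate--Huber pairs along $\qp^{\mathrm{cycl}} \to \cp$ is a v-cover, e.g.\ because it is surjective on $\Spa$ with the required quasi-compactness), and then applying the almost purity / $\mathscr{O}$-sheafiness machinery of \cite[Section~8]{Sch17p} to descend. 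I would also need the (routine but not entirely trivial) point that completed tensor products along perfectoid rings of perfectoid rings are perfectoid, which follows from tilting and the characterization of perfectoid Tate rings. Once these are in place, the statement that these properties (v-completeness) are preserved under rational localization, finite \'{e}tale maps, and $A \mapsto A\langle T^{\pm 1}\rangle$ — as advertised before the proposition — follows formally, since each such operation can be matched by the corresponding operation on the perfectoid cover $\tilde A$.
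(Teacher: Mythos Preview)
Your descent strategy has a genuine circularity at its core. You want to conclude $A = \mathrm{eq}(\tilde{A} \rightrightarrows \tilde{A} \mathbin{\hat{\otimes}}_A \tilde{A})$ by invoking \cite[Theorem~8.7]{Sch17p}. But that theorem only says that $\mathscr{O}$ is a v-sheaf on perfectoid spaces; applied to the v-cover $\Spd(\tilde{A}) \to \Spd(A)$ it yields
\[
  H^0(\Spd(A,A^+),\mathscr{O}) \;=\; \mathrm{eq}\bigl(\tilde{A} \rightrightarrows H^0(\Spd(\tilde{A}) \times_{\Spd(A)} \Spd(\tilde{A}),\mathscr{O})\bigr),
\]
and the left-hand side is by definition $\varprojlim_{A \to R^\sharp} R^\sharp$, not $A$. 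So this equalizer identity is exactly the statement you are trying to prove. Relatedly, your argument that the two images of $\tilde{f}$ in $\tilde{A} \mathbin{\hat{\otimes}}_A \tilde{A}$ coincide rests on v-completeness of that ring. But $\tilde{A} \mathbin{\hat{\otimes}}_A \tilde{A}$ contains $\cp \mathbin{\hat{\otimes}}_{\qp} \cp$ as a completed tensor factor, and this is not even uniform (compare the Remark immediately following the proposition, which flags exactly this pathology for $\qp^\mathrm{cyc} \mathbin{\hat{\otimes}}_{\qp} \cp$). So neither the claim that the self-fiber-product is perfectoid nor the claim that it is v-complete goes through.

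What the paper does instead is recognize $A \to \tilde{A}$ as the uniform completion of a tower of finite \'{e}tale Galois covers with profinite Galois group $\Gamma = \zp^n \rtimes \Gal(\qpbar/\qp)$, invoke \cite[Lemma~9.7]{HK25p} to reduce v-completeness to the single equation $A = \tilde{A}^{\Gamma}$, and then verify that equation by an explicit two-step computation: first, the completed direct-sum decomposition of $\tilde{A}$ by monomials $T_1^{d_1}\dotsm T_n^{d_n}$ with $d_i \in \mathbb{Z}[p^{-1}] \cap [0,1)$ shows that $(\tilde{A})^{\zp^n} = A \mathbin{\hat{\otimes}}_{\qp} \cp$; second, a Schauder-basis argument together with $\cp^{\Gal(\qpbar/\qp)} = \qp$ gives $(A \mathbin{\hat{\otimes}}_{\qp} \cp)^{\Gal(\qpbar/\qp)} = A$. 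This explicit invariant computation is precisely the content that your descent formalism skips over; without it, the argument does not close.
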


\begin{remark}
  This criterion applies to $A = \qp$ (by taking $n = 0$) but does not apply to
  $A = \qp^\mathrm{cyc}$ as $\tilde{A} = \qp^\mathrm{cyc} \hotimes_{\qp} \cp$ is
  not uniform.
\end{remark}

\begin{proof}
  Fix a sequence of finite Galois extensions $\qp \subseteq E_1 \subseteq E_2
  \subseteq \dotsb$ such that $\bigcup_i E_i = \qpbar$ and $E_k \supseteq
  \qp(\zeta_{p^k})$. We have a tower of finite \'{e}tale Galois covers
  \begin{align*}
    A &\hookrightarrow A \otimes_{\qp\langle T_1^{\pm 1}, \dotsc, T_n^{\pm 1}
    \rangle} E_1\langle T_1^{\pm 1/p}, \dotsc, T_n^{\pm 1/p} \rangle \\
    &\hookrightarrow A \otimes_{\mathbb{Q}_ p\langle T_1^{\pm 1}, \dotsc,
    T_n^{\pm 1} \rangle} E_2\langle T_1^{\pm 1/p^2}, \dotsc, T_n^{\pm 1/p^2}
    \rangle \hookrightarrow \dotsb.
  \end{align*}
  For $A_0 \subseteq A$ a ring of definition, the elements of
  \[
    A_0 \hotimes_{\zp\langle T_1^{\pm 1}, \dotsc, T_n^{\pm 1} \rangle}
    \mathcal{O}_{E_k}\langle T_1^{\pm 1/p^k}, \dotsc, T_n^{\pm 1/p^k} \rangle
  \]
  are power bounded, and because the ring $\tilde{A}$ is already uniform, the
  uniform completion of the tower is $\tilde{A}$. Hence by
  \cite[Lemma~9.7]{HK25p} it suffices to check that
  \[
    A = (\tilde{A})^{\zp^n \rtimes \Gal(\qpbar/\qp)},
  \]
  where $\zp^n \rtimes \Gal(\qpbar/\qp)$ acts on $\cp\langle T_1^{\pm
  1/p^\infty}, \dotsc, T_n^{\pm 1/p^\infty} \rangle$.

  We first note that $\tilde{A}$ can be written as a completed direct sum
  \[
    \tilde{A} = \mathop{\widehat{\bigoplus}}_{d_1, \dotsc, d_n \in
    \mathbb{Z}[p^{-1}] \cap [0,1)} (A \hotimes_{\qp\langle T_1^{\pm 1}, \dotsc,
    T_n^{\pm 1} \rangle} \cp\langle T_1^{\pm 1}, \dotsc, T_n^{\pm 1} \rangle)
    T_1^{d_1} \dotsm T_n^{d_n}.
  \]
  Since $\zp^n$ acts on each basis element $T_1^{c_1/p^k} \dotsm T_n^{c_n/p^k}$
  via the character $(\gamma_1, \dotsc, \gamma_n) \mapsto \zeta_{p^k}^{c_1
  \gamma_1 + \dotsb + c_n \gamma_n}$, an element of $\tilde{A}$ that is
  fixed under the $\zp^n$-action must have $(d_1, \dotsc, d_n)$-component equal
  to zero, unless $d_1 = \dotsb = d_n = 0$. That is, we have
  \[
    (\tilde{A})^{\zp^n} = A \hotimes_{\qp\langle T_1^{\pm 1}, \dotsc, T_n^{\pm
    1} \rangle} \cp\langle T_1^{\pm 1}, \dotsc, T_n^{\pm 1} \rangle = A
    \hotimes_{\qp} \cp.
  \]

  It now remains to show that
  \[
    (A \hotimes_{\qp} \cp)^{\Gal(\qpbar/\qp)} = A
  \]
  for every Banach $\qp$-vector space $A$. It is clear that $A$ is contained in
  the $\Gal(\qpbar/\qp)$-invariants. In the other direction, given any
  $\Gal(\qpbar/\qp)$-invariant $\alpha \in A \hotimes_{\qp} \cp$, there exists a
  topologically countably generated closed subspace $B \subseteq A$ such that
  $\alpha \in B \hotimes_{\qp} \cp$. By \cite[Proposition~2.7.2.8]{BGR84}, we
  may write $B \cong \mathop{\widehat{\bigoplus}} \qp$, so that
  \[
    (B \hotimes_{\qp} \cp)^{\Gal(\qpbar/\qp)} = \Bigl(
    \mathop{\widehat{\bigoplus}} \cp \Bigr)^{\Gal(\qpbar/\qp)} =
    \mathop{\widehat{\bigoplus}} \qp = B.
  \]
  This shows that $\alpha \in B \subseteq A$ as desired.
\end{proof}

\begin{definition}
  We say that an analytic adic space $X \to \Spa \zp$ is \textdef{stably
  v-complete} if there exists a topological basis of affinoid opens $U_i =
  \Spa(A_i, A_i^+) \subseteq X$ with the property that each ring $A_i$ is
  sheafy, Tate, and v-complete.
\end{definition}

\begin{remark}
  For $(A, A^+)$ a sheafy Tate--Huber pair, the affinoid adic space $\Spa(A,
  A^+)$ is stably v-complete if and only if all rational localizations of $A$
  are v-complete. If we have
  \[
    \Spa(B, B^+) = \bigcup_{i=1}^n \Spa(B_i, B_i^+), \quad \Spa(B_i, B_i^+) \cap
    \Spa(B_j, B_j^+) = \bigcup_{k=1}^{n_{ij}} \Spa(B_{ijk}, B_{ijk}^+)
  \]
  where all $B, B_i, B_{ijk}$ are sheafy, and $B_i, B_{ijk}$ are v-complete,
  then
  \begin{align*}
    H^0(\Spa(B, B^+)_\mathrm{v}, \mathscr{O}) &= \operatorname{eq}\biggl(
    \prod_i H^0(\Spa(B_i, B_i^+)_\mathrm{v}, \mathscr{O}) \rightrightarrows
    \prod_{i,j,k} H^0(\Spa(B_{ijk}, B_{ijk}^+)_\mathrm{v}, \mathscr{O}) \biggr)
    \\ &\cong \operatorname{eq}\biggl( \prod_i B_i \rightrightarrows
    \prod_{i,j,k} B_{ijk} \biggr) = B,
  \end{align*}
  and hence $B$ is also v-complete.
\end{remark}

\begin{remark} \label{Rem:Diamantine}
  Hansen--Kedlaya \cite[Section~11]{HK25p} introduces a notion of a diamantine
  Tate ring, which is stronger but more robust than that of a stably v-complete
  ring.
\end{remark}

\begin{lemma} \label{Lem:StaVCompFullFaith}
  Let $X, Y \to \zp$ be analytic adic spaces, where $X$ is stably v-complete.
  Then the natural map
  \[
    \Hom_{\Spa \zp}(X, Y) \to \Hom_{\Spd \zp}(X^\lozenge, Y^\lozenge)
  \]
  is a bijection.
\end{lemma}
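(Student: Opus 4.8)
The plan is to reduce to the affinoid case and then to exhibit an explicit inverse. First I would observe that a morphism $X^\lozenge \to Y^\lozenge$ over $\Spd\zp$ is the same datum as a compatible family of morphisms $U^\lozenge \to Y^\lozenge$ for $U$ ranging over a basis of affinoid opens of $X$, and that by hypothesis we may take this basis to consist of $U = \Spa(A,A^+)$ with $A$ sheafy, Tate, and v-complete. Since both sides of the asserted bijection are sheaves in $X$ (for the first this is because $Y$ is an adic space, hence $\Hom_{\Spa\zp}(-,Y)$ is a Zariski sheaf; for the second it is immediate), it suffices to treat $X = \Spa(A,A^+)$ affinoid with $A$ v-complete, and moreover we may assume $Y = \Spa(B,B^+)$ is affinoid by a further localization on $Y$ controlled by the continuous map $|X^\lozenge| \to |X| \to |Y|$ (this is where one uses that a map of adic spaces into $Y$ is glued from maps into affinoid opens of $Y$, pulled back along the open cover of $X$ by the preimages, which are again affinoid by the adic-space structure).

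Next, in the affinoid case $X = \Spa(A,A^+)$, $Y = \Spa(B,B^+)$, the source $\Hom_{\Spa\zp}(X,Y)$ is by definition the set of continuous ring maps $(B,B^+) \to (A,A^+)$, while the target $\Hom_{\Spd\zp}(\Spd(A,A^+),\Spd(B,B^+))$ is the set of natural transformations of the corresponding v-sheaves. I would construct the inverse map as follows. A morphism $f \colon \Spd(A,A^+) \to \Spd(B,B^+)$ assigns, to every perfectoid $(R^\sharp, R^{\sharp+})$ with a map $(A,A^+) \to (R^\sharp, R^{\sharp+})$, a compatible map $(B,B^+) \to (R^\sharp, R^{\sharp+})$; in particular, taking the limit over all such $(R^\sharp, R^{\sharp+})$ and invoking \Cref{Lem:TopLimitVComp} (which identifies $A \cong \varprojlim_{(A,A^+)\to(R^\sharp,R^{\sharp+})} R^\sharp$ and $A^+ \cong \varprojlim R^{\sharp+}$ as topological rings, using v-completeness of $A$), the family of maps $B \to R^\sharp$ assembles into a single continuous ring homomorphism $B \to A$ carrying $B^+$ into $A^+$. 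One then checks this is functorial in the test object, hence only depends on $f$, and that the two constructions are mutually inverse: starting from a continuous $(B,B^+)\to(A,A^+)$, composing with all $(A,A^+)\to(R^\sharp,R^{\sharp+})$ and re-taking the limit recovers the original map precisely because the limit identification is the identity on $A$; conversely, the round trip $f \mapsto (B\to A) \mapsto f$ holds because any test map $(A,A^+) \to (R^\sharp,R^{\sharp+})$ factors the reconstructed map through the limit, and $f$ was determined by its values on all such test objects.

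The main obstacle will be the continuity and topological-ring bookkeeping rather than the bare set-theoretic bijection: one must know not merely that $A$ is the ring-theoretic inverse limit of the $R^\sharp$ but that it carries the inverse-limit topology, so that a map $B \to A$ assembled componentwise is automatically continuous for the given topologies on $B$ and $A$ — this is exactly the content of \Cref{Lem:TopLimitVComp}, and its use (together with uniformity of $A$, also supplied there) is what makes v-completeness the right hypothesis. A secondary subtlety is the reduction to affinoid $Y$: since $Y$ need not be separated or quasi-compact, one localizes $Y$ along an open cover and pulls back along $X$, using that $|X^\lozenge| \cong |X|$ so the preimages in $X$ of affinoid opens of $Y$ form an open cover of $X$ refinable to the v-complete affinoid basis; the glued map on the $X$-side then descends by the sheaf property for $\Hom_{\Spa\zp}(-,Y)$. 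I expect these reductions to be routine, with the topological identification via v-completeness carrying all the real weight.
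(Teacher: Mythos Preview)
Your proposal is correct and follows essentially the same route as the paper: reduce to affinoid $X$ and $Y$ via the homeomorphisms $|X^\lozenge|\cong|X|$ and $|Y^\lozenge|\cong|Y|$, then use \Cref{Lem:TopLimitVComp} to identify a map $\Spd(A,A^+)\to\Spd(B,B^+)$ with a continuous map of Huber pairs $(B,B^+)\to(A,A^+)$. The only cosmetic difference is that the paper cites \cite[Proposition~2.1(i)]{Hub94} explicitly for the equivalence between maps of Huber pairs and maps of the corresponding affinoid adic spaces, and \cite[Lemma~15.6]{Sch17p} for the identification of topological spaces.
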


\begin{proof}
  Because $\lvert X \rvert \cong \lvert X^\lozenge \rvert$ and $\lvert Y \rvert
  \cong \lvert Y^\lozenge \rvert$ by \cite[Lemma~15.6]{Sch17p}, the map on
  topological spaces can be recovered from $X^\lozenge \to Y^\lozenge$.
  Therefore we can reduce to the case when $Y$ is affinoid, and then
  to when $X$ is v-complete affinoid.

  Write $X = \Spa(A, A^+)$ and $Y = \Spa(B, B^+)$, where both $A$ and $B$ are
  sheafy and $A$ is moreover v-complete. Suppose we are given a map $X^\lozenge
  \to Y^\lozenge$. This is the data of associating to every map $(A, A^+) \to
  (R^\sharp, R^{\sharp+})$ to a perfectoid Huber pair, a corresponding map $(B,
  B^+) \to (R^\sharp, R^{\sharp+})$. This is then equivalent to the data of a
  continuous ring homomorphism
  \[
    B \to \varprojlim_{(A, A^+) \to (R^\sharp, R^{\sharp+})} R^\sharp,
  \]
  where the right hand side has the inverse limit topology, which moreover sends
  $B^+$ to $\varprojlim R^{\sharp+}$. By \Cref{Lem:TopLimitVComp}, this is
  the data of a map of Huber pairs
  \[
    (B, B^+) \to (A, A^+),
  \]
  which by \cite[Proposition~2.1(i)]{Hub94} corresponds to a map of adic spaces
  $\Spa(A, A^+) \to \Spa(B, B^+)$.
\end{proof}

\begin{proposition} \label{Prop:YIsStaVComp}
  Let $(R, R^+)$ be a perfectoid Tate--Huber pair in characteristic $p$, and let
  $X$ be a sousperfectoid adic space admitting a smooth morphism $X \to
  \mathcal{Y}_{(0,\infty)}(R, R^+)$ in the sense of
  \cite[Definition~IV.4.8]{FS21p}. Then $X$ is stably v-complete.
\end{proposition}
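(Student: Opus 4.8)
The plan is to reduce the statement to the criterion established in \Cref{Prop:VCompCriter}, namely that a Tate ring $A$ is v-complete if it admits a continuous map from a Laurent polynomial ring $\qp\langle T_1^{\pm 1}, \dotsc, T_n^{\pm 1}\rangle$ whose base change along the perfectoid cover $\cp\langle T_1^{\pm 1/p^\infty}, \dotsc, T_n^{\pm 1/p^\infty}\rangle$ is perfectoid. Since being stably v-complete is defined via the existence of a topological basis of affinoid opens with v-complete coordinate rings, and since smoothness and the property of being sousperfectoid are local on the source, the first step is to reduce to the case where $X = \Spa(S^\sharp, S^{\sharp+})$ is affinoid sousperfectoid and the smooth morphism $X \to \mathcal{Y}_{(0,\infty)}(R, R^+)$ factors, after possibly shrinking $X$, as a composition of an \'{e}tale morphism and a projection from a polydisc (or a torus), using the structure theory of smooth morphisms of analytic adic spaces as in \cite[Definition~IV.4.8]{FS21p}; more precisely, \'{e}tale-locally on $X$ there is an \'{e}tale map $X \to \mathbb{B}^d_{\mathcal{Y}}$ to the relative $d$-dimensional ball (or, after a further rational localization, to a relative torus $\mathbb{T}^d_{\mathcal{Y}}$).

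The second step is to produce the required Laurent polynomial ring over $\qp$. Recall that $\mathcal{Y}_{(0,\infty)}(R, R^+)$ is an adic space over $\Spa\qpbr$, covered by the sous­perfectoid affinoids $\mathcal{Y}_{[r,s]}(R, R^+)$ for $0 < r < s < \infty$, and each $\mathcal{Y}_{[r,s]}(R, R^+)$ is, by construction, a rational subset of $\Spa(\period{A}{inf}(R^\sharp, R^{\sharp+}), \period{A}{inf}(R^\sharp, R^{\sharp+}))$ in the complement of $\{|p| = |[\varpi]| = 0\}$. The key point is that $\mathcal{Y}_{[r,s]}(R, R^+)$ base-changes to a perfectoid space along a suitable perfectoid cover: this is precisely what it means to be sousperfectoid. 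One checks, using \cite{BMS18} and the normalization $\varpi^\sharp \in p(R^{\sharp+})^\times$ adopted in the excerpt, that there is a continuous map $\qp\langle T^{\pm 1}\rangle \to \mathcal{O}(\mathcal{Y}_{[r,s]}(R, R^+))$ — sending $T$ to a function comparing $|p|$ and $|[\varpi]|$ — whose base change along $\cp\langle T^{\pm 1/p^\infty}\rangle$ is perfectoid; this is the content of $\mathcal{Y}$ being sousperfectoid over the arithmetic base. Combining this with the $d$ toric coordinates coming from the \'{e}tale-local structure $X \to \mathbb{T}^d_{\mathcal{Y}[r,s]}$, one obtains a continuous map $\qp\langle T_0^{\pm 1}, T_1^{\pm 1}, \dotsc, T_d^{\pm 1}\rangle \to \mathcal{O}(X)$.

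The third step is to verify the perfectoidness hypothesis of \Cref{Prop:VCompCriter} for this map: that is, that $\mathcal{O}(X) \hotimes_{\qp\langle T_0^{\pm 1}, \dotsc, T_d^{\pm 1}\rangle} \cp\langle T_0^{\pm 1/p^\infty}, \dotsc, T_d^{\pm 1/p^\infty}\rangle$ is perfectoid. For the $T_0$-direction this follows from $\mathcal{Y}_{[r,s]}$ being sousperfectoid; for the toric directions $T_1, \dotsc, T_d$ one uses that an \'{e}tale morphism over a perfectoid space is perfectoid (so $X$ base-changed over the perfectoid cover of $\mathbb{T}^d_{\mathcal{Y}}$, which is itself perfectoid by Tate's acyclicity and the standard perfectoid torus, is again perfectoid), together with the fact that completed tensor products of perfectoid rings over a perfectoid ring are perfectoid. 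Since $X$ admits a topological basis of such affinoid opens $\Spa(A_i, A_i^+)$ — one for each choice of $[r,s]$ refining the cover of $\mathcal{Y}_{(0,\infty)}$ and each \'{e}tale-toric chart — and each $A_i$ is sheafy (being sousperfectoid, see \cite[Section~6]{FS21p}), Tate, and now v-complete, we conclude that $X$ is stably v-complete. I expect the main obstacle to be bookkeeping in the second step: constructing the map $\qp\langle T^{\pm 1}\rangle \to \mathcal{O}(\mathcal{Y}_{[r,s]})$ with the correct perfectoid base change, which requires being careful about the relationship between $[\varpi]$, $p$, and the rational localization defining $\mathcal{Y}_{[r,s]}$, and in arranging that the \'{e}tale-local toric charts are compatible with a fixed choice of $\mathcal{Y}_{[r,s]}$ so that all coordinates can be combined into a single Laurent polynomial ring over $\qp$.
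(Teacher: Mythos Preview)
Your overall strategy matches the paper's: reduce to the criterion of \Cref{Prop:VCompCriter} using the local structure of smooth morphisms. However, your execution differs from the paper's in a way that introduces an unjustified step.

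The paper does not introduce a coordinate $T_0$ on $\mathcal{Y}_{[r,s]}(R,R^+)$. Instead it observes two things. First, the criterion of \Cref{Prop:VCompCriter} is \emph{stable} under the operations that build smooth algebras: rational localization, finite \'{e}tale extensions, and $A \mapsto A\langle T^{\pm 1}\rangle$. This is because completed tensor products distribute over these operations and perfectoid rings are closed under them. Second, for the base $\mathcal{Y}_{[r,s]}(R,R^+)$ itself one applies the criterion with $n = 0$: by the proof of \cite[Proposition~II.1.1]{FS21p}, the base change along $\qp \to \qp(p^{1/p^\infty})^\wedge$ is already perfectoid, and then so is the further base change to $\cp$. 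These two observations immediately give the result, since any affinoid open in a smooth $X \to \mathcal{Y}_{(0,\infty)}$ is obtained from some $\mathcal{Y}_{[r,s]}$ by the above operations (noting that $\Spa A\langle T\rangle$ is covered by two copies of $\Spa A\langle T^{\pm 1}\rangle$).

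Your proposed $T_0$ ``comparing $|p|$ and $|[\varpi]|$'' is not justified: sousperfectoidness only asserts the existence of \emph{some} perfectoid cover, not one arising from a torus variable, and for general $[r,s]$ there is no obvious unit of norm $1$ everywhere on $\mathcal{Y}_{[r,s]}$ with the required property. Dropping $T_0$ and using $n=0$ for $\mathcal{Y}$, together with the stability argument, is both simpler and correct. You should also be explicit that \'{e}tale maps are locally compositions of rational localizations and finite \'{e}tale maps, so that the stability reduction goes through; your third step elides this.
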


\begin{proof}
  Note that the criterion in \Cref{Prop:VCompCriter} is stable under
  rational localizations and finite \'{e}tale algebras, because completed tensor
  products distribute under rational localizations of finite \'{e}tale algebras,
  and perfectoid spaces are stable under such operations. We also observe that
  if $A$ satisfies the criterion, so does $A\langle T^{\pm 1} \rangle$, because
  if $\tilde{A}$ is perfectoid then so is $\tilde{A}\langle T^{\pm 1/p^\infty}
  \rangle$. Therefore it is enough to verify the assumptions of
  \Cref{Prop:VCompCriter} on an affinoid cover of
  $\mathcal{Y}_{(0,\infty)}(R, R^+)$ (where we note that $\Spa A\langle T
  \rangle$ can be covered by two copies of $\Spa A\langle T^{\pm 1} \rangle$).
  It follows from the proof of \cite[Proposition~II.1.1]{FS21p} that there
  exists an affinoid open cover of $\mathcal{Y}_{(0,\infty)}(R, R^+)$ whose base
  change to $\qp(p^{1/p^\infty})^\wedge$ is a perfectoid affinoid open cover of
  \[
    \mathcal{Y}_{(0,\infty)}(R, R^+) \times_{\Spa \qp} \Spa
    \qp(p^{1/p^\infty})^\wedge.
  \]
  On the other hand, because $\qp(p^{1/p^\infty})^\wedge$ is a perfectoid field,
  their base change to $\cp$ are also perfectoid by
  \cite[Proposition~3.6.11]{KL15}.
\end{proof}

\begin{remark}
  Alternatively, when $(R, R^+) = (C, \mathcal{O}_C)$ is a rank $1$ perfectoid
  field, it was pointed out by Kiran Kedlaya that the Fargues--Fontaine curve
  $\mathcal{Y}_{(0,\infty)}(C, \mathcal{O}_C)$ is covered by diamantine
  affinoids in the sense of \cite[Definition~11.1]{HK25p}, see
  \Cref{Rem:Diamantine}. For rational numbers
  $0 < r < s < \infty$, write $Y = \mathcal{Y}_{[r,s]}(C, \mathcal{O}_C)$ and
  $\period{B}{[r,s]}(C, \mathcal{O}_C) = H^0(Y, \mathscr{O}_Y)$. Then because
  $\period{B}{[r,s]}(C, \mathcal{O}_C) \hotimes_{\qp} \cp$ is perfectoid,
  $\period{B}{[r,s]}(C, \mathcal{O}_C)$ is v-complete by the argument above and
  moreover $1$-strict in the sense of \cite[Definition~9.11]{HK25p}. It follows
  that the $p$-torsion subgroup of $H^1(Y, \mathscr{O}_Y^+)$ is uniformly
  torsion by \cite[Lemma~9.19]{HK25p}. On the other hand, from the explicit
  topological description of $\lvert Y \rvert$ we see that every open cover
  has a refinement in which any nonempty triple intersection is also a double
  intersection, and thus $H^i(Y, \mathscr{O}_Y^+) = 0$ for $i \ge 2$. It
  now follows from the discussion after \cite[Definition~6.8]{HK25p} that
  $\period{B}{[r,s]}(C, \mathcal{O}_C)$ is plus-sheafy, and hence diamantine. By
  \cite[Theorem~11.18]{HK25p}, smooth algebras over $\period{B}{[r,s]}(C,
  \mathcal{O}_C)$ are diamantine as well.
\end{remark}

}
{\section{Functoriality of Igusa stacks} \label{Sec:DifferentUntilt}
\def\ebr{\breve{E}}
\def\binterval{\mathbf{B}_{[1/r,r]}}
\def\hotimes{\operatorname{\hat{\otimes}}}
\def\rkeybar{\mathcal{R}_{K,\bar{y}}^{k_E}}
\def\reybar{\mathcal{R}_{K,\bar{y}}^E}

In this section, we discuss some techniques to pass from the diagonal $\Spd E
\to \Spd E \times \Spd E$ to its connected component $\Spd E \times_{\Spd k_E}
\Spd E$. We prove a rather general fact, which states that for any smooth rigid
analytic variety $X/E$, a map of v-sheaves
\[
  \mathcal{R}_{K,y}^{k_E} \to X
\]
that lives over the projection $\mathrm{pr}_2 \colon \Spd E \times_{\Spd k_E}
\Spd E \to \Spd E$ is controlled by its restriction to $\mathcal{R}_{K,y}^E$.
This allows us to deduce the functoriality result for global uniformizations
$\Theta$ from the functoriality of $\Theta^E$.

\paragraph
Let $K = K^p K_p \subseteq \mathsf{G}(\af)$ be a neat compact open subgroup. For
a closed point $y \colon \Spd F \to U_K$ with $F/E$ a finite extension, recall
from \Cref{Par:RFinitePt} the v-sheaf
\[
  \mathcal{R}_{K,y} \to \mathcal{Q}_{K,y} = \Spd F \times_{\bung\c\m}
  [\underline{K_p^\mathrm{c}} \backslash \grg\c\m] \to \Spd F \times \Spd E.
\]
We have closed embeddings
\[
  \Spd F = \Spd F \times_{\Spd E} \Spd E \hookrightarrow \Spd F \times_{\Spd
  k_E} \Spd E \hookrightarrow \Spd F \times \Spd E,
\]
where $k_E$ is the residue field of $E$ and the second map is moreover an
inclusion of a connected component. We base change the map $\mathcal{R}_{K,y}
\to \Spd F \times \Spd E$ to these subsheaves and define $\mathcal{R}_{K,y}^E
\hookrightarrow \mathcal{R}_{K,y}^{k_E} \hookrightarrow \mathcal{R}_{K,y}$.
Similarly, we base change
\[
  \mathcal{R}_{K,\bar{y}} = \Spd \cp \times_{\Spd F} \mathcal{R}_{K,y} \to \Spd
  \cp \times_{\Spd F} (\Spd F \times \Spd E) = \Spd \cp \times \Spd E
\]
to these subsheaves to define closed subsheaves $\reybar
\hookrightarrow \rkeybar \hookrightarrow
\mathcal{R}_{K,\bar{y}}$. To summarize, we have the commutative diagram
\[ \begin{tikzcd}[row sep=small]
  \reybar \arrow[hook]{r} \arrow{d} & \rkeybar \arrow[hook]{r} \arrow{d} &
  \mathcal{R}_{K,\bar{y}} \arrow{d} \\ \mathcal{R}_{K,y}^E \arrow[hook]{r}
  \arrow{d} & \mathcal{R}_{K,y}^{k_E} \arrow[hook]{r} \arrow{d} &
  \mathcal{R}_{K,y} \arrow{d} \\ \Spd F \arrow[hook]{r} & \Spd F \times_{\Spd
  k_E} \Spd E \arrow[hook]{r} & \Spd F \times \Spd E,
\end{tikzcd} \]
of v-sheaves, where all squares are Cartesian, all horizontal arrows are closed
embeddings. We shall consider all of them as v-sheaves over $\Spd E$ using the
structure map $\mathrm{pr}_2 \colon \Spd F \times \Spd E \to \Spd E$.

\begin{proposition} \label{Prop:RFinPtRepresent}
  The v-sheaves $\mathrm{pr}_2 \colon \reybar, \rkeybar, \mathcal{R}_{K,\bar{y}}
  \to \Spd E$ are representable by strongly Noetherian sousperfectoid stably
  v-complete adic spaces over $\Spa E$. Moreover, $\mathcal{R}_{K,y}^E \to \Spd
  F$ is representable by a smooth rigid analytic variety over $F$, and hence
  $\reybar \cong \Spd \cp \times_{\Spd F} \mathcal{R}_{K,y}^E$ is representable
  by a smooth rigid analytic variety over $\cp$.
\end{proposition}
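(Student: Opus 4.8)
The plan is to reduce everything to understanding $\mathcal{R}_{K,\bar{y}}$, since the other two spaces are obtained by base change along the closed embeddings $\Spd F \hookrightarrow \Spd F \times_{\Spd k_E} \Spd E \hookrightarrow \Spd F \times \Spd E$, and closed subspaces of strongly Noetherian sousperfectoid stably v-complete spaces retain all three properties. So first I would analyze $\mathcal{R}_{K,\bar{y}} = \Spd \cp \times_{U_K} \mathcal{R}_K$. By \Cref{Lem:RFiniteGeom}, choosing a lift $x \colon \Spd \cp \to \sh$ of $\bar y$ gives an identification $\mathcal{R}_{K,\bar y} \cong \Spd \cp \times_{\bung\m} [\underline{K_p} \backslash \grg\m]$. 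Unwinding, this is exactly the local Shimura variety $\shloc{\tau_x}[K_p]$ at level $K_p$ for the map $\tau_x \colon \Spd \cp \to \bung\m$ obtained by restricting $\tau = \mathrm{BL} \circ \pi_\mathrm{HT}$ to $x$; equivalently $\tau_x = \mathrm{BL} \circ \mathrm{DRT}(\mathbb{P}_x)$ for $\mathbb{P}_x$ the restriction of $\xi_K$ (pulled to infinite level). Here the relevant base is $X = \Spd \cp$, a point, and the de Rham torsor is over a point.

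For the $\Spd E$-structure, I would use the Grothendieck--Messing picture of \Cref{Par:GMPeriod}: $\shloc{\tau_x}[K_p] \to \shloc{\tau_x}[G(\qp)] \hookrightarrow [G^\lozenge \backslash (D_\mathrm{dR}(\tau_x) \times_E \flgmu^\lozenge)]$ is a composition of an étale map and an open embedding, hence étale, and the target projects to $\Spd E$ via $\flgmu^\lozenge \to \Spd E$. By \Cref{Lem:DdRComparison}, $D_\mathrm{dR}(\tau_x)$ is just the v-sheaf associated to a $G$-torsor over $\Spa \cp$, which is a $\cp$-point (torsors over a point are trivial after extending scalars, but in any case it is represented by $\Spa \cp$ itself with a $G$-action); so $[G^\lozenge \backslash (D_\mathrm{dR}(\tau_x) \times_E \flgmu^\lozenge)]$ becomes, after trivializing, $\flgmu^\lozenge \times_{\Spd E} \Spd \cp = (\mathrm{Fl}_{G,\{\mu\},E} \times_E \cp)^\lozenge$. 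This is the diamond of a smooth projective $\cp$-variety, hence of a strongly Noetherian sousperfectoid analytic adic space over $\Spa \cp$ — and $\Spa \cp \to \Spa \qp$ is an example of \Cref{Prop:YIsStaVComp} (it is smooth over $\mathcal{Y}_{(0,\infty)}$ of a perfectoid field, or directly covered by diamantine affinoids as in the Remark), so it is stably v-complete, and smoothness/strong-Noetherianness/sousperfectoidness pass to smooth rigid varieties over it. Since $\mathcal{R}_{K,\bar y}$ is étale over this, and étale maps of diamonds associated to strongly Noetherian analytic adic spaces are represented by strongly Noetherian analytic adic spaces (by \cite[Lemma~15.6]{Sch17p} together with the étale-descent of the three properties), $\mathcal{R}_{K,\bar y}$ is represented by a strongly Noetherian sousperfectoid stably v-complete adic space over $\Spa \cp$, hence over $\Spa E$.

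Then $\rkeybar$ and $\reybar$ are the base changes of $\mathcal{R}_{K,\bar y}$ along the closed embeddings listed above; closed immersions of adic spaces preserve being strongly Noetherian, sousperfectoid (a closed subspace of a sousperfectoid space admitting a sousperfectoid cover stays sousperfectoid), and stably v-complete (a closed subspace is cut out by an ideal, and its rational localizations remain v-complete, using that v-completeness is inherited by quotients as in the discussion after the definition of stably v-complete). For the final clause, $\mathcal{R}_{K,y}^E = \mathcal{R}_{K,y} \times_{\Spd F \times \Spd E} \Spd F$, and by \Cref{Par:ThetaERepresent} (via \Cref{Cor:LocShiRepresent} applied to $\mathcal{Q}_{K,y}^E = \shloc\c{\tau_y^\mathrm{c}}[K_p^\mathrm{c}]$ and \cite[Lemma~15.6]{Sch17p} since $\mathcal{R}_{K,y}^E \to \mathcal{Q}_{K,y}^E$ is étale) it is represented by a smooth rigid analytic variety over $F$; base changing along $\Spd \cp \to \Spd F$ gives $\reybar$ as a smooth rigid analytic variety over $\cp$.

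The main obstacle I anticipate is the bookkeeping in the first paragraph: making precise that $\mathcal{R}_{K,\bar y}$, a priori only a v-sheaf étale over a (twisted) flag variety's diamond, is genuinely representable by an adic space with the three listed properties — this requires knowing that the class of strongly Noetherian sousperfectoid stably v-complete adic spaces is stable under étale maps (so that \cite[Lemma~15.6]{Sch17p} upgrades the v-sheaf to an adic space in the right category) and under the rank-one extension $\Spa E \to \Spa \cp$. The stably-v-complete part is the delicate one; it is handled by \Cref{Prop:YIsStaVComp} and its Remark for the base $\Spa \cp$, and then by the stability of the v-completeness criterion \Cref{Prop:VCompCriter} under rational localization, finite étale extension, and adjoining $T^{\pm 1}$, which propagates through any smooth morphism. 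Everything else is routine.
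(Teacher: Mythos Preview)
There is a genuine gap in your identification of $\mathcal{R}_{K,\bar y}$. The isomorphism from \Cref{Lem:RFiniteGeom} gives
\[
  \mathcal{R}_{K,\bar y}\;\cong\;\Spd\cp \times_{\bung\m}\bigl[\underline{K_p}\backslash\grg\m\bigr],
\]
with the fiber product taken only over $\bung\m$, \emph{not} over $\bung\m\times\Spd E$. The local Shimura variety $\shloc{\tau_x}[K_p]$ of \Cref{Sec:LocalShimura} is by definition $\Spd\cp \times_{\bung\times\Spd E}[\underline{K_p}\backslash\grg\m]$, which forces the untilt on the $\grg\m$ factor to coincide with the fixed untilt $\cp$ on the first factor. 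Thus your identification ``this is exactly $\shloc{\tau_x}[K_p]$'' is only correct for the diagonal piece $\reybar$; the full $\mathcal{R}_{K,\bar y}$ (and its component $\rkeybar$) genuinely records \emph{two} independent untilts, and the Grothendieck--Messing period map of \Cref{Par:GMPeriod} does not apply to it. Consequently your étale-over-a-flag-variety picture, and hence your representability argument, only covers $\reybar$.

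The paper handles $\rkeybar$ and $\mathcal{R}_{K,\bar y}$ differently: after choosing $b\in G(\qpbr)$ with $\mathscr{P}_b\cong\tau_x$ (via \cite[Theorem~5.1]{Far20}), one obtains
\[
  \rkeybar\;\cong\;\bigl(\Spd\cp\times_{\Spd\fpbar}\Spd\qpbr\bigr)\times_{\Spd\breve E}\shloc{b}[K_p]\;\cong\;\mathcal{Y}_{(0,\infty)}(\cp,\mathcal{O}_{\cp})\times_{\Spa\qpbr}\shloc{b}[K_p],
\]
where the Fargues--Fontaine curve $\mathcal{Y}_{(0,\infty)}(\cp,\mathcal{O}_{\cp})$ is precisely what parametrizes the ``second'' untilt. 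Since $\shloc{b}[K_p]$ is a smooth rigid variety over $\breve E$, this fiber product is smooth over $\mathcal{Y}_{(0,\infty)}(\cp,\mathcal{O}_{\cp})$, and \Cref{Prop:YIsStaVComp} applies directly. Your proposed route through closed subspaces would also need justification: neither sousperfectoidness nor stable v-completeness is known to pass to arbitrary closed subspaces, so even if the ambient space were under control, the base-change step would not be automatic. Your argument for $\mathcal{R}_{K,y}^E$ and $\reybar$ via \Cref{Par:ThetaERepresent} is fine and matches the paper.
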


\begin{remark}
  In general, the v-sheaves $\mathcal{R}_{K,y}$ and $\mathcal{R}_{K,y}^{k_E}$
  are not representable by adic spaces.
\end{remark}

\begin{proof}
  Recall from \Cref{Lem:RFiniteGeom} that upon choosing a lift $x \in
  U(\cp)$ of $\bar{y} \in U_K(\cp)$, we obtain an isomorphism
  \[
    \mathcal{R}_{K,\bar{y}} \cong \Spd \cp \times_{\bung\m} [\underline{K_p}
    \backslash \grg\m].
  \]
  Furthermore, upon choosing a $b \in G(\qpbr)$ together with an isomorphism
  between $\tau_x \colon \Spd \cp \to \bung$ and $\Spd \cp \to \Spd \fpbar
  \xrightarrow{b} \bung$ using \cite[Theorem~5.1]{Far20}, we obtain
  \begin{align*}
    \mathcal{R}_{K,\bar{y}} \cong \coprod_{k_E \hookrightarrow \fpbar}
    \rkeybar, \quad \rkeybar &\cong \Spd \cp \times_{\bung\m \times \Spd k_E}
    [\underline{K_p} \backslash \grg\m] \\ &\cong \Spd \cp \times_{\Spd \fpbar}
    \shloc{b}[K_p],
  \end{align*}
  where $\shloc{b}[K_p]$ as in \Cref{Exam:ClassicalLocShi} is a smooth
  rigid analytic variety over $\ebr$ by \cite[Section~24.1]{SW20}.

  On the other hand, the v-sheaf $\mathrm{pr}_2 \colon \Spd \cp \times_{\Spd
  \fpbar} \Spd \qpbr \to \Spd \qpbr$ is represented by the strongly Noetherian
  sousperfectoid adic space
  \[
    \mathcal{Y}_{(0,\infty)}(\cp, \mathcal{O}_{\cp}) \to \Spa \qpbr
  \]
  by \cite[Proposition~II.1.17]{FS21p} and \cite[Theorem~4.10]{Ked16}. Because
  $\shloc{b}[K_p]$ is a smooth rigid analytic variety over $\qpbr$ (as it is
  over $\ebr$), it follows from \Cref{Prop:YIsStaVComp} that the
  fiber product
  \[
    \Spd \cp \times_{\Spd \fpbar} \shloc{b}[K_p] \cong
    \mathcal{Y}_{(0,\infty)}(\cp, \mathcal{O}_{\cp}) \times_{\Spd \qpbr}
    \shloc{b}[K_p]
  \]
  is represented by a strongly Noetherian sousperfectoid stably v-complete adic
  space. It follows that $\rkeybar, \mathcal{R}_{K,\bar{y}} \to \Spd E$ are
  representable by strongly Noetherian sousperfectoid stably v-complete adic
  spaces.

  The fact that $\mathcal{R}_{K,y}^E \to \Spd F$ is representable by a smooth
  rigid analytic variety was shown in \Cref{Par:ThetaERepresent}.
\end{proof}

\paragraph
Let $E_0 = W(k_E)[p^{-1}] \subseteq E$ be the maximal subfield of $E$ unramified
over $\qp$. In view of \Cref{Lem:StaVCompFullFaith}, we regard the
v-sheaves $\reybar, \rkeybar$ as the stably v-complete adic spaces they are
represented by from \Cref{Prop:RFinPtRepresent}. We then have a
commutative diagram of adic spaces
\[ \begin{tikzcd} \label{Eq:RFinPtGalois}
  \reybar \arrow[hook]{r}{i} \arrow{d}{q} & \rkeybar \arrow{d}{p} \\ \Spa \cp
  \arrow[hook]{r}{j} & \mathcal{Y}_{(0,\infty)}(\cp, \mathcal{O}_{\cp})
  \times_{\Spa E_0} \Spa E
\end{tikzcd} \tag{$\ast$} \]
which becomes Cartesian once we pass to associated v-sheaves. On the other hand,
at the level of v-sheaves, the above diagram is the base change of the Cartesian
square
\[ \begin{tikzcd}[row sep=small]
  \mathcal{R}_{K,y}^E \arrow[hook]{r} \arrow{d} & \mathcal{R}_{K,y}^{k_E}
  \arrow{d} \\ \Spd F \arrow[hook]{r} & \Spd F \times_{\Spd k_E} \Spd E
\end{tikzcd} \]
of v-sheaves along $\Spd \cp \to \Spd F$. By applying
\Cref{Prop:RFinPtRepresent}, we see that there is a corresponding
$\Gal(\cp/F)$-action on the adic spaces $\reybar, \rkeybar$ making the diagram
\eqref{Eq:RFinPtGalois} equivariant with respect to the $\Gal(\cp/F)$-actions.

\paragraph \label{Par:DivisorXi}
There is a natural ring homomorphism
\[
  \theta_E \colon W(\mathcal{O}_{\cp^\flat}) \otimes_{\mathcal{O}_{E_0}}
  \mathcal{O}_E \to \mathcal{O}_{\cp}
\]
that $\mathcal{O}_E$-linearly extends the map $\theta \colon
W(\mathcal{O}_{\cp^\flat}) \to \mathcal{O}_{\cp}$ from \Cref{Par:BdR}.
Recall from \cite[Proposition~2.4]{Fon82} that the kernel of $\theta_E$ is
principal, and choose a generator $\xi_E \in \ker(\theta_E)$. This defines a
global section
\[
  \xi_E \in H^0(\mathcal{Y}_{(0,\infty)}(\cp, \mathcal{O}_{\cp}) \times_{\Spa
  E_0} \Spa E, \mathscr{O}_{\mathcal{Y}_{(0,\infty)}(\cp, \mathcal{O}_{\cp})
  \times_{\Spa E_0} \Spa E}).
\]

\begin{lemma} \label{Lem:CartierDiv}
  The inclusion $i \colon \reybar \to \rkeybar$ induces a short exact sequence
  \[
    0 \to \mathscr{O}_{\rkeybar} \xrightarrow{\xi_E} \mathscr{O}_{\rkeybar} \to
    i_\ast \mathscr{O}_{\reybar} \to 0
  \]
  of sheaves on $\lvert \rkeybar \rvert$, where $\xi_E$ is as in
  \Cref{Par:DivisorXi}.
\end{lemma}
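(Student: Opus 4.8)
The plan is to reduce the statement to a purely local claim about the Cartier divisor $\Spa \cp \hookrightarrow \mathcal{Y}_{(0,\infty)}(\cp, \mathcal{O}_{\cp}) \times_{\Spa E_0} \Spa E$ cut out by $\xi_E$, and then to pull it back along the smooth (hence flat) morphism $p \colon \rkeybar \to \mathcal{Y}_{(0,\infty)}(\cp, \mathcal{O}_{\cp}) \times_{\Spa E_0} \Spa E$. First I would record that the base diagram \eqref{Eq:RFinPtGalois} is Cartesian on v-sheaves and that $p$ is smooth, so by \Cref{Prop:RFinPtRepresent} and base change $\reybar = p^{-1}(\Spa \cp)$ is the vanishing locus of $p^\ast \xi_E$. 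The key input is that on $\mathcal{Y}_{(0,\infty)}(\cp, \mathcal{O}_{\cp}) \times_{\Spa E_0} \Spa E$ the section $\xi_E$ is a nonzerodivisor cutting out the closed Cartier divisor $\Spa \cp$: this is exactly \cite[Proposition~2.4]{Fon82} together with the fact (recalled in \Cref{Par:DivisorXi}, or \cite[Proposition~11.3.1]{SW20} in the case $E=\qp$) that $\theta_E$ has principal kernel and that $\Spa\cp$ sits inside the locus where $p$ is invertible. So on the base we have the short exact sequence $0 \to \mathscr{O} \xrightarrow{\xi_E} \mathscr{O} \to j_\ast \mathscr{O}_{\Spa \cp} \to 0$ of sheaves on $\lvert \mathcal{Y}_{(0,\infty)}(\cp, \mathcal{O}_{\cp}) \times_{\Spa E_0} \Spa E\rvert$.

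Next I would transport this to $\rkeybar$ by flat pullback. Since $p$ is smooth it is flat, so $p^\ast$ is exact on the relevant sheaves of modules, and $p^\ast$ of the above sequence gives $0 \to \mathscr{O}_{\rkeybar} \xrightarrow{p^\ast\xi_E} \mathscr{O}_{\rkeybar} \to p^\ast(j_\ast \mathscr{O}_{\Spa\cp}) \to 0$; here I use that $\lvert\rkeybar\rvert \to \lvert \mathcal{Y}_{(0,\infty)}(\cp,\mathcal{O}_{\cp})\times_{\Spa E_0}\Spa E\rvert$ is (the topological realization of) a smooth and in particular open map, so pullback of the skyscraper-type quotient along the Cartesian square in \eqref{Eq:RFinPtGalois} is $i_\ast \mathscr{O}_{\reybar}$ by flat base change for the closed immersion $j$; concretely, $p^\ast(j_\ast \mathscr{O}_{\Spa\cp}) = i_\ast(q^\ast \mathscr{O}_{\Spa\cp}) = i_\ast \mathscr{O}_{\reybar}$. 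Writing $\xi_E$ for $p^\ast\xi_E$ by abuse of notation, as in the statement, this is precisely the desired sequence. Injectivity of multiplication by $\xi_E$ on $\mathscr{O}_{\rkeybar}$ follows because $\xi_E$ is a nonzerodivisor on the base and $p$ is flat; alternatively one checks it on an affinoid cover $\Spa(A, A^+)$ of $\rkeybar$ by sousperfectoid stably v-complete rings, where $\xi_E$ is a nonzerodivisor in $A$ because $A$ is uniform (\Cref{Lem:TopLimitVComp}) and $\xi_E$ is a nonzerodivisor on each perfectoid quotient $A \to R^\sharp$ by \cite[Proposition~11.3.1]{SW20}, so the v-completeness injection $A \hookrightarrow \varprojlim R^\sharp$ forces $\xi_E$ to be a nonzerodivisor in $A$.

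The main obstacle I anticipate is the bookkeeping around base change for the closed immersion $j \colon \Spa \cp \hookrightarrow \mathcal{Y}_{(0,\infty)}(\cp, \mathcal{O}_{\cp}) \times_{\Spa E_0} \Spa E$: one must be careful that the square \eqref{Eq:RFinPtGalois} is Cartesian as adic spaces (not merely as v-sheaves), which is where \Cref{Prop:RFinPtRepresent} and \Cref{Lem:StaVCompFullFaith} are needed, and that the smooth morphism $p$ behaves well with respect to this closed immersion so that $p^{-1}(\Spa \cp) = \reybar$ as closed adic subspaces. Once that is set up, the sheaf-theoretic computation is a standard consequence of flatness of $p$ and the fact that $\xi_E$ locally generates the ideal sheaf of $\Spa\cp$, so the exactness of the displayed sequence on $\lvert\rkeybar\rvert$ is formal. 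I would also remark that since $\reybar \to \Spd F$ is representable by a smooth rigid analytic variety over $F$ (hence $\reybar$ over $\cp$ is as well), the sheaf $i_\ast\mathscr{O}_{\reybar}$ is genuinely the structure sheaf of a closed rigid analytic subvariety, which is what will be used in the subsequent arguments.
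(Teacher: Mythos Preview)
Your overall plan---reduce to the Cartier divisor on the base $\mathcal{Y}_{(0,\infty)}(\cp,\mathcal{O}_{\cp})\times_{\Spa E_0}\Spa E$ and then pull back along $p$---is exactly the shape of the paper's argument. The difference is that the paper does not invoke abstract flatness or base change; instead it chooses the explicit affinoid cover
\[
  \rkeybar = \bigcup_{\alpha,r} \Spa(\binterval \hotimes_{\ebr} A_\alpha),\qquad \reybar = \bigcup_\alpha \Spa(\cp \hotimes_{\ebr} A_\alpha)
\]
coming from the product description $\rkeybar \cong (\mathcal{Y}_{(0,\infty)}(\cp,\mathcal{O}_{\cp})\times_{E_0}E)\times_{\ebr}\shloc{b}[K_p]$, and then verifies directly that
\[
  0 \to \binterval \hotimes_{\ebr} A_\alpha \xrightarrow{\xi_E} \binterval \hotimes_{\ebr} A_\alpha \to \cp \hotimes_{\ebr} A_\alpha \to 0
\]
is exact. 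The point is that $A_\alpha$ admits an orthonormal Schauder basis over $\ebr$, and the map $\binterval \to \cp$ is strict (because $\binterval$ is Noetherian), so the completed tensor product reduces to a completed direct sum of copies of the base sequence.

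Your appeal to ``$p$ is smooth, hence flat, hence $p^\ast$ is exact'' and to ``flat base change'' is where the gap lies. In this sousperfectoid setting there is no off-the-shelf result guaranteeing that completed tensor products along $p$ preserve short exact sequences, nor that $p^\ast(j_\ast\mathscr{O}_{\Spa\cp})$ agrees with $i_\ast\mathscr{O}_{\reybar}$ on sections. Unwinding what these statements mean on the affinoid cover lands you precisely on the identity $(\binterval\hotimes_{\ebr}A_\alpha)/\xi_E \cong \cp\hotimes_{\ebr}A_\alpha$, which is the nontrivial content and needs the Schauder-basis/strictness argument above.

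Your alternative argument for injectivity of $\xi_E$ via v-completeness is also not correct as stated: for a perfectoid quotient $A \to R^\sharp$ whose image in $\lvert\rkeybar\rvert$ meets $\reybar$, the element $\xi_E$ maps to zero in $R^\sharp$, so it is certainly not a nonzerodivisor there, and \cite[Proposition~11.3.1]{SW20} says nothing about this situation. Injectivity really does come from the same Schauder-basis computation (or, if you prefer, from $\binterval$ being an integral domain and the completed direct sum description).
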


\begin{proof}
  Recall from \Cref{Prop:RFinPtRepresent} that $\rkeybar$ is strongly
  Noetherian. It now suffices to find an open cover $\rkeybar = \bigcup_\alpha
  W_\alpha$ by strongly Noetherian affinoids for which $i^{-1}(W_\alpha)$ is
  also affinoid and
  \[
    0 \to \mathscr{O}_{\rkeybar}(W_\alpha) \xrightarrow{\xi_E}
    \mathscr{O}_{\rkeybar}(W_\alpha) \to \mathscr{O}_{\reybar} (i^{-1}
    (W_\alpha)) \to 0
  \]
  is exact. Indeed, by \cite[Theorem~2.5, II.1(iv)]{Hub94} every rational
  localization $\mathscr{O}_{\rkeybar}(W_\alpha) \to A$ is flat so that the base
  change
  \[
    0 \to A \xrightarrow{\xi_E} A \to \mathscr{O}_{\reybar}(i^{-1}(W_\alpha))
    \otimes_{\mathscr{O}_{\rkeybar}(W_\alpha)} A \to 0
  \]
  is exact, and it follows from \cite[Lemma~2.4]{Hub94} that the topology on the
  last term agrees with the natural topology as a finitely generated $A$-module,
  hence already complete.

  As in the proof of \Cref{Prop:RFinPtRepresent}, we choose a lift $x
  \in U(\cp)$ of $\bar{y} \in U_K(\cp)$ and an element $b \in G(\qpbr)$ together
  with an isomorphism $\mathscr{P}_b \cong \tau_x \colon \Spd \cp \to \bung$.
  Then we may identify
  \begin{align*}
    \rkeybar &\cong (\mathcal{Y}_{(0,\infty)}(\cp, \mathcal{O}_{\cp})
    \times_{\Spa E_0} \Spa E) \times_{\Spa \ebr} \shloc{b}[K_p], \\ \reybar
    &\cong \Spa \cp \times_{\Spa \ebr} \shloc{b}[K_p].
  \end{align*}
  We consider a cover $\shloc{b}[K_p] = \bigcup_\alpha W_\alpha$ for $W_\alpha =
  \Spa(A_\alpha, A_\alpha^+)$ affinoid rigid analytic varieties over $\ebr$.
  Then we have an affinoid cover
  \[
    \rkeybar = \bigcup_{\alpha, r} (\mathcal{Y}_{[1/r,r]}(\cp,
    \mathcal{O}_{\cp}) \times_{\Spa E_0} \Spa E) \times_{\Spa \ebr} W_\alpha.
  \]
  Writing $\binterval$ for the Tate ring corresponding to
  $\mathcal{Y}_{[1/r,r]}(\cp, \mathcal{O}_{\cp}) \otimes_{E_0} E$, we see that
  it now suffices to prove exactness of
  \[
    0 \to \binterval \hotimes_{\ebr} A_\alpha \xrightarrow{\xi_E} \binterval
    \hotimes_{\ebr} A_\alpha \to \cp \hotimes_{\ebr} A_\alpha \to 0.
  \]
  Because $A_\alpha$ is an affinoid $\ebr$-algebra, it is a Banach $\ebr$-vector
  space of countable type, and hence has an orthonormal Schauder basis with
  respect to some norm by \cite[Proposition~2.7.2.8]{BGR84}. As $\binterval$ is
  Noetherian and
  \[
    0 \to \binterval \xrightarrow{\xi_E} \binterval \to \cp \to 0
  \]
  is exact, the map $\binterval \to \cp$ is strict, and the result follows.
\end{proof}

\begin{lemma} \label{Lem:SheafGalInv}
  Denote by $\pi \colon \lvert \reybar \rvert \to \lvert \mathcal{R}_{K,y}^E
  \rvert$ the $\Gal(\cp/F)$-invariant continuous map. Then the natural map
  \[
    \mathscr{O}_{\mathcal{R}_{K,y}^E} \to (\pi_\ast (\mathscr{O}_{\rkeybar} /
    \xi_E \mathscr{O}_{\rkeybar}))^{\Gal(\cp/F)}
  \]
  is an isomorphism, and for every integer $n \ge 1$ the natural map
  \[
    (\pi_\ast (\mathscr{O}_{\rkeybar} / \xi_E^n
    \mathscr{O}_{\rkeybar}))^{\Gal(\cp/F)} \to (\pi_\ast (\mathscr{O}_{\rkeybar}
    / \xi_E \mathscr{O}_{\rkeybar}))^{\Gal(\cp/F)} \cong
    \mathscr{O}_{\mathcal{R}_{K,y}^E}
  \]
  is injective.
\end{lemma}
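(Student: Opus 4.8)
The plan is to work locally on $\mathcal{R}_{K,y}^E$ and descend along the $\Gal(\cp/F)$-action, using the presentation $\reybar \cong \Spd\cp \times_{\Spd F} \mathcal{R}_{K,y}^E$ from \Cref{Prop:RFinPtRepresent} together with the exact sequence of \Cref{Lem:CartierDiv}. First I would reduce to an affinoid situation: cover $\mathcal{R}_{K,y}^E$ by affinoids $V = \Spa(B, B^+)$ with $B$ a smooth affinoid $F$-algebra, so that $\pi^{-1}(V) = \Spa(B\hotimes_F \cp, \cdot)$ and the $\Gal(\cp/F)$-action on $\mathscr{O}_{\reybar}(\pi^{-1}(V)) = B \hotimes_F \cp$ is the base change of the standard action on $\cp$. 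The first assertion is then the statement $(B \hotimes_F \cp)^{\Gal(\cp/F)} = B$, which I would prove exactly as in the last paragraph of the proof of \Cref{Prop:VCompCriter}: write $B$ as a topological direct sum $\widehat{\bigoplus} F$ using \cite[Proposition~2.7.2.8]{BGR84}, so that taking completed tensor with $\cp$ and then Galois invariants recovers $\widehat{\bigoplus}(\cp)^{\Gal(\cp/F)} = \widehat{\bigoplus} F = B$; here I use Tate's theorem that $\cp^{\Gal(\cp/F)} = F$. This gives the canonical isomorphism on each $V$, and compatibility with restrictions is clear, so it sheafifies to $\mathscr{O}_{\mathcal{R}_{K,y}^E} \cong (\pi_\ast(\mathscr{O}_{\rkeybar}/\xi_E\mathscr{O}_{\rkeybar}))^{\Gal(\cp/F)}$, noting that $\mathscr{O}_{\rkeybar}/\xi_E\mathscr{O}_{\rkeybar} \cong i_\ast\mathscr{O}_{\reybar}$ by \Cref{Lem:CartierDiv}, so $\pi_\ast$ of it is computed on the underlying space of $\reybar$.

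For the injectivity statement, I would filter $\mathscr{O}_{\rkeybar}/\xi_E^n\mathscr{O}_{\rkeybar}$ by powers of $\xi_E$, using the short exact sequences
\[
  0 \to \xi_E^{m}\mathscr{O}_{\rkeybar}/\xi_E^{n}\mathscr{O}_{\rkeybar} \to \mathscr{O}_{\rkeybar}/\xi_E^{n}\mathscr{O}_{\rkeybar} \to \mathscr{O}_{\rkeybar}/\xi_E^{m}\mathscr{O}_{\rkeybar} \to 0
\]
coming from \Cref{Lem:CartierDiv} (multiplication by $\xi_E$ is injective on $\mathscr{O}_{\rkeybar}$, being a nonzerodivisor on the strongly Noetherian sousperfectoid space $\rkeybar$). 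Taking $m = 1$, the map in question is the second map of the $n$-term filtration quotient, and its kernel is a quotient of $\pi_\ast$ applied to $\xi_E\mathscr{O}_{\rkeybar}/\xi_E^n\mathscr{O}_{\rkeybar} \cong (\mathscr{O}_{\rkeybar}/\xi_E^{n-1}\mathscr{O}_{\rkeybar})(1)$, a successive extension of twists of $i_\ast\mathscr{O}_{\reybar}$ by $\xi_E$; so it suffices to show that $(\pi_\ast(i_\ast\mathscr{O}_{\reybar} \otimes \xi_E^j/\xi_E^{j+1}))^{\Gal(\cp/F)} = 0$ for $j \ge 1$, i.e. that the graded pieces have no Galois invariants. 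Locally this is $(B \hotimes_F \cp)(j)^{\Gal(\cp/F)}$ where the twist is by the $j$-th power of the character $\Gal(\cp/F) \to \mathcal{O}_F^\times$ through which $\Gal(\cp/F)$ acts on $\xi_E$ (essentially the cyclotomic-type character attached to the Lubin--Tate divisor $\xi_E$, cf.\ \Cref{Par:DivisorXi} and \Cref{Lem:GaloisRepCochar}). Since $B = \widehat{\bigoplus} F$, this reduces to $\cp(j)^{\Gal(\cp/F)} = 0$ for $j \ne 0$, which is again Tate's theorem on the vanishing of Galois cohomology in nonzero Hodge--Tate weight.

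The main obstacle I anticipate is bookkeeping rather than conceptual: I must identify the precise character by which $\Gal(\cp/F)$ acts on the ideal $\xi_E \mathscr{O}_{\rkeybar}/\xi_E^2\mathscr{O}_{\rkeybar}$ and verify it is a nonzero-weight Hodge--Tate character (so that Tate's vanishing applies), and I must make sure the completed-direct-sum decomposition $B \cong \widehat{\bigoplus} F$ is compatible with the completed tensor product and the Galois action uniformly enough that I may pass $\Gal(\cp/F)$-invariants through the (possibly infinite) direct sum and through sheafification. Both points are handled by the same orthonormal-basis argument already used in \Cref{Prop:VCompCriter} together with continuity/strictness of the relevant maps; the twist computation is local at the divisor and follows from the construction of $\xi_E$ in \Cref{Par:DivisorXi}. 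Once these are in place, the two claims of the lemma follow by combining the local computations with \Cref{Lem:CartierDiv}.
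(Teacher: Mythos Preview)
Your proposal is correct and matches the paper's proof: both use \Cref{Lem:CartierDiv} to identify $\mathscr{O}/\xi_E$ with $i_\ast\mathscr{O}_{\mathcal{R}_{K,\bar{y}}^E}$, reduce to affinoids via an orthonormal Schauder basis, and conclude from Tate's theorems $\cp^{\Gal(\cp/F)} = F$ and $\cp(j)^{\Gal(\cp/F)} = 0$ for $j \neq 0$ applied to the $\xi_E$-adic graded pieces. The only differences are cosmetic --- the paper inducts on $n$ one step at a time rather than filtering all at once, and it identifies the twisting character directly as $\chi(\sigma) = \theta(\sigma(\xi_E)/\xi_E)$ (cohomologous to the cyclotomic character since $\xi_E$ and $\log[\epsilon]$ generate the same maximal ideal in $\period{B}{dR}\p(\cp)$) rather than via \Cref{Lem:GaloisRepCochar}, which is not the pertinent input here.
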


\begin{proof}
  For the first part, we see from \Cref{Lem:CartierDiv} that we can
  identify
  \[
    \mathscr{O}_{\rkeybar} / \xi_E \mathscr{O}_{\rkeybar} \cong i_\ast
    \mathscr{O}_{\reybar}.
  \]
  Because $\mathcal{R}_{K,y}^E$ is a rigid analytic variety over $F$, it
  suffices to show for every affinoid $F$-algebra $A$ of finite type that
  \[
    A \to (A \hotimes_F \cp)^{\Gal(\cp/F)}
  \]
  is an isomorphism. This is clear upon choosing an orthonormal Schauder basis
  of $A$ over $F$ using \cite[Proposition~2.7.2.8]{BGR84}, because
  $\cp^{\Gal(\cp/F)} = F$.

  Before moving on to the second statement, we note that the natural action of
  $\Gal(\cp/F)$ on $\rkeybar$ preserves the ideal sheaf $\xi_E
  \mathscr{O}_{\rkeybar}$ by \Cref{Lem:CartierDiv}, and hence also its
  powers $\xi_E^n \mathscr{O}_{\rkeybar}$. There is a short exact sequence
  \[
    0 \to \xi^{n-1} \mathscr{O}_{\rkeybar} / \xi^n \mathscr{O}_{\rkeybar} \to
    \mathscr{O}_{\rkeybar} / \xi_E^n \mathscr{O}_{\rkeybar} \to
    \mathscr{O}_{\rkeybar} / \xi_E^{n-1} \mathscr{O}_{\rkeybar} \to 0,
  \]
  and after taking $\pi_\ast$ and $\Gal(\cp/F)$-invariants we obtain a left
  exact sequence
  \begin{align*}
    0 &\to \pi_\ast(\xi_E^{n-1} \mathscr{O}_{\rkeybar} / \xi_E^n
    \mathscr{O}_{\rkeybar})^{\Gal(\cp/F)} \\ &\to
    \pi_\ast(\mathscr{O}_{\rkeybar} / \xi_E^n
    \mathscr{O}_{\rkeybar})^{\Gal(\cp/F)} \to \pi_\ast(\mathscr{O}_{\rkeybar} /
    \xi_E^{n-1} \mathscr{O}_{\rkeybar})^{\Gal(\cp/F)}.
  \end{align*}
  Therefore it suffices to show that the first term vanishes for $n \ge 2$.

  We consider the multiplication map
  \[
    i_\ast \mathscr{O}_{\reybar} \cong \mathscr{O}_{\rkeybar} /
    \xi_E \mathscr{O}_{\rkeybar} \xrightarrow{\xi_E^{n-1}} \xi_E^{n-1}
    \mathscr{O}_{\rkeybar} / \xi_E^n \mathscr{O}_{\rkeybar},
  \]
  which is an isomorphism. Because the natural map
  \[
    p \colon \rkeybar \to \mathcal{Y}_{(0,\infty)}(\cp, \mathcal{O}_{\cp})
    \times_{\Spa E_0} \Spa E
  \]
  of adic spaces is $\Gal(\cp/F)$-equivariant, the sheaf $\xi_E^{n-1}
  \mathscr{O}_{\rkeybar} / \xi_E^n \mathscr{O}_{\rkeybar}$ with the
  $\Gal(\cp/F)$-action is identified with the sheaf $\mathscr{O}_{\reybar}$ with
  the natural $\Gal(\cp/F)$-action twisted by $\chi^{n-1}$, where $\chi$ is the
  1-cocycle
  \[
    \chi \colon \Gal(\cp/F) \to \cp^\times; \quad \chi(\sigma) =
    \theta(\sigma(\xi_E) / \xi_E).
  \]
  Because $\xi_E$ and $\log([\epsilon])$ generate the same maximal ideal in
  $\period{B}{dR}\p(\cp)$, this 1-cocycle $\chi$ is cohomologous to the
  cyclotomic character. It follows that the $\Gal(\cp/F)$-action on
  \[
    \pi_\ast(\xi_E^{n-1} \mathscr{O}_{\rkeybar} / \xi_E^n
    \mathscr{O}_{\rkeybar})
  \]
  is identified with the $\Gal(\cp/F)$-action on $i_\ast \mathscr{O}_{\reybar}$
  twisted by $\chi^{n-1}$. Similarly as before, for each affinoid $\Spa(A, A^+)
  \subseteq \mathcal{R}_{K,y}^E$ we see that
  \[
    (A \hotimes_F \cp(n-1))^{\Gal(\cp/F)} = 0
  \]
  by picking an orthonormal Schauder basis for $A/F$, since
  $\cp(n-1)^{\Gal(\cp/F)} = 0$.
\end{proof}

\begin{proposition} \label{Prop:RFinFactorPt}
  Let $X$ be a smooth rigid analytic variety over $E$, and let $Z \subseteq X$
  be a closed subvariety. Let $K = K_p K^p \subseteq \mathsf{G}(\af)$ be a neat
  compact open subgroup, let $y \in U_K(F)$ be a classical point, and let $f
  \colon \mathcal{R}_{K,y}^{k_E} \to X^\lozenge$ be a map of v-sheaves over
  \[ \begin{tikzcd}
    \mathcal{R}_{K,y}^{k_E} \arrow{r}{f} \arrow{d} & X^\lozenge \arrow{d} \\
    \Spd F \times_{\Spd k_E} \Spd E \arrow{r}{\mathrm{pr}_2} & \Spd E.
  \end{tikzcd} \]
  If the restriction of $f$ to the closed subsheaf $\mathcal{R}_{K,y}^E
  \subseteq \mathcal{R}_{K,y}^{k_E}$ factors through $Z^\lozenge \hookrightarrow
  X^\lozenge$, then $f$ factors through $Z^\lozenge \hookrightarrow X^\lozenge$.
\end{proposition}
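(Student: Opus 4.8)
The plan is to reduce the statement to an assertion about honest adic spaces via stable v-completeness, and then to establish an infinitesimal rigidity statement along the Cartier divisor $\reybar \hookrightarrow \rkeybar$, using the cyclotomic twist from \Cref{Lem:SheafGalInv} together with Tate's vanishing $\cp(n)^{\Gal(\cp/F)} = 0$ for $n \ge 1$.

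\textbf{Reduction to adic spaces.} Since $Z^\lozenge \hookrightarrow X^\lozenge$ is a monomorphism of v-sheaves and the projection $\rho \colon \rkeybar = \Spd\cp \times_{\Spd F} \mathcal{R}_{K,y}^{k_E} \to \mathcal{R}_{K,y}^{k_E}$ is a v-cover with $\rkeybar \times_{\mathcal{R}_{K,y}^{k_E}} \rkeybar \cong \rkeybar \times \underline{\Gal(\cp/F)}$, it suffices to show that $\bar f := f \circ \rho \colon \rkeybar \to X^\lozenge$ factors through $Z^\lozenge$. By \Cref{Prop:RFinPtRepresent}, $\rkeybar$ and $\reybar$ are strongly Noetherian sousperfectoid stably v-complete adic spaces over $\Spa E$, so by \Cref{Lem:StaVCompFullFaith} the map $\bar f$ corresponds to a morphism of adic spaces $\rkeybar \to X$, which is $\Gal(\cp/F)$-equivariant over $\Spa E$ (trivially on $X$) because $\rho$ is; and the hypothesis together with \Cref{Lem:StaVCompFullFaith} applied to $\reybar$ shows that $\bar f\vert_{\reybar} \colon \reybar \to X$ factors through $Z$. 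Writing $\mathcal{J} \subseteq \mathscr{O}_{\rkeybar}$ for the coherent ideal sheaf $\bar f^\ast\mathcal{I}_Z \cdot \mathscr{O}_{\rkeybar}$ cutting out $\bar f^{-1}(Z)$, it remains to show $\mathcal{J} = 0$. Note that $\mathcal{J}$ is $\Gal(\cp/F)$-stable and $\mathcal{J} \cdot \mathscr{O}_{\reybar} = 0$, which by \Cref{Lem:CartierDiv} means $\mathcal{J} \subseteq \xi_E \mathscr{O}_{\rkeybar}$.

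\textbf{Infinitesimal rigidity.} I would next show by induction on $n \ge 1$ that $\mathcal{J} \subseteq \xi_E^n \mathscr{O}_{\rkeybar}$. Away from $\reybar$ the section $\xi_E$ is a unit, so there is nothing to check, and it suffices to argue locally near $\reybar$, where $\mathcal{J}$ is generated by functions $\bar f^\ast g$ for $g$ a local section of $\mathcal{I}_Z$; each such $\bar f^\ast g$ is $\Gal(\cp/F)$-invariant since $\bar f$ is equivariant and $g$ comes from $X/E$. Granting $\mathcal{J} \subseteq \xi_E^n \mathscr{O}_{\rkeybar}$, the class of $\bar f^\ast g$ in $\xi_E^n \mathscr{O}_{\rkeybar} / \xi_E^{n+1} \mathscr{O}_{\rkeybar}$ is $\Gal(\cp/F)$-invariant; by (the proof of) \Cref{Lem:SheafGalInv} this quotient is isomorphic to $\mathscr{O}_{\reybar}$ with its natural $\Gal(\cp/F)$-action twisted by $\chi^n$, where $\chi(\sigma) = \theta(\sigma(\xi_E)/\xi_E)$ is cohomologous to the cyclotomic character. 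Since $\reybar \cong \mathcal{R}_{K,y}^E \times_F \cp$ and $\mathcal{R}_{K,y}^E$ is a rigid analytic variety over $F$, choosing orthonormal Schauder bases of its affinoid algebras over $F$ and using $\cp(n)^{\Gal(\cp/F)} = 0$ for $n \ge 1$ shows the $\Gal(\cp/F)$-invariant sections of the twisted sheaf vanish; hence $\bar f^\ast g \in \xi_E^{n+1} \mathscr{O}_{\rkeybar}$. This closes the induction, so $\mathcal{J} \subseteq \bigcap_{n \ge 1} \xi_E^n \mathscr{O}_{\rkeybar}$.

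\textbf{Global vanishing.} Finally, using the description of \Cref{Prop:RFinPtRepresent} of $\rkeybar$ as a base change to $\cp$ of a product of $\mathcal{Y}_{(0,\infty)}(\cp, \mathcal{O}_{\cp}) \times_{\Spa E_0} \Spa E$ with a local Shimura variety $\shloc{b}[K_p]$, I would cover $\rkeybar$ by the strongly Noetherian affinoids $\Spa(\binterval \hotimes_{\ebr} \mathcal{O}(W))$ for $r \ge 1$ and $W$ a connected affinoid of $\shloc{b}[K_p]$, which (as in the proof of \Cref{Lem:CartierDiv}) are domains, each containing $\reybar$, so $\xi_E$ is a non-unit non-zerodivisor in each of their coordinate rings. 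Krull's intersection theorem then gives $\bigcap_{n \ge 1} \xi_E^n (\binterval \hotimes_{\ebr} \mathcal{O}(W)) = 0$, so $\mathcal{J}$ vanishes on each of these affinoids; since they cover $\rkeybar$ we get $\mathcal{J} = 0$, i.e.\ $\bar f$ factors through $Z$, hence $f$ factors through $Z^\lozenge$.

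The hard part will be the infinitesimal rigidity step: it relies on the conormal sheaf of $\reybar$ in $\rkeybar$ being the cyclotomically twisted structure sheaf (\Cref{Lem:SheafGalInv}), so that the successive obstructions to $\bar f^\ast\mathcal{I}_Z$ vanishing to higher order along $\reybar$ become $\Gal(\cp/F)$-invariant sections of positively Tate-twisted sheaves and therefore vanish — this is the mechanism by which $\mathcal{R}_{K,y}^{k_E}$, though far from representable, is rigidly controlled by its closed subsheaf $\mathcal{R}_{K,y}^E$. One should also take care that the reduction to an adic morphism and the subsequent Galois-descent identifications are compatible with the $\Gal(\cp/F)$-actions coming from \eqref{Eq:RFinPtGalois}.
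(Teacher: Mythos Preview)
Your strategy matches the paper's: pass to the adic space $\rkeybar$ via \Cref{Lem:StaVCompFullFaith} and \Cref{Prop:RFinPtRepresent}, use the cyclotomic twist on the graded pieces $\xi_E^n\mathscr{O}_{\rkeybar}/\xi_E^{n+1}\mathscr{O}_{\rkeybar}$ together with Tate's vanishing $\cp(n)^{\Gal(\cp/F)}=0$ to force the pullback ideal $\mathcal{J} = \bar f^\ast\mathcal{I}_Z\cdot\mathscr{O}_{\rkeybar}$ into $\bigcap_{n}\xi_E^n\mathscr{O}_{\rkeybar}$, and then show this intersection vanishes on an affinoid cover. The paper packages your inductive step as a single appeal to \Cref{Lem:SheafGalInv} (whose proof is exactly the induction you wrote out), but the substance is identical.

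The one gap is in your global vanishing step. You assert that the rings $\binterval \hotimes_{\ebr} \mathcal{O}(W)$ are domains, citing the proof of \Cref{Lem:CartierDiv}; but that lemma only establishes that $\xi_E$ is a non-zerodivisor, and a completed tensor product of domains over the non-algebraically-closed field $\ebr$ need not be a domain (a connected affinoid $W$ need not be geometrically connected). Without the domain hypothesis Krull's intersection theorem does not yield $\bigcap_n(\xi_E)^n = 0$: for a general Noetherian ring one only obtains that elements of the intersection are annihilated by some $1 + a\xi_E$, and $\xi_E$ is certainly not in the Jacobson radical here. The paper sidesteps this by reducing to $\binterval$ itself, which \emph{is} a PID: choosing an orthonormal Schauder basis of $\mathcal{O}(W)$ over $\ebr$ identifies $\binterval \hotimes_{\ebr} \mathcal{O}(W)$ with a completed direct sum $\widehat{\bigoplus}_i \binterval$, so any element of $\bigcap_n\xi_E^n$ has each coordinate in $\bigcap_n\xi_E^n\binterval = 0$. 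Alternatively you could refine the cover so that each $W$ is isomorphic to a polydisk over $\ebr$, in which case $\binterval \hotimes_{\ebr} \mathcal{O}(W) \cong \binterval\langle T_1,\dotsc,T_d\rangle$ is visibly a domain and your Krull argument goes through.
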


\begin{proof}
  We note that for a stably uniform Huber--Tate pair $(A, A^+)$ (for example, a
  sousperfectoid Huber pair), an element $f \in \bigcap_x \ker(\lvert -
  \rvert_x) \subseteq A$ is necessarily zero, because $\varpi^{-n} f \in A^+$
  for all $n$ by \cite[Lemma~3.3(i)]{Hub93}. Hence a map $\Spa(A, A^+) \to X$
  factors through $Z$ if and only if its image lies in $\lvert Z \rvert
  \subseteq \lvert X \rvert$. In particular, we have an isomorphism $Z^\lozenge
  \cong X^\lozenge \times_{\underline{\lvert X \rvert}} \underline{\lvert Z
  \rvert}$ of v-sheaves.

  By \Cref{Lem:StaVCompFullFaith} and
  \Cref{Prop:RFinPtRepresent}, the map $f$ between v-sheaves
  corresponds to a $\Gal(\cp/F)$-invariant map
  \[
    g \colon \rkeybar \to X
  \]
  between strongly Noetherian sousperfectoid stably v-complete adic spaces. The
  assumption that $f$ sends $\mathcal{R}_{K,y}^E$ to $Z^\lozenge$ corresponds to
  the fact that $g$ sends $\reybar$ to $Z$. That is, letting $\mathscr{I}_Z
  \subseteq \mathscr{O}_X$ be the sheaf of ideals cutting out $Z$, the
  composition
  \[
    \mathscr{I}_Z \hookrightarrow \mathscr{O}_X \to g_\ast (i_\ast
    \mathscr{O}_{\reybar})
  \]
  is zero, where $i \colon \reybar \hookrightarrow \rkeybar$ is the natural
  inclusion. We decompose the map $g \circ i$ as
  \[
    \lvert \reybar \rvert \xrightarrow{\pi} \lvert \mathcal{R}_{K,y}^E \rvert
    \xrightarrow{f} \lvert X \rvert,
  \]
  so that the corresponding composition
  \[
    f^{-1} \mathscr{I}_Z \hookrightarrow f^{-1} \mathscr{O}_X \xrightarrow{g}
    \pi_\ast \mathscr{O}_{\reybar}
  \]
  obtained by adjunction is zero.

  For an integer $n \ge 1$, we consider the map
  \[
    f^{-1} \mathscr{O}_X \to \pi_\ast (\mathscr{O}_{\rkeybar} / \xi_E^n
    \mathscr{O}_{\rkeybar})
  \]
  induced by $g$. Because $g$ is $\Gal(\cp/F)$-equivariant, its image lies in
  the Galois invariants
  \[
    (\pi_\ast(\mathscr{O}_{\rkeybar} / \xi_E^n
    \mathscr{O}_{\rkeybar}))^{\Gal(\cp/F)} \subseteq
    \pi_\ast(\mathscr{O}_{\rkeybar} / \xi_E^n \mathscr{O}_{\rkeybar}).
  \]
  On the other hand, from above we know that the composition
  \[
    f^{-1} \mathscr{I}_Z \hookrightarrow f^{-1} \mathscr{O}_X \to
    \pi_\ast(\mathscr{O}_{\rkeybar} / \xi_E^n
    \mathscr{O}_{\rkeybar})^{\Gal(\cp/F)} \hookrightarrow
    \pi_\ast(\mathscr{O}_{\reybar})^{\Gal(\cp/F)}
  \]
  is zero, where the last map is injective by \Cref{Lem:SheafGalInv}. It
  follows that $f^{-1} \mathscr{I}_Z$ maps to zero under
  $\pi_\ast(\mathscr{O}_{\rkeybar} / \xi_E^n \mathscr{O}_{\rkeybar})$, and
  therefore the composition
  \[
    \mathscr{I}_Z \hookrightarrow \mathscr{O}_X \to
    g_\ast(\mathscr{O}_{\rkeybar} / \xi_E^n \mathscr{O}_{\rkeybar})
  \]
  is zero.

  Because $\rkeybar$ is strongly Noetherian, there is a well-defined coherent
  ideal sheaf
  \[
    (\pi^{-1} f^{-1} \mathscr{I}_Z) \mathscr{O}_{\rkeybar} \subseteq
    \mathscr{O}_{\rkeybar}.
  \]
  We have seen above that this is contained in the coherent ideal sheaf $\xi_E^n
  \mathscr{O}_{\rkeybar} \subseteq \mathscr{O}_{\rkeybar}$
  for all integers $n \ge 1$. We now claim that the only coherent ideal sheaf
  contained in $\xi_E^n \mathscr{O}_{\rkeybar}$ for all $n \ge 1$ is the zero
  ideal sheaf. By \cite[Theorem~2.3.3]{KL19p}, coherent ideal sheaves correspond
  to ideals on strongly Noetherian affinoids. Hence it suffices to find a
  strongly Noetherian affinoid open cover
  \[
    \rkeybar = \bigcup_{\beta} \Spa(B_\beta, B_\beta^+)
  \]
  with the property that $\bigcap_{n \ge 1} \xi_E^n B_\beta= 0$ for all $\beta$.

  As in the proof of \Cref{Lem:CartierDiv}, we choose a lift $x \in U(\cp)$
  of $\bar{y}$ and $b \in G(\qpbr)$ so that
  \[
    \rkeybar \cong (\mathcal{Y}_{(0,\infty)}(\cp, \mathcal{O}_{\cp}) \times_{\Spa
    E_0} \Spa E) \times_{\Spa \ebr} \shloc{b}[K_p].
  \]
  Fixing an affinoid cover $\shloc{b}[K_p] = \bigcup_\alpha \Spa(A_\alpha,
  A_\alpha^+)$ and writing $\binterval$ for the ring of global sections of
  $\mathcal{Y}_{[1/r,r]}(\cp, \mathcal{O}_{\cp}) \times_{\Spa E_0} \Spa E$, we
  obtain an open cover
  \[
    \rkeybar = \bigcup_{\alpha,r} \Spa (\binterval \hotimes_{\ebr} A_\alpha).
  \]
  Because $\binterval$ is a Noetherian integral domain (even a principal ideal
  domain by \cite[Proposition~2.6.8]{Ked05}), we have $\bigcap_{n \ge 1} \xi_E^n
  \binterval = 0$ by Nakayama's lemma. It follows from choosing a Schauder basis
  of $A_\alpha$ over $\ebr$ that $\bigcap_{n \ge 1} \xi_E^n \binterval
  \hotimes_{\ebr} A_\alpha = 0$ as desired.

  This shows that $\mathscr{I}_Z \mathscr{O}_{\rkeybar} = 0$, and hence the
  composition
  \[
    \mathscr{I}_Z \hookrightarrow \mathscr{O}_X \xrightarrow{g} g_\ast
    \mathscr{O}_{\rkeybar}
  \]
  is zero. It follows that the map $g \colon \rkeybar \to X$ factors through the
  vanishing locus of $\mathscr{I}_Z$, which is $Z$. Thus $f \colon
  \mathcal{R}_{K,y}^{k_E} \to X^\lozenge$ factors through $Z^\lozenge$.
\end{proof}

\begin{remark}
  The assumption that $X$ is smooth can be relaxed to that $X$ is seminormal,
  because seminormal rigid analytic varieties are stably v-complete, see
  \cite[Theorem~8.2.3]{KL19p} or \cite[Theorem~10.3]{HK25p}.
\end{remark}

\paragraph
Recall from \Cref{Par:RFinite} the v-sheaf $\mathcal{R}_K \to \Spd E \times \Spd
E$. We denote by
\[
  \mathcal{R}_K^E \hookrightarrow \mathcal{R}_K^{k_E} \hookrightarrow
  \mathcal{R}_K
\]
its base change to $\Spd E \hookrightarrow \Spd E \times_{\Spd k_E} \Spd E
\hookrightarrow \Spd E \times \Spd E$.

\begin{corollary} \label{Cor:RFinFactor}
  Let $X$ be a smooth rigid analytic variety over $E$, and let $Z \subseteq X$
  be a closed subvariety. Let $K = K_p K^p \subseteq \mathsf{G}(\af)$ be a neat
  compact open subgroup, and let $f \colon \mathcal{R}_K^{k_E} \to X^\lozenge$
  be a map of v-sheaves over
  \[ \begin{tikzcd}
    \mathcal{R}_K^{k_E} \arrow{r}{f} \arrow{d} & X^\lozenge \arrow{d} \\ \Spd E
    \times_{\Spd k_E} \Spd E \arrow{r}{\mathrm{pr}_2} & \Spd E.
  \end{tikzcd} \]
  If the restriction of $f$ to the closed subsheaf $\mathcal{R}_K^E \subseteq
  \mathcal{R}_K^{k_E}$ factors through $Z^\lozenge \hookrightarrow X^\lozenge$,
  then $f$ factors through $Z^\lozenge \hookrightarrow X^\lozenge$.
\end{corollary}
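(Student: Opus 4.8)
The plan is to deduce \Cref{Cor:RFinFactor} from \Cref{Prop:RFinFactorPt} by reducing the statement at level $K$ to the statement at a classical point, exploiting the $\underline{\mathsf{G}(\af)}$-equivariance built into $\mathcal{R}_K$ and the fact that the locus where a map factors through $Z^\lozenge$ is closed. First I would observe that since $X$ is smooth (hence separated and, by the remark following \Cref{Prop:RFinFactorPt}, stably v-complete), the subsheaf $Z^\lozenge \hookrightarrow X^\lozenge$ is a closed v-subsheaf; concretely $Z^\lozenge \cong X^\lozenge \times_{\underline{\lvert X \rvert}} \underline{\lvert Z \rvert}$, so the locus in $\mathcal{R}_K^{k_E}$ on which $f$ factors through $Z^\lozenge$ is a closed v-subsheaf $\mathcal{F} \subseteq \mathcal{R}_K^{k_E}$. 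The hypothesis says $\mathcal{R}_K^E \subseteq \mathcal{F}$, and the goal is $\mathcal{F} = \mathcal{R}_K^{k_E}$.

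Next I would reduce to a pointwise statement. Recall from \Cref{Par:RFinite} that $\mathcal{R}_K$ is a quotient of $\mathcal{R}$ by a profinite group and that the projection $\mathrm{pr}_1 \colon \mathcal{R}_K \to U_K$ is (after composing through $\mathcal{Q}_K$) a reasonable map; moreover by \Cref{Lem:DensityClassical} applied to $\mathcal{R}^{k_E}$ — or rather by the observation that $\lvert \mathcal{R}_K^{k_E} \rvert \to \lvert U_K \rvert$ has image meeting every fiber, and that classical points $y \in U_K$ are Zariski dense in the smooth rigid analytic variety $\sh^{\mathrm{ad}}_K$ hence topologically dense in $|U_K|$ — the union of the images of the fibers $\mathcal{R}_{K,y}^{k_E} \hookrightarrow \mathcal{R}_K^{k_E}$ over classical points $y$ is dense in $\lvert \mathcal{R}_K^{k_E} \rvert$. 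Since $\mathcal{F}$ is closed, it suffices to check $\mathcal{F} \supseteq \mathcal{R}_{K,y}^{k_E}$ for every classical $y \in U_K(F)$, $F/E$ finite. But the restriction of $f$ to $\mathcal{R}_{K,y}^{k_E}$ is exactly a map of the kind considered in \Cref{Prop:RFinFactorPt}, living over $\mathrm{pr}_2 \colon \Spd F \times_{\Spd k_E} \Spd E \to \Spd E$ (note $\Spd F \times_{\Spd k_E} \Spd E \to \Spd E \times_{\Spd k_E} \Spd E$ since $k_E \subseteq k_F$, so the base change is compatible), and by hypothesis its restriction to the closed subsheaf $\mathcal{R}_{K,y}^E$ factors through $Z^\lozenge$ because $\mathcal{R}_{K,y}^E \subseteq \mathcal{R}_K^E \subseteq \mathcal{F}$ — one must be slightly careful that $\mathcal{R}_{K,y}^E$ is the fiber of $\mathcal{R}_K^E$ over $y$, which follows from the Cartesian diagram preceding \Cref{Prop:RFinPtRepresent}. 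Then \Cref{Prop:RFinFactorPt} gives that $f|_{\mathcal{R}_{K,y}^{k_E}}$ factors through $Z^\lozenge$, i.e. $\mathcal{R}_{K,y}^{k_E} \subseteq \mathcal{F}$.

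The main obstacle I anticipate is the density argument: one needs that the classical-point fibers $\bigcup_y \mathrm{im}(|\mathcal{R}_{K,y}^{k_E}| \to |\mathcal{R}_K^{k_E}|)$ are dense, which requires knowing both that $\mathrm{pr}_1 \colon \mathcal{R}_K^{k_E} \to U_K$ is an open map (or at least a quotient map) and that $|\mathcal{R}_{K,y}^{k_E}| \to |\mathcal{R}_K^{k_E}| \times_{|U_K|} \{y\}$ is surjective. The openness of $\mathrm{pr}_1$ can be extracted from the proof of \Cref{Lem:DensityClassical} — the argument there shows $[\underline{G(\qp)}\backslash\grg\m] \to \bung\m \times \Spd E$ is $\ell$-cohomologically smooth hence $\mathrm{pr}_1$ is open after base-changing along $\Spd k_E \hookrightarrow \Spd \fp$ as well, since $\bung\m \times_{\Spd \fp} \Spd k_E \hookrightarrow \bung\m \times \Spd E$ factors appropriately — and the fiber surjectivity follows from \cite[Proposition~12.10]{Sch17p} as in the last paragraph of that proof. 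Once these topological points are in place the reduction to \Cref{Prop:RFinFactorPt} is formal, so I would state the corollary's proof as: $Z^\lozenge$ is a closed v-subsheaf, the factoring locus $\mathcal{F}$ is closed, it contains all classical-point fibers by \Cref{Prop:RFinFactorPt}, these are dense by the argument of \Cref{Lem:DensityClassical}, hence $\mathcal{F} = \mathcal{R}_K^{k_E}$.
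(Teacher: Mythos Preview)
Your proposal is correct and follows essentially the same route as the paper: reduce to a topological statement via $Z^\lozenge \cong X^\lozenge \times_{\underline{\lvert X \rvert}} \underline{\lvert Z \rvert}$, apply \Cref{Prop:RFinFactorPt} to each classical-point fiber $\mathcal{R}_{K,y}^{k_E}$, and conclude by density using that $\pi \colon \mathcal{R}_K^{k_E} \to U_K$ is open. The paper makes the openness argument slightly more explicit by factoring $\pi$ as $\mathcal{R}_K^{k_E} \to \mathcal{Q}_K^{k_E} \to U_K \times_{\bung\c\m \times \Spd k_E} [\underline{G^\mathrm{c}(\qp)} \backslash \grg\c\m] \to U_K$, where the first two maps are \'{e}tale and the last is a base change of a separated $\ell$-cohomologically smooth map (using \cite[Proposition~24.5]{Sch17p} for the extra factor $\Spd E \to \Spd k_E$), but this is exactly the ``argument of \Cref{Lem:DensityClassical}'' you gesture at.
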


\begin{proof}
  As in the beginning of the proof of \Cref{Prop:RFinFactorPt}, it
  suffices to show that the closed subset $f^{-1}(\lvert Z \rvert) \subseteq
  \lvert \mathcal{R}_K^{k_E} \rvert$ is the entire space. On the other hand, for
  each classical point $y \colon \Spa F \to U_K$ for $F/E$ a finite extension,
  we know from \Cref{Prop:RFinFactorPt} that
  \[
    \im(\lvert \mathcal{R}_{K,y}^{k_E} \rvert \to \lvert \mathcal{R}_K^{k_E}
    \rvert) \subseteq f^{-1}(\lvert Z \rvert).
  \]
  Let $\pi \colon \mathcal{R}_K^{k_E} \to U_K$ be the natural map. By
  \cite[Proposition~12.10]{Sch17p}, we may identify the image with $\im(\lvert
  \mathcal{R}_{K,y}^{k_E} \rvert \to \lvert \mathcal{R}_K^{k_E} \rvert) =
  \pi^{-1}(\{y\})$.

  We now argue as in \Cref{Lem:DensityClassical}. The map $\pi$ is the
  composition
  \begin{align*}
    \mathcal{R}_K^{k_E} &\to \mathcal{Q}_K^{k_E} \cong U_K \times_{\bung\c\m
    \times \Spd k_E} [\underline{K_p^\mathrm{c}} \backslash \grg\c\m] \\ &\to
    U_K \times_{\bung\c\m \times \Spd k_E} [\underline{G^\mathrm{c}(\qp)}
    \backslash \grg\c\m] \xrightarrow{\mathrm{pr}_1} U_K.
  \end{align*}
  The first two maps are \'{e}tale by the discussion in
  \Cref{Par:RFinite}, and hence open. The last map is the base change of
  a separated and $\ell$-cohomologically smooth morphism
  \[
    [\underline{G^\mathrm{c}(\qp)} \backslash \grg\c\m] \to \bung\c\m \times
    \Spd E \to \bung\c\m \times \Spd k_E,
  \]
  see \cite[Theorem~IV.1.19]{FS21p} and \cite[Proposition~24.5]{Sch17p}, and
  hence is also open by \cite[Proposition~23.11]{Sch17p}. It follows that $\pi$
  is an open morphism, so that the preimage of a dense subset is dense. Finally,
  because $U_K$ is a rigid analytic variety, its classical points are dense, and
  thus the union $\bigcup_y \pi^{-1}(\{y\}) \subseteq \lvert \mathcal{R}_K^{k_E}
  \rvert$ is dense. This shows that $f^{-1}(\lvert Z \rvert) = \lvert
  \mathcal{R}_K^{k_E} \rvert$.
\end{proof}

\begin{proposition} \label{Prop:DifferentUntilt}
  Let $\gx \to \gx'$ be a morphism of Shimura data, inducing a morphism of
  Shimura varieties $\sh \to \sh'$. Let $U \subseteq \sh\d$ and $U^\prime
  \subseteq \sh\d'$ be open subsheaves stable under
  $\underline{\mathsf{G}(\af)}$ and $\underline{\mathsf{G}^\prime(\af)}$, where
  $U$ maps to $U^\prime$. Let
  \[
    \Theta \colon U \times_{\bung\m} \grg\m \to U, \quad \Theta^\prime \colon
    U^\prime \times_{\bung\m'} \grg\m' \to U^\prime
  \]
  be global uniformization maps for $U$ and $U^\prime$. Then the diagram
  \[ \begin{tikzcd}
    U \times_{\bung\m} \grg\m \arrow{r} \arrow{d}{\Theta} & U^\prime
    \times_{\bung\m'} \grg\m' \arrow{d}{\Theta^\prime} \\ U \arrow{r} & U^\prime
  \end{tikzcd} \]
  commutes.
\end{proposition}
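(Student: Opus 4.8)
The plan is to reduce the commutativity of the square on all of $\mathcal{R} = U \times_{\bung\m} \grg\m$ to the three successive loci $\mathcal{R}^E \hookrightarrow \mathcal{R}^{k_E} \hookrightarrow \mathcal{R}$, using the tools assembled in Sections~\ref{Sec:SameUntilt}--\ref{Sec:DifferentUntilt}, and then use the absolute Frobenius to sweep out everything. Since $U^\prime$ is a separated v-sheaf, the locus $\mathcal{E} \subseteq \mathcal{R}$ on which the two composites $\mathcal{R} \to U^\prime$ agree is a closed v-subsheaf, so it suffices to show $\mathcal{E} = \mathcal{R}$.

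\textbf{Step 1: the diagonal locus.} By \Cref{Prop:SameUntilt}, the restriction of the square to $\mathcal{R}^E = U \times_{\bung\m \times \Spd E} \grg\m$ commutes, i.e., $\mathcal{R}^E \subseteq \mathcal{E}$.

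\textbf{Step 2: from $\mathcal{R}^E$ to $\mathcal{R}^{k_E}$.} Here I pass to finite level using the $\underline{G(\qp)}$-equivariance (in fact $\mathsf{G}(\af) \times G(\qp)$-equivariance) of both $\Theta$ and $\Theta^\prime$, which is part of \Cref{Def:GlobalUniformization}: the commutativity of the square on $\mathcal{R}^{k_E}$ can be checked after descending to each $\mathcal{R}_K^{k_E}$ for $K = K^p K_p$ neat, and then taking the limit $K \to \{1\}$. For a fixed neat $K$ and a fixed neat $K^\prime \subseteq \mathsf{G}^\prime(\af)$ receiving the image of $K$, the two composites $\mathcal{R}_K^{k_E} \rightrightarrows U^\prime_{K^\prime}$ land in the smooth rigid analytic variety $\sh^\prime[K^\prime]$ over $E^\prime$ (hence over $E$), and they already agree after restriction to the closed subsheaf $\mathcal{R}_K^E$ by Step~1. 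Applying \Cref{Cor:RFinFactor} to the diagonal $Z = \Delta \colon \sh^\prime[K^\prime] \hookrightarrow \sh^\prime[K^\prime] \times_{E^\prime} \sh^\prime[K^\prime]$ --- which is a closed subvariety of a smooth rigid analytic variety --- with $X = \sh^\prime[K^\prime] \times_{E^\prime} \sh^\prime[K^\prime]$ and $f$ the product of the two composites, we conclude that $f$ factors through $Z^\lozenge$, i.e., the two composites agree on $\mathcal{R}_K^{k_E}$. Taking the limit over $K$ and using $G(\qp)$-equivariance to recover $\Theta$ and $\Theta^\prime$ from their finite-level descents $\Theta_K, \Theta^\prime_{K^\prime}$ (see \Cref{Par:ThetaFinite}), we obtain $\mathcal{R}^{k_E} \subseteq \mathcal{E}$.

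\textbf{Step 3: from $\mathcal{R}^{k_E}$ to $\mathcal{R}$.} The absolute Frobenius $\phi$ acts on $\mathcal{Y}_{(0,\infty)}(R, R^+)$, hence on $\Spd E \times \Spd E$ in the second factor via the automorphism $\phi$ of the source $\Spd E$; concretely, $\phi \colon \Spd E \times_{\Spd k_E} \Spd E \to \Spd E \times_{\Spd k_E} (\phi\text{-twist of } \Spd E)$, and iterating $\phi^n$ moves the connected component $\mathcal{R}^{k_E}$ around so that $\bigcup_{n \in \mathbb{Z}} \phi^n(\mathcal{R}^{k_E}) = \mathcal{R}$ (the residue-field locus $\Spd E \times_{\Spd k_E} \Spd E$ is, after base change to $\Spd \fpbar$, a single $\phi$-orbit of connected components in $\Spd E \times_{\Spd \fpbar} \Spd \fpbar$, and $\phi$ permutes them transitively, as in the identification $\mathcal{R}_{K,\bar y} \cong \coprod_{k_E \hookrightarrow \fpbar} \rkeybar$ of \Cref{Prop:RFinPtRepresent}). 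By axiom~(2) of \Cref{Def:GlobalUniformization}, $\Theta$ intertwines $\phi \times \id$ on the source with $\id$ on the target, and likewise for $\Theta^\prime$; the horizontal maps $U \times_{\bung\m} \grg\m \to U^\prime \times_{\bung\m'} \grg\m'$ and $U \to U^\prime$ are $\phi$-equivariant by naturality (\Cref{Par:AbsoluteFrob}). Hence $\mathcal{E}$ is stable under $\phi \times \id$, and since it contains $\mathcal{R}^{k_E}$ it contains all of $\bigcup_n \phi^n(\mathcal{R}^{k_E}) = \mathcal{R}$. Therefore $\mathcal{E} = \mathcal{R}$ and the square commutes.

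\textbf{Main obstacle.} The technical heart is Step~2, and it is already done for us in \Cref{Cor:RFinFactor} / \Cref{Prop:RFinFactorPt}; the only thing requiring care in assembling the argument is the bookkeeping of descending $\Theta$ and $\Theta^\prime$ to compatible finite levels $K \to K^\prime$ (using that the anti-cuspidal part of the center behaves as in \Cref{Sec:FiniteLevel}), and checking that the limit over $K \to \{1\}$ recovers the original square --- but this is immediate from $\Theta = \varprojlim_K \Theta_K$. Step~3 requires only the observation that the $\phi$-orbit of the diagonal component covers $\Spd E \times \Spd E$, which is elementary. So modulo citing the earlier results, the proof is short; the content lives in the preceding sections.
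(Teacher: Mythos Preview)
Your proof is correct and follows essentially the same three-step structure as the paper: \Cref{Prop:SameUntilt} for $\mathcal{R}^E$, then \Cref{Cor:RFinFactor} applied to the diagonal of the finite-level target to pass to $\mathcal{R}^{k_E}$, then the Frobenius sweep. Two cosmetic remarks: in Step~3 the action $(\phi \times \id)$ is Frobenius on the \emph{first} factor (the one coming from $U$), not the second, and the covering is the finite decomposition $\mathcal{R} = \coprod_{n=0}^{[k_E:\fp]-1} (\phi \times \id)^n \mathcal{R}^{k_E}$; also, the paper carries out both Steps~2 and~3 at finite level before taking the limit $K \to \{1\}$, whereas you take the limit in between --- both orderings work.
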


\begin{proof}
  Write $\mathcal{R} = U \times_{\bung\m} \grg\m$ and consider the base changes
  \[
    \mathcal{R}^E \hookrightarrow \mathcal{R}^{k_E} \hookrightarrow \mathcal{R}
  \]
  to $\Spd E \hookrightarrow \Spd E \times_{\Spd k_E} \Spd E \hookrightarrow
  \Spd E \times \Spd E$. From \Cref{Prop:SameUntilt} we see that the
  two $\underline{\mathsf{G}(\af)} \times \underline{G(\qp)}$-equivariant maps
  $\alpha, \beta \colon \mathcal{R} = U \times_{\bung\m} \grg\m \to U^\prime$ of
  the commutative diagram agree after restricting to $\mathcal{R}^E$.

  For neat compact open subgroups $K \subseteq \mathsf{G}(\af)$ and $K^\prime
  \subseteq \mathsf{G}^\prime(\af)$ satisfying $K \to K^\prime$, we have by
  \Cref{Par:ThetaFinite} a diagram
  \[ \begin{tikzcd}
    \mathcal{R}_K \arrow{r} \arrow{d}{\Theta_K} & \mathcal{R}_{K^\prime}^\prime
    \arrow{d}{\Theta_{K^\prime}^\prime} \\ U_K \arrow{r} & U_{K^\prime}^\prime
  \end{tikzcd} \]
  that commutes after restriction to $\mathcal{R}_K^E \hookrightarrow
  \mathcal{R}_K$. Because $U_K$ is a smooth separated rigid analytic variety, we
  may apply \Cref{Cor:RFinFactor} to the diagonal $U_K \hookrightarrow
  U_K \times_E U_K$ and conclude that the diagram commutes after restriction to
  $\mathcal{R}_K^{k_E} \subseteq \mathcal{R}_K$. On the other hand, both
  compositions $\alpha_K, \beta_K \colon \mathcal{R}_K \to U_{K^\prime}^\prime$
  are invariant for the action of $\phi \times \id$ on $\mathcal{R}_K$. Because
  we have
  \begin{align*}
    \mathcal{R} &= U \times_{\bung\m} \grg\m \\ &= \coprod_{n=0}^{[k_E:\fp]-1}
    (\phi \times \id)^n (U \times_{\bung\m \times \Spd k_E} \grg\m) =
    \coprod_{n=0}^{[k_E:\fp]-1} (\phi \times \id)^n \mathcal{R}^{k_E},
  \end{align*}
  taking an appropriate quotient as in \Cref{Par:RFinite} we also have
  \[
    \mathcal{R}_K = \coprod_{n=0}^{[k_E:\fp]-1} (\phi \times
    \id)^n(\mathcal{R}_K^{k_E}).
  \]
  Therefore the two maps $\alpha_K, \beta_K$ agree. Taking the limit as $K \to
  \{1\}$ and $K^\prime \to \{1\}$, we conclude that $\alpha = \beta$.
\end{proof}

\begin{theorem} \label{Thm:Functoriality}
  Let $\gx \to \gx'$ be a morphism of Shimura data, inducing a morphism of
  Shimura varieties $\sh \to \sh'$. Let $\mathsf{E} \supseteq \mathsf{E}^\prime$
  be the inclusion of reflex fields, choose a place $v \mid p$ of $\mathsf{E}$
  that induces a place $v^\prime \mid p$ of $\mathsf{E}^\prime$, and write $E =
  \mathsf{E}_v$ and $E^\prime = \mathsf{E}^\prime_{v^\prime}$. Let $U \subseteq
  \sh\d$ and $U^\prime \subseteq \sh\d'$ be open subsheaves stable under
  $\underline{\mathsf{G}(\af)}$ and $\underline{\mathsf{G}^\prime(\af)}$, where
  $U \to U^\prime$. Let
  \[
    \igsu \to \bung\m, \quad \igsu' \to \bung\m'
  \]
  be Igusa stacks for $U$ and $U^\prime$. Then there exists, uniquely up to
  unique isomorphism, a morphism of Igusa stacks
  \[
    \igsu \to \igsu'.
  \]
\end{theorem}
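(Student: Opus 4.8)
The plan is to deduce \Cref{Thm:Functoriality} directly from the equivalence between Igusa stacks and global uniformization maps together with the functoriality of the latter. First I would invoke \Cref{Prop:IgusaGlobalUnif}, which gives a bijection between isomorphism classes of Igusa stacks $\igsu \to \bung\m$ and global uniformization maps $\Theta \colon U \times_{\bung\m} \grg\m \to U$; let $\Theta$ and $\Theta'$ be the global uniformizations corresponding to $\igsu$ and $\igsu'$ respectively. By \Cref{Prop:IgusaThetaFunct}, a morphism of Igusa stacks $\igsu \to \igsu'$ exists if and only if the square \eqref{Eq:IgusaThetaFunct} commutes, i.e., if and only if the diagram
\[ \begin{tikzcd}
  U \times_{\bung\m} \grg\m \arrow{d}{\Theta} \arrow{r} & U^\prime
  \times_{\bung\m'} \grg\m' \arrow{d}{\Theta^\prime} \\ U \arrow{r} & U^\prime
\end{tikzcd} \]
of v-sheaves commutes. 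But this is exactly the content of \Cref{Prop:DifferentUntilt}. Hence a morphism of Igusa stacks exists.

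For the uniqueness part, I would appeal to \Cref{Prop:IgusaDiscrete}, which states that the groupoid of morphisms of Igusa stacks $\igsu \to \igsu'$ is $(-1)$-truncated, i.e., either empty or contractible. Combined with the existence established above, this groupoid is contractible, so the morphism of Igusa stacks $\igsu \to \igsu'$ exists uniquely up to unique isomorphism. This completes the proof.

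The genuine mathematical work, of course, has already been carried out in the preceding sections: the main obstacle is \Cref{Prop:DifferentUntilt}, whose proof in turn rests on \Cref{Prop:SameUntilt} (functoriality over the diagonal $\Spd E \hookrightarrow \Spd E \times \Spd E$, proven via the local Shimura variety analysis of \Cref{Sec:LocalShimura} and the connectedness arguments of \Cref{Sec:SameUntilt}) and \Cref{Cor:RFinFactor} (passing from the diagonal to the connected component $\Spd E \times_{\Spd k_E} \Spd E$, using the stably v-complete machinery of \Cref{Sec:StablyVComp}), followed by a Frobenius translation argument to cover all of $\mathcal{R}$. At the level of the present theorem, however, there is nothing further to do: it is a formal consequence of \Cref{Prop:IgusaGlobalUnif}, \Cref{Prop:IgusaThetaFunct}, \Cref{Prop:IgusaDiscrete}, and \Cref{Prop:DifferentUntilt}. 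One should only take a moment to check that the $\underline{\mathsf{G}(\afp)}$-equivariance bookkeeping matches up: the global uniformization maps $\Theta, \Theta'$ are by construction $\underline{\mathsf{G}(\af)} \times \underline{G(\qp)}$-equivariant, the diagram of \Cref{Prop:DifferentUntilt} is a diagram of $\underline{\mathsf{G}(\af)} \times \underline{G(\qp)}$-equivariant maps, and the reconstruction in \Cref{Prop:IgusaThetaFunct} produces precisely a $\underline{\mathsf{G}(\afp)}$-equivariant morphism of Igusa stacks, so no compatibility is lost.
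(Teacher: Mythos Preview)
Your proposal is correct and matches the paper's own proof essentially verbatim: the paper simply states that the theorem is a direct consequence of \Cref{Prop:IgusaDiscrete}, \Cref{Prop:IgusaThetaFunct}, and \Cref{Prop:DifferentUntilt}. Your additional invocation of \Cref{Prop:IgusaGlobalUnif} and the expository remarks on where the real work lies are accurate elaborations but add nothing the paper does not already implicitly use.
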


\begin{proof}
  This is a direct consequence of \Cref{Prop:IgusaDiscrete},
  \Cref{Prop:IgusaThetaFunct}, and
  \Cref{Prop:DifferentUntilt}.
\end{proof}

}
{\section{Igusa stacks for Shimura subdata} \label{Sec:Subdata}
\def\flgmud{\mathrm{Fl}_{G,\{\mu^{-1}\},E}^\lozenge}
\def\flgmudp{\mathrm{Fl}_{G^\prime,\{\mu^{\prime-1}\},E}^\lozenge}
\def\flgmuc{\mathrm{Fl}_{G^\mathrm{c},\{\mu^\mathrm{c}\},E}}
\def\flgmucp{\mathrm{Fl}_{G^{\prime\mathrm{c}},\{\mu^{\prime\mathrm{c}}\},E^\prime}}

Throughout this section, we let $\gx \hookrightarrow \gx'$ be an embedding of
Shimura data. Using the techniques developed for proving functoriality of Igusa
stacks, we prove that the existence of an Igusa stack for $\gx'$ implies the
existence of an Igusa stack for $\gx$. A corollary is that the Igusa stack for
Hodge type Shimura data exists not only on the good reduction locus but on the
entire Shimura variety.

\paragraph
As before, we denote by $\mathsf{E}$ and $\mathsf{E}^\prime$ the reflex fields
of $\gx$ and $\gx'$, fix a place $v \mid p$ of $\mathsf{E}$ inducing $v^\prime
\mid p$ of $\mathsf{E}^\prime$, and write $E = \mathsf{E}_v$ and $E^\prime =
\mathsf{E}^\prime_{v^\prime}$. For each compact open subgroup $K \subseteq
\mathsf{G}(\af)$, there exists an open compact subgroup $K^\prime \subseteq
\mathsf{G}^\prime(\af)$ such that $\sh[K] \to \sh'[K^\prime][E]$ is a closed
embedding by \cite[Proposition~1.15]{Del71}. Taking the limit over all $K$ and
$K^\prime$, we see that
\[
  i \colon \sh\d \to \sh\d'[][E]
\]
is a closed embedding of v-sheaves.

\begin{proposition} \label{Prop:SubdataSame}
  Let $U \subseteq \sh\d$ and $U^\prime \subseteq \sh\d'$ be open v-subsheaves
  stable under the actions of $\mathsf{G}(\af)$ and $\mathsf{G}^\prime(\af)$,
  satisfying the property that $i(U) \subseteq U_E^\prime$.
  Let $\Theta^\prime \colon U^\prime \times_{\bung\m'} \grg\m' \to U^\prime$ be
  a global uniformization for $U^\prime$. Then the composition
  \begin{align*}
    U \times_{\bung\m \times \Spd E} \grg\m &\to U^\prime \times_{\bung\m'
    \times \Spd E^\prime} \grg\m'[E] \\ &\xrightarrow{\Theta^{\prime E^\prime}}
    U^\prime_E \subseteq \sh\d'[][E]
  \end{align*}
  factors through the closed subsheaf $\sh\d$.
\end{proposition}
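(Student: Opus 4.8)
The strategy is to mimic the architecture of the functoriality proof (\Cref{Prop:SameUntilt} and \Cref{Prop:DifferentUntilt}), but now using the closed embedding $i \colon \sh\d \hookrightarrow \sh\d'[][E]$ in place of an arbitrary morphism of Shimura data, and using \Cref{Prop:RFinFactorPt} as the key rigidity tool. Concretely, write $\mathcal{R} = U \times_{\bung\m} \grg\m$ and let $g \colon \mathcal{R}^E \to U^\prime_E$ be the composition in the statement, restricted to the diagonal locus $\mathcal{R}^E = U \times_{\bung\m \times \Spd E} \grg\m$; the claim is that $g$ factors through the closed v-subsheaf $i(\sh\d) \cap U^\prime_E$. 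First I would reduce to finite level: using $\mathsf{G}(\af)$-equivariance as in \Cref{Par:ThetaFinite} and \Cref{Par:ThetaERepresent}, it suffices to show for each neat $K \subseteq \mathsf{G}(\af)$ with corresponding $K^\prime \subseteq \mathsf{G}^\prime(\af)$ making $\sh[K] \hookrightarrow \sh'[K^\prime][E]$ a closed embedding, that $\Theta^{\prime E^\prime}_{K^\prime}$ sends $\mathcal{R}_K^E$ (mapped into $\mathcal{R}_{K^\prime}^{\prime E^\prime}$) into the closed rigid subvariety $\sh[K] \subseteq \sh'[K^\prime][E]$, and then take the limit $K \to \{1\}$.

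The core of the argument is local analysis around classical points, exactly as in \Cref{Prop:SameUntilt}. By \Cref{Lem:DensityClassical} the classical points of $U$ are dense in $\mathcal{R}^E$, and since $U^\prime_E$ is a separated v-sheaf with $\sh[K] \subseteq \sh'[K^\prime][E]$ closed, the locus where $g$ lands in $\sh[K]$ is closed; so it suffices to check the factorization on $\shloc{\tau_x}$ for each classical $x \in U(\cp)$. For such $x$, with image $\bar y \in U_K(\cp)$ and residue field $F$, I would invoke \Cref{Prop:LocalUniformization} for the datum $\gx'$ at the point $y^\prime = i(y)$: there is a connected open neighborhood $W_{K^\prime, y^\prime}$ of the base point in $\mathcal{R}^{\prime E^\prime}_{K^\prime, y^\prime}$ on which $\Theta^{\prime E^\prime}_{K^\prime, y^\prime}$ is an isomorphism onto a neighborhood $V_{K^\prime, y^\prime}$ of $y^\prime$, and $\Psi_{K^\prime, y^\prime}$ is an open embedding. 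The image of $\sh[K]$ under $\Psi_{K^\prime, y^\prime}$ in $[G^{\prime\mathrm{c}}\backslash(D_\mathrm{dR}(\xi^\prime_{K^\prime})_{y^\prime} \times \flgmucp)]$ is a closed subvariety, and by \Cref{Prop:PsiFunctoriality} applied to $i \colon \sh[K] \hookrightarrow \sh'[K^\prime][E]$ together with the commutativity of \eqref{Eq:LocalUniformization}, the map $W_{K,y}$ (the corresponding neighborhood for $\gx$) lands inside $V_{K^\prime,y^\prime} \cap \sh[K]$. Hence the equalizer-type closed subvariety of $\shloc{\tau_x}[K_p]$ where $\Theta$ lands in $\sh[K]$ contains a neighborhood of the base point, so by \cite[Lemma~2.1.4]{Con99} it contains the whole connected component; taking the limit and spreading by $\underline{G(\qp)}$-equivariance via \Cref{Lem:TorsorOverConnected} and the connectedness of $\shloc{\tau_x}[G(\qp)]$ (\cite[Theorem~1.1]{GL22p}), we conclude $g$ restricted to $\shloc{\tau_x}$ factors through $\sh$. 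This establishes the factorization over $\mathcal{R}^E$.

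Finally, to pass from $\mathcal{R}^E$ to all of $\mathcal{R}^{k_E}$ — which is what is actually needed if one wants the full global uniformization, though the statement of \Cref{Prop:SubdataSame} only asserts it over $\mathcal{R}^E$ — one applies \Cref{Prop:RFinFactorPt} (resp.\ \Cref{Cor:RFinFactor}) with $X = \sh'[K^\prime][E]$ and $Z = \sh[K]$: the map $\Theta^{\prime k_E}_{K^\prime}$ restricted to $\mathcal{R}_K^{k_E}$ already factors through $Z^\lozenge$ on $\mathcal{R}_K^E$, hence factors through $Z^\lozenge$ everywhere. I expect the main obstacle to be bookkeeping the identification of the two $\underline{G^\mathrm{c}(\qp)}$-torsors (i.e.\ showing that the de Rham torsor $\xi_K$ for $\gx$ is the pullback of $\xi^\prime_{K^\prime}$ under $i$, which is needed to invoke \Cref{Prop:PsiFunctoriality}) and matching up the anticuspidal-center quotients $\mathcal{Q}_K$ vs.\ $\mathcal{Q}^\prime_{K^\prime}$; this is essentially the content of the proof of the final proposition in \Cref{Sec:HodgeTate}, but one must be careful that the embedding $\gx \hookrightarrow \gx'$ induces a compatible map $G^\mathrm{c} \to G^{\prime\mathrm{c}}$ (by \cite[Lemma~3.8]{IKY23p}) and that the Hodge cocharacter conjugacy classes match. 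The rest is a routine transcription of the density/connectedness/rigidity scheme already in place.
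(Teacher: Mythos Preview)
Your overall architecture is right: reduce to classical points via \Cref{Lem:DensityClassical}, do a local analysis around each such point at finite level, then spread by connectedness and $G(\qp)$-equivariance via \Cref{Lem:TorsorOverConnected} and \cite{GL22p}. The final paragraph about \Cref{Prop:RFinFactorPt} is also correct in spirit, though as you note it belongs to the next proposition rather than this one.

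The gap is in the local step. You write that ``by \Cref{Prop:PsiFunctoriality} applied to $i$ together with the commutativity of \eqref{Eq:LocalUniformization}, the map $W_{K,y}$ lands inside $V_{K^\prime,y^\prime} \cap \sh[K]$'', but this does not follow. The diagram \eqref{Eq:LocalUniformization} involves $\Theta^E_{K,y}$, which does not exist here: we only have $\Theta^\prime$, so \Cref{Prop:LocalUniformization} can only be invoked on the $\gx'$ side. Unwinding what you actually have, the image of $W_{K,y}$ under $\Theta^{\prime E^\prime}_{K^\prime,y^\prime}$ lands in $i(\sh[K])$ precisely when $\pi_\mathrm{GM}(W_{K,y}) \subseteq \Psi_{K,y}(V_{K,y})$ inside $[G^\mathrm{c} \backslash (D_\mathrm{dR}(\xi_K)_y \times \flgmuc)]$. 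Since $\pi_\mathrm{GM}$ is \'{e}tale, the left side is open; so you need $\Psi_{K,y}(V_{K,y})$ to contain an open neighborhood of the base point, i.e., you need $\Psi_{K,y}$ to be \'{e}tale at $y$.

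This is exactly the new ingredient in the paper's proof, and it is where the closed-embedding hypothesis on $i$ is actually used. The paper argues on tangent spaces: $di$ is injective and $d\Psi_{K^\prime,y^\prime}$ is an isomorphism, so $d\Psi_{K,y}$ is injective; since source and target have the same dimension (namely $\dim \sh[K] = \dim \flgmuc$), $d\Psi_{K,y}$ is an isomorphism and $\Psi_{K,y}$ is \'{e}tale at $y$. One then lifts along this local isomorphism to produce a map $W_{K,y} \to V_{K,y}$ filling in the diagram. Your proposal treats this as automatic from functoriality, but it is not; without it the argument breaks down.
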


\begin{proof}
  We follow the proof of \Cref{Prop:SameUntilt}. Because $\sh\d \to
  \sh\d'[][E]$ is a closed embedding of v-sheaves, by
  \Cref{Lem:DensityClassical} it suffices to show the map
  \[
    \shloc{\tau_x} \to \shloc'{\tau_{x^\prime}} \xrightarrow{\Theta^{\prime
    E^\prime}_{x^\prime}} U^\prime_{\cp} \subseteq \sh\d'[][\cp]
  \]
  factors through $\sh\d[][\cp]$ for every classical point $x \in U(\cp)$
  mapping to $x^\prime \in U^\prime(\cp)$. Let
  $K \subseteq \mathsf{G}(\af)$ be a neat open compact subgroup and choose a
  neat open compact subgroup $K \subseteq K^\prime \subseteq
  \mathsf{G}^\prime(\af)$ for which $i \colon \sh[K] \to \sh'[K^\prime][E]$ is a
  closed embedding. We then obtain a map
  \[
    \shloc{\tau_x}[K_p] \to \shloc'{\tau_{x^\prime}}[K_p^\prime]
    \xrightarrow{\Theta^{\prime E^\prime}_{x^\prime,K^\prime}}
    U^\prime_{K^\prime,\cp},
  \]
  and writing $y \in U_K(F)$ for the image of $x$, it can be identified with the
  base change of
  \[
    \mathcal{R}^E_{K,y} \to \mathcal{R}^{E^\prime}_{K^\prime,y^\prime}
    \xrightarrow{\Theta^{\prime E^\prime}_{K^\prime,y^\prime}}
    U^\prime_{K^\prime,F}
  \]
  from $\Spd F$ to $\Spd \cp$ by the discussion in
  \Cref{Par:ClassicalPtFinite}.

  Using \Cref{Prop:LocalUniformization}, we find connected open
  neighborhoods
  \begin{align*}
    y_0 &\in W_{K,y} \subseteq \mathcal{R}^E_{K,y}, & y_0^\prime &\in
    W_{K^\prime,y^\prime} \subseteq \mathcal{R}^{E^\prime}_{K^\prime,y^\prime},
    \\ y &\in V_{K,y} \subseteq U_{K,F} \subseteq \sh\ad[K][F], & y^\prime &\in
    V_{K^\prime,y^\prime} \subseteq U_{K^\prime,F} \subseteq
    \sh\ad'[K^\prime][F]
  \end{align*}
  and shrink $V_{K,y}$ if necessary so that we have a diagram
  \[ \begin{tikzcd} \label{Eq:SubdataSame}
    W_{K,y} \arrow{rrr} \arrow{dd} \arrow[dashed]{rd} &[-4.5em] &[-1em]
    &[-4.5em] W_{K^\prime,y^\prime} \arrow{dd} \arrow{dl}[']{\Theta^{\prime
    E^\prime}_{K^\prime,y^\prime}}{\cong} \arrow{dd} \\ & V_{K,y}
    \arrow[hook]{r}{i} \arrow{dl}[']{\Psi_{K,y}} & V_{K^\prime,y^\prime}
    \arrow[hook]{dr}{\Psi_{K^\prime,y^\prime}} \\ \lbrack G^\mathrm{c}
    \backslash (D_\mathrm{dR}(\xi_K)_y \times \flgmuc) \rbrack \arrow{rrr} & & &
    \lbrack G^{\prime\mathrm{c}} \backslash
    (D_\mathrm{dR}(\xi_{K^\prime}^\prime)_{y^\prime} \times \flgmucp) \rbrack
  \end{tikzcd} \tag{$\diamondsuit$} \]
  where the triangle on the right commutes, $\Psi_{K^\prime,y^\prime}$ is an
  open embedding, and the outer rectangle commutes by construction.

  We now claim that the map $\Psi_{K,y}$ is \'{e}tale at $y \in V_{K,y}(F)$. We
  have a commutative diagram
  \[ \begin{tikzcd}
    T_y V_{K,y} \arrow[hook]{r}{di} \arrow{d}{d\Psi_{K,y}} & T_{y^\prime}
    V_{K^\prime,y^\prime} \arrow{d}{d\Psi_{K^\prime,y^\prime}}[']{\cong} \\
    T_{y_0} \lbrack G^\mathrm{c} \backslash (D_\mathrm{dR}(\xi_K)_y \times
    \flgmuc) \rbrack \arrow{r} & T_{y^\prime_0} \lbrack G^{\prime\mathrm{c}}
    \backslash (D_\mathrm{dR}(\xi_{K^\prime}^\prime)_{y^\prime} \times \flgmucp)
    \rbrack
  \end{tikzcd} \]
  of tangent spaces, where $\dim V_{K,y} = \dim \sh[K] = \dim \flgmuc$. Because
  $d\Psi_{K,y}$ is injective, it is an isomorphism. As both $V_{K,y}$ and
  $\lbrack G^\mathrm{c} \backslash (D_\mathrm{dR}(\xi_K)_y \times \flgmuc)
  \rbrack$ are smooth rigid analytic varieties over $F$, the map $\Psi_{K,y}$ is
  \'{e}tale at $y$ by \cite[Proposition~1.6.9(iii)]{Hub96}.

  We obtain from \eqref{Eq:SubdataSame} two pointed maps
  \[
    W_{K,y}, V_{K,y} \to Y = V_{K^\prime,y^\prime} \times_{[G^{\prime\mathrm{c}}
    \backslash (D_\mathrm{dR}(\xi_{K^\prime}^\prime)_{y^\prime} \times
    \flgmucp)]} [G^\mathrm{c} \backslash (D_\mathrm{dR}(\xi_K)_y \times
    \flgmuc)]
  \]
  of smooth rigid analytic varieties over $F$, where both $y_0, y$ are sent to
  $(y^\prime, y_0)$. As we have seen above, $V_{K,y} \to Y$ is \'{e}tale at $y$,
  and the proof of \cite[Proposition~9.6]{Shi22} shows that pointed \'{e}tale
  maps between rigid analytic varieties are local isomorphisms. Therefore, upon
  shrinking $W_{K,y}$, we may lift $W_{K,y} \to Y$ to a map $W_{K,y} \to
  V_{K,y}$ sending $y_0$ to $y$. This fills in the dashed arrow in
  \eqref{Eq:SubdataSame} to make the entire diagram commutative, and it follows
  the composition
  \[
    W_{K,y} \subseteq \mathcal{R}^E_{K,y} \to
    \mathcal{R}^{E^\prime}_{K^\prime,y^\prime}
    \xrightarrow{\Theta^{E^\prime}_{K^\prime,y^\prime}} U_{K^\prime,F} \subseteq
    \sh\ad'[K^\prime][F]
  \]
  factors through $\sh\ad[K][F]$.

  The preimage
  \[
    \mathcal{I}_{x,K} = (\shloc{\tau_x}[K_p] \cong \mathcal{R}^E_{K,y}
    \times_{\Spa F} \Spa \cp) \times_{\sh\ad'[K^\prime][\cp]}
    \sh\ad[K][\cp]
  \]
  is a closed subvariety of $\shloc{\tau_x}[K_p]$ containing an open
  neighborhood of $x_0$. It follows that $\mathcal{I}_{x,K}$ contains the entire
  connected component of $Y_{K_p} \subseteq \shloc{\tau_x}[K_p]$ by
  \cite[Lemma~2.1.4]{Con99}. Taking the limit as $K, K^\prime \to \{1\}$, we see
  that the closed subsheaf
  \[
    \mathcal{I}_x = \shloc{\tau_x} \times_{\sh\d'[][\cp]} \sh\d[][\cp] \subseteq
    \shloc{\tau_x}
  \]
  both contains the limit $\varprojlim_{K_p \to \{1\}} Y_{K_p}$ and is stable
  under the action of $G(\qp)$, because both $\Theta^{\prime E^\prime}_x$ and
  $i$ are $\underline{G(\qp)}$-equivariant. Using
  \Cref{Lem:TorsorOverConnected}, we conclude that $\mathcal{I}_x =
  \shloc{\tau_x}$ as in the proof of \Cref{Prop:SameUntilt}.
\end{proof}

\begin{proposition} \label{Prop:SubdataDifferent}
  Let $U \subseteq \sh\d$ and $U^\prime \subseteq \sh\d'$ be open v-subsheaves
  stable under the actions of $\mathsf{G}(\af)$ and $\mathsf{G}^\prime(\af)$,
  satisfying the property that $i(U) \subseteq U_E^\prime$.
  Let $\Theta^\prime \colon U^\prime \times_{\bung\m'} \grg\m' \to U^\prime$ be a
  global uniformization for $U^\prime$. Then the composition
  \begin{align*}
    U \times_{\bung\m} \grg\m &\to U^\prime \times_{\bung\m'} \grg\m'[E] \\
    &\xrightarrow{\Theta^\prime} U^\prime_E \subseteq \sh\d'[][E]
  \end{align*}
  factors through the closed subsheaf $\sh\d$.
\end{proposition}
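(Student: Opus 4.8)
The plan is to deduce \Cref{Prop:SubdataDifferent} from \Cref{Prop:SubdataSame} in exactly the same way that \Cref{Prop:DifferentUntilt} is deduced from \Cref{Prop:SameUntilt}, using the factorization criterion \Cref{Cor:RFinFactor} and the absolute Frobenius to propagate from the diagonal to all of $\mathcal{R}$. Write $\mathcal{R} = U \times_{\bung\m} \grg\m$ and consider its base changes $\mathcal{R}^E \hookrightarrow \mathcal{R}^{k_E} \hookrightarrow \mathcal{R}$ along $\Spd E \hookrightarrow \Spd E \times_{\Spd k_E} \Spd E \hookrightarrow \Spd E \times \Spd E$. The composition in the statement, call it $g \colon \mathcal{R} \to \sh\d'[][E]$, is a map of v-sheaves; by \Cref{Prop:SubdataSame} its restriction $g^E \colon \mathcal{R}^E \to \sh\d'[][E]$ factors through the closed subsheaf $i \colon \sh\d \hookrightarrow \sh\d'[][E]$.

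First I would descend to finite level. Passing to a neat compact open $K \subseteq \mathsf{G}(\af)$, choose as in \cite[Proposition~1.15]{Del71} a neat compact open $K^\prime \subseteq \mathsf{G}^\prime(\af)$ with $K \to K^\prime$ so that $i \colon \sh[K] \to \sh'[K^\prime][E]$ is a closed embedding of smooth separated rigid analytic varieties over $E$. Using $\mathsf{G}(\af)$-equivariance of $\Theta'$ (and the argument of \Cref{Par:ThetaFinite}), the composition $\mathcal{R} \to \sh\d'[][E] \to \sh\d'[K^\prime][E]$ descends to a map $g_K \colon \mathcal{R}_K \to \sh\d'[K^\prime][E]$. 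By \Cref{Prop:SubdataSame} applied at finite level (i.e.\ by restricting the already-known factorization at infinite level and taking the quotient by the appropriate profinite group as in \Cref{Par:RFinite}), the restriction $g_K|_{\mathcal{R}_K^E}$ factors through the closed subvariety $\sh\ad[K][E] \subseteq \sh\ad'[K^\prime][E]$. Now I would apply \Cref{Cor:RFinFactor} with $X = \sh\ad'[K^\prime][E]$ and $Z = \sh\ad[K][E]$: since the hypothesis that $g_K|_{\mathcal{R}_K^E}$ lands in $Z^\lozenge$ is exactly what we just established, the conclusion is that $g_K|_{\mathcal{R}_K^{k_E}}$ factors through $Z^\lozenge$.

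Finally I would use the absolute Frobenius to pass from $\mathcal{R}_K^{k_E}$ to all of $\mathcal{R}_K$. Because $\Theta'$ intertwines $\phi \times \id$ with $\id$ and the natural map $\sh\d \to \sh\d'$ is itself invariant under $\phi$ (every map of v-stacks commutes with $\phi$, cf.\ \Cref{Par:AbsoluteFrob}), the composition $g_K$ is invariant under the $\phi \times \id$-action on $\mathcal{R}_K$. As in \Cref{Par:RFinite} we have the decomposition $\mathcal{R}_K = \coprod_{n=0}^{[k_E:\fp]-1} (\phi \times \id)^n(\mathcal{R}_K^{k_E})$, so the factorization of $g_K|_{\mathcal{R}_K^{k_E}}$ through $\sh\ad[K][E]^\lozenge$ spreads to all of $\mathcal{R}_K$. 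Taking the limit over $K \to \{1\}$ (and $K' \to \{1\}$) recovers the claim: the composition $g \colon \mathcal{R} \to \sh\d'[][E]$ factors through $\sh\d \hookrightarrow \sh\d'[][E]$. I expect no serious obstacle beyond bookkeeping; the one point requiring a little care is verifying that the finite-level map $g_K$ is well-defined and that $g_K|_{\mathcal{R}_K^E}$ genuinely lands in $\sh\ad[K][E]^\lozenge$ rather than merely in its image in $\sh\ad'[K^\prime][E]^\lozenge$ — but since $i$ is a closed embedding and taking quotients by the relevant profinite groups preserves the factorization already proved at infinite level in \Cref{Prop:SubdataSame}, this is immediate.
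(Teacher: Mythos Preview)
Your proposal is correct and follows essentially the same approach as the paper's proof: descend to finite level, apply \Cref{Cor:RFinFactor} to pass from $\mathcal{R}_K^E$ to $\mathcal{R}_K^{k_E}$, and then use the $(\phi \times \id)$-invariance together with the decomposition $\mathcal{R}_K = \coprod_{n=0}^{[k_E:\fp]-1} (\phi \times \id)^n(\mathcal{R}_K^{k_E})$ to cover all of $\mathcal{R}_K$. The only cosmetic difference is that the paper takes the limit $K,K' \to \{1\}$ before invoking the Frobenius decomposition (working on $\mathcal{R}^{k_E}$ at infinite level), whereas you apply Frobenius at finite level first and then take the limit---both orderings are valid and the content is identical.
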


\begin{proof}
  For $K \subseteq \mathsf{G}(\af)$ and $K^\prime \subseteq
  \mathsf{G}^\prime(\af)$ neat compact open subgroups satisfying the property
  that $i \colon \sh[K] \hookrightarrow \sh'[K^\prime][E]$ is a closed
  embedding, we have a map
  \[
    f_{K,K^\prime} \colon \mathcal{R}_K \to \mathcal{R}_{K^\prime}^\prime
    \times_{\Spd E^\prime \times \Spd E^\prime} (\Spd E^\prime \times \Spd E)
    \xrightarrow{\Theta^\prime_{K^\prime}} U^\prime_{K^\prime,E}
  \]
  We consider the base changes
  \[
    \mathcal{R}_K^E \hookrightarrow \mathcal{R}_K^{k_E} \hookrightarrow
    \mathcal{R}_K
  \]
  along $\Spd E \hookrightarrow \Spd E \times_{\Spd k_E} \Spd E \hookrightarrow
  \Spd E \times \Spd E$. By \Cref{Prop:SubdataSame}, the map
  $f_{K,K^\prime}$ sends $\mathcal{R}^E_K$ to $\sh\d[K] \subseteq
  \sh\d'[K^\prime][E]$. Then by \Cref{Cor:RFinFactor}, the map
  $f_{K,K^\prime}$ sends $\mathcal{R}^{k_E}_K$ to $\sh\d[K]$. Taking the limit
  as $K, K^\prime \to \{1\}$, the composition
  \[
    f \colon \mathcal{R} = U \times_{\bung\m} \grg\m \to \mathcal{R}^\prime
    \times_{\Spd E^\prime \times \Spd E^\prime} (\Spd E^\prime \times \Spd E)
    \xrightarrow{\Theta^\prime} U^\prime_E
  \]
  sends $\mathcal{R}^{k_E} = U \times_{\bung\m \times \Spd k_E} \grg\m$ to
  $\sh\d$. Finally, we note that $f$ is invariant for the action of $\phi \times
  \id$ because $\Theta^\prime$ is, and therefore $f$ factors through $\sh\d
  \hookrightarrow \sh\d'[][E]$.
\end{proof}

\begin{theorem} \label{Thm:IgusaSubdata}
  Let $\gx \hookrightarrow \gx'$ be an embedding of Shimura data with reflex
  fields $\mathsf{E} \supseteq \mathsf{E}^\prime$. Let $v^\prime \mid p$ be a
  place of $\mathsf{E}^\prime$, write $E^\prime = \mathsf{E}^\prime_{v^\prime}$,
  and let $U^\prime \subseteq \sh\d'$ be an open subsheaf that is
  $\underline{\mathsf{G}^\prime(\af)}$-stable. Assume that there exists a Igusa
  stack
  \[
    \igsu' \to \bung\m'
  \]
  for $U^\prime$. Then for every place $v \mid v^\prime$ of $\mathsf{E}$, there
  exists an Igusa stack
  \[
    \igsu \to \bung\m
  \]
  for $U = i^{-1}(U^\prime)$, where $E = \mathsf{E}_v$ and $i \colon \sh\d \to
  \sh\d'$ is the natural map of Shimura varieties.
\end{theorem}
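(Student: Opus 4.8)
The plan is to construct the global uniformization map $\Theta$ for $U$ from the global uniformization map $\Theta^\prime$ for $U^\prime$, and then invoke \Cref{Prop:IgusaGlobalUnif} to produce the Igusa stack $\igsu \to \bung\m$. The map $\Theta^\prime$ corresponds to $\igsu'$ via \Cref{Prop:IgusaGlobalUnif}. First I would observe that since $i \colon \sh\d \to \sh\d'$ is a closed embedding of v-sheaves (by \cite[Proposition~1.15]{Del71} as recalled at the start of this section), and since the Hodge--Tate period maps are compatible with the morphism of Shimura data, there is a natural map $U \times_{\bung\m} \grg\m \to U^\prime \times_{\bung\m'} \grg\m'[E]$ living over $i$ and over the induced map $\grg\m \to \grg\m'[E]$; composing with $\Theta^\prime$ (more precisely its base change $\Theta^\prime_E$ along $\Spd E \to \Spd E^\prime$) gives a map to $U^\prime_E \subseteq \sh\d'[][E]$. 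The content of \Cref{Prop:SubdataDifferent} is precisely that this composition factors through the closed subsheaf $\sh\d \hookrightarrow \sh\d'[][E]$, and since $U \subseteq \sh\d$ is the open subsheaf $i^{-1}(U^\prime)$, and the composition lands in $U^\prime_E$, it in fact factors through $U$. This defines the candidate map
\[
  \Theta \colon \mathcal{R} = U \times_{\bung\m} \grg\m \to U.
\]

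Next I would verify that $\Theta$ so defined is a global uniformization map in the sense of \Cref{Def:GlobalUniformization}, i.e., check the three conditions. The equivariance for $\underline{\mathsf{G}(\af)} \times \underline{G(\qp)}$ (condition (1)) follows because the morphism $\mathcal{R} \to \mathcal{R}^\prime$ is equivariant for the morphisms of groups $\mathsf{G}(\af) \times G(\qp) \to \mathsf{G}^\prime(\af) \times G^\prime(\qp)$, and $\Theta^\prime$ is equivariant by hypothesis, so the composition is equivariant for the action described in \Cref{Def:GlobalUniformization}(1); one uses here that $i$ is $\underline{\mathsf{G}(\af)}$-equivariant and that the inclusion $\sh\d \hookrightarrow \sh\d'[][E]$ is too. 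Condition (2), intertwining $\phi \times \id$ with $\id$, follows since $\Theta^\prime$ has this property and the map $\mathcal{R} \to \mathcal{R}^\prime$ is $\phi$-equivariant; in fact this was already used at the end of the proof of \Cref{Prop:SubdataDifferent}. For condition (3), the two commuting diagrams — the unit triangle involving $(\id, \pi_\mathrm{HT})$ and the associativity square for $\Theta \times \id$ — can be checked by embedding into the corresponding diagrams for $\Theta^\prime$: the map $\mathcal{R} \to \mathcal{R}^\prime$ together with $\pi_\mathrm{HT} \colon \sh\d \to \grg\m$ and its compatibility with $\pi_\mathrm{HT}^\prime$ (proven in \Cref{Sec:HodgeTate}) identifies the relevant source and target v-sheaves compatibly, and since $U^\prime$ is a separated v-sheaf (indeed the relevant maps land in $U \subseteq \sh\d$ which is separated), equality of two maps into it can be tested after composing with the closed embedding $U \hookrightarrow U^\prime_E$, whereupon the identities reduce to the corresponding identities for $\Theta^\prime$.

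Finally, having checked that $\Theta$ is a global uniformization map for $U$, I would apply \Cref{Prop:IgusaGlobalUnif} to obtain an Igusa stack $\igsu \to \bung\m$ for $U$, which is the desired conclusion. I expect the main obstacle to be not in this last bookkeeping step but in \Cref{Prop:SubdataDifferent} and its input \Cref{Prop:SubdataSame}, whose proofs mirror the functoriality arguments of \Cref{Sec:SameUntilt} and \Cref{Sec:DifferentUntilt}: the delicate point is that, unlike the functoriality statement \Cref{Thm:Functoriality} where one compares two global uniformization maps that already exist, here one must show a certain composition \emph{factors through} the Shimura subvariety, using the local analysis via local Shimura varieties (the map $\Psi_{K,y}$ being étale at classical points, \Cref{Prop:LocalUniformization}) to get the statement on a connected neighborhood of each classical point, then the connectedness of local Shimura varieties (\cite[Theorem~1.1]{GL22p}) together with \Cref{Lem:TorsorOverConnected} to spread it over the whole local Shimura variety, then density of classical points (\Cref{Lem:DensityClassical}) for $\mathcal{R}^E$, then \Cref{Prop:RFinFactorPt}/\Cref{Cor:RFinFactor} to pass from $\mathcal{R}^E$ to $\mathcal{R}^{k_E}$, and finally the absolute Frobenius to sweep $\mathcal{R}^{k_E}$ across all of $\mathcal{R}$. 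Since \Cref{Prop:SubdataSame} and \Cref{Prop:SubdataDifferent} are stated (with proofs) in this very section, for the present theorem I may cite them directly, so the proof of \Cref{Thm:IgusaSubdata} itself is short: combine \Cref{Prop:SubdataDifferent}, the fact that $U = i^{-1}(U^\prime)$, the verification of \Cref{Def:GlobalUniformization}, and \Cref{Prop:IgusaGlobalUnif}.
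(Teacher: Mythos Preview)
Your proposal is correct and follows essentially the same approach as the paper: reduce via \Cref{Prop:IgusaGlobalUnif} to constructing a global uniformization $\Theta$, obtain $\Theta$ by factoring the composition through $\sh\d$ using \Cref{Prop:SubdataDifferent}, and then verify the axioms of \Cref{Def:GlobalUniformization} by post-composing with the closed embedding $i \colon U \hookrightarrow U^\prime_E$ to reduce to the known properties of $\Theta^\prime$. The only small refinement in the paper is that for the lower triangle of the first diagram in \Cref{Def:GlobalUniformization}(\ref{Item:GlobalUnifDiagrams}) the target is $\grg\m$ rather than $U$, so there one post-composes with the closed embedding $\grg\m \hookrightarrow \grg\m'[E]$ of flag varieties instead.
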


\begin{proof}
  In view of \Cref{Prop:IgusaGlobalUnif}, it suffices to prove that
  there exists a global uniformization map
  \[
    \Theta \colon U \times_{\bung\m} \grg\m \to U.
  \]
  Writing $\Theta^\prime \colon U^\prime \times_{\bung\m'} \grg\m' \to U^\prime$
  for the global uniformization for $U^\prime$, we see from
  \Cref{Prop:SubdataDifferent} that $\Theta^\prime$ restricts to
  \[ \begin{tikzcd}
    U \times_{\bung\m} \grg\m \arrow{r} \arrow{d}{\Theta} & U^\prime
    \times_{\bung\m'} \grg\m'[E] \arrow{d}{\Theta^\prime} \\ U
    \arrow[hook]{r}{i} & U^\prime_E.
  \end{tikzcd} \]
  It remains to verify that $\Theta$ indeed is a global uniformization map.

  Because $i \colon U \to U^\prime$ is injective, to show that $\Theta$ is
  $\underline{\mathsf{G}(\af)} \times \underline{G(\qp)}$-equivariant and $\phi
  \times \id$-invariant, we may instead check that the composition
  \[
    U \times_{\bung\m} \grg\m \to U^\prime \times_{\bung\m'} \grg\m'[E]
    \xrightarrow{\Theta^\prime} U^\prime_E
  \]
  is equivariant and invariant. This immediately follows from the equivariance
  of the first map $U \times_{\bung\m} \grg\m \to U^\prime \times_{\bung\m'}
  \grg\m'$ together with the corresponding properties of $\Theta^\prime$.

  For the upper triangle of the first diagram and the second diagram in
  (\ref{Item:GlobalUnifDiagrams}) of \Cref{Def:GlobalUniformization},
  we again may check commutativity after post-composition with the closed
  embedding $U \hookrightarrow U^\prime$. Similarly, for the lower triangle of
  the first diagram, we use the fact that
  \[
    \grg\m \cong \flgmud \hookrightarrow \flgmudp \cong \grg\m'[E]
  \]
  is a closed embedding, so that we may check commutativity after
  post-composition with $\grg\m \hookrightarrow \grg\m'$. All of them follow
  from the corresponding properties of $\Theta^\prime$.
\end{proof}

As an application, we extend the result of \cite{DvHKZ24p} beyond the good
reduction locus.

\begin{corollary} \label{Cor:HodgeTypeBadReduction}
  Let $\gx$ be of Hodge type. Then there exist Igusa stacks
  \[
    \mathrm{Igs}^\circ\gx \subseteq \mathrm{Igs}\gx
  \]
  corresponding to $\mathbf{Sh}^\circ\gx_E^\lozenge \subseteq \sh\d$, where the
  inclusion of Igusa stacks is an open embedding.
\end{corollary}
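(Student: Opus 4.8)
The plan is to combine the two known inputs: the existence of the good reduction Igusa stack $\mathrm{Igs}^\circ\gx$ for Hodge type data (the Theorem attributed to \cite{DvHKZ24p} in the introduction, whose axioms are verified there), and the existence of an Igusa stack for an ambient Siegel datum together with \Cref{Thm:IgusaSubdata}. First I would choose, using the definition of a Hodge type Shimura datum, an embedding $\gx \hookrightarrow \gx'$ into a Siegel Shimura datum $\gx'$, i.e.\ one attached to a symplectic similitude group. For Siegel Shimura data an Igusa stack $\mathrm{Igs}\gx'$ on the \emph{entire} Shimura variety is available: the Siegel case is PEL of type C, so this follows from Zhang's construction \cite{Zha23p} combined with the verification that Zhang's construction satisfies the axioms of \Cref{Def:Igusa} --- the same verification referenced for the good reduction locus, now applied without restricting to the good reduction locus, since Zhang treats the minimal compactification and in particular all Newton strata in the Siegel case. (If one prefers to stay strictly within the good reduction Siegel Igusa stack, one instead runs the argument with the ambient $U'$ taken to be $\mathbf{Sh}^\circ\gx'{}_{E'}^\lozenge$, which only yields the good reduction statement; to get the full statement one genuinely needs the Siegel Igusa stack on all of $\sh\d'$.)

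Granting the Siegel input, the corollary is then immediate from \Cref{Thm:IgusaSubdata}: applying that theorem to the embedding $\gx \hookrightarrow \gx'$, the place $v \mid p$ of $\mathsf{E}$, and the $\underline{\mathsf{G}'(\af)}$-stable open subsheaf $U' = \sh\d'$, we obtain an Igusa stack $\mathrm{Igs}\gx \to \bung\m$ for $U = i^{-1}(\sh\d') = \sh\d$. Applying it again to the $\underline{\mathsf{G}'(\af)}$-stable open subsheaf $U' = \mathbf{Sh}^{\circ}\gx'{}_{E'}^\lozenge$ (the good reduction locus of the Siegel variety, which is $\underline{\mathsf{G}'(\af)}$-stable and for which the good reduction Siegel Igusa stack exists by the cited Theorem of \cite{DvHKZ24p}), we obtain an Igusa stack for $U = i^{-1}(\mathbf{Sh}^{\circ}\gx'{}_{E'}^\lozenge)$; I would then identify this preimage with $\mathbf{Sh}^\circ\gx_E^\lozenge$, i.e.\ the good reduction locus of $\sh\d$ pulls back from that of the Siegel variety (this is exactly how the integral canonical model, hence the good reduction locus, is characterized in the Hodge type case, and the relevant compatibility is part of the setup of \cite{DvHKZ24p}).

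Finally I would record that the inclusion $\mathbf{Sh}^\circ\gx_E^\lozenge \hookrightarrow \sh\d$ induces an open embedding of Igusa stacks. This follows from \Cref{Thm:Functoriality} applied to the \emph{identity} morphism $\gx \to \gx$ with the pair of open subsheaves $U = \mathbf{Sh}^\circ\gx_E^\lozenge \subseteq U' = \sh\d$: it produces a canonical morphism $\mathrm{Igs}^\circ\gx \to \mathrm{Igs}\gx$ over $\bung\m$, and one checks it is an open embedding by base change along the v-surjective v-cover $\mathrm{BL}\colon \grg\m \to \bung\m$ (\Cref{Prop:BLSurjective}), where it becomes the open embedding $\mathbf{Sh}^\circ\gx_E^\lozenge \times_{\bung\m}\grg\m \hookrightarrow \sh\d\times_{\bung\m}\grg\m$ of the defining Cartesian squares; openness and the property of being a monomorphism descend along v-covers.

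The main obstacle I anticipate is not in the Shimura-subdatum machinery --- that is handed to us by \Cref{Thm:IgusaSubdata} --- but in securing the ambient input cleanly: one must know that the Siegel Igusa stack exists on \emph{all} of $\sh\d'$ (not merely the good reduction locus) and satisfies the axiomatic \Cref{Def:Igusa}, which requires checking that Zhang's construction \cite{Zha23p} on the minimal compactification, restricted to the open Shimura variety, verifies conditions (1) and (2) of \Cref{Def:Igusa} (the Hecke-action compatibility and the triviality of $\phi\times\id$ on the fiber product). Equivalently one could phrase the corollary as conditional on that verification; but since the excerpt already invokes exactly this verification for the good reduction locus, the remaining work is to note it goes through over the full Siegel Shimura variety using the PEL type C structure. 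The identification $i^{-1}(\mathbf{Sh}^\circ\gx'{}_{E'}^\lozenge) = \mathbf{Sh}^\circ\gx_E^\lozenge$ is the only other point requiring care, and it is a standard feature of the theory of integral canonical models for Hodge type data.
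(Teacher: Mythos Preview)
Your overall strategy---reduce to the Siegel case via \Cref{Thm:IgusaSubdata}, then use functoriality for the open embedding---matches the paper. But there is a genuine gap in what you treat as the ``ambient input.''

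You assert that an Igusa stack for the \emph{entire} Siegel Shimura variety $\sh\d'$ is available directly from \cite{Zha23p} because ``Zhang treats the minimal compactification.'' This conflates two different objects. What Zhang constructs (for $g \ge 2$) is a minimally compactified Igusa stack $\mathrm{Igs}^\ast\gx'$ fitting into a Cartesian diagram with $\mathbf{Sh}^\ast\gx'{}_{\qp}^\lozenge$, not with the open $\sh\d'$. To extract from this an Igusa stack for the open Shimura variety one must show that the associated global uniformization map $\Theta^\ast$ preserves the open stratum, i.e., sends $\mathbf{Sh}\gx'{}_{\qp}^\lozenge \times_{\bung'} \grg\m'$ into $\mathbf{Sh}\gx'{}_{\qp}^\lozenge$ rather than into the boundary. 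This is not a matter of re-verifying the Hecke and Frobenius axioms; it is a separate geometric statement, and the paper devotes the entire appendix (\Cref{Sec:Boundary}) to proving it. The key step is \Cref{Prop:MinCompUniformization}, which shows $\Theta^\ast$ preserves each boundary stratum $Z_{h,\qp}^\lozenge$ by computing fixed points of prime-to-$p$ unipotent Hecke operators (\Cref{Prop:UnipotentFixedPts}) and using $\Gstd(\afp)$-equivariance of $\Theta^\ast$. Only after this does \Cref{Cor:IgusaSiegel} give the Siegel Igusa stack you need.

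You also miss a subtlety for $g = 1$: Zhang's minimal-compactification construction requires the boundary to have codimension $\ge 2$, so it does not apply to modular curves. The paper handles this by a further application of \Cref{Thm:IgusaSubdata} to the diagonal embedding $\mathrm{GL}_2 \hookrightarrow \mathrm{GSp}_4$. Finally, the paper does not obtain $\mathrm{Igs}^\circ\gx$ by pulling back the Siegel good reduction locus (which would require your identification $i^{-1}(\mathbf{Sh}^\circ\gx'{}_{E'}^\lozenge) = \mathbf{Sh}^\circ\gx_E^\lozenge$); it simply cites the existing construction from \cite{DvHKZ24p}. Your argument for the open embedding via \Cref{Thm:Functoriality} and v-descent along $\mathrm{BL}$ is correct and matches the paper's use of \cite[Proposition~10.11(i)]{Sch17p}.
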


\begin{proof}
  Once the corresponding Igusa stacks exist, it will be clear from
  \Cref{Thm:Functoriality} that there is a map $\mathrm{Igs}^\circ\gx \to
  \mathrm{Igs}\gx$, and then it follows from \cite[Proposition~10.11(i)]{Sch17p}
  that the induced map is an open embedding.

  Using \Cref{Thm:IgusaSubdata}, we immediately reduce to the case of $\gx =
  (\mathrm{GSp}_{2g}, \mathcal{H}_g^\pm)$. When $g \ge 2$, this is done in
  \Cref{Cor:IgusaSiegel}. For $g = 1$, we consider the diagonal embedding
  \[
    \mathrm{GL}_2 \hookrightarrow \mathrm{GSp}_4; \quad \begin{pmatrix} a & b \\
    c & d \end{pmatrix} \mapsto \begin{pmatrix} a & & & b \\ & a & b \\ & c &
    d \\ c & & & d \end{pmatrix},
  \]
  which induces a morphism of Shimura data, and then apply
  \Cref{Thm:IgusaSubdata} again. (We use the symplectic form $e_1^\vee \wedge
  e_4^\vee + e_2^\vee \wedge e_3^\vee$ for $\mathrm{GSp}_4$, following
  \Cref{Par:BailyBorel}.)
\end{proof}

}

\appendix
{\section{Minimally compactified Siegel modular varieties} \label{Sec:Boundary}
\def\GSp{\operatorname{GSp}}
\def\GL{\operatorname{GL}}
\def\SL{\operatorname{SL}}
\def\Gstd{\operatorname{GSp}_{2g}}
\def\Hpm{\mathcal{H}^\pm}
\def\Hstd{\mathcal{H}^\pm_g}
\def\gx{(\Gstd,\Hstd)}
\def\bung{\mathrm{Bun}_{\Gstd}}
\def\grgmu{\mathrm{Gr}_{\Gstd,\{\mu^{-1}\},\qp}}

In this appendix, we study the minimally compactified Siegel modular variety at
infinite level and compute the fixed points under a certain prime-to-$p$
unipotent Hecke action. We use this to deduce that for $g \ge 2$ the global
uniformization map
\[
  \Theta^\ast \colon \mathbf{Sh}^\ast\gx_{\qp}^\lozenge \times_{\bung} \grgmu
  \to \mathbf{Sh}^\ast\gx_{\qp}^\lozenge
\]
corresponding to minimally compactified Igusa stacks constructed in
\cite[Section~9.3]{Zha23p} sends boundary strata to boundary strata. As a
consequence, the map $\Theta^\ast$ restricts to a global uniformization
\[
  \Theta \colon \mathbf{Sh}\gx_{\qp}^\lozenge \times_{\bung} \grgmu \to
  \mathbf{Sh}\gx_{\qp}^\lozenge
\]
and hence defines an Igusa stack for the entire Shimura variety.

\paragraph
Let $(V, \psi)$ be a nonzero symplectic $\mathbb{Q}$-vector space of dimension
$2g$. We consider the general symplectic group $\GSp(V, \psi)$ of pairs
$(\gamma, \lambda)$ where $\gamma \in \Aut(V)$ satisfies $\psi(\gamma x, \gamma
y) = \lambda \psi(x, y)$. We also have the space $\Hpm(V, \psi)$ of
homomorphisms
\[
  h \colon \operatorname{Res}_{\mathbb{C}/\mathbb{R}} \mathbb{G}_{m,\mathbb{C}}
  \to \GSp(V, \psi)_\mathbb{R}
\]
satisfying the property that $h$ defines complex structure on $V_\mathbb{R}$ and
$\psi(x, h(i)y)$ is a definite symmetric pairing on $V_\mathbb{R}$. The Lie
group $\GSp(V, \psi)(\mathbb{R})$ acts transitively on $\Hpm(V, \psi)$ by
conjugation, and the pair $(\GSp(V, \psi), \Hpm(V, \psi))$ defines a Shimura
datum. We shall also write $\Gstd = \GSp(V, \psi)$ and $\Hstd = \Hpm(V, \psi)$
when $(V, \psi)$ is clear from the context.

\begin{remark}
  When $g = 0$, we follow the convention of \cite[Example~2.8]{Pin90}. Namely,
  we define $\Gstd = \mathbb{G}_{m,\mathbb{Q}}$ and $\Hstd = \lbrace \pm
  \sqrt{-1} \rbrace$ as a homogeneous space for $\mathbb{R}^\times$. They do not
  form a Shimura datum in the sense of \cite{Del71} or \cite{Mil05} but only in
  the slightly more general setup of \cite[Definition~2.1]{Pin90}. This
  modification is necessary to obtain the correct adelic description of the
  Satake compactification.
\end{remark}

\paragraph
For each (not necessarily maximal) isotropic subspace $W \subseteq V$, there is
a corresponding rational boundary component given as follows, cf.\
\cite[Example~4.25]{Pin90}. We have the maximal parabolic subgroup and its
canonical normal subgroup
\begin{align*}
  Q_W &= \lbrace \gamma \in \Gstd : \gamma(W) = W, \gamma(W^\perp) =
  W^\perp \rbrace, \\ P_W &= \lbrace \gamma \in Q_W : \gamma \equiv \id
  \pmod{W^\perp} \rbrace,
\end{align*}
where $W^\perp$ is the orthogonal complement of $W$. The maximal reductive
quotient of $P_W$ is canonically identified with the general symplectic group of
$W^\perp/W$,
\[
  P_W \twoheadrightarrow G_W = \GSp(W^\perp/W, \psi), \quad \gamma \mapsto
  \gamma \vert_{W^\perp}.
\]

\paragraph
There is a topological space $(\Hstd)^\ast$ that is set-theoretically
described as
\[
  (\Hstd)^\ast = \coprod_{W \subseteq V} \Hpm(W^\perp/W, \psi),
\]
where the union is over all isotropic subspaces $W \subseteq V$. This carries a
Satake topology, see \cite[Section~III.3]{BJ05} and the modification in
\cite[Section~6.2]{Pin90} to finite disjoint unions of symmetric spaces. There
is moreover a group action of $\Gstd(\mathbb{Q})$ on $(\Hstd)^\ast$ by
homeomorphisms. We remark that $(\Hstd)^\ast$ is not compact as a topological
space.

\paragraph \label{Par:BailyBorel}
Let $K \subseteq \Gstd(\af)$ be a neat compact open subgroup. We obtain the
\textdef{Baily--Borel compactification} or the \textdef{minimal
compactification} of the Shimura variety $\mathbf{Sh}_K\gx_\mathbb{C}$ as
\[
  \mathbf{Sh}_K^\ast\gx(\mathbb{C}) = \Gstd(\mathbb{Q}) \backslash (\Hstd)^\ast
  \times \Gstd(\af) / K.
\]
We note that for each $0 \le h \le g$, there exists a unique
$\Gstd(\mathbb{Q})$-orbit of an isotropic subspace $W \subseteq V$. Fix a basis
$e_1, \dotsc, e_{2g} \in V$ so that
\[
  \psi(e_i, e_j) = \delta_{j,2g+1-i}
\]
for $1 \le i < j \le 2g$. For each $0 \le h \le g$, we choose the isotropic
subspace $W_h = \operatorname{span}(e_1, \dotsc, e_h)$ so that the corresponding
parabolic subgroup and its normal subgroup are
\[
  Q_h = \Gstd \cap \left\lbrace \begin{pmatrix} \ast & \ast & \ast \\ 0 & \ast &
  \ast \\ 0 & 0 & \ast \end{pmatrix} \right\rbrace, \quad P_h = \Gstd \cap
  \left\lbrace \begin{pmatrix} \ast & \ast & \ast \\ 0 & \ast & \ast \\ 0 & 0 &
  1 \end{pmatrix} \right\rbrace,
\]
where the blocks have size $h, 2(g-h), h$. The reductive quotient of $P_h$ is
$G_h \cong \GSp_{2(g-h)}$ and $Q_h(\mathbb{Q})$ is the
stabilizer of $W_h$, and hence we obtain a stratification
\[
  \mathbf{Sh}_K^\ast\gx(\mathbb{C}) = \coprod_{h=0}^g Q_h(\mathbb{Q})
  \backslash \Hpm_{g-h} \times \Gstd(\af) / K.
\]

\paragraph \label{Par:CuspFinLevel}
For each $0 \le h \le g$ and a representative $c \in \Gstd(\af)$ of the double
coset
\[
  [c] \in Q_h(\mathbb{Q}) P_h(\af) \backslash \Gstd(\af) / K,
\]
the contribution of $[c]$ to the minimal compactification be computed as
follows, see \cite[Section~6.3]{Pin90}. Write $\pi \colon Q_h \twoheadrightarrow
M_h \cong \GSp_{2(g-h)} \times \GL_h$ for the reductive quotient, and consider
the groups
\begin{align*}
  H_{c,K} &= cKc^{-1} \cap Q_h(\af) \subseteq Q_h(\af), \\ L_{c,K} &=
  \pi(H_{c,K}) = \pi(cKc^{-1} \cap Q_h(\af)) \subseteq M_h(\af), \\ K_{c,K} &=
  L_{c,K} \cap G_h(\af) = \pi(cKc^{-1} \cap P_h(\af)) \subseteq G_h(\af), \\
  \Delta_{c,K} &= (M_h(\mathbb{Q}) \cap G_h(\af)L_{c,K}) / G_h(\mathbb{Q}).
\end{align*}
Then we see that
\begin{align*}
  Q_h(\mathbb{Q}) \backslash \Hpm_{g-h} &\times Q_h(\mathbb{Q}) P_h(\af) c K / K
  \\ &\xleftarrow{\cong} (Q_h(\mathbb{Q}) \cap P_h(\af) H_{c,K}) \backslash
  \Hpm_{g-h} \times P_h(\af) H_{c,K} / H_{c,K}
\end{align*} 
sending $[x, \gamma H_{c,K}]$ to $[x, \gamma c K]$ is an isomorphism, where we
note that $P_h(\af) H_{c,K} \subseteq Q_h(\af)$ is a subgroup. Using the fact
that for the unipotent radical $R_\mathrm{u}(Q_h) \subseteq Q_h$ the subgroup
$R_\mathrm{u}(Q_h)(\mathbb{Q}) \subseteq R_\mathrm{u}(Q_h)(\af)$ is dense, we
quotient both by $R_\mathrm{u}(Q_h)(\mathbb{Q})$ to further simplify it to
\begin{align*}
  (Q_h(\mathbb{Q}) \cap P_h(\af) H_{c,K}) &\backslash \Hpm_{g-h} \times P_h(\af)
  H_{c,K} / H_{c,K} \\ &\xrightarrow{\cong} (M_h(\mathbb{Q}) \cap G_h(\af)
  L_{c,K}) \backslash \Hpm_{g-h} \times G_h(\af) L_{c,K} / L_{c,K} \\ &\cong
  \Delta_{c,K} \backslash \mathbf{Sh}_{K_{c,K}}(\GSp_{2(g-h)},
  \Hpm_{g-h})(\mathbb{C}).
\end{align*}
When $K$ satisfies the property that $L_{c,K}$ factors as a direct product of open
compact subgroups under $M_h(\af) \cong \GSp_{2(g-h)}(\af) \times \GL_h(\af)$,
the group $\Delta_{c,K}$ acts trivially on $\mathbf{Sh}_{K_{c,K}}(\GSp_{2(g-h)},
\Hpm_{g-h})(\mathbb{C})$.

\paragraph
By \cite[Section~12.3]{Pin90}, the minimal compactification
$\mathbf{Sh}^\ast_K\gx$ has a natural structure of a projective variety over
$\mathbb{Q}$, and moreover the inclusion
\[
  \Delta_{c,K} \backslash \mathbf{Sh}_{K_{c,K}}(\GSp_{2(g-h)}, \Hpm_{g-h})
  \hookrightarrow \mathbf{Sh}^\ast_K\gx
\]
defines a locally closed embedding of quasi-projective varieties over
$\mathbb{Q}$. We note that the collection of $K$ for which $L_{c,K}$ splits as a
direct product form a neighborhood basis of $1$. Taking the limit as $K \to
\{1\}$, we obtain for each
\[
  [c] \in Q_h(\mathbb{Q}) P_h(\af) \backslash \Gstd(\af)
\]
a locally closed embedding of $\mathbb{Q}$-schemes
\[
  \mathbf{Sh}(\GSp_{2(g-h)}, \Hpm_{g-h}) \cong Z_{[c]} \hookrightarrow
  \mathbf{Sh}^\ast\gx = \varprojlim_{K \to \{1\}} \mathbf{Sh}^\ast_K\gx,
\]
where on both sides the transition maps are finite. Such elements $[c]$ are also
called \textdef{cusp labels}.

\begin{remark}
  The locally closed subscheme $Z_{[c]}$ depends only on the cusp label $[c] \in
  Q_h(\mathbb{Q}) P_h(\af) \backslash \Gstd(\af)$, but the isomorphism between
  $Z_{[c]}$ and the Shimura variety $\mathbf{Sh}(\GSp_{2(g-h)}, \Hpm_{g-h})$
  depends on choice of representative $c \in \Gstd(\af)$.
\end{remark}

\begin{lemma} \label{Lem:CuspLabelInf}
  Consider the character $\omega_h \colon Q_h \to \mathbb{G}_ {m,\mathbb{Q}}$
  given by the determinant of the lower right block. Then the closure of the
  subgroup $Q_h(\mathbb{Q}) P_h(\af) \subseteq \Gstd(\af)$ is
  \[
    \omega_h(\af)^{-1}(\mathbb{Q}^\times) = \lbrace q \in Q_h(\af) : \omega_h(q)
    \in \mathbb{Q}^\times \subseteq (\af)^\times \rbrace,
  \]
  and hence
  \[
    \varprojlim_{K \to \{1\}} Q_h(\mathbb{Q}) P_h(\af) \backslash \Gstd(\af) / K
    \cong \omega_h(\af)^{-1}(\mathbb{Q}^\times) \backslash \Gstd(\af).
  \]
\end{lemma}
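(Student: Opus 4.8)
The claim has two parts: identifying the closure of $Q_h(\mathbb{Q}) P_h(\af)$ inside $\Gstd(\af)$ with $\omega_h(\af)^{-1}(\mathbb{Q}^\times)$, and deducing the description of the inverse limit of double coset spaces. The second part is formal once the first is established: if $\bar{S}$ denotes the closure of $S = Q_h(\mathbb{Q}) P_h(\af)$, then for every compact open $K$ the double coset space $S \backslash \Gstd(\af) / K$ equals $\bar{S} \backslash \Gstd(\af) / K$ since $SK$ is already open (as $P_h(\af)$ contains an open subgroup, or rather $S \supseteq P_h(\af) \supseteq R_\mathrm{u}(P_h)(\af)$ which is open in its image), hence closed; and since $\Gstd(\af) / K$ is discrete, $S$-orbits and $\bar{S}$-orbits coincide. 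Taking the limit over $K \to \{1\}$ then gives $\bar{S} \backslash \Gstd(\af)$, because $\varprojlim_K (\bar{S} \backslash \Gstd(\af) / K) = \bar{S} \backslash \Gstd(\af)$ as sets (the transition maps are surjective with the expected fibers, and a point of $\Gstd(\af)$ is determined by its images modulo all $K$).

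\textbf{The main step} is therefore to compute the closure of $S = Q_h(\mathbb{Q}) P_h(\af)$. First I would check the inclusion $\bar{S} \subseteq \omega_h(\af)^{-1}(\mathbb{Q}^\times)$: the set $\omega_h(\af)^{-1}(\mathbb{Q}^\times)$ is closed in $Q_h(\af)$ — indeed $\mathbb{Q}^\times$ is closed in $(\af)^\times$ since $\mathbb{Q}^\times$ is discrete in the idele class context, or more directly since $\mathbb{Q}^\times = \bigcup_n \{ \pm n^{\pm 1} \}$ and one checks this is closed in $(\af)^\times$ using that $\mathbb{Z}$ is closed in $\af$ — and $Q_h(\af)$ is closed in $\Gstd(\af)$. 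Since $\omega_h(P_h) = 1$ and $\omega_h(Q_h(\mathbb{Q})) \subseteq \mathbb{Q}^\times$, we get $S \subseteq \omega_h(\af)^{-1}(\mathbb{Q}^\times)$, hence $\bar{S}$ is contained there too. For the reverse inclusion $\omega_h(\af)^{-1}(\mathbb{Q}^\times) \subseteq \bar{S}$, the strategy is: given $q \in Q_h(\af)$ with $\omega_h(q) = r \in \mathbb{Q}^\times$, I want to approximate $q$ by elements of $Q_h(\mathbb{Q}) P_h(\af)$. Write $q$ via its image $\pi(q) = (m, \ell) \in \GSp_{2(g-h)}(\af) \times \GL_h(\af)$ under the reductive quotient, together with a unipotent part. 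The point is that $\GSp_{2(g-h)}(\mathbb{Q})$ is dense in $\GSp_{2(g-h)}(\af)$ (strong approximation-type statement, using that $\Gstd$ has $\mathbb{Q}$-rank $\geq 1$ and more precisely that $\mathrm{Sp}_{2(g-h)}$ satisfies strong approximation while the similitude factor can be matched since $\mathbb{Q}^\times$ is dense in $\af^\times$ away from any finite set of places... but actually one wants density in the whole $\af$, which fails for $\mathbb{G}_m$). The correct statement to invoke is that $Q_h(\mathbb{Q})$ times the unipotent radical $R_\mathrm{u}(Q_h)(\af)$ times $G_h(\af)$ (the $\GSp_{2(g-h)}$ factor's adelic points) is dense in $\omega_h(\af)^{-1}(\mathbb{Q}^\times)$: the $\GL_h$-part of the Levi is controlled exactly by the determinant character $\omega_h$, and once $\omega_h(q) \in \mathbb{Q}^\times$ one can find $\gamma \in \GL_h(\mathbb{Q})$ with the same determinant, then $\gamma^{-1}$ times the $\GL_h$-component of $q$ lands in $\SL_h(\af)$ which admits strong approximation, so it can be approximated by $\SL_h(\mathbb{Q})$, which lies in $Q_h(\mathbb{Q})$.

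\textbf{Assembling the approximation argument.} Concretely: write $q = u \cdot \iota(m, \ell)$ for a chosen Levi lift $\iota$ and $u \in R_\mathrm{u}(Q_h)(\af)$. Since $R_\mathrm{u}(Q_h)(\mathbb{Q})$ is dense in $R_\mathrm{u}(Q_h)(\af)$ (it is unipotent, in fact a successive extension of $\mathbb{G}_a$'s and $\mathbb{Q}$ is dense in $\af$), we may approximate $u$ by $R_\mathrm{u}(Q_h)(\mathbb{Q}) \subseteq Q_h(\mathbb{Q})$; and crucially $R_\mathrm{u}(Q_h) \subseteq P_h$ anyway so that part also lies in $P_h(\af)$ — either way it contributes to $S$ or to its closure. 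For the Levi part: $\omega_h(q) = \det(\ell) = r \in \mathbb{Q}^\times$, so pick $\gamma_\ell \in \GL_h(\mathbb{Q})$ with $\det \gamma_\ell = r$; then $\gamma_\ell^{-1} \ell \in \SL_h(\af)$, which by strong approximation for $\SL_h$ is approximated by $\SL_h(\mathbb{Q})$. The $\GSp_{2(g-h)}$-component $m$ lies in $G_h(\af)$, hence $\iota$-lifts into $P_h(\af)$ (note $P_h$ surjects onto $G_h$ with unipotent kernel, so $G_h(\af) \subseteq P_h(\af) \cdot (\text{unipotent})$, i.e. it is absorbed into $P_h(\af)$). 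Combining, $q$ is approximated by products of elements of $Q_h(\mathbb{Q})$ (the rational unipotent and the $\SL_h(\mathbb{Q})$ and $\GL_h(\mathbb{Q})$ pieces) and $P_h(\af)$ (the $\GSp_{2(g-h)}$-lift and the kernel-of-$\omega_h$ part of the Levi), so $q \in \bar{S}$. The one technical subtlety I expect to be the main obstacle is keeping track of which pieces of $Q_h$ are genuinely in $P_h$ versus only in the closure of $Q_h(\mathbb{Q}) P_h(\af)$, and making sure the similitude factors of $\GSp_{2(g-h)}$ and $\GL_h$ match up correctly (a single multiplier $\lambda$ governs both the symplectic similitude and the relation $\psi(\gamma x, \gamma y) = \lambda \psi(x,y)$) — this requires carefully choosing the rational approximants so that their similitude characters agree with those of $q$, which is exactly where the constraint $\omega_h(q) \in \mathbb{Q}^\times$ (rather than just $\in \af^\times$) is used.
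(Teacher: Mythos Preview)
Your approach is correct and rests on the same key input as the paper's, namely strong approximation for $\SL_h$. The difference is one of packaging. You decompose $q \in Q_h(\af)$ via the Levi--unipotent decomposition $q = u \cdot \iota(m,\ell)$ with $u \in R_\mathrm{u}(Q_h)(\af)$ and $(m,\ell) \in \GSp_{2(g-h)}(\af) \times \GL_h(\af)$, then absorb $u$ and $\iota(m,1)$ into $P_h(\af)$ and handle $\ell$ via strong approximation. The paper instead observes directly that $P_h \trianglelefteq Q_h$ with quotient $Q_h/P_h \cong \GL_h$ (the lower-right block), and that this quotient map admits a section defined over $\mathbb{Q}$; hence $Q_h(\af) \cong P_h(\af) \times \GL_h(\af)$ as topological spaces, under which $Q_h(\mathbb{Q}) P_h(\af)$ corresponds exactly to $P_h(\af) \times \GL_h(\mathbb{Q})$, reducing the closure computation in one step to that of $\GL_h(\mathbb{Q}) \subset \GL_h(\af)$.

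Your worry about ``similitude matching'' is unnecessary: the Levi $M_h$ really is the direct product $\GSp_{2(g-h)} \times \GL_h$ (via $(B,C) \mapsto$ block-diagonal element, with $A$ determined by $C$ and the similitude of $B$), so there is no constraint linking $m$ and $\ell$. The place where $\omega_h(q) \in \mathbb{Q}^\times$ enters is purely in the $\GL_h$ factor, exactly as you say. Your treatment of the inverse limit is fine; the paper regards this as formal and omits it.
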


\begin{proof}
  When $h = 0$, we have $P_h = Q_h = \Gstd$ and $\omega_h = 1$, thus the
  statement trivially holds. For $h > 0$, we first note that $Q_h(\mathbb{Q})
  P_h(\af)$ is contained in the closed subgroup $Q_h(\af)$, and thus it suffices
  to compute the closure of $Q_h(\mathbb{Q}) P_h(\af) \subseteq Q_h(\af)$.
  Because there is a splitting $Q_h \cong P_h \times (Q_h/P_h)$ as affine
  schemes, this is the preimage of the closure of $(Q_h/P_h)(\mathbb{Q})
  \subseteq (Q_h/P_h)(\af)$. On the other hand, $Q_h / P_h \cong \GL_h$ and by
  strong approximation for $\SL_h$, the closure of $\GL_h(\mathbb{Q}) \subseteq
  \GL_h(\af)$ is the subgroup of matrices whose determinant is in
  $\mathbb{Q}^\times$.
\end{proof}

\paragraph
Using \Cref{Lem:CuspLabelInf}, we obtain a stratification of
$\mathbf{Sh}^\ast\gx$ by $Z_{[c]}$ in the sense that we have a disjoint union
\[
  \mathbf{Sh}^\ast\gx = \coprod_{h=0}^g Z_h, \quad Z_h = \coprod_{[c] \in
  \omega_h(\af)^{-1}(\mathbb{Q}^\times) \backslash \Gstd(\af)} Z_{[c]}
\]
of the underlying topological space by locally closed subsets. In the first
decomposition $Z_h \cup Z_{h+1} \cup \dotsb \cup Z_g$ is closed, and in the
second decomposition each $Z_{[c]}$ is closed in $Z_h$. From now on, we refer to
elements $[c] \in \omega_h(af)^{-1}(\mathbb{Q}^\times) \backslash \Gstd(\af)$ as
\textdef{cusp labels}. The stabilizer of each cusp label $[c]$ is the subgroup
\[
  c^{-1} \omega_h(\af)^{-1}(\mathbb{Q}^\times) c \subseteq c^{-1} Q_h(\af) c
  \subseteq \Gstd(\af).
\]

\begin{proposition} \label{Prop:CuspAction}
  Choose a representative $c \in \Gstd(\af)$ of a cusp label $[c] \in
  \omega_h(\af)^{-1}(\mathbb{Q}^\times) \backslash \Gstd(\af)$. Then the locally
  closed embedding
  \[
    \mathbf{Sh}(\GSp_{2(g-h)}, \Hpm_{g-h}) \cong Z_{[c]} \hookrightarrow
    \mathbf{Sh}^\ast\gx
  \]
  is $c^{-1} \omega_h(\af)^{-1}(\mathbb{Q}^\times) c$-equivariant, where the
  action on the left hand side is through
  \[
    c^{-1} \omega_h(\af)^{-1}(\mathbb{Q}^\times) c \xrightarrow{\mathrm{conj}_c}
    \omega_h(\af)^{-1}(\mathbb{Q}^\times) \subseteq Q_h(\af) \twoheadrightarrow
    \GSp_{2(g-h)}(\af).
  \]
\end{proposition}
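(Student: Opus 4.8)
The plan is to trace through the explicit adelic description of the minimal compactification and the boundary strata recalled in \Cref{Par:CuspFinLevel}, and check that each construction is compatible with the residual Hecke action. First, I would work at a finite level $K \subseteq \Gstd(\af)$ where $L_{c,K}$ splits as a direct product under $M_h(\af) \cong \GSp_{2(g-h)}(\af) \times \GL_h(\af)$, so that $\Delta_{c,K}$ acts trivially on $\mathbf{Sh}_{K_{c,K}}(\GSp_{2(g-h)}, \Hpm_{g-h})$ and the stratum is literally identified with a finite-level Shimura variety for $\GSp_{2(g-h)}$. Given $q \in c^{-1}\omega_h(\af)^{-1}(\mathbb{Q}^\times) c$, right translation by $q$ on $\Gstd(\af)$ sends the double coset $Q_h(\mathbb{Q}) P_h(\af) c K$ to $Q_h(\mathbb{Q}) P_h(\af) c (qKq^{-1})$, and I would check that under the isomorphism
\[
  Q_h(\mathbb{Q}) \backslash \Hpm_{g-h} \times Q_h(\mathbb{Q}) P_h(\af) c K / K
  \xleftarrow{\cong} \Delta_{c,K} \backslash \mathbf{Sh}_{K_{c,K}}(\GSp_{2(g-h)}, \Hpm_{g-h})(\mathbb{C})
\]
of \Cref{Par:CuspFinLevel}, this translation corresponds precisely to the Hecke action of $\pi(\mathrm{conj}_c(q)) \in \GSp_{2(g-h)}(\af)$ (together with the identity on the $\GL_h$-factor, which is why the action factors through the symplectic quotient). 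Concretely, one substitutes $\gamma H_{c,K} \mapsto \gamma q H_{c,qKq^{-1}}$ at the level of $P_h(\af) H_{c,K}/H_{c,K}$; the key observation is that since $q \in Q_h(\af)$ with $\omega_h(q) \in \mathbb{Q}^\times$, conjugation by $q$ preserves $P_h$, $G_h$, and the density of $R_\mathrm{u}(Q_h)(\mathbb{Q})$ used in the reduction, so passing to $M_h$ and quotienting by $R_\mathrm{u}(Q_h)(\mathbb{Q})$ intertwines right translation by $q$ with right translation by $\pi(\mathrm{conj}_c(q))$.

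Next I would upgrade this from the complex-analytic identity to an identity of maps of $\mathbb{Q}$-schemes, using that by \cite[Section~12.3]{Pin90} the locally closed embedding $Z_{[c]} \hookrightarrow \mathbf{Sh}^\ast_K\gx$ is defined over $\mathbb{Q}$ and the Hecke action is algebraic; since a map of finite-type $\mathbb{Q}$-schemes is determined by its $\mathbb{C}$-points, the equivariance over $\mathbb{C}$ forces equivariance over $\mathbb{Q}$. Then I would pass to the limit $K \to \{1\}$: the stabilizer of the cusp label $[c]$ is exactly $c^{-1}\omega_h(\af)^{-1}(\mathbb{Q}^\times) c$ by \Cref{Lem:CuspLabelInf}, the transition maps on both sides are finite, and the group $c^{-1}\omega_h(\af)^{-1}(\mathbb{Q}^\times) c$ acts compatibly on the tower $(K$ ranging over a cofinal system with $L_{c,K}$ split$)$, so in the limit the embedding $\mathbf{Sh}(\GSp_{2(g-h)}, \Hpm_{g-h}) \cong Z_{[c]} \hookrightarrow \mathbf{Sh}^\ast\gx$ is equivariant for the stated action through $\mathrm{conj}_c$ followed by $\pi \colon \omega_h(\af)^{-1}(\mathbb{Q}^\times) \to \GSp_{2(g-h)}(\af)$.

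The main obstacle I anticipate is bookkeeping the identification in \Cref{Par:CuspFinLevel} carefully enough to see that right translation by $q$ on $P_h(\af) H_{c,K}/H_{c,K}$ really does descend to right translation by $\pi(\mathrm{conj}_c(q))$ on $G_h(\af) L_{c,K}/L_{c,K}$ — in particular, checking that the intermediate groups $H_{c,qKq^{-1}} = qKq^{-1}\cap \cdots$ and $L_{c,qKq^{-1}}$ transform as expected under conjugation, that the $\GL_h$-component of $q$ (which has rational determinant but need not be integral) does not disturb the identification because it is absorbed into $\omega_h(\af)^{-1}(\mathbb{Q}^\times)$ acting trivially on the symplectic factor, and that one may indeed choose the cofinal system of $K$'s stably under $q$-conjugation so that $L_{c,K}$ stays split. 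Everything else is a formal consequence of the constructions recalled from \cite{Pin90}.
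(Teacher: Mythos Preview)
Your approach is the same as the paper's --- trace through the adelic identification of \Cref{Par:CuspFinLevel} at finite level and pass to the limit --- but you are missing the one step that makes the bookkeeping you anticipate actually go through. Before computing anything, the paper first reduces from the full stabilizer $c^{-1}\omega_h(\af)^{-1}(\mathbb{Q}^\times)c$ to its dense subgroup $c^{-1}Q_h(\mathbb{Q})P_h(\af)c$ (using \Cref{Lem:CuspLabelInf} and separatedness of $\mathbf{Sh}^\ast$, so that agreement on a schematically dense set of group elements suffices). An element $\gamma$ of this smaller group admits a decomposition $\gamma = c^{-1}q_\gamma p_\gamma c$ with $q_\gamma \in \ker(Q_h \to \GSp_{2(g-h)})(\mathbb{Q})$ genuinely rational and $p_\gamma \in P_h(\af)$; then $q_\gamma$ is absorbed into the $Q_h(\mathbb{Q})$-quotient (and acts trivially on $\Hpm_{g-h}$), while $p_\gamma$ sits naturally inside $P_h(\af)H_{c,K}$ and projects to the desired Hecke operator $\pi(p_\gamma)$.

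Your worry that ``the $\GL_h$-component of $q$ has rational determinant but need not be integral'' is precisely the obstruction the density reduction removes. For a general $q$ in the closure $\omega_h(\af)^{-1}(\mathbb{Q}^\times)$, the $\GL_h$-component lies only in $\det^{-1}(\mathbb{Q}^\times)\subseteq\GL_h(\af)$, not in $\GL_h(\mathbb{Q})$, so there is no evident way to absorb it into the rational quotient on the left; your claim that it is ``absorbed into $\omega_h(\af)^{-1}(\mathbb{Q}^\times)$ acting trivially on the symplectic factor'' is not a computation, since the identification in \Cref{Par:CuspFinLevel} quotients by $Q_h(\mathbb{Q})$, not by its adelic closure. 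Add the density step at the beginning and the rest of your outline matches the paper.
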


\begin{proof}
  It is clear that the action of locally profinite group $c^{-1}
  \omega_h(\af)^{-1}(\mathbb{Q}^\times) c$ on both sides is continuous. Because
  $c^{-1} Q_h(\mathbb{Q}) P_h(\af) c$ is dense in $c^{-1}
  \omega_h(\af)^{-1}(\mathbb{Q}^\times) c$ by \Cref{Lem:CuspLabelInf}, the map
  of $\mathbb{Q}$-schemes
  \[
    \coprod_{c^{-1} Q_h(\mathbb{Q}) P_h(\af) c} (\Spec \mathbb{Q})
    \to \underline{c^{-1} \omega_h(\af)^{-1}(\mathbb{Q}^\times) c}_{\mathbb{Q}}
  \]
  is schematically dense. Since $\mathbf{Sh}^\ast\gx$ is separated,
  it suffices to prove that the locally closed embedding is $c^{-1}
  Q_h(\mathbb{Q}) P_h(\af) c$-equivariant.

  We now trace through the computation of \Cref{Par:CuspFinLevel}. Given $\gamma
  \in c^{-1} Q_h(\mathbb{Q}) P_h(\af) c$, we write it as $\gamma =
  c^{-1} q_\gamma p_\gamma c$ where $q_\gamma \in \ker(Q_h \to
  \GSp_{2(g-h)})(\mathbb{Q})$ and $p_\gamma \in P_h(\af)$. For $p \in P_h(\af)$
  and $x \in \Hpm_{g-h}$, consider the element
  \[
    [x, p H_{c,K}] \in (Q_h(\mathbb{Q}) \cap P_h(\af) H_{c,K}) \backslash
    \Hpm_{g-h} \times P_h(\af) H_{c,K} / H_{c,K},
  \]
  which corresponds to $[x, p c K] \in Q_h(\mathbb{Q}) \backslash \Hpm_{g-h}
  \times \Gstd(\af) / K$. The Hecke action by $\gamma$ sends it to $[x, p c
  K \gamma^{-1}] \in Q_h(\mathbb{Q}) \backslash \Hpm_{g-h} \times \Gstd(\af) /
  \gamma K \gamma^{-1}$. Upon rewriting
  \[
    p c K \gamma^{-1} = p (c \gamma^{-1} c^{-1}) c (\gamma K \gamma^{-1}) =
    q_\gamma (q_\gamma^{-1} p q_\gamma) p_\gamma c (\gamma K \gamma^{-1})
  \]
  and observing that $q_\gamma \in Q_h(\mathbb{Q})$ acts trivially on
  $\Hpm_{g-h}$, we see that $[x, pc K \gamma^{-1}]$ corresponds to
  \begin{align*}
    [x, &(q_\gamma^{-1} p q_\gamma) p_\gamma H_{c,\gamma K \gamma^{-1}}] \\ &\in
    (Q_h(\mathbb{Q}) \cap P_h(\af) H_{c,\gamma K \gamma^{-1}}) \backslash
    \Hpm_{g-h} \times P_h(\af) H_{c, \gamma K \gamma^{-1}} / H_{c, \gamma K
    \gamma^{-1}}.
  \end{align*}

  Upon projecting down along $\pi \colon Q_h \twoheadrightarrow M_h$, we see
  that $p$ and $q_\gamma$ commutes and hence $\gamma$ sends
  \[
    [x, \pi(p) L_{c,K}] \in (M_h(\mathbb{Q}) \cap G_h(\af) L_{c,K}) \backslash
    \Hpm_{g-h} \times G_h(\af) L_{c,K} / L_{c,K}
  \]
  to
  \begin{align*}
    [x, &\pi(p p_\gamma) L_{c,\gamma K \gamma^{-1}}] \\ &\in (M_h(\mathbb{Q})
    \cap G_h(\af) L_{c, \gamma K \gamma^{-1}}) \backslash \Hpm_{g-h} \times
    G_h(\af) L_{c, \gamma K \gamma^{-1}} / L_{c, \gamma K \gamma^{-1}}.
  \end{align*}
  Passing to the limit as $K \to \{1\}$, we see that this is the Hecke action of
  $\pi(p_\gamma)$ on $\mathbf{Sh}(\GSp_{2(g-h)}, \Hpm_{g-h})$, which agrees with
  the action of $c \gamma c^{-1} = q_\gamma p_\gamma$.
\end{proof}

\paragraph
There is a right Kan extended functor from the category of quasi-compact schemes
over $\mathbb{Q}$ to the category v-sheaves, sending $X \to \Spec \mathbb{Q}$ to
the v-sheaf
\[
  X_{\qp}^\lozenge = \varprojlim_{X \to Y}
  (Y_{\qp}^\mathrm{ad})^\lozenge,
\]
where the limit is over all maps from $X$ to finite type schemes $Y \to \Spec
\mathbb{Q}$, and $Y_{\qp}^\lozenge =
(Y_{\qp}^\mathrm{ad})^\lozenge$ is defined as in \Cref{Par:Lozenge}.
Concretely, if $X$ is a cofiltered limit $X = \varprojlim_i X_i$ of finite type
schemes along finite transition maps, then we have $X_{\qp}^\lozenge =
\varprojlim_i X_{i,\qp}^\lozenge$. Taking the limit of the finite
stratifications of varieties at finite level, we obtain a stratification
\[
  \mathbf{Sh}^\ast\gx_{\qp}^\lozenge = \coprod_{h=0}^g
  \biggl( \coprod_{[c] \in \omega_h(\af)^{-1}(\mathbb{Q}^\times) \backslash
  \Gstd(\af)} Z_{[c],\qp}^\lozenge \biggr),
\]
in the sense that all inclusions are locally closed embeddings of v-sheaves and
on topological spaces it is a set-theoretic partition.

\begin{lemma} \label{Lem:NoFixedPts}
  Assume $g \ge 1$. Let $\gamma \in \Gstd(\af)$ be an element that is not
  conjugate to a semisimple element of $\Gstd(\mathbb{Q})$. Then the fixed
  points of $\gamma$ on $\mathbf{Sh}\gx_{\qp}^\lozenge$ is the empty v-sheaf.
\end{lemma}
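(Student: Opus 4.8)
The statement concerns fixed points of a Hecke operator $\gamma \in \Gstd(\af)$ on $\mathbf{Sh}\gx_{\qp}^\lozenge$, and we want to rule them out when $\gamma$ is not conjugate to a semisimple element of $\Gstd(\mathbb{Q})$. The plan is to reduce the question from v-sheaves to classical points and then to a transcendental (complex-analytic) statement about the complex Shimura variety. First I would observe that a fixed point of $\gamma$ on $\mathbf{Sh}\gx_{\qp}^\lozenge = \varprojlim_K \mathbf{Sh}_K\gx_{\qp}^\lozenge$ amounts to a $\gamma$-fixed point of the profinite tower, which (passing to $\cp$-points, since the fixed locus is a closed subsheaf and it suffices to check it is empty on geometric points) gives a compatible system of $\gamma$-twisted fixed points at each finite level $K$. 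Since each $\mathbf{Sh}_K\gx_{\qp}^\lozenge$ comes from a finite-type $\mathbb{Q}$-scheme and $\gamma K \gamma^{-1}$ is again neat compact open, the existence of such a point is equivalent to the existence of an actual closed point fixed by $\gamma$ under the correspondence $\mathbf{Sh}_{K \cap \gamma K \gamma^{-1}}\gx \rightrightarrows \mathbf{Sh}_K\gx$, and this may be tested over $\mathbb{C}$ via the complex uniformization.

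Next I would translate this into the adelic double-coset description $\mathbf{Sh}\gx(\mathbb{C}) = \Gstd(\mathbb{Q}) \backslash (\Hstd \times \Gstd(\af))$. A $\gamma$-fixed point at infinite level is a pair $(x, a) \in \Hstd \times \Gstd(\af)$ together with $q \in \Gstd(\mathbb{Q})$ such that $q x = x$ and $q a \gamma = a$, i.e., $q = a \gamma^{-1} a^{-1}$ fixes the point $x \in \Hstd$. So I need: $\gamma$ conjugate in $\Gstd(\af)$ to an element $q \in \Gstd(\mathbb{Q})$ that has a fixed point on the symmetric space $\Hstd$. Now the key input is that $\Hstd = \mathcal{H}_g^\pm$ is a Hermitian symmetric domain (a disjoint union of Siegel upper/lower half spaces) with $\Gstd(\mathbb{R})$ acting, and an element of $\Gstd(\mathbb{R})$ has a fixed point on $\Hstd$ if and only if it lies in a compact subgroup modulo the center — equivalently, if and only if it is $\mathbb{R}$-elliptic in the appropriate sense, which forces it to be semisimple (a stabilizer $h(\mathbb{C}^\times)$ of a point is a torus, and $q$ must normalize/centralize it appropriately; more simply, a linear automorphism preserving a positive-definite form $\psi(\cdot, h(i)\cdot)$ up to scalar is semisimple). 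Hence $q \in \Gstd(\mathbb{Q})$ with a fixed point on $\Hstd$ is automatically semisimple, and $\gamma$ is conjugate to $q$, contradicting the hypothesis.

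The main obstacle, and the step requiring the most care, is the passage from "v-sheaf fixed point on the profinite tower" to "classical $\mathbb{C}$-point fixed under the finite-level correspondence": one must be careful that the fixed locus of $\gamma$ on $\varprojlim_K \mathbf{Sh}_K\gx_{\qp}^\lozenge$ is computed as the limit of the fixed loci of the correspondences at finite level (a diamond/spectral-space limit argument, using that each $|\mathbf{Sh}_K\gx_{\qp}^\lozenge|$ is a spectral space and transition maps are finite hence spectral, so a nonempty inverse limit of nonempty closed subsets is nonempty by a compactness argument), and that a nonempty closed subscheme of a finite-type $\mathbb{Q}$-scheme has a $\bar{\mathbb{Q}}$-point, hence a $\mathbb{C}$-point. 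I would also record the elementary fact, used implicitly, that semisimplicity of $q$ is detected after extension of scalars, so "$\gamma$ conjugate in $\Gstd(\af)$ to a semisimple element of $\Gstd(\mathbb{Q})$" is exactly the obstruction that must vanish. Everything else — the complex uniformization, neatness ensuring the correspondence is finite étale, and the Cartan-type argument that a $\psi$-similitude preserving a positive form is semisimple — is standard and I would cite or state it without grinding through details.
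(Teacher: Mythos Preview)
Your proposal is correct and follows essentially the same approach as the paper: reduce the fixed-point v-sheaf to a limit of closed subvarieties at finite level, test emptiness on classical points, transport to $\mathbb{C}$ via an abstract isomorphism $\qpbar \cong \mathbb{C}$, and then use the complex uniformization $\Gstd(\mathbb{Q}) \backslash (\Hstd \times \Gstd(\af))$ to extract a rational element $\delta$ fixing a point of $\Hstd$, hence lying in a compact-mod-center subgroup and therefore semisimple. The paper's proof is terser but the logical skeleton is identical.
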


\begin{proof}
  We observe that the $\gamma$-fixed points can be described as
  \[
    \varprojlim_{K \to \{1\}} \operatorname{eq}(\mathbf{Sh}_{K \cap \gamma K
    \gamma^{-1}}\gx_{\qp}^\lozenge \rightrightarrows
    \mathbf{Sh}_{\gamma K \gamma^{-1}}\gx_{\qp}^\lozenge)
  \]
  where the two maps are the projection and the projection to level $K$ composed
  with the action of $\gamma$. In particular, it is a limit of v-sheaves
  associated to closed subvarieties of finite level Shimura varieties. Hence it
  suffices to show that the $\gamma$-action on
  \[
    \mathbf{Sh}\gx_{\qp}^\lozenge(\cp)_\mathrm{classical} \cong
    \mathbf{Sh}\gx(\qpbar)
  \]
  has no fixed points. Choose an abstract isomorphism $\qpbar \cong \mathbb{C}$
  so that we can identify it with
  \[
    \mathbf{Sh}\gx(\mathbb{C}) = \Gstd(\mathbb{Q}) \backslash
    \Hstd \times \Gstd(\af).
  \]
  If $[x, y] = [x, y \gamma^{-1}]$, then $\delta x = x$ and $\delta y = y
  \gamma^{-1}$ for some $\delta \in \Gstd(\mathbb{Q})$. Because $\delta x = x$
  for some $x \in \Hstd$, we see that $\delta$ lies in a subgroup of
  $\Gstd(\mathbb{R})$ that is compact modulo the center. It follows that
  $\delta$ is semisimple, hence $\gamma = y^{-1} \delta^{-1} y$ contradicts our
  assumption.
\end{proof}

\paragraph
For $0 \le h \le g$, let us consider the unipotent subgroup
\[
  R_h = \left\lbrace \begin{pmatrix} U_1 & \ast & \ast \\ 0 & 1 & \ast \\ 0 & 0
  & U_2 \end{pmatrix} \in \Gstd : U_1, U_2 \in \GL_h \text{ unipotent
  upper-triangular} \right\rbrace,
\]
where the blocks have size $h, 2(g-h), h$.

\begin{proposition} \label{Prop:UnipotentFixedPts}
  Let $0 \le k \le g$ be an integer. Then for each $\gamma \in \Gstd(\af)$ we
  have
  \begin{align*}
    (\mathbf{Sh}^\ast&{}\gx_{\qp}^\lozenge)^{\gamma R_k(\afp)
    \gamma^{-1}} \\ &= \coprod_{h=k}^g \biggl( \coprod_{[c] \in
    \omega_h(\af)^{-1}(\mathbb{Q}^\times) \backslash Q_h(\af) Q_k(\af)
    \Gstd(\qp) \gamma^{-1}} Z_{[c],\qp}^\lozenge \biggr),
  \end{align*}
  meaning that it is a stratification on underlying topological spaces.
\end{proposition}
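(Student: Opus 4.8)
The plan is to reduce the computation of the fixed locus of $\gamma R_k(\afp)\gamma^{-1}$ to a statement about prime-to-$p$ unipotent Hecke operators on the boundary strata $Z_{[c],\qp}^\lozenge$, and then invoke \Cref{Lem:NoFixedPts} together with the explicit action of the Hecke group on the cusp labels from \Cref{Prop:CuspAction}. Since we are computing fixed points of a group acting by homeomorphisms on the spatial diamond $\mathbf{Sh}^\ast\gx_{\qp}^\lozenge$, it suffices to prove the equality on underlying topological spaces, and since the stratification by the $Z_{[c],\qp}^\lozenge$ is a set-theoretic partition compatible with the $\Gstd(\af)$-action, the fixed locus is a union of pieces $Z_{[c],\qp}^\lozenge$ intersected with fixed loci. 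Concretely, the element $\gamma R_k(\afp)\gamma^{-1}$ fixes (set-theoretically) the stratum indexed by $[c]$ if and only if $\gamma R_k(\afp)\gamma^{-1}$ is contained in the stabilizer $c^{-1}\omega_h(\af)^{-1}(\mathbb{Q}^\times)c$ and, on the Shimura variety $\mathbf{Sh}(\GSp_{2(g-h)},\Hpm_{g-h})_{\qp}^\lozenge$ of the boundary component, the induced action of $\gamma R_k(\afp)\gamma^{-1}$ via the recipe of \Cref{Prop:CuspAction} has nonempty fixed locus — but in fact by \Cref{Lem:NoFixedPts} we need it to act \emph{trivially} at $p$-adic points, which for a group acting continuously forces it to stabilize every point of the $(g-h)$-dimensional Shimura variety.

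First I would unwind the condition that $\gamma R_k(\afp)\gamma^{-1}$ stabilizes the cusp label $[c]$, where $[c]$ lies in the $h$-th stratum. Replacing $\gamma$ by a representative, this means $\gamma R_k(\afp)\gamma^{-1}$ stabilizes the line of isotropic subspaces $W$ defining $[c]$, i.e.\ $\gamma R_k(\afp)\gamma^{-1} \subseteq Q_W(\af)$ with $\omega$-component in $\mathbb{Q}^\times$; tracing through the standard representatives $W_h$, this is equivalent to $R_k(\afp) \subseteq \gamma^{-1}Q_h(\af)\gamma$, and the prime-to-$p$ part of the computation reduces to: $R_k(\afp) \subseteq \gamma^{-1}Q_h(\af)\gamma$ iff $[c]$, as an element of $\omega_h(\af)^{-1}(\mathbb{Q}^\times)\backslash\Gstd(\af)$, lies in the image of $Q_h(\af)Q_k(\af)\Gstd(\qp)\gamma^{-1}$. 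Here the $\Gstd(\qp)$ factor appears because at $p$ there is no constraint (we are only taking $R_k$ at finite level away from $p$), and the $Q_k(\af)$ factor captures exactly which $\gamma$-conjugates of $R_k$ land inside a conjugate of $Q_h$ — this is a purely group-theoretic containment that I would verify using the explicit block structure of $R_k$ and $Q_h$ and the fact that $R_k \subseteq Q_k$. The constraint $h \ge k$ emerges because $R_k$ contains unipotent elements with $k\times k$ upper-triangular blocks that cannot be squeezed into $Q_h$ for $h < k$ after conjugation without forcing them to act nontrivially.

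Next, granting that $[c]$ satisfies the stabilization condition, I would show the action of $\gamma R_k(\afp)\gamma^{-1}$ on $Z_{[c],\qp}^\lozenge \cong \mathbf{Sh}(\GSp_{2(g-h)},\Hpm_{g-h})_{\qp}^\lozenge$ is \emph{trivial}, so that the entire stratum is fixed. By \Cref{Prop:CuspAction}, the action factors through the composite $c^{-1}\omega_h(\af)^{-1}(\mathbb{Q}^\times)c \xrightarrow{\mathrm{conj}_c} \omega_h(\af)^{-1}(\mathbb{Q}^\times) \subseteq Q_h(\af) \twoheadrightarrow \GSp_{2(g-h)}(\af)$. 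The key point is that $\gamma R_k(\afp)\gamma^{-1}$, being conjugate into the unipotent radical portion $R_k$ that maps \emph{trivially} to the reductive quotient $\GSp_{2(g-h)}$ — since $R_k \subseteq P_h$ when $h \ge k$ and the reductive quotient $P_h \twoheadrightarrow \GSp_{2(g-h)}$ kills the relevant unipotent block — maps to the identity in $\GSp_{2(g-h)}(\af)$. Hence the Hecke action is trivial and the whole stratum is fixed. Conversely, if $[c]$ is \emph{not} of this form, then either the stabilization condition fails (so no point of $Z_{[c]}$ is fixed) or $\gamma R_k(\afp)\gamma^{-1}$ acts through a subgroup of $\GSp_{2(g-h)}(\af)$ containing an element not conjugate to a semisimple rational element, and \Cref{Lem:NoFixedPts} applied to $\mathbf{Sh}(\GSp_{2(g-h)},\Hpm_{g-h})$ kills the fixed locus — one has to check a non-semisimple such element exists, which is where the explicit unipotent generators of $R_k$ and a careful case analysis on whether $\gamma$ conjugates them into $Q_h$ cleanly or "partially" come in. I expect \textbf{the main obstacle} to be precisely this converse direction and the bookkeeping of which $\gamma$-conjugates of $R_k(\afp)$ fit into which parabolics $Q_h(\af)$: one must show that if $R_k(\afp)$ is conjugate into $Q_h(\af)$ but $[c] \notin \omega_h(\af)^{-1}(\mathbb{Q}^\times)\backslash Q_h(\af)Q_k(\af)\Gstd(\qp)\gamma^{-1}$, then the induced action on the lower-rank Shimura variety genuinely has no fixed $p$-adic points, which requires producing a non-semisimple-mod-rational element in the image and then quoting \Cref{Lem:NoFixedPts}.
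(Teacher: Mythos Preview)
Your overall strategy matches the paper's: reduce to a stratum-by-stratum analysis, determine when $R_k(\afp)$ stabilizes a cusp label, and then use \Cref{Prop:CuspAction} together with \Cref{Lem:NoFixedPts}. The paper first sets $\gamma = 1$ by translating by $\gamma$; keeping $\gamma$ in play leads you to conflate the roles of $c$ and $\gamma$ in a couple of places (e.g.\ ``$R_k(\afp)\subseteq\gamma^{-1}Q_h(\af)\gamma$'' should involve $c$).

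Two concrete corrections. First, the assertion ``$R_k\subseteq P_h$ when $h\ge k$'' is false: the lower-right $h\times h$ block of a generic element of $R_k$ is $\left(\begin{smallmatrix} I_{h-k} & \ast \\ 0 & U_2 \end{smallmatrix}\right)$, not the identity. What \emph{is} true, and is all you need, is that $R_k\subseteq\ker(Q_h\to\GSp_{2(g-h)})$ for $h\ge k$, since the middle $2(g-h)$ block is the identity; after writing $c^p=q_hq_k$ with $q_h\in Q_h(\afp)$, $q_k\in Q_k(\afp)$ and using normality of these kernels, the induced Hecke action on the boundary Shimura variety is trivial.

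Second, the ``main obstacle'' you flag --- the possibility that $[c]$ is stabilized, $h\ge k$, yet $[c]$ lies outside $Q_h(\af)Q_k(\af)\Gstd(\qp)\gamma^{-1}$ --- does not actually occur. The key linear-algebra step (which the paper states and you only gesture at) is: if $R_k(\mathbb{Q}_v)$ stabilizes an isotropic $h$-dimensional subspace $c_v^{-1}W_h$, then either $h<k$ and $c_v^{-1}W_h=W_h$, or $h\ge k$ and $c_v^{-1}W_h\supseteq W_k$, forcing $c_v\in Q_h(\mathbb{Q}_v)Q_k(\mathbb{Q}_v)$. This dichotomy is exhaustive, so for $h\ge k$ stabilization already puts $c$ in the desired double coset. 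The only genuine ``converse'' case is $h<k$ with $c^p\in Q_h(\afp)$, and there you simply exhibit an element of $R_k(\afp)$ whose image in $\GSp_{2(g-h)}(\afp)$ is a nontrivial unipotent and invoke \Cref{Lem:NoFixedPts}.
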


\begin{proof}
  Because the $R_k(\afp)$-fixed points is a closed v-subsheaf, it is determined
  by its underlying topological space. Hence we may verify the statement on each
  stratum $Z_{[c],\qp}$ separately. We also reduce to the case when $\gamma =
  1$, because we may obtain the general case by applying the action of $\gamma$.

  We first identify the cusp labels $(h, [c])$ whose stabilizer contains
  $R_k(\afp)$, i.e.,
  \[
    R_k(\afp) \subseteq c^{-1} \omega_h(\af)^{-1}(\mathbb{Q}^\times) c.
  \]
  By enlarging $\omega_h(\af)^{-1}(\mathbb{Q}^\times)$ to $Q_h(\af)$ and
  projecting to $\afp$, we obtain
  \[
    R_k(\afp) \subseteq (c^p)^{-1} Q_h(\afp) c^p
  \]
  after writing $c = c^p c_p$ for $c^p \in \Gstd(\afp)$ and $c_p \in
  \Gstd(\qp)$. At each finite prime $v \neq p$ this implies that
  $R_k(\mathbb{Q}_v)$ stabilizes the isotropic $\mathbb{Q}_v$-vector space
  $c_v^{-1} W_{h,\mathbb{Q}_v}$. Using the definition of $R_k$, one computes
  that either (i) $h < k$ and $c_v^{-1} W_{h,\mathbb{Q}_v} =
  W_{h,\mathbb{Q}_v}$, meaning $c_v \in Q_h(\mathbb{Q}_v)$, or (ii) $h \ge k$
  and $c_v^{-1} W_{h,\mathbb{Q}_v} \supseteq W_{k,\mathbb{Q}_v}$, meaning $c_v
  \in Q_h(\mathbb{Q}_v) Q_k(\mathbb{Q}_v)$, because there exists a $\gamma \in
  \Gstd(\mathbb{Q}_v)$ such that $\gamma(c_v^{-1} W_{h,\mathbb{Q}_v},
  W_{k,\mathbb{Q}_v}) = (W_{h,\mathbb{Q}_v}, W_{k,\mathbb{Q}_v})$. Collecting
  this for all $v \neq p$, we conclude that either (i) $h < k$ and $c^p \in
  Q_h(\afp)$ or (ii) $h \ge k$ and $c^p \in Q_h(\afp) Q_k(\afp)$.

  Let us first consider the case when $h < k$ and $c^p \in Q_h(\afp)$. In this
  case, we indeed have
  \[
    c R_k(\afp) c^{-1} = c^p R_k(\afp) (c^p)^{-1} \subseteq
    \omega_h(\af)^{-1}(\mathbb{Q}^\times)
  \]
  because $c^p \in Q_h(\afp)$ and $\omega_h(\af)^{-1}(\mathbb{Q}^\times)$ is
  normal in $Q_h(\af)$. Because $h < k$, there exists an element $\gamma \in
  R_k(\afp)$ whose image under $R_k \hookrightarrow Q_h \to \GSp_{2(g-h)}$ is
  nontrivial unipotent $\delta \in \GSp_{2(g-h)}(\afp)$. By
  \Cref{Prop:CuspAction}, the action of $c^{-1} \gamma c \in R_h(\afp)$ on
  $Z_{[c]} \cong \mathbf{Sh}(\GSp_{2(g-h)}, \Hpm_{g-h})$ agrees with the Hecke
  action by $\delta$. On the other hand, \Cref{Lem:NoFixedPts} shows that the
  $\delta$-fixed points is empty.

  We now consider the case when $h \ge k$ and $c^p \in Q_h(\afp) Q_k(\afp)$.
  Again, we similarly check that
  \[
    c R_k(\afp) c^{-1} = c^p R_k(\afp) (c^p)^{-1} \subseteq
    \omega_h(\af)^{-1}(\mathbb{Q}^\times),
  \]
  as conjugating $R_k(\afp)$ by an element of $Q_k(\afp)$ gives an element of
  $\ker(Q_k \to \GSp_{2(g-k)})(\afp)$, which is contained inside
  $\omega_h(\af)^{-1}(\mathbb{Q}^\times)$. Next, for every element $\gamma \in
  R_k(\afp)$, its action on $Z_{[c]} \cong \mathbf{Sh}(\GSp_{2(g-h)},
  \Hpm_{g-h})$ is identified with the action of $c \gamma c^{-1} \in \ker(Q_h
  \to \GSp_{2(g-h)})(\afp)$ by \Cref{Prop:CuspAction}, which is trivial.
\end{proof}

\paragraph
We now assume that $g \ge 2$, so that the boundary of the minimal
compactification $\mathbf{Sh}_K^\ast\gx$ has codimension at least $2$. In
\cite[Section~9.3]{Zha23p}, Zhang constructs for each neat compact open subgroup
$K^p \subseteq \Gstd(\afp)$ a minimally compactified Igusa stack
\[
  \mathrm{Igs}_{K^p}^\ast\gx \to \mathrm{Bun}_{\Gstd}
\]
by taking a partially compactified relative affinization of the good reduction
Igusa stack $\mathrm{Igs}_{K^p}^\circ\gx \to \mathrm{Bun}_{\Gstd}$. We have by
\cite[Proposition~10.1]{Zha23p} a limit
\[
  \mathrm{Igs}^\ast\gx = \varprojlim_{K^p \to \{1\}} \mathrm{Igs}_{K^p}^\ast\gx
\]
with a natural $\underline{\Gstd(\afp)}$-action, and moreover by
\cite[Theorem~9.38]{Zha23p} a $\underline{\Gstd(\afp)} \times
\underline{\Gstd(\qp)}$-equivariant isomorphism
\[
  \mathbf{Sh}^\ast\gx_{\qp}^\lozenge \cong \mathrm{Igs}^\ast\gx
  \times_{\bung} \grgmu.
\]

\paragraph
Following the construction of \Cref{Par:CechNerve}, we obtain for $g \ge 2$ a
uniformization map
\begin{align*}
  \Theta^\ast \colon \mathbf{Sh}^\ast\gx_{\qp}^\lozenge &\times_{\bung} \grgmu
  \\ &\cong \mathbf{Sh}^\ast\gx_{\qp}^\lozenge \times_{\mathrm{Igs}^\ast\gx}
  \mathbf{Sh}^\ast\gx_{\qp}^\lozenge \\ &\xrightarrow{\mathrm{pr}_2}
  \mathbf{Sh}^\ast\gx_{\qp}^\lozenge
\end{align*}
that is $\underline{\Gstd(\af)} \times \underline{\Gstd(\qp)}$-equivariant. As
noted in \cite[Proposition~5.2.15]{DvHKZ24p}, the isomorphism
\[
  \mathrm{Igs}^\circ\gx \times_{\bung} \grgmu \cong
  \mathbf{Sh}^\circ\gx_{\qp}^\lozenge
\]
from \cite[Corollary~8.15]{Zha23p} satisfies the property that $(\phi, \id)$ on
the left hand side corresponds to $\id$ on the right hand side. Taking the
partially compactified relative affinization over $\grgmu$ on both sides, we see
that
\[
  \mathrm{Igs}^\ast\gx \times_{\bung} \grgmu \cong
  \mathbf{Sh}^\ast\gx_{\qp}^\lozenge
\]
also satisfies the property same equivariance property, and hence the
uniformization map $\Theta^\ast$ is invariant under the $(\phi, \id)$-action on
the source. Finally, the commutativity of the diagrams
(\ref{Item:GlobalUnifDiagrams}) in \Cref{Def:GlobalUniformization} follow
immediately from the definition of $\Theta^\ast$. That is, $\Theta^\ast$
satisfies all the conditions of \Cref{Def:GlobalUniformization}.

\begin{proposition} \label{Prop:MinCompUniformization}
  Assume $g \ge 2$, and write
  \[
    \mathbf{Sh}^\ast\gx_{\qp}^\lozenge = \coprod_{h=0}^g Z_{h,\qp}^\lozenge,
    \quad Z_{h,\qp}^\lozenge = \coprod_{[c] \in
    \omega_h^{-1}(\af)^{-1}(\mathbb{Q}^\times) \backslash \Gstd(\af)}
    Z_{[c],\qp}^\lozenge,
  \]
  so that each $Z_{h,\qp}^\lozenge \subseteq \mathbf{Sh}^\ast\gx_{\qp}^\lozenge$
  is a locally closed v-subsheaf. Then for each $0 \le h \le g$, the map
  $\Theta^\ast$ sends
  \[
    \Theta^\ast \colon Z_{h,\qp}^\lozenge \times_{\bung} \grgmu \to
    Z_{h,\qp}^\lozenge.
  \]
\end{proposition}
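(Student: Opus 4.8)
The plan is to show that $\Theta^\ast$ preserves the locally closed stratification by reducing to a statement about fixed points of prime-to-$p$ unipotent Hecke operators, which was computed in \Cref{Prop:UnipotentFixedPts}. First I would observe that, because $\Theta^\ast$ is $\underline{\Gstd(\af)} \times \underline{\Gstd(\qp)}$-equivariant, for each integer $0 \le k \le g$ it carries the fixed-point locus $(\mathbf{Sh}^\ast\gx_{\qp}^\lozenge)^{R_k(\afp)} \times_{\bung} \grgmu$ into $(\mathbf{Sh}^\ast\gx_{\qp}^\lozenge)^{R_k(\afp)}$, since $R_k(\afp) \subseteq \Gstd(\afp)$ acts only on the first factor of the source and on the target. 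Here I use that $\Theta^\ast$ intertwines the $\underline{\Gstd(\afp)}$-action on $\mathbf{Sh}^\ast\gx_{\qp}^\lozenge \times_{\bung} \grgmu$ with the $\underline{\Gstd(\afp)}$-action on $\mathbf{Sh}^\ast\gx_{\qp}^\lozenge$, which follows from the $\underline{\Gstd(\af)}$-equivariance of $\Theta^\ast$ together with the fact that $\Gstd(\afp)$ maps to the prime-to-$p$ part. By \Cref{Prop:UnipotentFixedPts} with $\gamma = 1$, this fixed-point locus is precisely $\coprod_{h \ge k} \coprod_{[c]} Z_{[c],\qp}^\lozenge$, where the union is over cusp labels with $c \in Q_h(\af) Q_k(\af) \Gstd(\qp)$ modulo $\omega_h(\af)^{-1}(\mathbb{Q}^\times)$.

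Next I would extract the stratification statement from these fixed-point containments. The key point is that the closed v-subsheaf $\coprod_{h \ge k} Z_{h,\qp}^\lozenge$ can be recovered from the $R_k(\afp)$-fixed points by varying over translates: more precisely, taking $\gamma$ ranging over $\Gstd(\af)$ and using the $\gamma$-conjugated version of \Cref{Prop:UnipotentFixedPts}, the union of all $(\mathbf{Sh}^\ast\gx_{\qp}^\lozenge)^{\gamma R_k(\afp) \gamma^{-1}}$ as $\gamma$ varies sweeps out exactly $\coprod_{h \ge k} Z_{h,\qp}^\lozenge$, because every cusp label $[c]$ of depth $h \ge k$ has some $\gamma$ for which $c \in Q_h(\af) Q_k(\af) \Gstd(\qp) \gamma^{-1}$. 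Therefore $\Theta^\ast$ maps $\bigl(\coprod_{h \ge k} Z_{h,\qp}^\lozenge\bigr) \times_{\bung} \grgmu$ into $\coprod_{h \ge k} Z_{h,\qp}^\lozenge$ for every $k$. Taking the difference of the statements for $k$ and $k+1$, and using that $Z_{h,\qp}^\lozenge$ is locally closed and that $\Theta^\ast$ is a map of v-sheaves (hence its underlying map of topological spaces respects the set-theoretic partition once it respects each closed piece $\coprod_{h \ge k}$), we conclude that $\Theta^\ast$ sends $Z_{h,\qp}^\lozenge \times_{\bung} \grgmu$ into $Z_{h,\qp}^\lozenge$.

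One technical point to handle carefully is that the stratification of $\mathbf{Sh}^\ast\gx_{\qp}^\lozenge$ is only a stratification on underlying topological spaces, so I would phrase the argument at the level of $\lvert \mathbf{Sh}^\ast\gx_{\qp}^\lozenge \rvert$ and then note that the restriction $\Theta^\ast \colon Z_{h,\qp}^\lozenge \times_{\bung} \grgmu \to Z_{h,\qp}^\lozenge$ is automatically a morphism of v-sheaves since $Z_{h,\qp}^\lozenge \hookrightarrow \mathbf{Sh}^\ast\gx_{\qp}^\lozenge$ is a locally closed embedding and the source maps into its underlying set. I also need to be slightly careful that the fixed-point locus $X^H$ for $H$ a profinite group acting on a v-sheaf $X$ is itself a closed v-subsheaf, determined by its support; this is standard for such actions and lets me pass freely between the v-sheaf and its topological space. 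The main obstacle, such as it is, is bookkeeping: matching the index sets $\omega_h(\af)^{-1}(\mathbb{Q}^\times) \backslash Q_h(\af) Q_k(\af) \Gstd(\qp) \gamma^{-1}$ of \Cref{Prop:UnipotentFixedPts} against the full index set $\omega_h(\af)^{-1}(\mathbb{Q}^\times) \backslash \Gstd(\af)$ as $\gamma$ varies, and checking that this union exhausts all cusp labels of depth $\ge k$ — which reduces to the obvious fact that $\bigcup_\gamma Q_h(\af) Q_k(\af) \Gstd(\qp) \gamma^{-1} = \Gstd(\af)$. No serious difficulty is expected beyond organizing these combinatorics.
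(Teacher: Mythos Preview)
Your proposal has a genuine gap in the step where you ``take the difference of the statements for $k$ and $k+1$.'' What you have established at that point is that $\Theta^\ast$ carries $\bigl(\coprod_{h \ge k} Z_{h,\qp}^\lozenge\bigr) \times_{\bung} \grgmu$ into $\coprod_{h \ge k} Z_{h,\qp}^\lozenge$ for every $k$. From this you can only conclude that a point $x$ in the stratum $Z_{k,\qp}^\lozenge \times_{\bung} \grgmu$ is sent to $\coprod_{h \ge k} Z_{h,\qp}^\lozenge$; nothing prevents $\Theta^\ast(x)$ from landing in a \emph{deeper} stratum $Z_{k^\prime,\qp}^\lozenge$ with $k^\prime > k$. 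A map preserving a decreasing filtration need not preserve its associated graded pieces --- think of a map collapsing everything to the deepest stratum. Your parenthetical justification (``its underlying map of topological spaces respects the set-theoretic partition once it respects each closed piece'') is simply false as a general principle.

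The paper supplies the missing inequality $k^\prime \le k$ by exploiting the groupoid structure of $\Theta^\ast$, specifically the identity $\mathrm{pr}_1 = \Theta^\ast \circ (\Theta^\ast, \pi_\mathrm{HT}^\ast \circ \mathrm{pr}_1)$, which follows from the diagrams in \Cref{Def:GlobalUniformization}(\ref{Item:GlobalUnifDiagrams}) (and is essentially the content of \Cref{Lem:InverseAutomatic}). If $x$ lies over stratum $k$ and $\Theta^\ast(x)$ lies in stratum $k^\prime$, then $(\Theta^\ast(x), \pi_\mathrm{HT}^\ast(\mathrm{pr}_1(x)))$ is a point of $Z_{k^\prime,\qp}^\lozenge \times_{\bung} \grgmu$, and applying $\Theta^\ast$ again must return $\mathrm{pr}_1(x) \in Z_{k,\qp}^\lozenge$. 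But by the half you \emph{did} prove, this image lies in $\coprod_{h \ge k^\prime} Z_{h,\qp}^\lozenge$, forcing $k \ge k^\prime$. Combined with your inequality $k^\prime \ge k$, this gives $k = k^\prime$. You should incorporate this symmetry argument to close the gap.
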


\begin{proof}
  Since $Z_{h,\qp}^\lozenge$ is a locally closed v-subsheaf, it suffices to show
  that $\Theta^\ast$ maps the topological space for $Z_{h,\qp}^\lozenge
  \times_{\bung} \grgmu$ to the topological space for $Z_{h,\zp}^\lozenge$.
  Because $\Theta^\ast$ is $\Gstd(\afp)$-equivariant, for each $0 \le k \le g$
  and $\gamma \in \Gstd(\af)$ the map $\Theta^\ast$ sends
  \begin{align*}
    \Theta^\ast \colon (\mathbf{Sh}^\ast\gx_{\qp}^\lozenge)^{\gamma
    R_k(\afp) \gamma^{-1}} &\times_{\bung} \grgmu \\ &\to
    (\mathbf{Sh}^\ast\gx_{\qp}^\lozenge)^{\gamma R_k(\afp) \gamma^{-1}}.
  \end{align*}
  By taking the union of \Cref{Prop:UnipotentFixedPts} for all $\gamma$, we have
  \[
    \bigcup_{\gamma \in \Gstd(\af)} (\mathbf{Sh}^\ast\gx_{\qp}^\lozenge)^{\gamma
    R_k(\afp) \gamma^{-1}} = \coprod_{h=k}^g Z_{h,\qp}^\lozenge,
  \]
  and hence $\Theta^\ast$ maps
  \[
    \Theta^\ast \colon \biggl( \coprod_{h=k}^g Z_{h,\qp}^\lozenge \biggr)
    \times_{\bung} \grgmu \to \biggl( \coprod_{h=k}^g Z_{h,\qp}^\lozenge \biggr)
  \]
  by \cite[Corollary~5.5(i)]{Sch17p}.

  On the other hand, from diagrams of (\ref{Item:GlobalUnifDiagrams}) in
  \Cref{Def:GlobalUniformization} it follows that
  \[ \begin{tikzcd}
    \mathbf{Sh}^\ast\gx_{\qp}^\lozenge \times_{\bung} \grgmu \arrow{d}{(\Theta,
    \pi_\mathrm{HT}^\ast \circ \mathrm{pr}_1)} \arrow{r}{\mathrm{pr}_1} &
    \mathbf{Sh}^\ast\gx_{\qp}^\lozenge \arrow[equals]{d} \\
    \mathbf{Sh}^\ast\gx_{\qp}^\lozenge \times_{\bung} \grgmu \arrow{r}{\Theta} &
    \mathbf{Sh}^\ast\gx_{\qp}^\lozenge
  \end{tikzcd} \]
  commutes. If for some $0 \le k < k^\prime \le g$, the map $\Theta^\ast$ maps a
  point $x \in Z_{k,\qp}^\lozenge \times_{\bung} \grgmu$ to
  $Z_{k^\prime,\qp}^\lozenge \times_{\bung} \grgmu$, then the image of $x$ under
  the upper right path is in the stratum $Z_{k,\qp}^\lozenge$, while the image
  under the lower left path is in $\bigcup_{h=k^\prime}^g Z_{h,\qp}^\lozenge$.
  This is a contradiction, and thus $\Theta^\ast$ sends $Z_{k,\qp}^\lozenge
  \times_{\bung} \grgmu$ to $Z_{k,\qp}^\lozenge$.
\end{proof}

\begin{corollary} \label{Cor:IgusaSiegel}
  For $g \ge 2$, there exists an Igusa stack $\mathrm{Igs}\gx$ for $U =
  \mathbf{Sh}\gx_{\qp}^\lozenge$, which naturally fits in between
  \[
    \mathrm{Igs}^\circ\gx \subset \mathrm{Igs}\gx \subset \mathrm{Igs}^\ast\gx,
  \]
  where both inclusions are open embeddings.
\end{corollary}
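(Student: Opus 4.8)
The plan is to leverage \Cref{Prop:MinCompUniformization}, which shows that the uniformization map $\Theta^\ast$ preserves each closed stratum $\bigcup_{h \ge k} Z_{h,\qp}^\lozenge$, and in particular the top open stratum $Z_{0,\qp}^\lozenge = \mathbf{Sh}\gx_{\qp}^\lozenge$. First I would observe that $\mathbf{Sh}\gx_{\qp}^\lozenge \subseteq \mathbf{Sh}^\ast\gx_{\qp}^\lozenge$ is an open v-subsheaf stable under $\underline{\GSp_{2g}(\af)}$, since the boundary $\coprod_{h \ge 1} Z_{h,\qp}^\lozenge$ is closed and Hecke-stable. By \Cref{Prop:MinCompUniformization} with $h = 0$, the restriction of $\Theta^\ast$ gives a map
\[
  \Theta \colon \mathbf{Sh}\gx_{\qp}^\lozenge \times_{\bung} \grgmu \to
  \mathbf{Sh}\gx_{\qp}^\lozenge,
\]
and I claim this is a global uniformization map for $U = \mathbf{Sh}\gx_{\qp}^\lozenge$ in the sense of \Cref{Def:GlobalUniformization}. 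The three conditions all follow by restriction: the $\underline{\GSp_{2g}(\af)} \times \underline{\GSp_{2g}(\qp)}$-equivariance of $\Theta$ is inherited from that of $\Theta^\ast$ (which was noted in the discussion preceding \Cref{Prop:MinCompUniformization}), the $(\phi,\id)$-invariance is likewise inherited, and the commutativity of the two diagrams in (\ref{Item:GlobalUnifDiagrams}) is immediate since the relevant fiber products $U \times_{\bung} \grgmu$ and $U \times_{\bung} \grgmu \times_{\bung} \grgmu$ sit inside the compactified versions as open subsheaves, on which $\Theta$ agrees with $\Theta^\ast$. One subtle point is that the source of $\Theta$ is $U \times_{\bung} \grgmu$ rather than $Z_{0,\qp}^\lozenge \times_{\bung} \grgmu$ intersected with anything further; but these are literally equal since $U = Z_{0,\qp}^\lozenge$.

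Next I would apply \Cref{Prop:IgusaGlobalUnif} to the global uniformization map $\Theta$, which produces an Igusa stack $\mathrm{Igs}\gx \to \bung$ for $U = \mathbf{Sh}\gx_{\qp}^\lozenge$, together with a $\underline{\GSp_{2g}(\afp)}$-equivariant Cartesian diagram realizing $\mathbf{Sh}\gx_{\qp}^\lozenge \cong \mathrm{Igs}\gx \times_{\bung} \grgmu$. It remains to construct the two open embeddings $\mathrm{Igs}^\circ\gx \subset \mathrm{Igs}\gx \subset \mathrm{Igs}^\ast\gx$. For the first, note that $\mathbf{Sh}^\circ\gx_{\qp}^\lozenge \subseteq \mathbf{Sh}\gx_{\qp}^\lozenge$ is a $\underline{\GSp_{2g}(\af)}$-stable open subsheaf (the good reduction locus is open and Hecke-stable), and $\Theta$ restricts to the good reduction uniformization $\Theta^\circ$ by \Cref{Prop:SameUntilt}-type reasoning — or more directly because both restrict to the same map, being restrictions of $\Theta^\ast$. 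Applying \Cref{Thm:Functoriality} to the open inclusion $(\GSp_{2g}, \Hstd) \hookrightarrow (\GSp_{2g}, \Hstd)$ viewed with $U' = \mathbf{Sh}\gx_{\qp}^\lozenge$ and $U = \mathbf{Sh}^\circ\gx_{\qp}^\lozenge$ yields a canonical morphism $\mathrm{Igs}^\circ\gx \to \mathrm{Igs}\gx$; and by \cite[Proposition~10.11(i)]{Sch17p}, since the corresponding map on the fibered-product level $\mathbf{Sh}^\circ\gx_{\qp}^\lozenge \to \mathbf{Sh}\gx_{\qp}^\lozenge$ is an open embedding and $\mathrm{BL}$ is v-surjective, the map $\mathrm{Igs}^\circ\gx \to \mathrm{Igs}\gx$ is an open embedding. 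The same argument, applied now with $\mathrm{Igs}^\ast\gx$ playing the role of $\mathrm{Igs}'$ and $\mathbf{Sh}^\ast\gx_{\qp}^\lozenge$ the ambient space, gives the open embedding $\mathrm{Igs}\gx \subset \mathrm{Igs}^\ast\gx$ — although here I must be slightly careful since $\mathrm{Igs}^\ast\gx$ is not an Igusa stack in the strict sense of \Cref{Def:Igusa} (it is a "minimally compactified" version), so I would instead argue directly: the uniqueness part of \Cref{Prop:IgusaGlobalUnif} identifies $\mathrm{Igs}\gx$ as the quotient v-stack $[\mathbf{Sh}\gx_{\qp}^\lozenge / \mathcal{R}_U]$ where $\mathcal{R}_U$ is the groupoid, and $\mathrm{Igs}^\ast\gx = [\mathbf{Sh}^\ast\gx_{\qp}^\lozenge / \mathcal{R}_{U^\ast}]$; since $\mathcal{R}_U = \mathcal{R}_{U^\ast} \times_{\mathbf{Sh}^\ast\gx_{\qp}^\lozenge \times \mathbf{Sh}^\ast\gx_{\qp}^\lozenge} (\mathbf{Sh}\gx_{\qp}^\lozenge \times \mathbf{Sh}\gx_{\qp}^\lozenge)$ by \Cref{Prop:MinCompUniformization}, the natural map of quotient stacks is an open embedding by \cite[Proposition~10.11(i)]{Sch17p} again, using that $\mathrm{BL}$ is v-surjective.

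The main obstacle I anticipate is a bookkeeping one rather than a conceptual one: making sure that the minimally compactified object $\mathrm{Igs}^\ast\gx$ from \cite{Zha23p} interacts correctly with our axiomatic framework, since \Cref{Def:Igusa} and \Cref{Prop:IgusaGlobalUnif} are phrased for genuine Igusa stacks over $\bung\m$ and the compactified object lives over $\bung$ (without the $\{\mu^{-1}\}$-restriction on the stacky target, or with a relative affinization that is not flat). I would handle this by working entirely at the level of v-sheaves $\mathbf{Sh}\gx_{\qp}^\lozenge \subseteq \mathbf{Sh}^\ast\gx_{\qp}^\lozenge$ and their groupoid presentations, invoking \Cref{Prop:MinCompUniformization} to see that the groupoid over the open locus is the restriction of the groupoid over the compactification, so that the inclusion of quotient stacks is formally an open embedding; the identification $\mathrm{Igs}\gx \times_{\bung} \grgmu \cong \mathbf{Sh}\gx_{\qp}^\lozenge$ then follows by restricting Zhang's isomorphism $\mathrm{Igs}^\ast\gx \times_{\bung} \grgmu \cong \mathbf{Sh}^\ast\gx_{\qp}^\lozenge$ to the preimage of $Z_{0,\qp}^\lozenge$. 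A secondary subtlety is verifying that $\mathrm{Igs}\gx$ as produced by \Cref{Prop:IgusaGlobalUnif} literally agrees with this open substack of $\mathrm{Igs}^\ast\gx$ and not merely some isomorphic copy, but this follows from the uniqueness clause of \Cref{Prop:IgusaGlobalUnif} once one checks both present the same global uniformization map $\Theta$, which is true by construction.
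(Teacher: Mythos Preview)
Your proposal is correct and follows essentially the same approach as the paper: restrict $\Theta^\ast$ to the open stratum $Z_{0,\qp}^\lozenge = \mathbf{Sh}\gx_{\qp}^\lozenge$ using \Cref{Prop:MinCompUniformization} with $h=0$, observe that the axioms of \Cref{Def:GlobalUniformization} are inherited by restriction, apply \Cref{Prop:IgusaGlobalUnif}, and deduce the open embeddings from the corresponding inclusions of Shimura varieties via \cite[Proposition~10.11(i)]{Sch17p}. The paper's proof is terser and does not spell out the groupoid-level argument for $\mathrm{Igs}\gx \subset \mathrm{Igs}^\ast\gx$ that you give; your extra care there is warranted but not strictly needed, and your anticipated bookkeeping obstacle does not materialize.
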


\begin{proof}
  Setting $h = 0$ in \Cref{Prop:MinCompUniformization}, we obtain the global
  uniformization map
  \[
    \Theta \colon \mathbf{Sh}\gx_{\qp}^\lozenge \times_{\bung} \grgmu \to
    \mathbf{Sh}\gx_{\qp}^\lozenge
  \]
  by restricting $\Theta^\ast$. This satisfies all the axioms of
  \Cref{Def:GlobalUniformization} because $\Theta^\ast$ does, and therefore by
  \Cref{Prop:IgusaGlobalUnif} there exists a Igusa stack $\mathrm{Igs}\gx$. The
  fact that the inclusions are open embeddings follows from the corresponding
  fact on Shimura varieties together with \cite[Proposition~10.11(i)]{Sch17p}.
\end{proof}

\begin{remark}
  The upcoming work \cite{CHZ25p} performs a much finer analysis of
  $\Theta^\ast$ on the boundary strata, and obtains a precise description of the
  corresponding boundary strata of Igusa stacks. It also works in the more
  general setting of PEL Shimura data.
\end{remark}

}

\bibliographystyle{amsalpha}
\bibliography{references.bib}

\end{document}